\pdfoutput=0 
\documentclass[12pt]{amsart}
\pdfoutput=1
\usepackage{geometry}
\usepackage{epsfig}
\usepackage{mathabx}
\usepackage{txfonts,fancyvrb}
\usepackage{verbatim}
\usepackage{todonotes}
\usepackage{thmtools}
\usepackage{thm-restate}

\usepackage{algorithm}
\usepackage[noend]{algpseudocode}
\makeatletter
\def\BState{\State\hskip-\ALG@thistlm}
\makeatother

\usepackage{subfig}
\usepackage{amscd,amssymb,mathabx,amsthm}
\usepackage[arrow,matrix,graph,frame,poly,arc,tips]{xy}
\usepackage{graphicx}
\usepackage{psfrag}
\usepackage{mathtools}
\usepackage[mathscr]{eucal}
\usepackage{tikz}
\usetikzlibrary{decorations.markings}
\usetikzlibrary{shapes}
\usetikzlibrary{arrows}
\usetikzlibrary{backgrounds}
\usetikzlibrary{calc}
\usepackage{mdwlist}
\usepackage{multirow}
\usepackage{enumerate}
\usepackage{verbatim}
\usepackage{etoolbox}
\AtEndEnvironment{proof}{\setcounter{claim}{0}}

\topmargin=0.1in
\textwidth5.5in
\textheight7.8in
\oddsidemargin=0.4in
\evensidemargin=0.4in



\DeclarePairedDelimiter{\abs}{\lvert}{\rvert}

    \newcommand\ba{\begin{align*}}
    \newcommand\ea{\end{align*}}
    \newcommand\be{\begin{enumerate}}
    \newcommand\ee{\end{enumerate}}
    \newcommand\bp{\begin{proof}}
    \newcommand\ep{\end{proof}}
    \newcommand\bpp{\begin{prop}}
    \newcommand\epp{\end{prop}}
    \newcommand\bpb{\begin{prob}}
    \newcommand\epb{\end{prob}}
    \newcommand\bd{\begin{defn}}
    \newcommand\ed{\end{defn}}
    \newcommand\bh{\begin{hint}}
    \newcommand\eh{\end{hint}}


    \newcommand\F{\mathcal{F}}

    \newcommand\bN{\mathbb{N}}
    \newcommand\N{\mathbb{N}}

    \newcommand\bZ{\mathbb{Z}}

    \newcommand\LL{\mathcal{L}}

    \newcommand\Fill{\operatorname{Fill}}
    \newcommand\supp{\operatorname{supp}}

    \newcommand\Mod{\operatorname{Mod}}

    \newcommand\mC{\mathcal{C}}
    \newcommand\mL{\mathcal{L}}
    \newcommand\mM{\mathcal{M}}

    \DeclareMathOperator\Aut{Aut}

    \DeclareMathOperator\Stab{Stab}
    
    \DeclareMathOperator\qftp{qftp}
    \DeclareMathOperator\tp{tp}

    \newcommand\Or{\operatorname{Or}}

    \newcommand{\nin}[0]{\notin}
    
    \newcommand{\forkingdep}{
    	\;\raise.2em\hbox{$\mathrel|\kern-.9em\lower.35em\hbox{$\smile$}\kern-.7em\hbox{\char'57}$}\;}

    	\newcommand{\forkingind}{
    		\,\raise.2em\hbox{$\,\mathrel|\kern-.9em\lower.35em\hbox{$\smile$}$}}
    		\newcommand{\indep}[3]{#1 \mathop{\underset{#2}{ \forkingind}} #3}

    		\newtheorem{theorem}{Theorem}[section]
		\newtheorem{metatheorem}{Metatheorem}[section]
    		\newtheorem{lemma}[theorem]{Lemma}
		\newtheorem{sublemma}[theorem]{Sublemma}
    		\newtheorem{corollary}[theorem]{Corollary}
    		\newtheorem{proposition}[theorem]{Proposition}
    		
    		\newtheorem{observation}[theorem]{Observation}
    		\newtheorem{question}[theorem]{Question}
    		\newtheorem*{claim*}{Claim}
    		
    		\newtheorem{claim}{Claim}

    		\theoremstyle{remark}
    		
    		\newtheorem{remark}[theorem]{Remark}

    		\theoremstyle{definition}
    		\newtheorem{definition}[theorem]{Definition}
    		\newtheorem{prob}[theorem]{Problem}
    		\newtheorem{que}[theorem]{Question}

    		\newtheorem{example}[theorem]{Example}
    		


    		\title{The model theory of the curve graph}

    		\begin{document}
    		\author{Valentina Disarlo, Thomas Koberda, and J.~de la Nuez Gonz\'alez}
    		\maketitle
    		
    		\newcommand{\pam}[0]{partial map } 
    		\newcommand{\pams}{partial maps } 
    		\newcommand{\D}[0]{\mathcal{D}}
    		\renewcommand{\P}[0]{\mathcal{P}}
    		\newcommand{\C}[0]{\mathcal{C}}
    		\newcommand{\tc}[0]{\tilde{\mathcal{C}}}
    		\newcommand{\tv}[0]{\pitchfork}
    		\newcommand{\sq}[0]{\mathcal{S}}
    		\newcommand{\E}[0]{\mathcal{E}}
    		\renewcommand{\mC}[0]{\mathcal{C}}
    		\renewcommand{\supp}[0]{\mathrm{supp}}

    		\newcommand{\Th}[1]{\mathrm{Th}(#1)}
    		\newcommand{\wo}[0]{\mathcal{W}}
    		\newcommand{\alp}[0]{\mathcal{A}}
    		
    	\newcommand{\enum}[2]{\begin{enumerate}[\hspace*{0.5cm}#1] #2 \end{enumerate}}
    		\renewcommand{\N}{\mathbb{N}}
    		\newcommand{\M}{\mathcal{M}}
    		\newcommand{\subg}[1]{\langle #1 \rangle}
    		
    		
    		\newcommand{\va}[1]{\textcolor{blue}{#1}}
    		\newcommand{\tho}[1]{\textcolor{green}{#1}}
    		\newcommand{\lu}[1]{\textcolor{magenta}{#1}}
    		
    		\newcommand*{\greysquare}{\textcolor{gray}{\blacksquare}}
    		\newenvironment{subproof}[1][\proofname]{%
    		\renewcommand{\qedsymbol}{$\greysquare$}%
    		\begin{proof}[#1]%
    			}{%
    		\end{proof}%
    	}
    	
    	\newenvironment{subsubproof}[1][\proofname]{%
    	\renewcommand{\qedsymbol}{$\blacksquare$}%
    	\begin{proof}[#1]%
    		}{%
    	\end{proof}%
    }

    \renewcommand{\emph}[1]{\textbf{#1}}
    
    \newcommand{\mnote}[1]{\todo[inline, color=gray!50]{#1}}
    
    \begin{abstract}
    	In this paper we develop bridges between model theory, geometric topology,
    	and geometric group theory. We consider a surface $\Sigma$ of finite type and
    	its curve
    	graph $\mC(\Sigma)$, and we carry out a model theoretic study of its
    	first-order theory. Among many other things, we are able to prove that the theory of the curve graph
	is $\omega$--stable and admits a natural version of quantifier elimination, and we compute its Morley rank.
	We also show that many of the complexes which are naturally associated to a surface are
    	interpretable in $\mC(\Sigma)$. This shows that these complexes are all $\omega$--stable
    	and admit certain a priori bounds on their Morley ranks. We are able to use Morley ranks
    	to prove that various complexes are not bi--interpretable with the curve graph.
	
	We then use the model theory of the curve graph to establish results about the geometric topology of surfaces and mapping class groups.
	We give a model theoretic interpretation of the Ivanov Metaconjecture about automorphisms of sufficiently complex object that are
	naturally associated to a surface.
    	As a consequence of quantifier elimination, we show that algebraic intersection number
    	is not definable in the first order theory of the curve graph. We 
	use the model theoretic machinery we develop to show that the curve graph of a surface enjoys a novel phenomenon
    	that we call
    	interpretation rigidity. That is, if surfaces $\Sigma_1$ and $\Sigma_2$ admits curve graphs that are mutually interpretable,
    	then $\Sigma_1$ and $\Sigma_2$ are homeomorphic to each other. Finally, as a consequence of the proof of interpretation rigidity, we
	show that interpretations between curve graphs naturally give rise to homomorphisms
	between finite index subgroups of mapping class groups, addressing a classical question about injective maps between mapping class
	groups of surfaces.
	
	In the course of the proofs of the main results, we construct a class of auxiliary structures associated to the mapping class groups,
	called augmented Cayley graphs. These enriched structures are easier to investigate that than the curve graph and other geometric
	graphs, and their value lies in the fact that they are bi-interpretable with various geometric graphs.
	We show that the theories of augmented Cayley graphs enjoy many nice model theoretic properties such a simple connectedness, total
	triviality, and weak elimination of imaginaries.
    \end{abstract}
    
    \maketitle
    \setcounter{tocdepth}{1}
    \tableofcontents

\section{Introduction}
This paper is an investigation of the curve graphs and mapping class groups of hyperbolic surfaces, from a model theoretic point of view.
We thus provide a systematic framework for studying objects in combinatorial topology from the point of view of model theory; we
  employ model theoretic techniques to objects of classical interest in geometric topology, and investigate rigidity phenomena through which
  topological structure is reflected in model theory.
  
  The subject of this paper can be divided into two main sub-subjects as follows:
  \begin{enumerate}
  \item
  \emph{The model theory of the curve graph of a surface $\Sigma$ of hyperbolic type}; this is a unifying framework for the whole paper, and
  consists of a detailed investigation of certain properties of the theory of the curve graph. The work here is largely subordinate to the
 applications to geometric topology, and we do not claim to have conducted a complete model theoretic study.
  \item
  \emph{The applications of model theoretic ideas to geometric topology}; here, we investigate classical questions in surface topology, such as
  automorphisms of geometric graphs and injective maps between mapping class groups, using model theoretic machinery.
  \end{enumerate}
  
  The authors hope that this paper will be of interest to both logicians and to geometric topologists. Results relevant to model theory
  are in Subsection~\ref{ss:mtcg} below. Results relevant to geometric topology are primarily in Subsections~\ref{ss:ivanov},~\ref{ss:app},
  and~\ref{ss:other}.
  
  \subsection{Motivation: curve graphs, mapping class groups, and the Ivanov Metaconjecture}
  The primary topological motivation for this article is the curve graph, which organizes homotopy classes of embedded, essential,
  compact $1$--submanifolds of a surface, and the mapping class group of the surface, which consists of homotopy classes of homeomorphisms of the underlying surface. The curve graph and the mapping class group are central to the study of the topology
  and geometry of a surface; see Subsection~\ref{ss:mtcg} immediately below and Section~\ref{sec:background} for more detail.
  
The following is a well-known question in the geometric topology of surfaces,
  known as the Ivanov Metaconjecture~\cite{ivanov06}, which finds its roots in the paper~\cite{ivanov_97}.
  \begin{prob}\label{p:ivanov}
  	Every object naturally associated to a surface $S$ with sufficiently rich structure has the extended mapping class group of $S$ as its group
  	of automorphisms. Moreover, the proof of this fact should reduce to Ivanov's theorem that the automorphism group of the curve graph
  	is the extended mapping class group.
  \end{prob}
  
  The combination~\cite{ivanov06,ivanov_97} has spurred a deluge of research, and these papers have accrued hundreds of citations
  and have spawned dozens of papers.
  Notable unifying approaches to the Ivanov Metaconjecture were proposed by 
  Brendle--Margalit in~\cite{brendle_margalit}; cf.~\cite{mcleay18,mcleay19}.
   
   One of the purposes of this article is to provide a unified account of the Ivanov Metaconjecture using model theory.
   Our approach in this paper is markedly different from previous ones, in that it
  avoids purely geometrically topological methods in favor of developing a robust framework based in logic.
  In particular we will articulate a precise formulation of Problem~\ref{p:ivanov} that uses the language of model theory.
  Our formulation naturally raises
  and answers a question that could be viewed as a broad philosophical generalization of Problem~\ref{p:ivanov}.
  
  \begin{que}\label{q:general}
  	Why does the curve graph play such a central role in the study of mapping class groups and the objects naturally associated to a surface?
  	Why is the behavior of these objects controlled by the structure of the curve graph?
  \end{que}

  \subsection{The main results of the paper}
  To pass from the philosophical to the mathematically precise, let us give our setup.
  We are concerned with a finite-type orientable surface $\Sigma$ (that is, a real
  two-dimensional manifold) of negative
  Euler characteristic and possibly with (finitely many) punctures,
  its (extended) mapping class group $\Mod^{\pm}(\Sigma)$, consisting of
  homotopy classes of homeomorphisms of $\Sigma$, and
  the  curve graph $\mC(\Sigma)$, which encodes the homotopy classes of simple closed
  curves on $\Sigma$.
  The reader is directed to Section~\ref{sec:background} (and
  especially Subsection~\ref{ss:mcg}) for precise definitions of the topological objects under consideration here.
  Our main results are about the model theory of the
  curve graph, thus investigating a geometrically/topologically defined
  object from a model--theoretic point of view.
  To guide the model theoretic development of the theory, we draw inspiration and adapt some ideas from~\cite{BPZ17}, where a
  model--theoretic study of right-angled buildings  was carried out. The program carried out in this paper can be summarized as follows:
  
  \begin{enumerate}
  	\item
  	Provide a rigorous framework for relating the theories of the mapping class group and various naturally defined complexes (such as
  	the curve graph, the pants graph, etc.) via the model theoretic concept of interpretation.
  	\item
  	Control the complexity of these various theories, through quantifier elimination and $\omega$--stability.
  	\item
  	Build model-theoretically constructed, injective homomorphisms between finite index subgroups of mapping class groups of surfaces.
	\item
	Produce new examples of theories with explicitly computable Morley ranks that are infinite ordinals, as well as many other 
	auxiliary theories with desirable model-theoretic properties.
  \end{enumerate}

  \subsubsection{A model theoretic framing of the Ivanov Metaconjecture}\label{ss:ivanov}
  Problem~\ref{p:ivanov}, and more generally Question~\ref{q:general}, are demystified by the fact that most natural geometric
  graphs associated to a surface are interpretable in the curve
  graph. Consequently the complexities of their first order theories are
  controlled by that of the curve graph. This includes properties like
  $\omega$--stability, well-definedness of Morley
  rank, and relative quantifier elimination. Objects that are bi-interpretable with the curve graph, or more generally a certain
  auxiliary structure that we call the \emph{augmented Cayley graph of the mapping class group}, automatically have the same
  automorphism group as the curve graph, which gives a precise formulation of Problem~\ref{p:ivanov}.
  
  \begin{metatheorem}[Model theoretic formulation of the Ivanov Metaconjecture]\label{meta:ivanov}
  The following makes precise, and organizes the instances of, Problem~\ref{p:ivanov}.
  \begin{itemize}
  	\item
  	``Naturally associated" means interpretable in the curve graph.
  	\item
  	``Sufficiently rich structure" means bi-interpretable with the curve graph, or more generally bi-interpretable with an augmented
  	Cayley graph of the mapping class group of $\Sigma$. This will be a certain auxiliary structure built out of the standard Cayley graph
  	of the extended mapping class group of $\Sigma$, which keeps track certain relations arising from subsurfaces of $\Sigma$.
  	\item
  	``Reduction to Ivanov's theorem" is the general model theoretic fact that structures that are bi-interpretable without parameters
  	have the same automorphism group, combined with the fact that the automorphism groups of the relevant augmented Cayley graphs
	are isomorphic to the extended mapping class group of $\Sigma$, a fact which ultimately reduces to Ivanov's result on automorphisms
	of the curve graph.
  \end{itemize}
  \end{metatheorem}
  
  Our formulation of the Ivanov Metaconjecture thus captures the spirit of Problem~\ref{p:ivanov}. In many cases,
  classically defined geometric
  graphs associated to the surface $\Sigma$ are bi-interpretable with an augmented
  Cayley graph of the mapping class group of $\Sigma$.

  \subsubsection{The model theory of the curve graph}\label{ss:mtcg}
  One of the main themes of this paper is a model theoretic study of curve graphs of surfaces, the conclusions of which we summarize here.
  We emphasize that the focus of the results here is subordinate to our interest in the geometric topology of surfaces and mapping class
  groups, and we therefore do not have the intention of giving new examples or phenomena within model theory itself.
  
    The curve graph was introduced by Harvey~\cite{harvey_1978},
    who defined it as a surface-theoretic analogue of a
    building associated to a symmetric space of nonpositive curvature. 
    It consists of a vertex for every homotopy class of essential, nonperipheral simple closed curves on a surfaces, 
    and the adjacency relation is disjoint realization. Perhaps the most influential work on the curve graph and its structure was carried out
    by Masur--Minsky in~\cite{masur_minsky_99,masur_minsky_00}
    
    The analogy with buildings is part of the inspiration for
    the present work, via the aforementioned work on right-angled buildings.
    The curve graph of a surface is usually a locally infinite graph of infinite
    diameter, with complicated structure on both a local and global scale. Thus,
    one  may suspect that reasonable notions classification (e.g.~classifying the
    finite subgraphs of $\C(\Sigma)$ up to isomorphism, for { $\Sigma$ } fixed) might be so
    lacking in regular structure as to be intractable; cf.~\cite{KKIMRN,KKOsaka,bering_conant_gaster}.
    One of the purposes of this paper is to establish that,
    in spite of the apparent chaotic structure of the curve graph, its formal
    properties are tamer than one might initially imagine.
    
    We will view $\mC(\Sigma)$ as a graph, without any further structure.
    The language of (undirected) graphs has a single  binary relation $E$
    which is symmetric.
    In the context of the curve graph,
    the set $V$ of vertices of  $\mC(\Sigma)$ is the
    universe, and $E$ is interpreted as adjacency in $\mC(\Sigma)$.
    We will write
    $\Th{\mC(\Sigma)}$ for the~\emph{theory} of $\mC(\Sigma)$. Precisely,
    this consists of the set of first order logical sentences which
    are satisfied by $\mC(\Sigma)$.
    The adjective~\emph{first order} refers to the fact that scope of
    quantification, i.e.~allowable values for variables in formulae, are only permitted to be individual elements and not
    relations.
    
    The main model theoretic of this paper are listed in this subsection.
    The driving force of this paper is the following technical result
    to which we have alluded already,
    which we will not state completely precisely, and instead direct the reader to Section~\ref{sec:interpretations}
    (cf.~Corollary~\ref{bi-interpretability of the curve graph}).
    
    \begin{theorem}\label{thm:intro-bi-int}
    	The curve graph $\mC(\Sigma)$ is bi-interpretable with the augmented Cayley graph of the mapping class group of $\Sigma$.
    \end{theorem}
    
    Once Theorem~\ref{thm:intro-bi-int} is established, the study of the theory of the curve graph can be carried out in the better-behaved
    world of the augmented Cayley graph of the mapping class group.
    The augmented Cayley graphs of mapping class groups are
    a family of auxiliary structures {$\mathcal M^G_\D(\Sigma)$}, which depend on a finite index subgroup $G$ of the extended
    mapping class group $\Mod^\pm(\Sigma)$ and a class $\D$ of curve graphs of subsurfaces satisfying certain suitable hypotheses.
    The theories of some of these structures will have absolute quantifier elimination, and others will have quantifier elimination with respect
    to $\exists$--formulae.
    Here,
    (absolute) quantifier elimination means that any first order predicate is equivalent
    in the theory to one which involves no quantifiers. A theory $T$ having relative quantifier
    elimination with respect to $\forall\exists$--formulae means that every first
    order predicate is equivalent modulo $T$ to a Boolean combination of formulae in which all variables fall under the scope of a single
    block of universal quantifiers, followed by a single block of existential quantifiers.

    With this machinery in place, we will be able
    show that under certain hypotheses, these auxiliary structures are bi--interpretable with the curve graph through mostly formal
    means; the increase in quantifier
    complexity in the theory of the curve graph is due to the quantifiers in the formulae furnishing the interpretations.
    We are thus able to use Theorem~\ref{thm:intro-bi-int} to establish results
    such as the following.
    
    \begin{theorem}\label{thm:quantifier elimination}
    	Let $\Sigma$ be an orientable finite-type surface that is not a torus with two {punctures} or a sphere with fewer than five punctures.
    	Then $\Th{\mC(\Sigma)}$ has quantifier elimination with
    	respect to the class of $\forall\exists$--formulae.
    \end{theorem}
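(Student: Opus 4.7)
The plan is to establish model completeness of $\Th{\mC(\Sigma)}$ by Robinson's test: every embedding $\mC_1 \hookrightarrow \mC_2$ between models of the theory should be elementary, equivalently, every existential formula with parameters in $\mC_1$ witnessed in $\mC_2$ should already be witnessed in $\mC_1$. I would deduce this from a back-and-forth between finite tuples of vertices, the backbone being a class of ``canonical'' or ``rigid'' configurations whose quantifier-free type controls their complete type.

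The central object I would introduce is a distinguished class of finite induced subgraphs of $\mC(\Sigma)$, built around a multicurve that extends to a pants decomposition together with a controlled system of dual/filling curves. Such a configuration should be determined up to the action of $\Mod^{\pm}(\Sigma)$ by its isomorphism type as an abstract labeled graph, via the combinatorial reconstruction machinery of Ivanov and its extensions to curve graphs. With this in hand, the back-and-forth decomposes into two sub-tasks: (a) every finite tuple can be extended to a rigid configuration by means of an $\exists$-formula, and (b) the quantifier-free type of a rigid configuration determines its full type.

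For (a), the existence of a pants decomposition containing a given multicurve, and of standard filling curves with prescribed adjacency patterns, are properties verified by finite witnesses in $\mC(\Sigma)$ itself and hence expressible existentially; any tuple can be enlarged to such a configuration in both $\mC_1$ and $\mC_2$. For (b), I would argue that membership of a vertex $v$ in the neighborhood of a rigid configuration, i.e.\ its adjacency pattern with the configuration, already pins down $v$ uniquely, so that a quantifier-free partial isomorphism between rigid configurations extends one vertex at a time. Combining (a) and (b) produces the back-and-forth and yields model completeness.

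The main obstacle I anticipate is that a model of $\Th{\mC(\Sigma)}$ is only an abstract graph satisfying the first-order theory of the curve graph; it need not itself be a curve graph of any surface. Topological arguments about fillings, dual curves, and pants-decomposition extensions must therefore be stated in a form that is purely first-order, so that they transfer from $\mC(\Sigma)$ to arbitrary models. The exclusion of the torus with two boundary components almost certainly reflects a sporadic failure of Ivanov-type rigidity for that surface, which would obstruct the uniqueness needed in step (b); most likely this case also forces a careful accounting of low-complexity exchanges that do not arise in the generic setting.
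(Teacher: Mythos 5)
Your step (b) contains a genuine gap. You claim that the adjacency pattern of a vertex $v$ with a rigid configuration $W$ pins down $v$ uniquely, so that a quantifier-free partial isomorphism between rigid configurations extends one vertex at a time. This is false even in $\mC(\Sigma)$ itself: for any fixed finite $W$, infinitely many curves share the same adjacency pattern with $W$ (for instance every curve at curve-graph distance $>2$ from all of $W$ is adjacent to no vertex of $W$), and these do not share a type. The paper flags exactly this phenomenon as the reason absolute quantifier elimination fails for $\Th{\mC(\Sigma)}$: the quantifier-free type of a pair cannot distinguish $d_{\mC}(a,b)=k$ from $d_{\mC}(a,c)=k+1$ for $k\geq 2$. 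A back-and-forth of quantifier-free partial isomorphisms in the graph language, rigid configurations or not, therefore cannot succeed. (You also aim at model completeness via Robinson's test, which for a complete theory is strictly stronger than what the theorem asserts---the paper establishes only that every formula is equivalent to a Boolean combination of $\exists$-formulae, and it is not clear $\Th{\mC(\Sigma)}$ is model complete.)

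What the paper actually does is introduce an auxiliary $\mL$-structure $\M(\Sigma)$ whose universe is the mapping class group, with a binary relation $R_D$ for each domain $D\subseteq\mC_0$ expressing that $x^{-1}y$ stabilizes every curve in $D^\perp$ with orientation. These relations encode, for every subsurface, whether two elements differ only by a mapping class supported there, and so carry far more information than the edge relation. Proving that $\Th{\M}$ has \emph{absolute} quantifier elimination (Theorem \ref{t:M-fullqe}) is the technical core of the paper, using displacement estimates, the Behrstock inequality, simple connectedness, and weak convexity---machinery your proposal has no analogue of. Your instinct about rigid configurations is correct but only enters in the transfer step: the Aramayona--Leininger exhaustion of $\mC(\Sigma)$ by finite strongly rigid sets (which indeed is where the twice-punctured torus is excluded, as you guessed) furnishes a bi-interpretation between $\mC(\Sigma)$ and $\M$ in which each orbit of tuples is cut out by an existential formula. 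Absolute QE in $\M$ then pulls back to relative QE with respect to $\exists$-formulae in $\mC(\Sigma)$. Without the auxiliary structure $\M$, the back-and-forth you envision has no engine driving it.
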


    We note that one cannot hope for absolute quantifier elimination
    in $\Th{\mC(\Sigma)}$, since then every first order predicate about the graph
    $\mC(\Sigma)$ would be determined by finitely many adjacency relations.
    For example, it is not difficult to write a first order formula
    which expresses that
    $d_{\mC(\Sigma)}(a,b)=k$ and $d_{\mC(\Sigma)}(a,c)=k+1$, where here
    we are measuring graph metric distance in the curve graph,  and where $k\geq 2$ is an
    integer.
    Since the language of graph theory is equipped with a single binary relation, any quantifier-free formula in this language is simply
    a finite Boolean combination of adjacencies and non-adjacencies; thus, the quantifier--free
    types of the pair $(a,b)$ and the  pair
    $(a,c)$ are the same. Therefore, the  quantifier--free type of a pair
    cannot determine the type of a pair.

    We are now in a position to state the second general result about the model theory
    of the curve graph.
    
    \begin{theorem}\label{thm:stable}
    	Let $\Sigma$ be a surface of genus $g$ with $b$ punctures. Then $\Th{\mC(\Sigma)}$ is
    	$\omega$--stable. If $S$ has genus
    	$g$ and {$b$ punctures}, then the Morley rank of
    	the formula $x=x$ in the theory $\Th{\mC(\Sigma)}$ is equal to $\omega^{3g+b-3}$.
    \end{theorem}
     Here, a countable theory is \emph{$\omega$--stable} if there are only
    countably many  types over a countable set of parameters.

    \begin{remark}
    Previous work on stability inside of the curve graph of a surface was carried out by Bering--Conant--Gaster~\cite{bering_conant_gaster},
    where they show that the curve graph \emph{$k$--edge stable} for $k=6g-5+2b$. This property can be formulated combinatorially
    by requiring that no \emph{half-graph} of height $\geq k$ occurs as an induced subgraph of the curve graph, and implies that quantifier--free
    formulae are stable in the model--theoretic sense; see Section 8.2 to Tent--Ziegler's book~\cite{tent_ziegler}, for instance. The authors of the
    paper~\cite{bering_conant_gaster} specifically suggest that understanding stability of arbitrary formulae on in $\Th{\mC(\Sigma)}$ would
    require understanding quantifier elimination for some expansion of $\Th{\mC(\Sigma)}$, which is precisely what is carried out in this paper.
    \end{remark}

    \subsubsection{Further model-theoretic applications}
    Returning to $\omega$--stability,
    we will obtain control over the Morley rank of a large class of other formulae in $\Th{\mC(\Sigma)}$, coming
    from computations of the Morley rank of certain formulae in the language of the augmented Cayley graph; see Section~\ref{sec:morley}.
    We refer the reader to Subsection~\ref{ss:model}
    for an expanded discussion of this notion. In intuitive terms, it
    means that countable models for $\mC(S)$ have some hope of being classified, and do not admit too many types; for instance, stability
    precludes the interpretation of arithmetic in the underlying theory.
    
    The Morley rank of a theory
    is a notion of dimension. See the discussion in Section~\ref{sec:background}.
    We will prove equalities of Morley ranks for certain auxiliary structures
    that are bi--interpretable with the curve graph, and which we
    construct in the course of the paper.
    We note  that
    the general strategies  of our proofs of
    Theorem~\ref{thm:quantifier elimination}  and
    Theorem~\ref{thm:stable} are adapted from~\cite{BPZ17}.
    
    We note several other technical results that we obtain that are of a purely model theoretic nature, so that the reader may develop a feeling
    for the discussion in the body of the paper. For easy cross-reference, we state them
    for the auxiliary structure $\M=\M^G_{\D}$ that is bi-interpretable with the curve graph of the underlying surface, i.e.~the augmented
    Cayley graph as mentioned previously; see
    Section~\ref{sec:framework} for definitions.
    The signature of $\M$ is contains certain relations $R_D$ for each $D\in\D$, and we write
    $\hat M$ for the set of imaginaries consisting of equivalence classes under these relations.
    
    \begin{proposition}[See Section~\ref{sec:sc}]
    $\Th{\M}$ is simply connected.
    \end{proposition}
    
    The simple connectedness of a theory, in our case, makes the combinatorics of the augmented Cayley graph much more tractable,
    and makes easier to state results such as quantifier elimination possible to achieve in the first place.
    
    \begin{proposition}[See Lemma~\ref{total triviality} and Corollary~\ref{c: weak elimination of imaginaries}]
    The theory $\Th{\hat\M}$ is totally trivial and enjoys weak elimination of imaginaries.
    \end{proposition}
    
    We will not discuss the meaning of total triviality and weak elimination of imaginaries here, though both of these play an important
    role deep in the proof of interpretation rigidity for curve graphs; see Theorem~\ref{thm:interp-rigid} below. Quantifier elimination also plays
    an important role in the following rigidity result.
    
    \begin{proposition}[See Lemma~\ref{lem:acl=dcl imaginary}]
    If $N$ is an arbitrary model of $\Th{\M}$ and if $e$ is a finite tuple of imaginaries in $\hat N$, then the algebraic closure and definable
    closure of $e$ coincide.
    \end{proposition}

    The reader may find a large number of other technical facts about the model theory of curve graphs in this paper, and here we have
    curated a selection.

    \subsubsection{Applications to mapping class groups}\label{ss:app}
    In this subsection, we give some of the applications of the model theory of the curve graph to geometric topology.
       As a first consequence of quantifier elimination, we have the following result:
    
    \begin{corollary}\label{cor:alg-int}
    	Let $\Sigma$ be a surface of positive genus that is not a torus with fewer than two {punctures}, and for each
    	natural number $k$, let $I_k(x,y)$ be
    	the predicate such that $I_k(a,b)$ if and only if the essential simple closed curves $a$ and $b$ have algebraic intersection number
    	$\pm k$.
    	Then $I_k$ is not a (parameter--free) definable relation in $\mC(\Sigma)$.
    \end{corollary}
    
    We will in fact prove a significantly more general statement; cf.~Corollary  \ref{cor:alg-int-body} below.
    Corollary~\ref{cor:alg-int}
    seems counterintuitive at first, since algebraic intersection number is a homological
    invariant which is easy to compute once curves are identified with conjugacy classes
    in $\pi_1(\Sigma)$. What the corollary is saying is that the
    first order theory of the graph structure of the
    curve graph is not well--suited to predicates encoding algebraic intersection
    number. Thus, Corollary~\ref{cor:alg-int} can be viewed as an answer to the following more philosophical question.
    
    \begin{question}
    	Why is the relationship between the intersection--theoretic relationship between curves on a surface and their homological relationship
    	so complicated?
    \end{question}
    
    The answer is, plainly, that the first order theory of the curve graph does not know about homology.
    
    A further, entirely novel direction in which we develop the theory here is with regards to \emph{interpretation rigidity}.
    Let $\mathcal L$ be a language and let $\mathcal X$ be a class of $\mathcal L$--structures.
    We will say that $\mathcal X$ enjoys interpretation rigidity if whenever $\mathcal A,\mathcal B\in\mathcal X$ are mutually interpretable
    (possibly with parameters), then
    $\mathcal A$ is isomorphic to $\mathcal B$. Here, we mean interpretation in the technical model--theoretic sense: roughly, an $\mathcal L$--structure $\mathcal A$ is interpretable
    in an $\mathcal L'$--structure $\mathcal B$ if the universe of $\mathcal A$, together with the interpretations of the non-logical symbols of $\mathcal L$ in $\mathcal A$, are given
    by a definable subset of a finite Cartesian product of copies of $\mathcal B$, modulo a definable equivalence relation. See
    Section~\ref{sec:background} for more details.
    
    \begin{theorem}\label{thm:interp-rigid}
    	Let $\mathcal X$ be the class of curve graphs of surfaces, excluding closed genus two surfaces, tori with at most two punctures,
    	and spheres with at most six punctures. Then the class $\mathcal X$ enjoys interpretation rigidity.
    \end{theorem}
    
    Thus, the model theory of curve graphs determines their isomorphism type within the class of curve graphs of 
    surfaces.
    As mentioned above,
    Theorem~\ref{thm:interp-rigid} relies on several results about the model theory of curve graphs (and the associated auxiliary
    structures) that we prove in this paper, namely elimination of quantifiers and weak elimination of imaginaries.
    
    Perhaps the most striking application of the model theoretic methods in this paper to geometric topology is to the problem of classifying
    injective homomorphisms between mapping class groups.
      Generally, one would hope that such injective maps arise from canonical topological constructions such as inclusions of subsurfaces
    and covering spaces, though this is not always the case. We refer the reader 
    to~\cite{AraSoutoGT12,farb-survey,ivanov06,harvey-korkmaz,castel16,kordek-margalit} 
    for a survey of Problem~\ref{prob:injective} below and related results.
    The following is a difficult, generally open problem that has been a driving
    force in the theory of mapping class groups of surfaces, which in turn is closely related to the phenomenon of Margulis Superrigidity.
    
    \begin{prob}\label{prob:injective}
    Let $\Sigma_1$ and $\Sigma_2$ be surfaces. Under what conditions is there an injective map between (a finite index subgroup of)
    $\Mod(\Sigma_1)$ and $\Mod(\Sigma_2)$? Can these maps be understood geometrically?
    \end{prob}
    
    As a consequence of the methods involved in the proof of Theorem~\ref{thm:interp-rigid}, we obtain the following result, which gives a
    completely novel perspective on Problem~\ref{prob:injective}:
    
    \begin{theorem}\label{thm:induced-homo-intro}
    	Let $\Sigma_1$ and $\Sigma_2$ be surfaces such that $\Sigma_1$ is not a sphere with fewer than six punctures or a torus with
    	fewer than three punctures. Suppose that $\C(\Sigma_1)$ is interpretable
    	(possibly with parameters) in $\C(\Sigma_2)$. Then canonically associated
    	to the interpretation, there is a finite index subgroup
    	$G\leq\Mod^{\pm}(\Sigma_1)$ and an injective homomorphism $G\hookrightarrow \Mod^{\pm}(\Sigma_2)$.
    \end{theorem}
    
    Thus, the purely model theoretic data of an interpretation of one curve graph in another yields an algebraic object, namely an
    injective homomorphism between the corresponding mapping class groups. The group $G$ in Theorem~\ref{thm:induced-homo-intro}
    is somewhat mysterious, and it is not clear how to explicitly describe it. 
    More precisely, the mapping class group of $\Sigma_1$ acts
    on some set of complete types of maximal Morley rank, which is finite by compactness. The group $G$ is the stabilizer of one of these
    types; a more explicit description of the group is unknown to the authors.
    
   It would be remarkable
    if interpretations between curve graphs of surfaces were always induced by topological operations, such as subsurface inclusions and
    covering maps.

  \subsubsection{Interpretability and non-interpretability of other  complexes}\label{ss:other}
    
    There are many graphs other than the curve graph which are naturally associated
    to a surface $\Sigma$ of finite
    type, and a general theme that can be observed in  the literature on these complexes
    is that their automorphism groups tend to coincide  with that of the curve graph, except
    in some sporadic low--complexity cases and a few notable exceptions; cf. Problem~\ref{p:ivanov} above, and cf.~axioms for hyperbolicity
    in~\cite{masur_schleimer_13}.
    We would like to say, in precise terms,
    that most known graphs associated to $\Sigma$ are completely determined by
    the curve graph, and that in fact they can be reconstructed from a finite
    sequence of canonical first order operations.
    
    For the remainder of this section, assume that $\Sigma$ is not a torus with two punctures or a sphere with fewer than five punctures.
    The following result is schematic in the sense that it is technical but has
    broad applicability to the first order theory of complexes associated to $\Sigma$. It may be viewed as a precise result answering
    Question~\ref{q:general}.
    We state it here for easy reference.

    \begin{corollary}\label{c:schematic}
    	Let $X(\Sigma)$ be a graph with vertices $V(X(\Sigma))$ and edges
    	$E(X(\Sigma))$. Let $G:=\Mod^{\pm}(\Sigma)$ denote the extended mapping class group of $\Sigma$, and suppose that
    	the  natural action of $G$ on $\C(\Sigma)$ induces an action on
    	$X(\Sigma)$ by graph isomorphisms. Assume that the following conditions hold:
    	\begin{enumerate}
    		\item There exists a constant $N \geq 1$ such that each $v \in V(X(\Sigma))$
    		corresponds to a collection of at most $N$ curves or arcs;
    		\item The quotient $V(X(\Sigma))/G$ is finite;
    		\item The quotient $E(X(\Sigma))/G$ is finite.
    	\end{enumerate}
    	Then $X(\Sigma)$ is interpretable in $\C(\Sigma)$. Consequently,
    	$X(\Sigma)$ is $\omega$-stable.
    \end{corollary}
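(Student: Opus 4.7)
The plan is to construct an explicit first-order interpretation of $X(\Sigma)$ in $\mC(\Sigma)$; once this is in place, the $\omega$-stability and finite Morley rank conclusions follow from Theorem~\ref{thm:stable} together with the standard fact that interpretations of $\omega$-stable theories remain $\omega$-stable and that Morley rank is preserved (up to a standard transformation) under interpretation.

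The first step is to encode vertices of $X(\Sigma)$ as tuples of curves in $\mC(\Sigma)$. Hypothesis~(1) gives a uniform bound $N$ on the number of curves or arcs comprising a vertex, so it suffices to fix an $M$-tuple encoding for some $M$ depending only on $N$ and $\Sigma$. Arcs, which are not themselves vertices of $\mC(\Sigma)$, will be encoded by the components of the boundary of a regular neighborhood of the arc together with the relevant boundary components of $\Sigma$; this passage is definable. Hypothesis~(2) then lets me restrict to the definable subset of $M$-tuples whose pairwise geometric intersection numbers are at most $K$. Definability of the predicate ``$i(x,y)\leq k$'' for each fixed $k$ is a direct consequence of Theorem~\ref{thm:quantifier elimination}, since bounded intersection of a pair of curves is naturally described by an $\exists$-formula in the language of graphs.

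Within this definable bounded-intersection set I have to (a) pick out those tuples that legitimately present vertices of $X(\Sigma)$, and (b) quotient by the equivalence identifying different presentations (for instance, reorderings) of the same vertex. Because pairwise intersections are uniformly bounded, a tuple has only finitely many topological types up to the $\Mod(\Sigma)$-action, and those arising from vertices of $X(\Sigma)$ form a union of finitely many such orbits. Each $\Mod(\Sigma)$-orbit of bounded-complexity configurations is determined by a finite amount of combinatorial data, and I claim each such orbit is first-order definable. This is the step I expect to be the real obstacle, and my approach is to reduce it to the fact, implicit in the proof of Theorem~\ref{thm:quantifier elimination}, that the auxiliary structure bi-interpretable with $\mC(\Sigma)$ already separates such bounded-complexity orbits, so these separations transfer back to $\mC(\Sigma)$.

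Finally, I use Hypothesis~(3) to handle the edge relation. Choose representatives $(v_1,v_1'),\ldots,(v_\ell,v_\ell')$ of the finitely many $\Mod(\Sigma)$-orbits of edges in $E(X(\Sigma))$; two vertices of $X(\Sigma)$ are adjacent if and only if some pair of their tuple-presentations has the same topological type as one of these representatives. Writing a formula $\phi_i$ for each orbit type and taking the disjunction then gives a first-order definition of the interpreted edge relation. Everything outside the topological-type definability step is bookkeeping with finitely many cases made possible by Hypotheses~(1)--(3), and the consequences stated in the corollary follow at once from the existence of the interpretation together with Theorem~\ref{thm:stable}.
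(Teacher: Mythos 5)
Your proposal tries to interpret $X(\Sigma)$ directly in $\mC(\Sigma)$, whereas the paper instead interprets $X(\Sigma)$ in the auxiliary structure $\M(\Sigma)$ via Lemma~\ref{interpretation in M} and then transfers to $\mC(\Sigma)$ through the bi-interpretation of Corollary~\ref{bi-interpretability of the curve graph}. In $\M$ the universe is $G$ itself, so $G$-orbits of tuples become cosets of point-stabilizers, and the relations $R_D$ in the language of $\M$ make these quotients $\emptyset$-definable essentially by fiat; no analogous shortcut exists inside $\mC(\Sigma)$, whose language has only the adjacency relation.

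The step you flag as ``the real obstacle''---$\emptyset$-definability in $\mC(\Sigma)$ of $\Mod(\Sigma)$-orbits of bounded-complexity configurations---is a genuine gap, and the argument sketched does not close it. Theorem~\ref{thm:quantifier elimination} constrains the syntactic form of definable sets, not which sets are definable, so it does not yield the orbit-definability you need; the same circularity infects the earlier claim that ``$i(x,y)\leq k$'' is $\exists$-definable ``as a direct consequence'' of quantifier elimination. Your fallback appeal to the auxiliary structure separating orbits is precisely the content of Lemma~\ref{interpretation in M}, so that reduction, carried out, would be the paper's proof rather than an alternative. The concrete topological verification your sketch never performs is condition~(3) of that lemma: for each finite tuple $b$ of vertices of $X(\Sigma)$, one must show that $\Stab_G(b)$ contains $\Stab^+(D_b^{\perp})$ with finite index for some domain $D_b\in\D$. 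This is exactly where hypotheses~(1) and~(2) of the corollary do their work, by forcing the curves and arcs defining each vertex to fill a subsurface of uniformly bounded complexity; hypothesis~(3) of the corollary then discharges condition~(2) of the lemma. Without these checks, and without exhibiting the interpretation's ingredients $(\eta,k,X,E)$ explicitly, the proposal asserts the corollary rather than proving it.
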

    
    Corollary~\ref{c:schematic} is the primary philosophical reason why we interpretability in the curve graph of $\Sigma$ as a definition
    of the term ``naturally associated" in Problem~\ref{p:ivanov}.
    Observe that the interpretations in Corollary~\ref{c:schematic} are without parameters; thus, automorphisms of the curve graph of $\Sigma$
    automatically
    induce automorphisms of the geometric graph $X(\Sigma)$. Consequently, the fact that the extended mapping
    class group acts on $X(\Sigma)$
    can be seen through a purely model theoretic lens.
    
    As a consequence of Corollary~\ref{c:schematic}, we have the  following conclusions
    about the first order theory of graphs naturally associated to $\Sigma$.

    \begin{corollary}\label{c:interpretability}
    	All of the following graphs are interpretable in the curve complex $\C(\Sigma)$,
    	and are therefore $\omega$-stable:
    	\begin{enumerate}
    		\item the Hatcher-Thurston graph $\mathcal{HT}(\Sigma))$;
    		\item the pants graph $\mathcal P(\Sigma)$;
    		\item the marking graph $\mathcal{MG}(\Sigma)$;
    		\item the non-separating curve graph $\mathcal N(\Sigma)$;
    		\item the $k$-separating curve graph $\C_k(\Sigma)$;
    		\item the Torelli graph $\mathcal{T}(\Sigma)$;
    		\item the $k$-Schmutz  Schaller graph $\mathcal{S}_k(\Sigma)$;
    		\item the $k$-multicurve graph $\mathcal{MC}_k(\Sigma)$;
    		\item the arc graph $\mathcal A(\Sigma)$;
    		\item the $k$-multiarc graph $\mathcal{MA}_k(\Sigma)$;
    		\item the flip graph $\mathcal{F}(\Sigma)$;
    		\item the polygonalization graph $\mathcal{P}ol(\Sigma)$;
    		\item the arc-and-curve graph $\mathcal{AC}(\Sigma)$.
    	\end{enumerate}
    	
    \end{corollary}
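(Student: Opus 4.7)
The plan is to apply Corollary~\ref{c:schematic} separately to each of the fourteen graphs listed, verifying its three hypotheses in each case. Conditions (1) and (2) are combinatorial/topological and follow directly from the definition of the vertex set, while condition (3) --- that $\Mod(\Sigma)$ acts on edges with finitely many orbits --- is in each case a standard change--of--coordinates statement that can be read off from the literature on the surface in question.

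For items (4), (5), (9), (13), each vertex is a single curve or arc, so we may take $N=1$ and $K=0$ trivially. For items (6), (7), (8), (10), vertices are multicurves or multiarcs of bounded cardinality by definition, all of whose components are disjoint, so again $K=0$ and $N$ depends only on the topological type of $\Sigma$. For items (1), (2), (3), (11), (12), the vertices are cut systems, pants decompositions, markings, ideal triangulations, or polygonalizations; in each case the cardinality is a function of $g$ and $b$ (for instance $3g-3+b$ curves in a pants decomposition), and the constituent curves or arcs are either mutually disjoint or, in the case of a marking's transverse curves, meet the associated pants curves a uniformly bounded number of times. For item (14), the graph of domains, a subsurface is recorded by its boundary multicurve, whose cardinality is bounded in terms of $\Sigma$.

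To verify condition (3), observe that an edge of each of these graphs is specified by an elementary move, flip, disjointness relation, or analogous local combinatorial condition between two vertices. Hence every edge $\{v,w\}$ is determined up to the diagonal $\Mod(\Sigma)$--action by the topological type of the union of the multicurves/multiarcs defining $v$ and $w$. The classical change--of--coordinates principle (see Farb--Margalit) implies that there are only finitely many such types on $\Sigma$; for the Schmutz Schaller and Torelli graphs one invokes the analogous classifications by Schmutz Schaller and by Farb--Ivanov respectively. Hence $E(X(\Sigma))/\Mod(\Sigma)$ is finite and Corollary~\ref{c:schematic} applies, yielding both $\omega$--stability and finite Morley rank for each $X(\Sigma)$.

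The main obstacle is not conceptual but organizational: the statement quantifies over fourteen distinct complexes, each with its own vertex and edge definitions, and the orbit--finiteness of edges must be cross--referenced to the appropriate source for each. A subtler point concerns the arc--based complexes (items 9--13): Corollary~\ref{c:schematic} is formulated for vertices composed of curves \emph{or arcs}, which presupposes that arcs can be interpreted in $\C(\Sigma)$. This is a nontrivial input that we take from the earlier parts of the paper, where arcs are recovered from the curve graph via a capping/doubling construction that produces an auxiliary closed or larger surface whose curve graph is in turn interpretable in $\C(\Sigma)$. Once this is in hand, the verification reduces to the bookkeeping described above.
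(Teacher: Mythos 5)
Your strategy is the same as the paper's: each of the fourteen graphs is verified to satisfy the hypotheses of Corollary~\ref{c:schematic} (whose body form is Corollary~\ref{cor:other graphs}), which delivers interpretability in $\C(\Sigma)$, $\omega$--stability, and finite Morley rank in one stroke. The paper is terse here --- it writes only that the implication follows ``through standard combinatorial topology arguments'' --- so your item--by--item bookkeeping is a reasonable unpacking. Two points, however, deserve correction.

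First, your account of how arcs enter the picture is not what the paper does. You invoke a ``capping/doubling construction that produces an auxiliary closed or larger surface whose curve graph is in turn interpretable in $\C(\Sigma)$,'' but no such construction appears in the paper. The actual mechanism is Lemma~\ref{interpretation in M}: a geometric graph $\mathcal{B}$ is interpreted in the mapping class group model $\M$ (not directly in $\C(\Sigma)$) via the cosets of vertex stabilizers, and the operative hypothesis is condition (3) of that lemma, which asks that $\Stab_G(b)$ be commensurable with $\Stab^{+}(D_b^{\perp})$ for a suitable domain $D_b$. This condition is what one must verify for an arc, a marking, a triangulation, etc.; the passage from $\M$ to $\C(\Sigma)$ is then handled once and for all by the bi--interpretability in Corollary~\ref{bi-interpretability of the curve graph}. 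So arcs do not need to be ``recovered'' inside the curve graph at all --- they are encoded as imaginaries of $\M$ through their stabilizers.

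Second, the graph of domains is a genuine exception that you gloss over. Your claim that ``a subsurface is recorded by its boundary multicurve'' is false: a single separating curve bounds two distinct subsurfaces with identical boundary, so the map from domains to boundary multicurves is not injective, and condition (1) of Corollary~\ref{c:schematic} does not literally hold. The paper itself flags this: ``Strictly speaking, Corollary~\ref{c:schematic} does not apply to the complex of domains, though the proof of interpretability goes through without issue.'' The repair, again, is Lemma~\ref{interpretation in M} directly: the stabilizer of a subsurface is still commensurable with $\Stab^{+}(D^{\perp})$ for the appropriate $D\in\D_0$, so the interpretation in $\M$ works even though the schematic corollary's hypothesis on vertices fails. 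Your writeup should treat item (14) separately rather than folding it into the generic argument.

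Beyond these two corrections the verification is a routine application of the change--of--coordinates principle, as you say, and the approach matches the paper's.
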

    
     The decoration of a graph by a nonnegative integer $k\geq 0$ means that
    intersections between curves or arcs representing  vertices are  allowed in the
    edge relation, up to at most $k$ intersections, where (except for when noted to the contrary) a lack of decoration implies
    $k=0$.
    
    We briefly summarize these graphs for the convenience of the reader, and give
    some references for definitions and discussions of automorphism groups. The
    \emph{Hatcher-Thurston graph} of a surface of positive genus consists of
    cut systems, i.e. systems of curves whose complement is a connected  surface
    of genus  zero, and  whose edges correspond to elementary
    moves~\cite{hatcher_thurston1980,irmak_korkmaz}.
    The~\emph{pants graph} consists of multicurves giving rise to a pants
    decomposition of the surface, with edges
    given by elementary moves \cite{margalit}. The~\emph{marking graph}
    consists  of markings on
    the surface, which are pants decompositions together with transversal data,
    and whose edges are  given  by elementary moves \cite{masur_minsky_00}.
    The \emph{non-separating curve
    graph} consists of simple closed curves on the surface whose complement is
    connected,  and edges are  given by disjointness \cite{irmak}. The
    \emph{separating  curve graph} has separating simple closed curves as
    its  vertices  and  the edge  relation is disjointness \cite{looijenga, kida}.
    The \emph{Torelli graph} consists of separating curves and bounding pairs, with
    the edge relation being disjointness \cite{kida}. The \emph{Schmutz Schaller
    graph} on a closed surface has nonseparating  curves as its vertices, with
    geometric intersection number one being the edge
    relation~\cite{schmutz_schaller}. The \emph{arc graph} and \emph{multiarc graph}
    consist of simple arcs or multiarcs with endpoints at distinguished marked
    points, with edge relation given by disjointness
    \cite{irmak_mccarthy10,erl-fan-2017}. The \emph{flip graph} and
    \emph{polygonalization graph} consist of simple arc systems whose  complements
    are a triangulation or  a  polygonal decomposition of the surface,
    respectively, with endpoints of  the arcs lying at distinguished marked points.
    Edge  relations are given by elementary moves \cite{korkmaz_papadopoulos,
    aramayona_koberda_parlier, disarlo_parlier, bell_disarlo_tang_2018,DPTrans19}. The
    \emph{arc and curve graph} consists of simple closed curves and simple arcs,
    with adjacency given by disjointness \cite{korkmaz_papadopoulos}.

    In the standard literature on surface theory, many of the graphs discussed
    in Corollary~\ref{c:interpretability}
    are referred to as ``complexes" instead
    of as ``graphs". We stick to ``graph" for consistency of terminology. In each
    case, the passage from the graph to the complex is simply by taking the
    flag complex, which is completely determined by its $1$--skeleton. In
    particular,  the relation expressing that $k+1$ vertices bound a
    $k$--cell is a first order relation that  is  completely determined by
    the adjacency relation; thus  the first order theories of the
    graphs and the complexes  are the same, since the higher simplicial relation is definable from the adjacency relation.
    
    Corollary~\ref{c:interpretability} suggests that  many of the graphs
    labelled therein should be bi-interpretable with the curve graph, which
    would imply that  the automorphism groups of these  graphs coincide with that
    of the curve graph, i.e.~the extended mapping class group.
    As remarked above in Problem~\ref{p:ivanov}, Ivanov  conjectured that the
    automorphism group of a ``natural" graph associated  to
    a surface should be the extended mapping class group, provided that the complexity of that graph is sufficiently great, and that  the proof
    should factor through the automorphisms of the curve graph, whereby
    interpretability in $\C(\Sigma)$ provides a suitable framework for interpreting
    the meaning of  ``natural". It remains to properly interpret ``sufficiently complex", and bi--interpretability with $\C(\Sigma)$
    is a potential candidate, though in the end it is not quite the right one.

    Though the graphs in Corollary~\ref{c:interpretability} are interpretable in the curve
    graph (and so are ``naturally associated" to the surface $\Sigma$),
    their theories are essentially different from that of the curve graph. This remark can
    be articulated precisely in terms of Morley rank; for such a discussion, we direct the reader to
    Sections~\ref{sec:interpretations} and~\ref{sec:morley} below. As we have already mentioned, we opt for a
    description of ``sufficiently rich structure" in terms of bi--interpretability. As an example of the failure of
    bi--interpretability with the curve graph for a naturally associated object, we have the following.
    
    \begin{corollary}\label{c:non-bi}
    	Let $\Sigma$ be a surface of genus $g$ with {$b$ punctures}.
    	The curve graph {$\mathcal C(\Sigma)$ } is not interpretable in the pants graph
    	$\mathcal P(\Sigma)$, provided that {$3g + b > 4$}.
    	The curve graph $\mathcal C(\Sigma)$ is not interpretable
    	the separating curve graph $\mathcal{SC}(\Sigma)$, provided $g\geq 2$ and $b\leq 1$.
    	The curve graph $\mathcal{C}(\Sigma)$ is not interpretable
    	the arc graph $\mathcal A(\Sigma)$, provided $g\geq 2$ and $b= 1$.
    \end{corollary}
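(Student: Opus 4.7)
The strategy is to obstruct interpretability via Morley rank. The key model--theoretic input is that if $Y$ is interpretable in $X$ with both theories $\omega$--stable, then $MR(Y)\le MR(X^n)$ for some finite $n$, and the Lascar inequalities for $\omega$--stable theories yield $MR(X^n)\le MR(X)\cdot n$. Consequently, if $MR(X)<\omega^{k}$ in Cantor normal form, then $MR(X)\cdot n<\omega^{k}$ for every finite $n$ as well, so to rule out the interpretability of $Y$ in $X$ it suffices to find an exponent $k$ with $MR(X)<\omega^{k}\le MR(Y)$.

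The plan is to combine this principle with the sharp Morley rank computations carried out earlier in the paper in support of Theorem~\ref{thm:stable}. These yield both the equality $MR(\C(\Sigma))=\omega^{3g+b-2}$ and strictly smaller upper bounds for each of the three complexes in the statement, extracted from their interpretations in $\C(\Sigma)$ provided by Corollary~\ref{c:interpretability}. Specifically, a vertex of the pants graph packages $3g+b-3$ disjoint curves, which bounds $MR(\mathcal P(\Sigma))$ strictly below $\omega^{3g+b-2}$ once $3g+b-2>2$. A separating curve in a surface of genus $g\ge 2$ with $b\le 1$ is determined up to bounded ambiguity by its two complementary subsurfaces, each of strictly smaller complexity, which gives a smaller power of $\omega$ as the bound on the Morley rank of the separating curve graph. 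The arc graph when $g\ge 2$ and $b=1$ is handled similarly: an arc can be reconstructed from a nearby curve obtained by banding a neighborhood of the boundary along the arc, reducing the ambient complexity by one.

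Once these rank bounds are in place, the three non--interpretability conclusions follow directly from the Lascar principle above. The main technical hurdle is the lower bound $MR(\C(\Sigma))\ge \omega^{3g+b-2}$, established via an explicit construction of a chain of definable families indexed by ordinals up to $\omega^{3g+b-2}$; this uses the dimensional information encoded by multicurves, together with the quantifier elimination result Theorem~\ref{thm:quantifier elimination} to give tight control over types. The topological hypotheses $3g+b-2>2$, $g\ge 2,\,b\le 1$, and $g\ge 2,\,b=1$ in the three cases are exactly what is needed to guarantee that the candidate interpreter genuinely drops in rank; the excluded low--complexity cases fall outside the range where the rank gap opens up.
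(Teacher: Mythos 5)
Your Morley-rank-gap strategy is exactly what the paper does (the proof is in Corollary~\ref{cor:pants}), and your topological heuristics for the smaller exponents track the paper's computations of the parameter $k(X(\Sigma))$: $k(P)=2$ and $k(S),k(A)\le 3g-4$. One step needs repair, however: the bound $MR(X^n)\le MR(X)\cdot n$ ``via the Lascar inequalities'' is not available off the shelf, since the Lascar inequalities are statements about $U$-rank, and the Morley-rank analogue for products is not automatic in $\omega$-stable theories. The paper never invokes such a general principle; it instead proves a direct, model-specific bound $RM(\M^r)\le\binom{r+1}{2}\omega^{k(\Sigma)}$ for the auxiliary structures $\M(\D)$ (Theorem~\ref{thm:morley-upper-bound}), and then chains interpretations $\M(\D_0)\hookrightarrow\C(\Sigma)\hookrightarrow P\hookrightarrow\M(\D)$ so that the rank comparison happens between the two $\M$-structures, for which the exact Morley ranks $\omega^{3g+b-2}$ and $\omega^{k(\D)}$ are established in Corollary~\ref{lower bound on Morley rank}. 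Substituting that theorem for the Lascar appeal closes your argument; without it, the finite-multiple bound on $MR(X^n)$ is unjustified.
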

    
    However, many natural geometric graphs associated to $\Sigma$ are bi--interpretable with suitable augmented Cayley graphs,
    and these auxiliary
    structures do have the extended mapping class group of $\Sigma$ as their group of automorphisms. We have the following
    result, which is more precisely stated as Lemma~\ref{interpretation in B}; definitions of relevant terms will be given in the sequel.
    
    \begin{theorem}\label{t:bi-int-intro-geom}
    	Let $X(\Sigma)$ be a geometric graph associated to $\Sigma$, where $\Sigma$ is sufficiently complex
    	(i.e.~not one of finitely many sporadic surfaces). Suppose furthermore
    	that $X(\Sigma)$ admits an exhaustion by finite strongly rigid tuples, and that the
    	stabilizer of a domain in $\D_X$ is commensurable with the
    	stabilizer of a tuple of vertices of $X(\Sigma)$. Then $X(\Sigma)$ is bi--interpretable with an augmented Cayley graph of the mapping
    	class group of $\Sigma$.
    \end{theorem}
    
    In particular, Theorem~\ref{t:bi-int-intro-geom} gives a model theoretic resolution of Problem~\ref{p:ivanov},
    via Metatheorem~\ref{meta:ivanov}. The theorem provides a clear
    illustration of
    why ``sufficiently complex" is interpreted to mean bi--interpretable with an augmented Cayley graph of the mapping class group
    of $\Sigma$; moreover, that this latter object has its automorphism group given by $\Mod^{\pm}(\Sigma)$ is ultimately a corollary
    of Ivanov's result that $\Aut(\C(\Sigma))\cong\Mod^{\pm}(\Sigma)$ in~\cite{ivanov_97}.
    The extra hypotheses on $X(\Sigma)$ in Theorem~\ref{t:bi-int-intro-geom} are somewhat technical, but can be verified for the curve graph,
    the arc graph, the pants graph of a genus zero surface, and the nonseparating curve graph, for instance.
    
    We reiterate that significant previous progress on understanding Ivanov's Metaconjecture has been made
    by Brendle and Margalit~\cite{brendle_margalit}, wherein they give
    conditions under which Ivanov's Metaconjecture  is true  and false, and
    adapt it to the study of normal  subgroups of mapping class groups. Brendle--Margalit work with subcomplexes of complexes of
    domains, and they leave open the cases of complexes where the edge relation is more complicated and where their techniques do not
    generalize, such as arc complexes and complexes of multicurves; see Conjecture 1.9 in that paper.
    We remark that in that sense, our approach
    is more robust under modification of the adjacency relation, and it provides a novel and philosophically satisfying explanation for the
    observed phenomena, even though for certain technical reasons our
    methods do not apply to the complex of domains.
    
    A by-product of the proof of Theorem~\ref{t:bi-int-intro-geom} is a certain recipe for producing definable subsets of geometric graphs,
    or more generally of finite Cartesian products of them. Specifically, let $X(\Sigma)$ denote a geometric graph falling under the purview
    of Theorem~\ref{t:bi-int-intro-geom}, such as curve graphs of nonsporadic surfaces.
    
    \begin{corollary}\label{cor:definable-recipe}
    	Let $k$ be a nonzero natural number, and let $Y\subseteq X(\Sigma)^k$. Suppose that $Y$ is invariant under the action of the mapping
    	class group of $\Sigma$, and suppose that the quotient of $Y$ by the diagonal
    	action of the mapping class group is finite, or that
    	the quotient of the complement $X(\Sigma)^k\setminus Y$ by the mapping class group is finite. Then $Y$
    	is definable in the language of graph theory.
    \end{corollary}
    
    The quotient conditions on $Y$ in Corollary~\ref{cor:definable-recipe} are called \emph{finite} and \emph{cofinite}, respectively.
    For us, $X(\Sigma)$ is generally a graph, and so the conclusion of Corollary~\ref{cor:definable-recipe} says that $Y$ is definable in the
    language of graph theory.
    
    Finally, we note that some of the ideas and several arguments in this manuscript that were inspired by the
    paper~\cite{BPZ17}. Some of the ingredients involved in building an  auxiliary
    $\mL$--structure (Section~\ref{sec:framework}), applications of geometry
    to characterizing types (Section~\ref{sec:dt}), the notion of
    simple connectedness (Section~\ref{sec:sc}), and the use of weak
    convexity in order to ultimately  establish $\omega$--stability and
    versions of quantifier  elimination (Sections~\ref{sec:core}, ~\ref{sec:rel-qe},
    and ~\ref{sec:full-qe}), are
    adapted from  the arguments  in~\cite{BPZ17}.
    This said, a large amount of new ideas were required to execute arguments for the curve graph, and
    significant technical complications arose. For instance, group elements occur naturally within the context of the auxiliary $\mL$--structure
    (i.e.~the augmented Cayley graph),
    and this causes many of the resulting arguments to be more complicated. Numerous other
    modifications to the context of mapping class groups are required, and
    the interpretability results for other complexes (Section~\ref{sec:interpretations})
    is made possible through  the framework in which we operate. Among  the
    ingredients specific to the mapping class group situation is the use
    of the asymptotic geometry of  the mapping class  group  and the
    Behrstock inequality \cite{Be}. The development and establishment of interpretation rigidity for
    curve graphs of surfaces is original to this monograph, and requires a novel synthesis of model theoretic ideas and ideas from geometric
    group theory. We also draw inspiration from various rigidity results for mapping
    class groups and curve graphs; see~\cite{ara-souto-rigid}.
    
  \subsection{Some ideas in the proofs}
    In order to build (bi)--interpretations between the geometric graphs and the auxiliary structures that we call
    augmented Cayley graphs of mapping class groups, the primary
    difficulty is articulating a good definition of a geometric graph, after which the proof becomes largely formal. Once these interpretations have
    been realized, the remainder of the mathematical content of the paper is in investigations of the augmented Cayley graphs of mapping class
    groups.
    
    In these auxiliary structures, we develop a combinatorial/model--theoretic calculus for understanding types of tuples. The culmination
    of the ideas is that the type of a tuple of elements in the auxiliary structure is (roughly) determined by their ``relative positions" in
    the auxiliary structure. In the context of so-called weakly convex subsets, the argument ultimately boils down to a classical back-and-forth
    argument, as in the case of dense linear orders, for example. For more complicated tuples, such an argument does not generalize in a
    straightforward way, which imposes the necessity for a relativity hypothesis in any quantifier elimination result that we obtain.
    
    Once these ideas are established, one can then establish $\omega$--stability of the theory of the auxiliary structure.
    Moreover, one obtains that the type of a tuple is determined by either its quantifier--free type or its existential type, and is thus the
    fundamental reason that these auxiliary structures enjoy some version of quantifier elimination.
    
    We then investigate elimination of imaginaries in the theory of the auxiliary structure, which is the key to establishing the interpretation
    rigidity phenomena.
    
  \subsection{Structure of the paper}The paper is structured as follows. Section \ref{sec:background} deals with the
    necessary background in the geometric group theory of the mapping class group and the curve complex, and in model theory. In  Section \ref{sec:framework} we introduce the basic definitions of the objects and relations we will use in this paper. In particular, we will define domains, the orthogonality relations, and the augmented Cayley graphs of the mapping class group $\mathcal M^G_{\D}(\Sigma)$. In Section \ref{sec:interpretations} we will prove some general results about interpretations of various structures in the augumented Cayley graphs of the mapping class group. Section~\ref{sec:relational-theory} develops the combinatorial background needed to investigate the theory of the
    structures $\mathcal M^G_{\D}(\Sigma)$. Sections~\ref{sec:dt} and~\ref{sec:sc} combine mapping class group theoretic methods
    with the combinatorial ones from Section~\ref{sec:relational-theory}. Section~\ref{sec:core} develops a doubling technique which
    allows us to establish a weak version of quantifier elimination and $\omega$--stability of the theory of the augmented Cayley graphs, which
    we carry out in Section~\ref{sec:rel-qe}. We strengthen quantifier elimination in certain augmented Cayley graphs in Section~\ref{sec:full-qe}.
    We obtain Morley rank bounds for the various theories under discussion in Section~\ref{sec:morley}. Section~\ref{sec:int-rigid} proves
    that the class of curve graphs of surfaces enjoys interpretation rigidity.
    
    In terms of specific results mentioned in this introduction, Theorem~\ref{thm:quantifier elimination} is proved in Section~\ref{sec:rel-qe}.
    Corollary~\ref{cor:alg-int} is proved in Section~\ref{sec:interpretations}, along with Theorem~\ref{thm:stable}, Corollary~\ref{c:schematic},
    Corollary~\ref{c:interpretability}, and Corollary~\ref{c:non-bi}. Many of the results in Section~\ref{sec:interpretations} require results from
    Section~\ref{sec:rel-qe} and Section~\ref{sec:morley}, which in turn build on the section before them. Theorem~\ref{thm:interp-rigid}
    and Theorem~\ref{thm:induced-homo-intro} are proved in Section~\ref{sec:int-rigid}.
    
\section*{Acknowledgements}
  The authors thank M.~Casals-Ruiz, I.~Kazachkov, S.-h.~Kim, J.~T.~ Moore, and C.~Perin
  for helpful discussions. The authors thank Universit\"at
  Heidelberg, the University of Virginia,
  and University of the Basque Country (UPV/EHU) for hospitality while part of this research was carried out.
  The authors acknowledge support from U.S. National Science Foundation
  grants DMS 1107452, 1107263, 1107367 ``RNMS: Geometric structures and
  Representation varieties" (the GEAR Network). The first author acknowledges support
  from the European Research Council under ERC-Consolidator grant 614733
  (GEOMETRICSTRUCTURES), the Olympia Morata Programme of Universit\"at Heidelberg and from the Deutsche
  Forschungsgemeinschaft (DFG, German Research Foundation) under Germany's Excellence Strategy EXC-2181/1 - 390900948 (the Heidelberg STRUCTURES Cluster of Excellence) and the Priority Program SPP 2026 ``Geometry at Infinity" (DI 2610/2-1).
  The second author was partially supported by
  an Alfred P. Sloan Foundation Research Fellowship, by NSF Grant DMS-1711488, and is partially supported by
  NSF Grant DMS-2002596. The third author has received funding from the European Research
  Council under the European Unions Seventh Framework Programme (FP7/2007- 2013)/ERC
  Grant Agreements No. 291111 and No. 336983 and from the Basque Government Grant IT974-16.

\section{Background}\label{sec:background}
  
  In this section, we summarize relevant background from mapping class group theory
  and model theory, in order to make the present article as self--contained as possible.
  
  \subsection{Mapping class groups and curve graphs}\label{ss:mcg}
    A standard reference for the material in this section is the book by
    Farb and Margalit~\cite{farb_margalit_11}.
    
    \subsubsection{Generalities}
      Let $\Sigma$ be an orientable surface of finite type. We will always assume
      that the Euler characteristic $\chi(\Sigma)$ is negative, so that $\Sigma$
      admits a complete hyperbolic metric of finite area (possibly with cusps).
      \begin{definition}[The mapping class group $\Mod(\Sigma)$]
      	The \emph{extended mapping class group} {$\Mod^\pm(\Sigma)$} is defined to
      	be the group of the isotopy classes of the homeomorphisms of $\Sigma$.
      	We allow homeomorphisms to reverse the orientation of the surface and to
      	permute its punctures. The \emph{mapping class group} $\Mod(\Sigma)$ is the subgroup generated by orientation-preserving homeomorphisms. The \emph{puncture-fixing mapping class group} is the subgroup generated by mapping classes which fix the
	punctures pointwise; this subgroup is sometimes called the \emph{pure} mapping class group, but we will not use that terminology to
	avoid certain confusions.
	
      	A nontrivial mapping class $\psi$ is called \emph{pure} if it is of infinite order and admits a canonical reduction system $\mathcal S$ such
      	that $\psi$ preserves every component of $\mathcal S$ and of $\Sigma\setminus \mathcal S$, and restricts to the identity or to
      	a psuedo-Anosov mapping class on each component of $\Sigma\setminus \mathcal S$; cf.~\cite{BLM-duke}. It is a standard fact
      	that the mapping class group of a surface $\Sigma$ admits a finite index subgroup consisting of pure mapping classes. We will
	call a subgroup of the mapping class group consisting entirely of pure mapping classes a \emph{pure mapping class group}.
      \end{definition}
      
      Let $\gamma\colon S^1\to \Sigma$ be a homotopy class of maps of the circle into
      $\Sigma$, which without loss of generality we assume to be smooth. We say that
      $\gamma$ is an~\emph{simple closed curve}
      if in addition $\gamma(S^1)$ represents a nontrivial conjugacy class in
      $\pi_1(\Sigma)$, if some representative of $\gamma$ is an embedding, and
      if $\gamma(S^1)$ is not freely homotopic to a loop which encircles a puncture
      {or boundary component} of $\Sigma$. We will often conflate $\gamma$ with the
      image of a representative of $\gamma$, which can always be chosen to be geodesic
      in a fixed hyperbolic metric on $\Sigma$.
      The set of all simple closed curves on $\Sigma$ is organized into
      the~\emph{curve graph} of $\Sigma$.
      \begin{definition}[The curve graph $\C(\Sigma)$]\label{def:curve graph}
      	The \emph{curve graph} $\C(\Sigma)$ is a graph which consists of one vertex
      	for each simple closed curve, and where $\gamma_1$ and $\gamma_2$ are adjacent
      	if there are representatives of $\gamma_1(S^1)$ and $\gamma_2(S^1)$ which intersect a minimal number of
      	times in $\Sigma$, among all pairs of distinct simple closed curves;
	this latter condition is equivalent to the statement that geodesic
      	representatives of the corresponding loops intersect minimally in $\Sigma$. If $\Sigma$ has negative Euler characteristic
	then the adjacency relation is disjointness provided $\Sigma$ is not a torus with one puncture or a four-times punctured sphere.
	In the latter two cases, the adjacency relation is intersection in one and two points, respectively.
	
	If $D$ is an annular domain, then the definition of the curve graph is slightly different, since there are no curves in $D$ other than the
	core curve. We direct the reader to~\cite{masur_minsky_99} for a precise definition. The only relevant property of the curve graph in
	this case is that it is quasi-isometric to a line and the Dehn twist about the core curve acts by a translation by a definite distance
	bounded away from zero.
      \end{definition}
      
      We will often write $\mC_0(\Sigma)$ for the set of vertices of $\mC(\Sigma)$, and
      sometimes just $\mC_0$ if the underlying surface is clear.
      The curve graph has very complicated local and global structure. We record
      some of the facts which are useful for us.
      
      \begin{theorem}[See~\cite{farb_margalit_11,masur_minsky_99}]\label{thm:cs-basic}
      	Let $\Sigma$ be a non-sporadic surface and let $\mC(\Sigma)$ be the curve
      	graph of $\Sigma$. The following holds:
      	\begin{enumerate}
      		\item
      		The graph $\mC(\Sigma)$ is connected, locally infinite, and has infinite diameter.
      		\item
      		If $\Sigma'\subset \Sigma$ is an incompressible non-sporadic subsurface,
      		then there is an inclusion of subgraphs $\mC(\Sigma')\subset\mC(\Sigma)$,
      		and the image of this inclusion has diameter two in $\mC(\Sigma)$.
      	\end{enumerate}
      \end{theorem}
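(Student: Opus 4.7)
The plan is to treat the three parts of Theorem~\ref{thm:cs-basic} separately, since each is due to a different strand of the literature. Parts (1) and (2) admit short, essentially self-contained arguments, while part (3) will be invoked as a black box from Masur--Minsky.

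For (1), connectivity is established by a standard surgery argument by induction on geometric intersection number: given distinct curves $\alpha,\beta$ with $i(\alpha,\beta)>0$, fix representatives in minimal position and consider the simple closed curves obtained by surgery at an intersection point. At least one such surgery curve $\gamma$ is essential, disjoint from $\beta$, and satisfies $i(\gamma,\alpha)<i(\alpha,\beta)$, so induction produces a path from $\alpha$ to $\beta$. Local infinitude at a vertex $\gamma$ follows by cutting $\Sigma$ along $\gamma$: since $\Sigma$ is non-sporadic, at least one component of $\Sigma\setminus \gamma$ is a surface of positive complexity, and any such surface carries infinitely many isotopy classes of essential simple closed curves, each adjacent to $\gamma$ in $\mC(\Sigma)$. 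Infinite diameter, which is the nontrivial part of (1), I would deduce by citing Masur--Minsky~\cite{masur_minsky_99}; a concrete model argument is to take any pseudo-Anosov $\phi\in \Mod(\Sigma)$ and a curve $\gamma$, and show via subsurface projections or Teichm\"uller geodesic considerations that $d_{\mC(\Sigma)}(\gamma,\phi^n\gamma)\to\infty$.

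For (2), the inclusion of vertex sets is tautological: any essential simple closed curve in an incompressible subsurface $\Sigma'$ remains essential and simple in $\Sigma$, and disjointness is preserved, giving an inclusion of graphs. To bound the ambient diameter of the image by two, observe that since $\Sigma'\subsetneq \Sigma$ is non-sporadic and incompressible, the complement $\Sigma\setminus \Sigma'$ contains at least one essential simple closed curve $\delta$ in $\Sigma$ (for instance, one may take a suitable boundary component of $\Sigma'$, or a curve in a complementary component, and verify essentiality). Such a $\delta$ is disjoint from every curve of $\Sigma'$, so any two vertices of $\mC(\Sigma')$ share the common neighbor $\delta$ in $\mC(\Sigma)$. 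Since part (1), applied to $\Sigma'$, provides pairs of curves in $\Sigma'$ that intersect (hence are non-adjacent in $\mC(\Sigma)$), the diameter of the image is exactly two.

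Part (3) is the theorem of Masur--Minsky~\cite{masur_minsky_99}, and this is the main obstacle of the statement. I would invoke it rather than reprove it; the original proof proceeds via Teichm\"uller theory, showing that the systole projection from Teichm\"uller space to $\mC(\Sigma)$ is coarsely Lipschitz, that Teichm\"uller geodesics project to unparameterized quasi-geodesics, and then verifying a thin-triangles condition for these quasi-geodesic projections. Alternative, more combinatorial proofs via unicorn paths or guessing geodesics are available and equally acceptable, but I would simply cite the result. The upshot is that $\mC(\Sigma)$ is $\delta$-hyperbolic, with $\delta$ bounded independently of $\Sigma$ in the work of Aougab, Bowditch, Clay--Rafi--Schleimer, Hensel--Przytycki--Webb; for the present paper only Gromov hyperbolicity itself will be needed.
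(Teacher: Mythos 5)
The paper itself offers no proof of this theorem: it is quoted as background, with the citation to Farb--Margalit and Masur--Minsky standing in for the argument. Your reconstruction correctly follows the standard proofs in those sources --- the surgery/bigon-criterion induction on $i(\alpha,\beta)$ for connectivity, cutting along $\gamma$ for local infinitude, pseudo-Anosov orbit growth (or Masur--Minsky directly) for infinite diameter, a shared boundary-curve neighbor $\delta\in\partial\Sigma'$ for the diameter-two claim in part (2), and direct citation of Masur--Minsky for hyperbolicity in part (3). One small point worth keeping in mind for part (2): the hypothesis that $\Sigma'$ is incompressible and non-sporadic does not by itself exclude the degenerate case in which every component of $\partial\Sigma'$ is peripheral in $\Sigma$, where $\mC(\Sigma')=\mC(\Sigma)$ and the image has infinite diameter; your step of actually verifying that some $\delta\subset\partial\Sigma'$ is essential and non-peripheral in $\Sigma$ is exactly the hidden assumption (that $\Sigma'$ is a proper essential subsurface in the usual sense) that the paper's statement leaves implicit.
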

      
      Much more is known about the geometry of curve graphs, including their hyperbolicity and various hierarchical structures, though
      these properties will not be relevant for us here.
      
      Here, by \emph{non-sporadic surfaces}, we mean ones which admit two disjoint,
      non-isotopic simple closed curves. Sporadic surfaces are spheres with at most
      four punctures and tori with at most one puncture. For sporadic surfaces, there
      are suitable modifications of  the definition of the curve graph which allow for
      analogues of Theorem~\ref{thm:cs-basic}. In the sequel, we will use the terminology
      \emph{sporadic surfaces} to refer to various finite lists of low-complexity surfaces whose exact
      members will depend on the context.
      
      The mapping class group acts simplicially on $\C(\Sigma)$. Standard results
      from combinatorial topology of surfaces imply that, except for finitely
      many exceptions, the action of $\Mod(\Sigma)$ on $\mC(\Sigma)$ is faithful.
      \begin{theorem}[Ivanov \cite{ivanov_97}, Luo \cite{Luo}, Korkmaz \cite{korkmaz}]
      	Let $\Sigma$ be an orientable surface of genus $g$ and $n$ punctures:
      	\begin{enumerate}
      		\item If $\Sigma$ admits a pair of non-isotopic simple closed curves and if $(g, n)
      		\neq (1, 2)$, then any automorphism of $\C(\Sigma)$ is induced by a self-homeomorphism
      		of the surface.
      		\item Any automorphism of $\C(\Sigma_{1,2})$ preserving the set of
      		vertices represented by separating loops is induced by a self-homeomorphism of the surface.
      		\item There is an automorphism of $\C(\Sigma_{1,2})$ which is not
      		induced by any homeomorphisms.
      	\end{enumerate}
      \end{theorem}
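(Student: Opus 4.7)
The strategy is classical, following Ivanov, Luo, and Korkmaz. For part (1), the plan is to show that any automorphism $\phi$ of $\mathcal{C}(\Sigma)$ preserves enough combinatorial structure to be realized by a homeomorphism, proceeding in three stages. First, I would characterize the topological type of a curve (the homeomorphism class of the pair $(\Sigma,\gamma)$) in terms of the graph-theoretic structure of the link of the corresponding vertex in $\mathcal{C}(\Sigma)$. By Theorem~\ref{thm:cs-basic}, the link of $\gamma$ contains a copy of $\mathcal{C}(\Sigma\setminus\gamma)$ as a diameter-two subgraph, so invariants of the complement (genus, number of boundary/puncture components, whether $\gamma$ separates, and the genus partition induced by a separating curve) can be extracted from the combinatorics of links. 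This lets me conclude that $\phi$ preserves topological types. Second, maximal cliques in $\mathcal{C}(\Sigma)$ correspond to pants decompositions, so $\phi$ permutes them; I would then verify that graph-theoretically defined elementary moves between adjacent pants decompositions are also preserved.

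Third, I would compose $\phi$ with an element of $\Mod(\Sigma)$ so that the result fixes a chosen pants decomposition $P$ setwise—possible by the change-of-coordinates principle and transitivity of $\Mod(\Sigma)$ on pants decompositions of fixed topological type. A rigidity argument then shows that $\phi$ fixes $P$ pointwise, and finally that any automorphism fixing $P$ pointwise is the identity, since each essential simple closed curve on $\Sigma$ is determined by its disjointness pattern with curves obtainable from $P$ via iterated elementary moves. Assembling these ingredients yields a self-homeomorphism of $\Sigma$ inducing the original $\phi$.

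For part (2), the obstruction at $\Sigma_{1,2}$ is that separating curves and certain non-separating curves have indistinguishable link structures in $\mathcal{C}(\Sigma_{1,2})$, causing the first stage above to fail. The hypothesis that $\phi$ preserves separating curves restores exactly the missing information, after which the second and third stages go through unchanged. For part (3), one constructs an exotic automorphism explicitly: there is a bijection between separating and a distinguished subset of non-separating curves on $\Sigma_{1,2}$ that respects disjointness in $\mathcal{C}(\Sigma_{1,2})$, and a direct combinatorial construction (as carried out by Luo~\cite{Luo}) produces an automorphism swapping these two classes. Such an automorphism cannot arise from a homeomorphism, since homeomorphisms must preserve the separating/non-separating dichotomy.

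The hardest step is the first stage of part (1): the purely graph-theoretic recognition of topological types of curves. This requires a careful case analysis based on $(g,n)$ and the possible link structures of vertices, and it is precisely the coincidence of link structures at $(g,n)=(1,2)$ that both necessitates the exception in the hypothesis and gives rise to the exotic automorphism in part (3).
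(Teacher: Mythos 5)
This statement is cited in the paper (Theorem attributed to Ivanov, Luo, Korkmaz) as background, without a proof, so there is no proof in the paper to compare against; I'll evaluate your sketch on its own terms.

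Your overall three-stage architecture (recognize topological types from link combinatorics, recognize pants decompositions as maximal cliques and track elementary moves, then normalize and apply a rigidity step) correctly reflects the strategy in the literature, and your explanation of why $(g,n)=(1,2)$ is exceptional and how Luo's exotic automorphism arises is accurate.

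There is, however, a concrete gap in the final rigidity step of part (1). You assert that ``any automorphism fixing $P$ pointwise is the identity.'' This is false: if $\gamma\in P$, the Dehn twist $T_\gamma$ fixes every curve of $P$ (all are disjoint from $\gamma$), yet acts highly nontrivially on $\mathcal C(\Sigma)$. Your supporting reasoning is also circular: you argue that a curve is determined by its disjointness pattern with curves obtainable from $P$ by iterated elementary moves, but an automorphism that fixes $P$ pointwise need not fix the curves produced by those elementary moves --- it only permutes the set of pants decompositions adjacent to $P$. To close the argument you need the normalization to produce an automorphism that fixes a genuinely rigid configuration, not merely a pants decomposition: classically this is a chain of curves with consecutive intersection number one (Ivanov), or a marking (pants decomposition together with transversals), or, in the language this paper later uses, a finite rigid set in the sense of Aramayona--Leininger (Theorem~\ref{t:finite rigid}). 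Once $\phi$ is shown to fix such a configuration pointwise, the rigidity of that configuration forces $\phi=\mathrm{id}$; fixing $P$ alone does not.

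A smaller caveat: in stage two, preservation of elementary moves is not automatic from preservation of maximal cliques alone. Two pants decompositions differ by an elementary move precisely when their maximal cliques differ in exactly one vertex and the two replaced curves fill a one-holed torus or four-holed sphere; you need the stage-one classification of topological types to recognize this graph-theoretically, so the stages are more interdependent than the writeup suggests.
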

      
      The action of $\Mod(\Sigma)$ on $\mC(\Sigma)$ is cofinite, in the sense
      that the vertices and edges of $\mC(\Sigma)$ fall into finitely many orbits
      under the action of $\Mod(\Sigma)$. Since $\Mod(\Sigma)$ can invert edges
      of $\mC(\Sigma)$, we typically do not speak of a graph structure on the
      quotient $\mC(\Sigma)/\Mod(\Sigma)$.
      
    \subsubsection{Subsurface projections}\label{ss:projection} Let $\Sigma'$ be an essential subsurface of $\Sigma$.
      We denote the curve graph of $\Sigma'$ simply by $\C(\Sigma')$.
      Given $\alpha\in\C_{0}$, we denote by $\pi_{\Sigma'}(\alpha)$ the
      (possibly empty) projection of $\alpha$ to $\C(\Sigma')$. Following
      Masur--Minsky~\cite{masur_minsky_99}, the projection of a curve $\alpha$ to
      a  subsurface $\Sigma'\subsetneq\Sigma$ is obtained by taking  a geodesic
      representative of $\alpha$ and of $\partial\Sigma'$, and then taking the
      boundary of a small tubular neighborhood of
      \[\partial\Sigma'\cup(\alpha\cap\Sigma').\] The result is a finite collection
      of simple closed  curves of $\Sigma'$, some of which  may  be  inessential or
      peripheral. Discarding  those which are inessential or peripheral, we obtain a finite set of pairwise adjacent
      vertices in $\C(\Sigma')$, which we take to  be $\pi_{\Sigma'}(\alpha)$.
      Thus, projection gives a coarsely well-defined map from $\C(\Sigma)$ to
      $\C(\Sigma')$.
      We will write $d_{\Sigma'}(\alpha,\beta)$ as an abbreviation for
      \[d_{\C(\Sigma')}(\pi_{\Sigma'}(\alpha),\pi_{\Sigma'}(\beta)).\]
      We adopt the convention that the distance between the empty set and
      any set is infinite.  Subsurface projections will play an essential role
      in establishing
      simple connectivity of certain relational structures in
      Section~\ref{sec:sc} below.
      
  \subsection{Model theory}\label{ss:model}
    For standard references for the material contained in this
    section see~\cite{tent_ziegler,marker2006model}.
    
    \subsubsection{Languages, structures, theories, and models}
      We will work with several languages in this paper. The theory of the
      curve graph will be formulated in the first order language of graph
      theory, which consists of the single symmetric, binary relation $E$,
      denoting adjacency. For most of the discussion in this paper,
      we will work in an auxiliary structure adapted to a particular
      surface, and which will encode relations for subsurfaces and
      mapping classes.
      
      \begin{definition}[$\mL$--structure] A \emph{language} $\mL$ is a set
      	of constants, function symbols and relation symbols. An
      	\emph{$\mathcal{L}$--structure} $\mM$  is given by a set $M$ called
      	the \emph{universe}, together with an interpretation of the constants,
      	relations, and functions of $\LL$. We say that a structure is
      	\emph{relational} if its
      	underlying language has no functions and no constants.
      \end{definition}
      
      When talking about specific elements or collections of elements in the universe of a $\mathcal{L}$--structure, one often encounters
      \emph{tuples}. These will be thought of as subsets of the universe, indexed by an ordinal. If $A$ and $B$ are tuples of elements
      in a universe $M$, we will write $AB$ for the tuple $(A,B)$, indexed by the ordinal sum.
      
      One can use symbols in a language, together with logical connectives,
      variables, and quantifiers,
      in order to express conditions on tuples of elements in a structure. Such a condition
      among tuples is called a \emph{formula}. A \emph{sentence} is a formula without
      variables that are unbound by a quantifier. A formula or sentence in the language $\mathcal L$ is called an \emph{$\LL$--formula} or
      \emph{$\LL$--sentence}. If $A\subset M$, where $\mM$ is an $\mL$--structure, we call \emph{$\LL(A)$--formulae (resp.~sentences)}
      the $\LL$--formulae (resp.~sentences) with parameters in $A$.

      For every sentence $\phi$ we write $\mM \models \phi$ if $\phi$ holds in
      $\M$. We say that $\mM$ is a \emph{model of} $\phi$ and
      $\phi$ \emph{holds in} $\mM$. Similarly if $\Sigma$ is a set of sentences,
      then $\mM$ is a \emph{model of} $\Sigma$  if
      all the sentences of
      $\Sigma$ holds in $\mM$.
      
      \begin{definition}[Consistent $\mL$--theory]
      	A \emph{theory} $T$ is given by a set of $\LL$--sentences. A theory $T$
      	is \emph{consistent} if there is an $\LL$--structure $\mM$ where every
      	sentence in $T$ holds. We say that $\mM$ is a \emph{model} of $T$. The \emph{theory of $\mM$}, denoted $\Th{\mM}$, consists
      	of the maximal (with respect to inclusion) theory $T$ such that $\mM\models T$.
      \end{definition}
      The \emph{consequences} of $T$ are the sentences which hold in every model
      of $T$. If $\phi$ is a consequence of $T$, we say that $\phi$ \emph{follows
      from} $T$ (or $T$ \emph{proves} $\phi$) and write $T \vdash \phi$.
      If for all $\mL$--sentences
      $\phi$ either $T \vdash \phi$ or $T \vdash \neg \phi$, the theory $T$ is called \emph{complete}. G\"odel's Completeness Theorem
      implies that the proving relation $\vdash$ and the modeling relation $\models$ are, in an appropriate sense, equivalent.
      A theory is \emph{consistent} if it admits a model. Equivalently, a theory $T$ is consistent if it is not \emph{inconsistent}, which is
      to say $T\vdash \phi$ and $T\vdash \neg\phi$ for some sentence $\phi$. The following result is basic and will be used several times
      in this paper.

      \begin{theorem}[Compactness Theorem]
      	A theory $T$ is  consistent if and only if every finite subset of $T$
      	is consistent.
      \end{theorem}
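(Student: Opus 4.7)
The plan is to dispatch one direction instantly and to prove the nontrivial direction by a Henkin-style term model construction. The forward direction is essentially definitional: if $\mM \models T$, then $\mM \models T_0$ for every $T_0 \subseteq T$, so in particular every finite subset of $T$ is consistent. All the work is in the converse, where we assume that every finite $T_0 \subseteq T$ has a model and must produce a single model $\mM$ of $T$.

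For the converse, I would first enlarge the language $\mL$ to an $\mL^*$ by adjoining a supply of fresh constant symbols $C = \{c_\phi\}$, one for each $\mL^*$-formula $\phi(x)$ with a single free variable (this requires iterating the enlargement through a suitable chain, since adding constants creates new formulas). Then I would enlarge $T$ to $T^*$ by adding the \emph{Henkin axioms} $\exists x\, \phi(x) \to \phi(c_\phi)$ for every such $\phi$. The central lemma at this stage is that $T^*$ is still finitely satisfiable: given a finite $T_0^* \subseteq T^*$, only finitely many Henkin axioms are involved, and one interprets the relevant $c_\phi$'s inside any model of the finitely many $\mL$-axioms of $T$ that appear in $T_0^*$, picking actual witnesses when they exist and arbitrary elements otherwise so that the implication holds vacuously.

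Next I would invoke Zorn's Lemma on the poset of finitely satisfiable $\mL^*$-theories containing $T^*$ to obtain a maximal such theory $\bar T$. Maximality plus finite satisfiability forces $\bar T$ to be deductively closed and complete: for any $\mL^*$-sentence $\psi$, exactly one of $\psi, \neg\psi$ lies in $\bar T$. Finally, I would build the term model $\mM$ whose universe is the set of closed $\mL^*$-terms modulo the equivalence relation $t \sim t' \iff (t = t') \in \bar T$, with function symbols and relation symbols interpreted via $\bar T$ in the obvious way. The Henkin witnesses guarantee that the existential clause of the Truth Lemma goes through.

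The main technical obstacle is the Truth Lemma, $\mM \models \psi \iff \psi \in \bar T$, proved by induction on the complexity of $\psi$. The atomic and Boolean cases reduce to the definition of the quotient and to completeness of $\bar T$. The existential case $\psi = \exists x\, \phi(x)$ is where the Henkin axioms earn their keep: if $\psi \in \bar T$ then $\phi(c_\phi) \in \bar T$ by the Henkin axiom and completeness, and by induction the class of $c_\phi$ in $\mM$ witnesses $\psi$; conversely, if some $[t] \in \mM$ satisfies $\phi$, then $\phi(t) \in \bar T$ by induction, so $\exists x\, \phi(x) \in \bar T$ by deductive closure. Once the Truth Lemma is in hand, every sentence of $T \subseteq \bar T$ holds in $\mM$, completing the proof.
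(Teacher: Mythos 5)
The paper does not prove the Compactness Theorem; it records it as a classical background fact, with the convention (fixed earlier in the same section) that \emph{consistent} means \emph{satisfiable}, and defers to the standard model theory literature. Your Henkin-style argument is correct and is the usual textbook proof: the iterated language expansion $\mL \subseteq \mL_1 \subseteq \cdots$, finite satisfiability of the Henkin theory $T^*$, the Zorn's Lemma extraction of a maximal finitely satisfiable $\bar{T}$, completeness of $\bar{T}$ from maximality, and the term model with its Truth Lemma are all sound.

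Two small points of presentation are worth tightening. First, in the finite satisfiability check for $T^*$, a finite $T_0^* \subseteq T^*$ can contain nested Henkin constants (a $c_\phi$ where $\phi$ itself mentions some other new constant $c_\psi$); one must therefore interpret the finitely many new constants occurring in $T_0^*$ by increasing level, assigning a witness to $c_\phi$ only after every lower-level constant in $\phi$ has already received an interpretation. Your phrase about picking actual witnesses when they exist captures the idea but elides this induction on levels. Second, since the paper's framework is purely semantic (no proof calculus is introduced), \emph{deductive closure} of $\bar{T}$ should be read as closure under finite logical consequence: if $\psi_1,\ldots,\psi_n \in \bar{T}$ and every model of $\{\psi_1,\ldots,\psi_n\}$ satisfies $\psi$, then $\psi \in \bar{T}$. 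This does follow from maximality together with finite satisfiability, by the same argument you use for completeness, and it is exactly the form of closure the existential clause of the Truth Lemma needs.
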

      
      Let $\mM$ be an $\mL$--structure and $A \subseteq M$. We say that $a\in M$
      \emph{realizes} a set of $\mL(A)$--formulae $p(x)$ (containing at most
      the free variable $x$), if $a$ satisfies all formulae from $p(x)$; in
      this case we write $\mM \models p(a)$. We say that $p(x)$ is
      \emph{finitely satisfiable} in $\mM$ if every finite subset of $p$ is realizable in $\mM$.
      
      \begin{definition}[Types]
      	The set $p(x)$ of $\mL(A)$--formulae is a \emph{type over} $A$ if $p(x)$
      	is finitely satisfiable in $\mM$. If $p(x)$ is maximal with respect
      	to inclusion, we say that $p(x)$ is a \emph{complete type}.
      	We say that $A$ is the \emph{domain} (or set of \emph{parameters}) of $p$.
      	
      \end{definition}
      
      {An example} is the type determined by an element $m\in
      M$:
      $$\tp(m/A) = \{ \phi(x) ~|~\mM \models \phi(m), ~\phi \mbox{ is an }
      \mL(A)\mbox{-- formula} \} ~.$$
      
      An \emph{$n$--type} is a finitely satisfiable set of formulae in $n$
      variables $\{x_1, \ldots, x_n\}$. As for $1$--types,
      a maximal $n$--type  is  called \textbf{complete}.
      The set of $n$-types over $A$ is denoted by $S_n(A)$, and the set of
      complete types over $A$ is denoted by
      \[S(A):= \bigcup_{n < \omega}S_n(A).\]
      Let $p$  be a type and let $\phi\in p$. We say  that $\phi$
      \emph{isolates} $p$ if for all $\psi\in p$, we have
      \[\Th{\M}\models  \phi(x)\rightarrow\psi(x).\]
      
      The set of complete $n$--types with parameters in $A$ has a natural topology on it.
      A basis of open sets is given by formulae $\phi$ with $n$ free variables, and
      $p\in U_{\phi}$ if and only if $\phi\in p$. Completeness of types implies that these
      sets are in fact clopen. A type is isolated if and only if it is isolated in this
      topology. We call this topology the~\emph{Stone topology}.
      
      We will oftentimes need to discuss \emph{quantifier-free types} of tuples, over sets of parameters; these are defined in the 
      same way as types, except the formulae occurring are required to be quantifier-free. The quantifier-free type of an element
      $m\in M$ is written $\qftp(m)$.
      
      \begin{definition}[$\kappa$-saturated]
      	Let  $\kappa$ be an  infinite cardinal.
      	A $\mL$--structure $\mathcal{M}$ is \emph{$\kappa$--saturated} if for all
      	$A \subset M$ and $p \in S(A)$ with $|A| \leq \kappa$,  the type $p$ is
      	realized in $\mM$. We say that $\mM$ is saturated if it is
      	$|M|$--saturated.
      \end{definition}
      
      To avoid having to switch models to realize types, we will often work in the
      \emph{monster model}, which can be thought of as a model of the theory containing as an elementary sub-model any model one might eventually need to consider.
      A complete theory with infinite models admits a model $\mathscr M$
      such that all types over all subsets of $M$ are realized in $\mathscr M$.
      Up to some set-theoretic issues, $\mathscr M$ is unique up to isomorphism.
      It is well--known that the monster model of a theory $T$
      also enjoys all of the following properties:
      \begin{itemize}
      	
      	\item Any model of $T$ is elementarily embeddable in $\mathscr M$;
      	\item Any elementary bijection between two subsets of $\mathscr M$
      	can be extended to an automorphism of $\mathscr M$.
      \end{itemize}
      
    \subsubsection{Definability and interpretability}
      Let $X$ be an $\mL$--structure. A subset $Y\subset X$ is~\emph{definable} if
      there is a formula $\phi$ and a finite tuple of parameters $a$
      in $X$ such that
      \[Y=\{y\mid X\models \phi(y,a)\}.\]
      Thus, $Y$ is ``cut out" by the formula $\phi$. We will say that
      $Y$ is \emph{$\emptyset$--definable}
      if \[Y=\{y\mid X\models \phi(y)\}.\]
      In this case, we also say that $Y$ is \emph{parameter--free definable}.
      We say a structure $Y$ is~\emph{interpretable} in a structure $X$ if,
      roughly, there is a definable subset $X_0$ of $X$ and a definable
      equivalence  relation $\sim$ on $X_0$ such that $Y$ is isomorphic to
      $X_0/\sim$. More precisely, we will define interpretability as follows.
      
      Let $f:A\to B$  be a function. We denote by $f^{\times r}$ the associated product
      function \[f\times f\cdots \times f:A^{r}\to B^{r}.\]
      Given an equivalence relation $E$ on a set $X$, let $E^{r}$ {be } the natural equivalence relation it induces on $X^{r}$.

      \begin{definition}[Interpretation]  Given two structures $\M$ and
      	$\mathcal{N}$, a \emph{(parameter--free) interpretation}
      	of $\mathcal{M}$ in $\mathcal{N}$ is given by a tuple \[\bar{\eta}=(\eta,k,X,E)\colon\mathcal M\rightsquigarrow\mathcal N,\]
      	where the following holds:
      	\begin{itemize}
      		\item $k\in\N$;
      		\item $X\subseteq N^{k}$ is a $\emptyset$--definable set;
      		\item $E\subseteq X\times X$ a definable equivalence relation;
      		\item $\eta:M\to X/E$ a bijection such that for every $r\geq 1$
      		any definable set $Y\subseteq M^{r}$ is the preimage of a  unique,
      		$E$-invariant, $\emptyset$--definable set
      		$Y_{\eta}\subseteq X^{r}$.
      	\end{itemize}
      \end{definition}

      Notice that in order to verify a purported interpretation,
      it is suffices to check the relevant properties for the relations in
      the language of $\M$, and for the graphs of function symbols in the language
      of $\mathcal{M}$.
      
      Given parameter-free interpretations \[\bar{\eta}_1=(\eta_{1},k_{1},X_{1},E_{1})\colon\M_1 \rightsquigarrow\M_2\]
      of $\M_{1}$ in $\M_{2}$ and \[\bar{\eta}_{2}=(\eta_{2},k_{2},X_{2},E_{ 2})\colon\M_2 \rightsquigarrow\M_3\]
      of $\M_{2}$ in $\M_{3}$, the composition
      \[\bar{\eta}_{2}\circ\bar{\eta}_{1}=(\eta_{3},k_{1},X_{1},E_{3})\colon\M_1 \rightsquigarrow\M_3\] is given as follows:
      \begin{itemize}
      	\item $X_{3}:=\eta_{2}[X_{1}]\subseteq X_{2}^{k_{1}}\subseteq M_{1}^{k_{1}k_{2}}$,
      	\item $E_{3}$ is the equivalence relation $\eta_{2}[E_{1}]\subseteq X_{3}\times X_{3}$.
      	Notice that this is automatically coarser than the restriction of
      	$E_{2}^{\times k_{1}}$ to $X_{3}$.
      	\item $\eta_{3}$ is the composition of $\eta_{1}$ with the map
      	$\tilde{\eta}_{2}: X_{1}/E_{1}\to X_{3}/E_{3}$ through which
      	\[\eta_{2}^{\times k_{1}}\restriction_{X_{1}}:X_{1}\to (X_{2}/E_{2})^{k_{1}}\]
      	factors in the natural way.
      \end{itemize}
      
      It can be shown that $\bar{\eta}_{3}$ is
      an interpretation of $\mathcal{M}_{1}$ in $\mathcal{M}_{3}$.
      \begin{definition}[Definable interpretation]\label{def:def-int}
      	We say that an interpretation \[\bar{\eta}=(\eta,k,X,E)\colon\M \rightsquigarrow\M\] of a structure
      	$\mathcal{M}$ in itself
      	is \emph{definable} if the relation
      	\[\Gamma_{\eta}=\{(x,y)\in M^{2}\,|\,\eta(x)=[y]_{E}\}\] is $\emptyset$-definable.
      \end{definition}
      
      \begin{definition}[Bi-intepretation] A \emph{bi-interpretation} between structures
      	$\M$ and $\mathcal{N}$ is a pair $(\bar{\eta},\bar{\zeta})$, where
      	$\bar{\eta}$ is a  interpretation of $\mathcal{M}\rightsquigarrow\mathcal{N}$,
      	where $\bar{\zeta}$ is an interpretation
      	of $\mathcal{N}\rightsquigarrow\mathcal{M}$, and where both
      	$\bar{\zeta}\circ\bar{\eta}$ and $\bar{\eta}\circ\bar{\zeta}$ are
      	$\emptyset$--definable. Accordingly, we say that the two structures are
      	bi-interpretable.
      \end{definition}
      
      It is a standard fact that if structures $\M$ and $\mathcal N$ are bi-interpretable (without parameters),
      then $\Aut(\M)\cong\Aut(\mathcal N)$.
      
    \subsubsection{Algebraic and definable closure}
      \newcommand\dcl{\mathrm{dcl}}
      \newcommand\acl{\mathrm{acl}}
      
      Let $X$ be an $\mL$--structure and let $A\subset X$ be a set of parameters. An element
      $a\in X$ is in the \emph{definable closure} of $X$ if $a$ is the unique element satisfying
      a formula $\phi$ with parameters in $A$. The definable closure $\dcl(A)$ consists of
      all such elements $a$. Similarly, the \emph{algebraic closure} of $A$ consists
      of elements $a\in X$ for which there is a formula $\phi$ with parameters in $A$
      which is satisfied by $a$ and which has finitely many solutions. We write
      $\acl(A)$ for the algebraic closure of $A$. Algebraic and definable closures are
      idempotents.
      
      Algebraic and definable closures are invariant under elementary extensions. Moreover,
      if $X$ is sufficiently saturated then algebraic and definable closures are characterized
      in terms of types. Namely, $a\in\dcl(A)$ if and only if $\tp(a/A)$ has a unique
      realization in $X$, and $a\in\acl(A)$ if and only if $\tp(a/A)$ has finitely many
      realizations in $X$.
      
      If $X$ is the monster model of a theory, then $a\in\dcl(A)$ if and only if $a$ is fixed
      by every automorphism of $X$ which fixes $A$. Similarly, $a\in\acl(A)$ if and only
      if $a$ has finitely many orbits under the group of automorphisms of $X$ fixing $A$.
      
    \subsubsection{Imaginaries}\label{sssec:imaginaries}
      \newcommand\EQ{\mathrm{eq}}
      Let $X$ be an $\mL$--structure and let $x$ and $y$ be $n$--tuples for some $n$.
      An \emph{equivalence formula} $\phi(x,y)$ is a formula that is a symmetric and transitive
      relation. It is an equivalence relation on the set of tuples $a$ such that $\phi(a,a)$.
      An \emph{imaginary} is an equivalence formula $\phi$ together with an
      equivalence class $[a]_{\phi}$. We say that $X$ has \emph{elimination of imaginaries}
      if every imaginary is interdefinable with some real tuple, i.e.~each is in the definable closure of the other.
      A theory admits elimination of imaginaries if every model does.
      Not every theory admits elimination of imaginaries, though every theory $T$ can be embedded
      in a theory $T^{\EQ}$ which does. A models $X$ of a theory $T$ can be extended to a structure
      $X^{\EQ}$, which consist of $X$ (called the \emph{home sort}) together with all the
      definable equivalence relations on tuples in $X$ (called the \emph{imaginaries}). The
      algebraic and definable closure in $X^{\EQ}$ are written $\acl^{\EQ}$ and $\dcl^{\EQ}$
      respectively.
      
      A theory is said to have \emph{weak elimination of imaginaries} if every imaginary is in the definable closure of a finite tuple
      in the home sort, and if the tuple lies in the algebraic closure of the imaginary.

    \subsubsection{$\omega$--stability, Morley rank, and forking}
      {We will now recall the definition of $\omega$-stability and illustrate other related concepts, which will be essential to us.}
      
      \begin{definition}[$\omega$--stability]
      	Let $T$ be a complete theory with infinite models.
      	We say that $T$ is \emph{$\omega$--stable}
      	if in each model of $T$ and for each countable set of parameters $A$,
      	there are at most countably complete many $n$--types
      	for each $n$.
      \end{definition}

      \begin{example}
      	The following are classical examples of $\omega$--stable theories:
      	\begin{itemize}
      		\item the theory of algebraically closed fields ACF (of characteristic 0 or
      		$p$);
      		\item every countable $\kappa$--categorical theory with $\kappa$  uncountable
      		cardinal.
      	\end{itemize}
      	
      \end{example}

      In  the  context of a complete theory  $T$,
      the \emph{Morley rank} is a notion of dimension
      for a formula with parameters in the monster model.
      \begin{definition}[Morley rank]
      	We define the \emph{Morley rank of a formula} $RM(\phi)$ by transfinite
      	induction as follows:
      	\begin{enumerate}
      		\item $RM(\phi(x)) \geq 0$ if $(\exists x)\phi(x)$.
      		\item For $\alpha = \beta + 1$, we have $RM(\phi(x))\geq \alpha$ if and only if
      		there exist{s} an infinite family of formulae $\{ \phi_i(x) \}$ which are pairwise
      		inconsistent
      		with $RM(\phi_i)\geq\beta$,
      		and $\forall x ( \phi_i(x) \to \phi(x) )$.
      		\item $RM(\phi(x)) \geq \delta $ for a limit ordinal $\delta$ if and only if
      		for each $\alpha < \delta$, we have $RM(\phi(x))\geq \alpha$.
      	\end{enumerate}
      	We set $RM(\phi(x))= \beta$ for the least $\beta$ such that $RM(\phi(x))
      	\ngeq \beta +1$. If there is no such $\beta$, then $\phi$ is unranked. The
      	\emph{Morley rank of a type} $p \in S(A)$ is the smallest Morley rank of a
      	formula $\phi \in p$ in that type.
      	\emph{The Morley rank of a countable theory} $T$ is defined as the Morley
      	rank of the formula $ x = x$, that is \[RM(T):= RM(x=x).\]
      	In the Stone topology on types, Morley rank coincides with Cantor--Bendixson rank.
      \end{definition}

      For  a countable complete theory, the notion of $\omega$--stability is
      characterized by the existence of an ordinal--valued rank function $r$, whose
      domain is  the set of definable subsets of a (sufficiently saturated) model,
      and which has the following property: if $X$ is a definable subset of the model,
      then $r(X)>\alpha$ if there exists a countably infinite collection of  pairwise
      disjoint definable sets $\{Y_i\subset X\mid i<\omega\}$ with
      $r(Y_i)\geq\alpha$. The smallest  such rank function  is the Morley rank.

      Let $T$ be an $\omega$--stable theory, and let $\M$ and $\mathcal{N}$ be models
      of $T$. Let $p$ be a type of $\M$ and $q$ be a type of $\mathcal{N}$ containing $p$.
      We say that $q$ is a \emph{forking} extension if the Morley rank of $q$ is smaller
      than that of $p$, and \emph{non--forking} if the Morley rank is the same.
      Terminologically and notationally, we say that a set $A$ is \emph{independent} from
      $B$ over $C$ if for every finite tuple $a$ in $A$, the type $\tp(a/BC)$ does not fork
      over $C$, and we write $\indep{A}{C}{B}$. For a particular tuple $a$ in $A$,
      we write $\indep{a}{C}{B}$.
      
    \subsubsection{Quantifier elimination}
      Let  $T$ be a $\mL$--theory and let  $x$  be a multi--variable. We say that
      formulae $\phi(x)$ and $\psi(x)$ are \emph{equivalent modulo $T$} if
      \[T\vdash (\forall x)(\phi(x)\leftrightarrow\psi(x)).\]
      \begin{definition}[Quantifier elimination]
      	A theory $T$  has \emph{quantifier elimination} if every $\mL$--formula in the
      	theory is equivalent modulo $T$ to a quantifier--free formula. A theory
      	$T$ has \emph{relative quantifier elimination} with respect to  a  class of
      	formulae $\mathcal{F}$ if  every $\mL$--formula in the theory is equivalent
      	modulo $T$ to a  quantifier free formula which allows elements of $\mathcal{F}$
      	as predicates.
      \end{definition}
      In our context, we will discuss \emph{quantifier elimination with respect to
      $\exists$--formulae},  in which case we simply
      mean that a given first order formula will be equivalent modulo $T$ to
      a Boolean combination of $\exists$--formulae. Here, $\exists$--formulae 
      are formulae consisting of a quantifier-free part under the scope of a single block of existential quantifiers.
      
      One can always expand a given  language $\mL$ by adding predicates which  were
      definable in $\mL$, and this  expansion does  not  affect the
      absolute model theory of
      $\mL$--structures since it only depends on the class of  definable sets.
      Note that any theory can thus be embedded into  one which has quantifier
      elimination at  the  expense of enlarging the language.

      A useful criterion for proving that a theory has
      quantifier elimination relies on the following property.
      \begin{definition}[Back-and-forth property]
      	Let $\mM$ and $\mathcal{N}$  be $\mL$--structures, and let $I$ be a  set  of partial isomorphisms  between $\mM$ and $\mathcal{N}$. We say that $I$ has the  \emph{back--and--forth  property} if whenever $(\overline{a},\overline{b})\in I$ and
      	$c\in M$, there exists a $d\in N$ such that $(\overline{a}c,\overline{b}d)\in I$. Dually,
      	if $d\in N$, there exists a $c\in M$ such that $(\overline{a}c,\overline{b}d)\in I$.
      \end{definition}
      In the definition above,
      we view substructures as tuples. If $\overline{a}$ denotes a tuple
      of elements in $M$ and $c\in M$, then $\overline{a}c$ is the concatenation of
      $\overline{a}$ and $c$. The following result is a standard fact from model theory, which is essential for us.
      
      \begin{theorem}\label{thm:backandforth}
      	Let $T$ be a theory, and  let
      	$\M$ and $\mathcal{N}$ be  models of  $T$. Suppose
      	that $I$ is a family of  partial isomorphisms between $\mM$  and  $\mathcal{N}$ with the back and forth property.
      	Then for each $f\in I$, the types of  the domain and  the range  of $f$
      	are equal.
      	
      	In particular, if $I$ is the family of finite partial isomorphisms between subsets
      	of $\omega$--saturated models of $T$ and if $I$ has the back--and--forth property,
      	then the quantifier--free type of a tuple in an arbitrary
      	model of
      	$T$ determines its type. Therefore, $T$ has quantifier elimination.
      	The converse also holds.
      \end{theorem}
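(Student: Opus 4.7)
The plan is to establish the core biconditional --- that for every first-order formula $\phi$ and every $(\bar{a}, \bar{b}) \in I$ one has $\mathcal{M} \models \phi(\bar{a})$ if and only if $\mathcal{N} \models \phi(\bar{b})$ --- by induction on the complexity of $\phi$, and then obtain the quantifier elimination statement as a corollary via compactness. The atomic and negated-atomic cases hold because elements of $I$ are partial isomorphisms by hypothesis. The Boolean connective cases are immediate from the inductive hypothesis. The only substantive case is the existential quantifier: if $\mathcal{M} \models \exists y\, \phi(y, \bar{a})$, a witness $c \in M$ produces via the back-and-forth property some $d \in N$ with $(\bar{a}c, \bar{b}d) \in I$, and the inductive hypothesis applied to $\phi$ yields $\mathcal{N} \models \phi(d, \bar{b})$, hence $\mathcal{N} \models \exists y\, \phi(y, \bar{b})$; the reverse direction uses the dual clause of the back-and-forth property. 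This establishes the first assertion, since $(\bar{a}, \bar{b}) \in I$ then forces $\tp(\bar{a}) = \tp(\bar{b})$.

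For the equivalence with quantifier elimination, one direction is immediate: a quantifier-free formula is visibly determined by the quantifier-free type. For the nontrivial converse, fix a formula $\phi(\bar{x})$ and let $\Delta(\bar{x})$ denote the set of quantifier-free formulas $\psi(\bar{x})$ such that $T \vdash \forall \bar{x}\,(\phi(\bar{x}) \to \psi(\bar{x}))$. It suffices to prove $T \cup \Delta(\bar{x}) \vdash \phi(\bar{x})$, since compactness then collapses $\Delta$ to a finite conjunction, providing a quantifier-free formula equivalent to $\phi$ modulo $T$. If this inclusion failed, I would take a model $\mathcal{N} \models T$ with a tuple $\bar{b}$ realizing $\Delta$ and satisfying $\neg \phi(\bar{b})$, and let $p(\bar{x})$ be the quantifier-free type of $\bar{b}$. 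A short compactness argument shows $T \cup p(\bar{x}) \cup \{\phi(\bar{x})\}$ is consistent --- otherwise a finite conjunction from $p$ would imply $\neg \phi$ modulo $T$, so its negation would lie in $\Delta$, contradicting $\bar{b} \models p$ --- so it is realized by some tuple $\bar{a}$ in a model $\mathcal{M}$. Passing to $\omega$-saturated elementary extensions of $\mathcal{M}$ and $\mathcal{N}$, the tuples $\bar{a}$ and $\bar{b}$ have the same quantifier-free type but disagree on $\phi$, contradicting the hypothesis.

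The step requiring most care is verifying that the canonical family $I$ --- finite partial maps between $\omega$-saturated models of $T$ that preserve quantifier-free types --- genuinely satisfies the back-and-forth property. This is where saturation is used essentially: given $(\bar{a}, \bar{b}) \in I$ and a new element $c$, the collection $\{\psi(y, \bar{b}) : \psi \text{ quantifier-free},\ \mathcal{M} \models \psi(c, \bar{a})\}$ is a consistent partial type over a finite parameter set in the $\omega$-saturated structure $\mathcal{N}$, hence realized by some $d$, producing $(\bar{a}c, \bar{b}d) \in I$. None of the individual inductive steps is difficult, but the argument hinges on setting up this canonical $I$ correctly and on coordinating saturation with the compactness reduction in the quantifier elimination step.
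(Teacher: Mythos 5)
The paper states this result as a standard fact from model theory and offers no proof, so there is no paper argument to compare against; I am evaluating your proposal on its own terms. Your first two paragraphs are correct and standard: the induction on formula complexity, with the existential case handled by the back-and-forth hypothesis, establishes the first assertion, and the compactness argument cleanly proves the equivalence between quantifier elimination and ``quantifier-free type determines type.''

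Your third paragraph, however, contains a genuine gap. You assert that the set $\{\psi(y,\bar b) : \psi\ \text{quantifier-free},\ \mathcal{M}\models\psi(c,\bar a)\}$ is ``a consistent partial type'' in $\mathcal{N}$, but this is not automatic from $(\bar a,\bar b)$ being a partial isomorphism. Finite satisfiability in $\mathcal{N}$ requires that $\mathcal{M}\models\exists y\,\bigwedge_{i}\psi_{i}(y,\bar a)$ entail $\mathcal{N}\models\exists y\,\bigwedge_{i}\psi_{i}(y,\bar b)$, and these are existential formulas, not quantifier-free ones; a map preserving only quantifier-free formulas need not transfer them. Indeed, for any complete theory lacking quantifier elimination, its $\omega$-saturated models contain tuples with identical quantifier-free types but distinct full types, so the canonical $I$ does \emph{not} have the back-and-forth property --- as must be the case, since otherwise your own first paragraph would force equal types. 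The back-and-forth property of the canonical $I$ is therefore not something to verify unconditionally; it is only a consequence of the quantifier-elimination hypothesis in the converse direction of the theorem. The repair is simple but necessary: frame paragraph three as the proof of the converse, assume quantifier elimination (equivalently, that quantifier-free type determines type), deduce that $\bar a$ and $\bar b$ have the same full type, and then the existential formulas above do transfer, giving finite satisfiability. For the forward direction no such verification is required, because back-and-forth there is a stated hypothesis, not a conclusion.
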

      
      In our  context,  we will be interested in enlarging the language of  graph
      theory in order to embed the theory of the curve graph in a theory  with
      quantifier elimination, and then bi--interpreting the theory of the curve graph
      with this larger theory. As we have already remarked, we  cannot quite obtain absolute quantifier elimination  for
      the theory of the curve graph, and instead we will obtain quantifier elimination
      relative to natural classes of formulae. As stated in the introduction, absolute
      quantifier elimination is not possible for the curve graph.

\section{Framework}\label{sec:framework}
  Adopting the notation from the previous section, let $\Sigma$ be a surface of genus $g$ with $b \geq 0$ punctures and
  $\chi(\Sigma) <0$,  and $\mathcal C(\Sigma)$
  (or $\mathcal{C}$ for short) its curve graph.

  \begin{definition}[Domains]
  	A \emph{domain} $D$ is a subset of vertices $D \subseteq \C_0(\Sigma)$
  	with the property that if
  	\[\{\alpha_{1},\alpha_{2},\ldots,\alpha_{k}\}\subset D,\] and if
  	$\gamma\in\C_0$ is a curve that is isotopic (perhaps peripherally) into the (possibly disconnected)
  	subsurface \[\Fill(\alpha_{1},\alpha_{2},\dots, \alpha_{k}) \subseteq \Sigma\] filled by
  	$\{\alpha_{1},\alpha_{2},\ldots,\alpha_{k}\}$, then $\gamma\in D$. We will write $\Fill(D)$ for
  	the subsurface \emph{filled} by the curves in $D$; by definition, $\Fill(D)$ is the isotopy class of the smallest, essential (possibly
	disconnected)
	subsurface such that all curves in the domain $D$ are (possibly peripherally) isotopic into $\Fill(D)$.
  	The surface $\Fill(D)$ is sometimes called the \emph{geometric realization} $|D|$ of $D$.
  	Note that the domain $D$ may be empty, in which case the realization of $D$
  	is also empty. Note furthermore that whereas when $|D|\subsetneq \Sigma$ then $|D|$ may have boundary components,
  	notwithstanding our standing assumption is that $\Sigma$ may only have punctures and not boundary components.
  	We define $\mathcal{D}_0=\mathcal{D}_0(\Sigma)$ the collection of all domains $D \subset
  	C_0(\Sigma)$.
  \end{definition}
  
  The reason for the naught--subscript in the definition of the set of all domains will become clear
  in the sequel.
  
  \begin{example}
  	Here are some examples of domains:
  	\begin{enumerate}
  		\item The full set of vertices of the curve graph $\C_0(\Sigma)$ is a domain.
  		\item For every $\alpha \in \C_0(\Sigma)$, we have $\{\alpha\} \in \D_0$ (we write $\alpha \in \D_0$ for short).
  		\item For every pair of pants $P$ with essential boundary curves
  		$\alpha, \beta, \gamma \in C_0(\Sigma)$ then $D:=\{\alpha, \beta, \gamma\}
  		\subset C_0(\Sigma)$ is a domain in $D$. Note that the realization $|D|$
  		is given by three pairwise disjoint annuli of core curves $\alpha, \beta, \gamma$,
  		not by $P$.
  	\end{enumerate}
  	
  \end{example}
  
  \begin{remark}
  In~\cite{brendle_margalit}, much technical complication is caused by the presence of certain configurations of subsurfaces (or equivalently
  domains for us) called~\emph{holes}
  and~\emph{corks}. We remark that we do not rule out such configurations, and they are not relevant to the particular results we obtain
  and methods that we use.
  \end{remark}
  
  \begin{definition}[Domain associated to a subsurface]
  	For an essential subsurface $\Sigma' \subset \Sigma$
  	its \emph{associated domain}  $D_\Sigma' \subset \C_0(\Sigma)$ is defined to be
  	\[D_{\Sigma'}:=\C_0(\Sigma') \cup \partial \Sigma' .\]
  \end{definition}
  
  \begin{definition}[Connected domains] We say that a domain $D\in\D_0$ is
  	\emph{connected} if $D=D_{\Sigma'}$ for some connected subsurface
  	$\Sigma' \subseteq \Sigma$ which is not a pair of pants.
  \end{definition}
  Note that if $D$ is connected $D= D_{\Sigma'}$ and $|D| = \Sigma'$.
  \begin{definition}[Complexity of a domain]
  	For a connected $D\in\D_0$, we define the \emph{complexity} of $D$ as
  	\[k(D) := 3 g' + b' -2,\] where $g'$ and $b'$ are respectively the
  	genus and the number of boundary components and punctures of $\Sigma'=\Fill(D)$.
  \end{definition}

  \begin{lemma}\label{complexity length}
  	Let $\Sigma$ be an orientable surface of genus $g$ and with $b$ boundary components
  	and punctures with $3g-3+b\geq 1$. Suppose that
  	\[\emptyset\neq D_{0}\subsetneq D_{1}\subsetneq D_{2}\subsetneq\cdots\subsetneq
  	D_{k}=\C_0(\Sigma)\] is a
  	chain of connected domains. Then $k\leq 3g-3+b$, and any maximal chain has length
  	exactly $3g-3+b$.
  \end{lemma}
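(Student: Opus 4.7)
The plan is to translate the chain of domains into a chain of connected essential subsurfaces of $\Sigma$ via the filling map $D_{i} \mapsto \Sigma_{i} := \Fill(D_{i})$, and then exploit the fact that the complexity $k$ is strictly monotone along such a chain while taking values in a restricted subset of $\N$.

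For the upper bound, my first step is to observe that, since $D_{\Sigma'}$ and $\Sigma'$ determine each other up to isotopy, a strict inclusion $D_{i} \subsetneq D_{i+1}$ is equivalent to a strict, non-isotopic containment $\Sigma_{i} \subsetneq \Sigma_{i+1}$ of essential subsurfaces. I would then establish strict monotonicity of $k$: the complement $\Sigma_{i+1} \setminus \Sigma_{i}^{\circ}$ is nonempty and contains at least one non-annular component (otherwise $\Sigma_{i}$ and $\Sigma_{i+1}$ would be isotopic), so $\chi(\Sigma_{i+1}) < \chi(\Sigma_{i})$; combined with the obvious bound $g(\Sigma_{i}) \leq g(\Sigma_{i+1})$ and the identity $k(\Sigma') = -\chi(\Sigma') + g(\Sigma')$, this yields $k(D_{i}) < k(D_{i+1})$. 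Next, I would note that the attainable values of $k$ lie in $\{0\} \cup \{2, 3, \ldots, 3g+b-2\}$: the value $1$ is excluded because $3g'+b'-2 = 1$ forces $\Sigma'$ to be either a pair of pants (excluded by the definition of connected domain) or a closed torus (ruled out as an essential subsurface of the connected $\Sigma$, since the hypothesis $3g-3+b \geq 1$ prevents $\Sigma$ itself from being a torus). Hence the strictly increasing sequence $k(D_{1}) < \cdots < k(D_{k}) = 3g+b-2$ has at most $3g+b-2$ terms.

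For sharpness and the statement about maximal chains, I plan to induct on $K := k(\C_{0}(\Sigma))$. The base case $K = 2$ is witnessed by $\emptyset \subsetneq \{\alpha\} \subsetneq \C_{0}(\Sigma)$ for any essential $\alpha$. For the inductive step, I would produce a proper connected essential subsurface $\Sigma' \subsetneq \Sigma$ with $k(\Sigma') = K - 1$, apply the induction hypothesis to $\Sigma'$, and append $\C_{0}(\Sigma)$ at the top. Such a $\Sigma'$ always exists: if $g \geq 1$, take the complement of an annular neighborhood of a non-separating simple closed curve, giving $(g', b') = (g-1, b+2)$; if $g = 0$ (and hence $b \geq 5$), cut off a pair of pants containing two boundary components of $\Sigma$ to obtain a $(b-1)$-holed sphere. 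In either case $K - 1 \geq 2$, so $\Sigma'$ is not a pair of pants and the induction applies. The same refinement procedure, applied locally between consecutive terms of a chain, shows that any gap with $k(D_{i+1}) - k(D_{i}) \geq 2$ (apart from the unavoidable $0 \to 2$ jump) can be filled by inserting an intermediate connected domain, so every maximal chain realizes all $3g+b-2$ attainable complexities.

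The main technical obstacle is the strict monotonicity step: one must argue carefully that $D_{i} \neq D_{i+1}$ genuinely forces a non-annular component in $\Sigma_{i+1} \setminus \Sigma_{i}^{\circ}$, ruling out the scenario in which $\Sigma_{i+1}$ differs from $\Sigma_{i}$ only by collars of its boundary circles. A secondary subtlety lies in the inductive construction of $\Sigma'$ and the local refinement, where one must verify that the resulting subsurface is connected, essential, and of the correct complexity, taking particular care in the low-genus and low-boundary cases.
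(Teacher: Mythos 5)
Your strict monotonicity argument rests on the assertion that $\Sigma_{i+1}\setminus\Sigma_{i}^{\circ}$ must contain a non-annular component, ``otherwise $\Sigma_{i}$ and $\Sigma_{i+1}$ would be isotopic,'' from which you deduce $\chi(\Sigma_{i+1})<\chi(\Sigma_{i})$. This is a genuine gap: the claim is false, and the scenario you flag as a ``technical obstacle'' to be ruled out in fact \emph{cannot} be ruled out. If $\Sigma_{i+1}$ is obtained from $\Sigma_{i}$ by attaching a single annulus $A$ whose two boundary circles land on two distinct boundary components $B_{1},B_{2}$ of $\Sigma_{i}$, then $\Sigma_{i+1}\setminus\Sigma_{i}^{\circ}=A$ is annular, $\chi(\Sigma_{i+1})=\chi(\Sigma_{i})$, and yet $D_{i}\subsetneq D_{i+1}$ is a proper inclusion of connected domains: the genus of $\Sigma_{i+1}$ is one higher and its boundary count two lower, so they are not isotopic, and $\Sigma_{i+1}$ carries non-peripheral curves crossing $B_{1}$ that are absent from $D_{i}$. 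This is not a degenerate edge case but an essential step type that any maximal chain must traverse, and the paper's own proof treats it explicitly (``$\Sigma_{i}$ has two boundary components $B_{1}$ and $B_{2}$ which are isotopic to each other in $\Sigma_{i+1}$'').

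The good news is that your intended conclusion, $k(D_{i})<k(D_{i+1})$, is still true and your own identity $k(\Sigma')=-\chi(\Sigma')+g(\Sigma')$ is exactly what rescues it: in the annular-complement case $\chi$ is unchanged, but since $\Sigma_{i}$ is connected (being the realization of a connected domain), identifying two of its boundary circles raises the genus by one, so $k$ increases anyway. You should replace the false Euler-characteristic step with this two-case analysis. Separately, be aware that your approach is a genuinely different route from the paper's: the paper builds a refining chain by attaching one pair of pants or performing one boundary gluing at a time and counts the $3g+b-3$ gluings directly, whereas you argue via monotonicity of a complexity function together with a restriction on its value set. The latter gives the upper bound cleanly once monotonicity is repaired, but your maximality claim --- that any complexity gap $\geq 2$ between consecutive $D_{i}\subsetneq D_{i+1}$ can be filled by an intermediate connected domain --- is stated without proof and is precisely the content of the paper's constructive refinement; you would still need to supply that argument, and in doing so you would likely reproduce the paper's pants-and-gluing decomposition.
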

  \begin{proof}
  	Consider a proper inclusion of domains $D_i\subsetneq D_{i+1}$
  	for $i>0$, with underlying
  	topological realizations $\Sigma_i\subsetneq\Sigma_{i+1}$ obtained via
  	application of $\Fill$.
  	Suppose that there is an essential
  	arc $\alpha\subset\Sigma_{i+1}\setminus\Sigma_i$ which meets boundary curves
  	$B_1$ and $B_2$ of $\Sigma_i$. Note that it is possible that $B_1=B_2$. We have
  	that a tubular neighborhood of $B_1\cup\alpha\cup B_2$ is homeomorphic to an
  	essential pair of pants (and possibly to a torus with one boundary component
  	if $\Sigma_i$ is itself an annulus. In this case, we may view $B_1\cup\alpha\cup B_2$
  	as obtained from an annulus by attaching two cuffs of a pair of pants to the two
  	boundary components).
  	If no such arc $\alpha$ exists, then $\Sigma_i$ has two
  	boundary components $B_1$ and $B_2$ which are isotopic to each other in $\Sigma_{i+1}$.
  	
  	It follows that there exists an ascending chain of
  	essential subsurfaces
  	\[\emptyset\neq\Sigma_0\subsetneq\Sigma_1\subsetneq\cdots\subsetneq \Sigma_m=\Sigma\] and
  	a strictly increasing function \[f\colon\{0,\ldots,n\}\to \{0,\ldots,m\}\]
  	satisfying the following conditions:
  	\begin{itemize}
  		\item We have $\Sigma_{i+1}\setminus\Sigma_i$ is either
  		an essential pair of pants or an annulus
  		for all $i>0$.
  		\item The surface $\Sigma_{f(i)}$ is the underlying surface of the domain
  		$D_i$.
  	\end{itemize}
  	Thus, we may assume that for $i\geq 1$, the surface
  	$\Sigma_{i+1}$ is obtained from $\Sigma_i$ by attaching a pair of pants or by
  	gluing together two boundary components of $\Sigma_i$. It
  	suffices to show that $m=3g-2+b$.
  	
  	Recall that gluing two surfaces
  	$S_1$ and $S_2$ along a single boundary curve results in a surface $S_3$ such that
  	\[\chi(S_3)=\chi(S_1\cup S_2).\]
  	
  	The Euler characteristic of $\Sigma$ is $2-2g-b$.
  	The Euler characteristic of a pair of pants
  	is $-1$, so that $\Sigma_m$ is built from $2g+b-2$ pairs of pants. We thus have
  	that $m$ can be estimated from the number of gluings that need to be made
  	between boundary curves of these $2g+b-2$ pairs of pants in order to reassemble
  	$\Sigma_m$.
  	
  	The total number of boundary curves among all  the pairs of pants
  	is $6g+3b-6$. A total of $b$ of these curves
  	correspond to the boundary components of $\Sigma$ and are not involved in any
  	gluings. The remaining $6g+2b-6$ are glued in pairs, which results in exactly
  	$3g+b-3$ gluings.
  	
  	Now, since $m$ is assumed to be maximal, we must have that
  	$\Sigma_0$ is an annulus. Applying the maximality of $m$ again and  the assumption
  	that  the corresponding domains are connected, we have that $\Sigma_1$ is either
  	a torus with one boundary component or a sphere with four boundary components.
  	In either case, $\Sigma_1$ is the surface obtained after the first gluing, so
  	that $m=3g+b-3$, as claimed.
  \end{proof}
  
  \begin{definition}[Domain spanned by $D_1$ and $D_2$]
  	Given $D_{1},D_{2}\in\D_0$, there exists a unique smallest $D\in\D_0$ such that
  	$D_{1},D_{2}\subseteq D$. We will denote it by $D_{1}\vee D_{2}$ and say that $D_{1}\vee
  	D_{2}$ is the \emph{domain spanned by} $D_1$ and $D_2$, or the \emph{join} of these domains.
  \end{definition}
  
  \begin{example}[Domains spanned by connected domains]
  	Let $D_1$ and $D_2$ be two
  	connected domains with respectively associated surfaces (i.e.~geometric realizations) $\Sigma_1, \Sigma_2 \subset \Sigma$.
  	Then the following holds:
  	\begin{enumerate}
  		\item if $\Sigma_1 \subseteq \Sigma_2$ then $D_1 \vee D_2 = D_2$;
  		\item if $\Sigma_1 \cap \Sigma_2 = \varnothing $ then $D_1 \vee D_2 = D_1 \cup D_2$;
  		\item if $\Sigma_1 \cap \Sigma_2 \neq \varnothing$ then $D_1 \vee D_2 = D_{\Sigma_1 \cup
  		\Sigma_2}$; here, the notation $\Sigma_1 \cup
  		\Sigma_2$ means the smallest essential subsurface of $\Sigma$ containing both $\Sigma_1$ and $\Sigma_2$.
  	\end{enumerate}
  \end{example}

  In  the previous example, if $\Sigma_1$ and $\Sigma_2$ intersect
  essentially (i.e.~are not isotopic to disjoint surfaces), then
  $\Sigma_1\cap\Sigma_2\neq\varnothing$. If $\Sigma_1$ and $\Sigma_2$ are
  surfaces which  share  a boundary component and correspond  to
  domains $D_1$ and  $D_2$, then there is a unique smallest connected  domain
  containing $D_1\vee D_2$.

  \subsection{Orthogonality}\label{ss:ortho} Here we introduce one of several notions of orthogonality that we will use in this paper.
    \begin{definition}[Orthogonal domains]
    	Given a domain $D\subseteq\mathcal{C}_0$ and $\alpha\in\C_0$, we say that $\alpha$ is \emph{orthogonal to } $D$ and we denote
    	$$\alpha\perp D $$  if and only if   $i(\alpha,\beta)=0$  for all  $\beta\in D$.
    	Here, $i(\alpha,\beta)$ denotes the {geometric intersection  number}
    	of $\alpha$ and $\beta$, taken to be the minimal number of intersections over
    	all  representatives of $\alpha$  and $\beta$ in their respective
    	isotopy classes. Similarly, we say that a domain $D$ is \emph{orthogonal } to a domain $D'$
    	and we write $$D\perp D'$$ if $\alpha\perp D'$ for all $\alpha\in D$. Notice that these notions are symmetric.
    \end{definition}
    We remark that if $D=\emptyset$ then every $\alpha\in\mC_0$ is orthogonal to $D$. Furthermore, if $D$ is a finite union of pairwise disjoint simple closed curves, then for every $\alpha \in D$ we have $\alpha \perp D$.
    \begin{example}[Orthogonality for connected domains]
    	Let $D$ be a connected domain filling an associated surface $\Sigma'$. We have the following:
    	\begin{enumerate}
    		\item For every curve $\alpha \in \partial \Sigma' $ which is also essential in $\Sigma$,
    		we have $\alpha \perp D$.
    		\item If $\Sigma \setminus \Sigma'$ is essential then every
    		essential simple closed curve $\alpha$ on $\Sigma \setminus \Sigma'$ is
    		orthogonal to  the domain $D$.
    	\end{enumerate}
    \end{example}

    \begin{definition}[Orthogonal complements] For a
    	domain $D \in \D_0$, we define its \emph{orthogonal complement}
    	as \[D^{\perp}:=\{\alpha\in\C_0(\Sigma)\,|\,\alpha\perp D\}.\]
    	Equivalently, $D^{\perp}$ is the set of simple closed curves which are
    	(possibly peripherally) homotopic into $\Sigma\setminus |D|$.  For $\gamma \in \C_0$, we will often write simply $\gamma^{\perp}$,
    	instead of $\{\gamma\}^\perp$.
    \end{definition}
    
    \begin{example} The following holds:
    	\begin{enumerate}
    		\item if $D = D_\Sigma'$ is connected then
    		$\partial \Sigma' \subset D^\perp$ and $D^\perp = D_{\Sigma \setminus |D|}$;
    		\item we have $D = D^\perp$ if and only if $D$ is a maximal union of pairwise disjoint simple closed curves.
    	\end{enumerate}
    \end{example}
    We will now see some fundamental properties of orthogonal complements.
    \begin{proposition}
    	The following holds:
    	\begin{enumerate}
    		\item $D^{\perp}\in\D_0$;
    		\item $(D^{\perp})^{\perp}=D$;
    		\item $(D_1 \vee D_2)^\perp = D_1^\perp \cap D_2^\perp$;
    		\item $(D_1 \cap D_2)^\perp = D_1^\perp \vee D_2^\perp$;
    		\item (Existence of an orthogonal decomposition)
    		For every $D \in \D_0$ there exist domains
    		$D_1, \ldots, D_k \in \D_0$ which are connected and pairwise
    		orthogonal such that
    		\[D= \bigvee_{i=1}^k D_i.\]
    		The orthogonal decomposition is unique up to a permutations of its domains.
    		
    	\end{enumerate}
    \end{proposition}
    \begin{proof}
    	The first four points follow easily from our definitions. We will now prove (5).
    	Let $D \in \D_0$, we have that \[|D| = \Sigma_1 \cup \ldots \cup \Sigma_k,\]  where
    	each $\Sigma_i$ is a suitable essential (possibly disconnected) subsurface of $\Sigma$,
    	and where for $i\neq  j$, we  have that $\Sigma_i\cap\Sigma_j=\varnothing$ (up
    	to isotopy). We now have $D = D_1 \vee \ldots \vee D_k$ with
    	$D_i = D_{\Sigma_i}$.
    	As the subsurfaces are disjoint up to isotopy, and can only have (isotopy classes of) boundary components in common,
    	it follows that $D_i\perp D_j$ for  $i\neq j$. The uniqueness of the domains of the decomposition follows immediately.
    \end{proof}
    
    With the notion of the orthogonal complement of a domain in place, we can make a few remarks about subsurfaces filled by arcs.
    If $\Sigma$ is a surface with at least one puncture, then it is natural for us to consider \emph{essential simple arcs} on $\Sigma$,
    which are simply isotopy classes of essential, properly embedded copies of $\mathbb{R}$ inside of $\Sigma$. If
    $\{\alpha_1,\ldots,\alpha_n\}$ is a collection of essential simple arcs on $\Sigma$, then the notion of
    $\Fill(\{\alpha_1,\ldots,\alpha_n\})$ continues to make sense, though some modification is needed to the definitions before one can talk
    directly about the domain associated to a collection of arcs. Since it is not particularly relevant to our purposes, we will avoid making
    such a modification; however, the orthogonal complement
    $D(\Fill(\{\alpha_1,\ldots,\alpha_n\}))^{\perp}$ is naturally a domain, and consists of precisely those isotopy classes of
    simple closed curves on
    $\Sigma$ with zero
    geometric intersection number with each of $\{\alpha_1,\ldots,\alpha_n\}$. The notion of $D(\Fill(\{\alpha_1,\ldots,\alpha_n\}))^{\perp}$
    will be essential in our discussion of geometric graphs in the sequel.

    \begin{definition}[Boundary of a domain]
    	Given $D\in\D_0$, we define $\partial D:=D\cap D^{\perp}$ the \emph{boundary} of $D$.
    	This coincides with the collection of curves
    	$\alpha\in\mathcal{C}$ parallel to a boundary of $|D|$. If $\alpha\in\partial D$,
    	then we say that $\alpha$ is \emph{peripheral} in $D$.
    \end{definition}
    
    In contrast with orthogonality, we also introduce a notion of transversality.
    
    \begin{definition}[Transverse domains]
    	If $D$ and $D'$ are both non-orthogonal and incomparable
    	with respect to  the inclusion relation,
    	we say that they are \emph{transverse}.
    \end{definition}
    
    \begin{example}The following holds:
    	\begin{enumerate}
    		\item If $D$ is a union of pairwise disjoint simple closed curves, then we have $\partial D = D$. If $D$ is also maximal then $\partial D = D^\perp$.
    		\item If $D=D_{\Sigma'}$ is a connected domain then $D^\perp = D_{\Sigma \setminus \Sigma'}$, where here 
		$\Sigma \setminus \Sigma'$ is the largest (possibly disconnected) essential subsurface of the complement of $\Sigma'$.
    		\item Let $\Sigma$ be a once-holed torus and $\Sigma' \supset \Sigma$ be a once-holed surface with genus 2. Then $D_\Sigma \subset D_\Sigma'$, with both domains non-orthogonal and non-transverse.
    	\end{enumerate}
    \end{example}

  \subsection{The augmented Cayley graphs $\mathcal M_0^G(\Sigma)$ and $\mathcal M_{\mathcal D}^G(\Sigma)$ }
    In this section we will define the auxiliary $\mathcal L$--structures we will work with in the rest of the paper, namely
    $\M_0^G(\Sigma)$ and $\M_{\mathcal D}^G(\Sigma)$. In the sequel, we let $G$ be a
    finite index subgroup of the extended mapping class group $\Mod^\pm(\Sigma)$, which will be arbitrary unless otherwise specified.
    
    \begin{definition}[$D$-relation]
    	Given mapping classes $\sigma,\tau\in G$ and a connected domain $D\in\mathcal{D}_0$,
    	we say that $\sigma$ and $\tau$ are \emph{$D$-related} if
    	\[\sigma^{-1}\circ \tau\in \mathrm{Stab}_G({D^{\perp}}).\] That is,
    	$\sigma^{-1}\circ \tau$ preserves the isotopy class of each curve in $D^{\perp}$. We write $R_D(\sigma,\tau)$ for the $D$--relation.
    	If $D=D_1\vee D_2\vee\cdots\vee D_n$ is a disconnected domain with exactly $n$ mutually orthogonal,
    	then we set \[R_D:=R_{D_1}\circ R_{D_2}\circ\cdots\circ R_{D_n}.\]
    \end{definition}
    
    For $D$ connected, write \[G[D]:=\mathrm{Stab}_G({D^{\perp}}),\]
    where here again the stabilizer is pointwise. In particular, if $G=\Mod^{\pm}(\Sigma)$ and $D$ is connected, we may
    write $D=D_{\Sigma'}$ for a subsurface $\Sigma'$ of $\Sigma$. In this case, $\mathrm{Stab}_G(D^\perp)$ is isomorphic to the subgroup
    of $\Mod^\pm(\Sigma')$ preserving each boundary component.
    
    Observe that $g\in G[D]$ does not necessarily mean that $g$ restricts to the identity on the surface filled by
    $D^{\perp}$, since for low complexity surfaces, the hyperelliptic involution preserves isotopy classes of simple closed curves. A concrete
    example to keep in mind is when $\Sigma$ is the four-times punctured sphere, and where $D$ is an annular region corresponding
    to a simple closed curve $\alpha$. Then, $G[D]$ is allowed to contain involutions of $\Sigma$ that fix $\alpha$ and permute
    pairs of punctures of $\Sigma$.
    
    
    Note that for a general $D$, the definition of $R_D$ makes sense since the composition of the constituent relations is commutative.
    If \[D=\bigvee_{i=1}^n D_i\] is a join of $n$ pairwise orthogonal connected domains, we will write \[G[D]=
    \prod_{i=1}^n\Stab_G(D_i^{\perp}).\] Again, this product makes sense since the constituent subgroups commute with each
    other. With this notation, we have $$\sigma,\tau\in G \mbox{ satisfy } R_{D}(\sigma,\tau) \mbox{ if and only }
    \sigma^{-1}\tau\in G[D]~.$$

    \begin{example}($D$-relation for maximal finite domains) For this example, suppose that $\Sigma$ is not a four-times punctured
    	sphere or a once-punctured torus, and that $G$ is the whole mapping class group of $\Sigma$.
    	When $D$ is maximal finite (i.e.~a pants decomposition of $\Sigma$), we have
    	$D= D^\perp = \{ \gamma_1, \ldots, \gamma_k \}$, where $\gamma_i$ are pairwise disjoint simple closed curves.  The condition $\sigma$ and $\tau$ are $D$-related is equivalent to
    	$$\sigma^{-1}\circ \tau \in \mathrm{Stab}(\{\gamma_1, \ldots, \gamma_k
	\})~.$$ Here $\sigma^{-1}\circ \tau$ is a product of Dehn twists around the curves $\{\gamma_1,\ldots,\gamma_k\}$.
	Equivalently, $\sigma$ and $\tau$ are in the same left coset in the group generated by Dehn twists around these
	curves. Observe that if $\Sigma$ is a four-times punctured sphere or a once-punctured torus and $D= \{ \gamma \} $, then
    	\[\Stab(D)=\Stab(D^{\perp})=\Stab(\gamma),\] and this group is only virtually generated by the Dehn twist about $\gamma$.
    \end{example}
    
    \subsubsection{The language $\mathcal L(\mathcal D_0)$ and the augmented Cayley graph $\M_0^G(\Sigma)$ }
      We define $\mathcal{L}_{0}(\D_0)$, writing $\mL_0$ for short, as the first order language containing a
      binary relation symbol $R_{D}$ for every domain $D\in \mathcal{D}_0$ and a binary relation symbol
      $R_{g}$ for every $g\in G$:
      $$\mathcal{L}_0(\D_0):=(\{R_{D}\}_{D\in \mathcal{D}_0}, \{R_{g}\}_{g\in G} )~.$$
      Given $x,y \in G$, the binary relations $R_D$ and $R_g$ are defined as follows:
      \begin{itemize}
      	\item $R_{g}(x,y)$ holds if and only if $y=xg$;
      	\item $R_{D}(x,y)$ holds for $D\in\D_0$ if and only if $x$ and $y$ are $D$-related as mapping classes.
      \end{itemize}
      Note that
      $\mathcal C = \mC_0(\Sigma)$ is also an
      element of  $\D_0$, and we have:
      \begin{itemize}
      	\item $R_{\mC}(1,g)$ holds for all $g\in G$.
      \end{itemize}
      
      Let $\wo_0$ be the collection of finite words in the alphabet $\alp_0=\D_0\cup
      G$. Consider the language $\mathcal{L}(\D_0)$
      obtained from $\mathcal{L}_{0}(\D_0)$ by
      adding a binary
      relation symbol $R_{w}$ for each tuple $w= (\delta_{1},\delta_{2},\dots,\delta_{k}) \in \wo_0$, that is
      $$\mathcal{L}(\D_0):=(\{R_{D}\}_{D\in \mathcal{D}_0}, \{R_{g}\}_{g\in G}, \{R_w\}_{w \in \wo_0} )~.$$

      Given $x, y \in G$, the relation $R_w$ is defined as follows:
      \begin{itemize}
      	\item $R_{w}(x,y)$ holds iff $R_{\delta_{1}}\circ R_{\delta_{2}} \circ \dots \circ R_{\delta_{k}} (x,y)$ holds.
      \end{itemize}
      In other words, we have:
      \begin{align*}
      	R_{w}(x,y)\leftrightarrow\exists z_{0}\exists z_{1}\dots\exists\,\,
      	z_{k}\,(z_{0}=x)\wedge (z_{1}=y)\wedge\bigwedge_{i=0}^{k-1}R_{\delta_{i}}(z_{i},z_{i+1}).
      \end{align*}

      \begin{definition}(The augmented Cayley graph  $\M_0^G(\Sigma)$)
      	We define the \emph{augmented Cayley graph of $G$ } the $\mathcal L(\mathcal D_0)$-structure  $\mathcal M_0^G(\Sigma)$
      	with universe $M^G$, i.e.~the elements of $G$ considered as a set, and relations in $\mathcal L(\mathcal D_0)$ as defined above.
      \end{definition}
      
    \subsubsection{The language $\mathcal L(\mathcal D)$ and the augmented Cayley graph $\M_0^G(\Sigma)$ } The notion of downward closed collection of domains $\mathcal D \subseteq \D_0$ will be key for the following.
      \begin{definition}[Downward closed]
      	A collection of domains $\D \subseteq \D_0$ is \emph{downward closed} if for every
      	domain $D\in\D$ such that $D \neq \mathcal C_0(\Sigma)$ and for
      	every $E\subsetneq D$, we have $E\in\D$ as well.
      \end{definition}
      
      Oftentimes, we will use downward closed collections of domains which contain $\mathcal{C}_0(\Sigma)$, but which nevertheless
      do not consist of all domains; for example, $\Sigma$ could be a closed surface of genus two, and $\D$ could consist of all domains
      of complexity at most that of a torus with one boundary component, together with $\mathcal C_0(\Sigma)$ itself.

      Let $\D\subseteq\D_0$ be $G$--invariant and downward closed collection of domains. Let $\wo$ be the collection of finite words in the restricted alphabet \[\alp=\alp_{\D}:=\D\cup G \subset \alp_0.\]
      The \emph{language adapted to $\D$} is the sublanguage $\mathcal{L}(\D)\subseteq\mathcal{L}_0(\mathcal D_0)$ defined as
      $$\mathcal{L}(\D):=(\{R_{D}\}_{D\in \mathcal{D}}, \{R_{g}\}_{g\in G}, \{R_w\}_{w \in \wo} )~.$$
      In the language $\mathcal{L}(\D)$, we consider only relations $R_w$ where $w$ is a finite word with all letters in the restricted alphabet $\alp$, i.e.~no letter of $w$ belongs to $\D_0 \setminus \D$.
      We will often write  $\mathcal{L}$ instead  of	$\mathcal{L}(\D)$ for short, when no confusion can arise.
      \begin{definition}(The $\mathcal D$-augmented Cayley graph $\M_\D^G(\Sigma)$)
      	\label{def-D}  The  \emph{$\mathcal{D}$-augmented Cayley graph of $G$} is the $\mathcal{L}(\D)$-structure $\M_\D^G(\Sigma)$ whose universe is $M^G$ (i.e.~the set $G$) and whose relations in $\mathcal{L}(\D)$ are defined as above.
      \end{definition}

      \begin{remark}\label{rmk:left-v-right}
      	The group $G$ can play several roles. It is
      	identified with the universe of $\M_\D^G(\Sigma)$, and its elements appear as relational symbols. In the latter
      	of these roles, it is natural for the group $G$ to act on words on the right. In the
      	sequel, we will also need to consider $G$ as a group of automorphisms of $\mC(\Sigma)$.
      	In this case, it is convenient to view curves as left cosets of their stabilizer, and
      	then the group $G$ acts naturally on the left.
      \end{remark}

\section{Interpretations and the Ivanov Metaconjecture}\label{sec:interpretations}
  
  In this section, we will prove some general results about interpretations and
  bi--interpretations of various  structures with $\M^G_0(\Sigma)$ and $\M^G_\mathcal{D}(\Sigma)$, where here $G\leq\Mod(\Sigma)$
  is a finite index subgroup and where $\D$ is a downward closed collection of domains; cf.~Definition~\ref{def-D} above.
  Throughout this section, we will make forward reference to technical results proved later in the paper. We have elected
  to bring this section forward because it can be read as a modular piece that addresses Problem~\ref{p:ivanov} and Question
  ~\ref{q:general}
  from the introduction, and wherein the technical results are treated as black boxes.
  In particular, we  will  note
  consequences concerning quantifier elimination and $\omega$--stability,  though
  we will  relegate proofs of those  properties to  later  sections.

  \subsection{The curve graph $\mathcal C(\Sigma)$ is interpreted by $\M_\D^G(\Sigma)$ for $G \leq \Mod^\pm(\Sigma)$ of finite index }
    In this subsection we will prove that many complexes associated to surfaces are interpretable in $\M_{\mathcal D}^G(\Sigma)$,
    as well as in the curve graph $\mathcal C(\Sigma)$.
    
    \subsubsection{Definition and examples of $(G, \mathcal D)$-structures }
      We recall that a structure is called \emph{relational} if its underlying language has no functions and no constants.
        Recall that if $X$ is a set equipped with an action by a group $G$, and if $Y\subset X$ is $G$--invariant, we say that the
      quotient by the action of
      $G$ on $Y$ is \emph{finite} if the quotient of $Y$ by $G$ is a finite set. We say that the quotient
      is \emph{cofinite} if the quotient of $X\setminus Y$
      by $G$ is a finite set.
      
      \begin{definition}[$(G,\D)$-structure] Let $G< \Mod^\pm(\Sigma)$ be a finite index subgroup. Let
      	$\mathcal{D} \subset \D_0$ be a $G$-invariant and downward closed subset of $\mathcal D_0$ which contains $\C_0(\Sigma)$.
      	A \emph{$(G,\D)$-structure} $\mathcal{B}(\Sigma)$ over a language $\mathcal{L}_\mathcal{B}$ is a relational structure with
      	the following properties:
      	\begin{enumerate}
      		\item  The universe $B$ of $\mathcal{B}(\Sigma)$ is a set equipped with an action of $G$, and $B$ consists of
      		finitely many $G$--orbits.
      		\item For each symbol $R^{(k)}\in\mathcal{L}_B$,
      		the set {$R_{\mathcal{B}}\subseteq B^{k}$} is $G$--invariant, and the quotient of $R_{\mathcal{B}}$ by $G$
      		is either finite or cofinite.
      		\item For any $b\in B^{<\omega}$ there is a
      		(necessarily unique) $D_{b}\in\D$ such that $\Stab_{G}(b)$ contains $G[D_b]$ with finite index.
      	\end{enumerate}
      	When $G=\Mod^\pm(\Sigma)$ we sometimes say that $\mathcal{B}(\Sigma)$ is a $\mathcal D$-\emph{structure}.
      \end{definition}

      In the definition of a $(G, \mathcal D)$-structure, it is convenient to imagine that each $b$ is a union of curves and arcs on $\Sigma$
      and that $D_b$ is the orthogonal of the domain associated to the subsurface filled by $b$, that is
      $D_b := D_{\mathrm{Fill}(b)}^{\perp} $. It follows from the definition of a $(G, \mathcal D)$-structure that if
      $\mathcal B(\Sigma)$ is a $(G, \mathcal D)$-structure and if $H\leq G$ has finite
      index then $\mathcal B(\Sigma)$ is a $(H, \mathcal D)$-structure as well.
      
      It is straightforward to see that $\mathcal M_\mathcal D^G(\Sigma)$ is a $(G, \mathcal D)$-structure.
      
      
    \subsubsection{The motivating examples: geometric graphs}
      The first and motivating example for the previous definition is the one of graphs commonly associated to $\Mod^\pm(\Sigma)$ such as the curve complex, the pants graph, and the arc complex. We propose to study them all under the following unifying notion of
      a \emph{geometric graph}.
      \begin{definition}[Geometric graph]
      	Let $X(\Sigma)$ be a graph with sets of vertices $V(X(\Sigma))$ and
      	sets of edges $E(X(\Sigma))$. We say that $X(\Sigma)$ is a
      	\emph{geometric graph} if the following conditions are satisfied:
      	\begin{enumerate}
      		\item  A finite index subgroup  $G\leq \Mod^\pm(\Sigma)$ of the mapping  class group acts on $X(\Sigma)$ via its
      		action on  curves and arcs, and $V(X(\Sigma))$
      		consists of  finitely many $G$--orbits;
      		\item There exists a constant $N \geq 1$ such that each $v \in V(X(\Sigma))$
      		is identified with a collection of  $N$ essential curves and/or arcs;
      		\item The set $E(X(\Sigma))$  consists of finitely many $G$--orbits.
      	\end{enumerate}
      \end{definition}
      
      Let $v = \{ \gamma_1, \ldots, \gamma_{m_v} \} \in V(X(\Sigma))$ be a vertex of $X(\Sigma)$. The \emph{domain associated to $v$} is the domain $$D_v := D^{\perp}_{\mathrm{Fill}(v)}$$
      associated to the (possibly disconnected) subsurface $\mathrm{Fill}(v) \subseteq \Sigma$ filled by the curves and/or arcs defining $v$.
      \begin{definition}[$\D_X$ for $X$ a geometric graph]
      	Let $X(\Sigma)$ be a geometric graph.
      	We set $\mathcal D_{X}$ to be the smallest downward closed collection of domains generated by the $\{ D_v \}_{v \in X(\Sigma)}$.
      \end{definition}
      
      It is straightforward from the definition to establish that $X(\Sigma)$ is a $\mathcal D_X$-geometric structure.

      \begin{corollary} All of the following graphs are geometric graphs:
      	\begin{enumerate}
      		\item the curve graph $\mathcal C(\Sigma)$;
      		\item the Hatcher-Thurston graph $\mathcal{HT}(\Sigma))$;
      		\item the pants graph $\mathcal P(\Sigma)$;
      		\item the marking graph $\mathcal{MG}(\Sigma)$;
      		\item the non-separating curve graph $\mathcal N(\Sigma)$;
      		\item the $k$-separating curve graph $\C_k(\Sigma)$;
      		\item the Torelli graph $\mathcal{T}(\Sigma)$;
      		\item the $k$-Schmutz  Schaller graph $\mathcal{S}_k(\Sigma)$;
      		\item the $k$-multicurve graph $\mathcal{MC}_k(\Sigma)$;
      		\item the arc graph $\mathcal A(\Sigma)$;
      		\item the $k$-multiarc graph $\mathcal{MA}_k(\Sigma)$;
      		\item the flip graph $\mathcal{F}(\Sigma)$;
      		\item the polygonalization graph $\mathcal{P}ol(\Sigma)$;
      		\item the arc-and-curve graph $\mathcal{AC}(\Sigma)$.
      	\end{enumerate}
      \end{corollary}
      
      That all these graphs are geometric follows more or less immediately from their definitions.
      
    \subsubsection{Natural interpretations of $(G, \D)$-structures}
      In this subsection we will see that every $(G,\mathcal D)$-structure has a natural interpretation into the $\mathcal D$-augmented Cayley graph of $G$.
      \begin{lemma}[$(G, \D)$-structures are interpretable in $\mathcal{M}_\D^G(\Sigma)$]\label{interpretation in M}
      	Suppose that $G$ acts by isomorphisms on a $(G, \mathcal D)$-structure $\mathcal{B}(\Sigma)$ over a language $\mathcal{L}_B$.
      	Then there is a natural interpretation of $\mathcal B(\Sigma)$ in the $\mathcal D$-augmented Cayley graph of $G$:
      	$$\bar{\zeta} :
      	\mathcal{B}(\Sigma) \leadsto \mathcal{M}_\D^G(\Sigma).$$
      	This map sends each element $b\in B$ to the corresponding
      	imaginary element in $M^G/\Stab(b)$.
      \end{lemma}
      
      The interpretation of $ \mathcal{B}(\Sigma)\rightsquigarrow \mathcal{M}_\D^G(\Sigma)$ will be called the \emph{natural interpretation} of
      $ \mathcal{B}(\Sigma)$ in $\mathcal{M}_\D^G(\Sigma)$.
      \begin{proof}[Proof of Lemma~\ref{interpretation in M}]
      	For $b\in B$, we define the equivalence relation $S_{b}$ on $G$ by
      	$(g,h)\in S_{b}$ if and only if $g(b)=h(b)$.
      	Notice that this  equivalence relation is definable in $\mathcal{M}^G_\D(\Sigma)$
      	without parameters, since we can get express it as a finite union
      	of relations of the form $R_{\sigma,D_{b}}$, with $\sigma\in G$. This results from the fact that $\Stab_G(b)$ contains
      	$G[D_b]$ with finite index. Observe that equivalence classes in $S_b$ are naturally identified with cosets of $\Stab_G(b)$,
      	and we will occasionally abuse this notation.
      	
      	If $\{b_{1},b_{2},\dots, b_{m}\}$ are representatives of the orbits of $B$
      	under $G$, then there is a natural bijection:
      	\[B \longleftrightarrow \coprod_{j=1}^{m}M^G/S_{b_{j}}.\]
      	Observe that this latter  union can be encoded as the quotient of
      	a definable subset of $M^G$ by a definable equivalence relation.
      	
      	Now, let $R^{(k)}\in\mathcal{L}_B$ be a relation. For an
      	arbitrary $b \in B$, denote its $G$-orbit by $O(b)$. Without loss of generality,
      	we assume that
      	\[R_{\mathcal{B}}\slash G\subseteq B^{k}\slash G\] is finite.
      	It suffices to show that for each fixed \[\bar{i}\in\{1,2,\dots, m\}^{k},\]
      	the intersection
      	\[R_{\mathcal{B}}\cap (O(b_{i_{1}})\times O(b_{i_{2}})
      	\times\dots \times O(b_{i_{k}}))\]
      	is the pullback of a
      	formula $\psi_{\bar{i}}^{R}(x_{1},\dots, x_{k})$ which is equivariant under
      	\[S_{b_{i_{1}}}\times\dots\times S_{b_{i_{k}}},\] upon restriction to
      	\[O(b_{i_{1}})\times O(b_{i_{2}})\times\dots\times O(b_{i_{k}}).\]
      	Let $\mathcal{T}_{\bar{i}}$ be a finite collection of $(k-1)$-tuples
      	$(g_1,\ldots,g_{k-1})$  of
      	mapping classes such that every tuple
      	\[(b_{1},b_{2},\dots, b_{k})\in O(b_{i_{1}})\times O(b_{i_{2}})\times\cdots\times
      	O(b_{i_{k}})\] is the $G$--translate of a tuple of the form
      	\[(b_{i_{1}},g_{1}(b_{i_{2}}),g_{2}(b_{i_3}),\dots, g_{k-1}(b_{i_{k}})).\] The existence of $\mathcal{T}_{\bar{i}}$ results from
      	the assumption that $R_{\mathcal{B}}\slash G$ is finite.
      	
      	We can then take the (abbreviated) quantifier free formula
      	\begin{align*}
      		\psi^{R}_{\bar{i}}(x)=\bigvee_{\bar{i}\in\{1,\dots,
      		m\}^{k}}\bigvee_{\bar{\tau}\in\mathcal{T}_{\bar{i}}}
      		\bigwedge_{j=2}^{k}(S_{b_{i_{1}}}\circ R_{\tau_{j-1}}\circ
      		S_{b_{i_{j}}})(x_{1},x_{j}),
      	\end{align*}
      	which has the  desired property.
      \end{proof}
      \begin{remark}
      	\label{r: quantifier free} It is not difficult to encode \[\coprod_{j=1}^{k}M^G/S_{b_{j}}\]
      	as a quotient by a single equivalence relation
      	in such a way that the resulting interpretation $\psi^{R}$ of $R$ is
      	quantifier free; this observation will be crucial in the sequel.
      \end{remark}
      
      \begin{remark}\label{r:psi-R-i}
      	The formula $\psi^{R}_{\bar{i}}(x)$ implicitly defines an arbitrary $k$--ary relation on $B$ that is stable under the action of the
      	mapping class group and has a well-behaved quotient, once these have been interpreted in the augmented Cayley graph.
      	Corollary~\ref{cor:definable-recipe} will follow once this formula can be transported back to $\mathcal B$, which will be possible for certain
      	types of $(G,\D)$--structures; see the discussion immediately after the proof of Lemma~\ref{interpretation in B} below.
      \end{remark}
      
      \begin{corollary}[$X(\Sigma)$ is interpretable in $ \mathcal{M}_{\mathcal D_X}^G(\Sigma) $]\label{cor:geom_interpr}
      	Every geometric graph $X(\Sigma)$ is interpretable in the $\mathcal D_X$-augmented Cayley graph of $G$:
      	$$\bar{\zeta} :
      	X(\Sigma) \leadsto \mathcal{M}_{\mathcal D_X}^G(\Sigma) ~.$$
      	In particular, the curve complex $\mathcal C(\Sigma)$ is interpretable in $\mathcal M_0^G(\Sigma)$.
      \end{corollary}

      In Section \ref{sec:rel-qe} we will prove the following, which is one of the main results of this paper.
      \begin{restatable}[]{theorem}{oldversion}\label{thm:omega-stab}
      	Let $G< \Mod^\pm(\Sigma)$ be finite index and let
      	$\emptyset\neq\D\subset\D_0$ be a $G$--invariant, downward closed collection of domains.
      	Then $\Th{\M_\mathcal{D}^G(\Sigma)}$ is $\omega$-stable.
      \end{restatable}
      
      Combining Theorem~\ref{thm:omega-stab} with Lemma \ref{interpretation in M}, we obtain the following, which establishes part of
      Theorem \ref{thm:stable}:
      \begin{corollary}\label{cor:b-omega-stab}
      	Let  $\mathcal{B}(\Sigma)$ be a $(G, \D)$-structure.
      	Then $\mathcal{B}(\Sigma)$ is $\omega$--stable.
      	In particular,
      	for every geometric graph $X(\Sigma)$, the first order theory $\mathrm{Th}(X(\Sigma))$ is  $\omega$--stable.
      \end{corollary}
      
  \subsection{The curve graph interprets the extended  mapping class group}
    In this subsection, we assume $G = \Mod^\pm(\Sigma)$.
    Suppose that $X(\Sigma)$ is a geometric graph. We will write $\mathcal M_X(\Sigma)$ for $\mathcal M^G_{\D_X}(\Sigma)$ for compactness of notation. We will abuse notation and identify $X$ with the underlying universe.

    \begin{definition}[Strongly rigid tuple]
    	Let $\mathcal G=(V,E)$ be a graph. We say that a tuple $\delta\in V^{<\omega}$ of vertices a graph  is \emph{strongly rigid} 
    	if for any partial isomorphism between their induced subgraphs \[f\colon \delta\to \delta'\subseteq \mathcal G,\]
    	there exists a unique automorphism $F: \mathcal G \to \mathcal G$  extending $f$.
    \end{definition}
    Thus, the partial isomorphism $f$ must preserve adjacency and non-adjacency. In the context of curve graphs, Irmak refers to
    preserving non-adjacency as \emph{superinjectivity}~\cite{irmak-superinjective}.
    The uniqueness of the automorphism of $\mathcal G$ extending the partial isomorphism implies that the stabilizer of a strongly
    rigid tuple is trivial. The following observation is straightforward from the definition of a strongly rigid tuple.

    \begin{observation}
    	\label{c: rigid tuple}\leavevmode
    	Let {$\mathcal{G}$} be a graph. The type of a strongly rigid
    	tuple $\delta$ of vertices is isolated by a
    	quantifier-free formula, since the type is determined by which vertices of $\delta$ are adjacent to each other
    	and which are not.
    	If a graph {$\mathcal{G}$} admits an exhaustion by strongly rigid tuples, then the automorphism group
	orbit of a tuple $\alpha$ of vertices is the
    	set of solutions of a suitable existential formula $\phi_{\alpha}(x)$ which isolates
    	$\tp(\alpha)$.
    \end{observation}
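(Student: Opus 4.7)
The plan is to prove both clauses by writing down the obvious candidate formulas and verifying that isolation persists across models. For the first clause, let $W=(w_1,\ldots,w_n)$ be a strongly rigid tuple in $\mathcal{B}=(V,E)$, and let $\psi_W(y_1,\ldots,y_n)$ be the quantifier-free diagram of $W$: the conjunction of all edge, non-edge, equality, and inequality literals that hold on $W$. A tuple $W'$ realizes $\psi_W$ in $\mathcal{B}$ if and only if $w_i\mapsto W'_i$ is a partial graph isomorphism, and strong rigidity then produces a (unique) automorphism of $\mathcal{B}$ sending $W$ to $W'$. Hence all realizations of $\psi_W$ in $\mathcal{B}$ lie in a single $\Aut(\mathcal{B})$-orbit. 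For every formula $\phi(y)$, this means that one of the two sentences $\forall y\,(\psi_W(y)\to\phi(y))$ or $\forall y\,(\psi_W(y)\to\neg\phi(y))$ holds in $\mathcal{B}$ and thus belongs to $\Th{\mathcal{B}}$; consequently, in any model of $\Th{\mathcal{B}}$ every realization of $\psi_W$ satisfies exactly the formulas of $\tp(W)$, so $\psi_W$ isolates $\tp(W)$.

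For the second clause, given a tuple $\alpha=(\alpha_1,\ldots,\alpha_k)$, invoke the exhaustion hypothesis to extend $\alpha$ to a strongly rigid tuple $W=(w_1,\ldots,w_n)$ with trivial pointwise stabilizer and containing $\alpha$ as a subtuple, say $\alpha_i=w_{j_i}$. With $\psi_W$ supplied by the first clause, set
\[\phi_\alpha(x_1,\ldots,x_k):=\exists y_1\cdots\exists y_n\Bigl(\psi_W(y_1,\ldots,y_n)\wedge\bigwedge_{i=1}^{k}x_i=y_{j_i}\Bigr),\]
which is manifestly existential. To see that $\phi_\alpha$ defines the $G$-orbit of $\alpha$ in $\mathcal{B}$, observe that any $g\in G=\Aut(\mathcal{B})$ sends $W$ to a realization of $\psi_W$ whose $j_i$-th coordinates are the $g(\alpha_i)$; conversely, a realization $\alpha'$ of $\phi_\alpha$ is witnessed by some $W'\models\psi_W$ with $\alpha'_i=W'_{j_i}$, and strong rigidity furnishes an automorphism $g$ with $g(W)=W'$, forcing $g(\alpha)=\alpha'$.

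That $\phi_\alpha$ isolates $\tp(\alpha)$ is then immediate from the first clause: for any $\alpha'$ realizing $\phi_\alpha$ in a model $\mathcal{B}'$ of $\Th{\mathcal{B}}$, a witness $W'$ realizes $\tp^{\mathcal{B}}(W)$, and since $\alpha$ and $\alpha'$ arise from $W$ and $W'$ by the same projection to the fixed coordinates $j_1,\ldots,j_k$, their complete types coincide. The only point where care is needed is the transfer in the first clause from strong rigidity---which is a statement internal to $\mathcal{B}$---to a consequence for arbitrary models of $\Th{\mathcal{B}}$; this is what the dichotomy argument in the first paragraph accomplishes, and it is the one step that is not mere bookkeeping.
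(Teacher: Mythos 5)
Your proof is correct, and it fills in an argument the paper simply declares ``straightforward'' and omits. Both clauses are handled the natural way: the quantifier-free diagram $\psi_W$ of a strongly rigid tuple $W$, together with the fact that all realizations of $\psi_W$ in $\mathcal{B}$ are $\Aut(\mathcal{B})$-conjugate, gives that $\psi_W$ determines a complete type, and this persists to all models of $\Th{\mathcal{B}}$ by the dichotomy between the universal sentences $\forall y(\psi_W\to\phi)$ and $\forall y(\psi_W\to\neg\phi)$; projecting to a subtuple $\alpha$ then yields the existential isolating formula $\phi_\alpha$. You correctly observe (implicitly) that the uniqueness clause in strong rigidity and the trivial-stabilizer hypothesis are not actually needed for the conclusion as stated — they are included in the hypothesis because that is what Theorem~\ref{t:finite rigid} provides and what the downstream Lemma~\ref{interpretation in B} (the bijection $g\mapsto g\delta$) requires. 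The one point worth flagging as truly essential, which you do highlight, is the transfer from ``same $\Aut(\mathcal{B})$-orbit in $\mathcal{B}$'' to ``same type in every model of $\Th{\mathcal{B}}$''; your universal-sentence dichotomy handles it cleanly.
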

    
    Now, suppose that  $X=X(\Sigma)$ is a geometric graph that admits an exhaustion by strongly rigid tuples.
    For $\alpha$ is an arbitrary finite tuple of vertices in $X$, we let $Y(\alpha)\subset X^{|\alpha|}$ be the set of realizations of
    $\tp(\alpha)$. As above, we let $\phi_{\alpha}(x)$ be an existential formula isolating $\tp(\alpha)$, which is taken to be
    quantifier-free if $\alpha$ is itself strongly rigid. We define
    $\eta_{\alpha}\colon G\to X^{|\alpha|}$ by \[\eta_{\alpha}(g)\mapsto g\cdot\alpha.\]
    The model theoretic properties of strongly rigid tuples as enumerated in Observation~\ref{c: rigid tuple} now give the following
    bi--interpretability result.

    \begin{lemma}[$X(\Sigma)$ and $\mathcal M_{X}(\Sigma)$ are bi-intepretable]\label{interpretation in B}
    	Let $X(\Sigma)$ be a geometric graph that has the following two properties:
    	\begin{enumerate}
    		\item
    		$X(\Sigma)$ admits an exhaustion by strongly rigid tuples $\{\delta_i\}_{i\geq 0}$.
    		\item
    		For every connected proper domain $D\in \D_X$, there is a tuple $c\in X^{<\omega}$ such that $\Stab_G(c)=G[D]$.
    	\end{enumerate}
    	Then we have that
    	$$\bar{\eta}:=(\eta_\delta^{-1},Y(\delta),=) : \mathcal{M}_{X}(\Sigma) \rightsquigarrow X(\Sigma)$$ is an interpretation of
    	$\mathcal{M}_{X}(\Sigma)$ in $X(\Sigma)$.
    	
    	Furthermore,
    	if $\bar{\zeta}: X(\Sigma) \rightsquigarrow \mathcal{M}_{X}(\Sigma)$ is the natural interpretation of $X(\Sigma)$ in
    	$\mathcal{M}_{X}(\Sigma)$,
    	then $$(\bar{\zeta},\bar{\eta}):  X(\Sigma) \leftrightsquigarrow \mathcal{M}_{X}(\Sigma) $$ is a bi-interpretation of
    	$X(\Sigma)$ in $\mathcal{M}_{X}(\Sigma)$.
    \end{lemma}
    
    The first hypothesis on $X$ is satisfied by many natural geometric graphs, and the second, while less commonly studied,
    is also enjoyed by many geometric graphs of interest. See Subsection~\ref{ss:applications}.
    
    \begin{proof}[Proof of Lemma~\ref{interpretation in B}]
    	We will fix the tuple \[\delta=\delta_0=(\delta_1,\ldots,\delta_k)\] and suppress it from the notation
    	whenever possible for the remainder of the proof.
    	
    	For $g\in G$ and $D\in\D_X$, it  suffices to check that
    	the relations $R_{g}$ and $R_{D}$ are the preimages
    	of the set of solutions of
    	suitable definable relations $\psi_{g}$ and $\psi_{D}$ on $Y$ under
    	$\eta\times\eta$.
    	
    	For $g\in G$, we write \[\psi_{g}(x,y)\equiv\bigwedge_{j=1}^{k}\phi_{g\cdot
    	\delta_{j},\delta}(y_{j},x).\] Note here that $y_j$ is a singleton variable, and that $x$ is actually a $k$--tuple of variables. The
    	formula $\phi_{g\cdot
    	\delta_{j},\delta}(y_{j},x)$ is simply isolating the type of $(g\cdot\delta_j,\delta)$. It follows that:
    	\begin{align*}
    		X\models \psi_{g}(\eta(h),\eta(h'))\leftrightarrow X\models
    		\bigwedge_{j=1}^{k}\phi_{g\cdot
    		\delta_{j},\delta}(h'\cdot\delta_{j},h\cdot\delta)\leftrightarrow\\
    		\leftrightarrow X\models \bigwedge_{j=1}^{k}\phi_{g\cdot
    		\delta_{j},\delta}(h^{-1}h'\cdot\delta_{j},\delta)\leftrightarrow h^{-1}h'=g
    		\leftrightarrow \M_{X} \models R_{g}(h,h').
    	\end{align*}
    	
    	The second to last bi-implication results from the fact that $\delta$ has a trivial stabilizer.
    	Now for $D\in\D_X$ connected, choose a finite tuple $(c_{1},c_{2},\dots,
    	c_{\ell})\in X^{<\omega}$ such that $G[D]$
    	coincides with the stabilizer of $(c_{1},c_{2},\dots, c_{\ell})$, as is guaranteed by hypothesis. Let
    	\begin{align*}
    		\psi_{D}(x,y)\equiv\exists z_{1},z_{2},\dots, z_{\ell}\,\,
    		\bigwedge_{j=1}^{\ell}\phi_{c_{j},\delta}(z_{j},x)\wedge\phi_{c_{j},\delta}(z_{j},y).
    	\end{align*}
    	It is straightforward to verify that $\psi_{D}$ interprets $R_{D}$.\\
    	
    	Now, let \[\bar{\xi}_{1}=\bar{\eta}\circ\bar{\zeta},\quad
    	\bar{\xi_{2}}=\bar{\zeta}\circ\bar{\eta}.\] We need to show that the underlying
    	maps $\xi_{1}$ and $\xi_{2}$ are definable without parameters in
    	$X$ and in $\M_X$, respectively.
    	
    	To this end, let $\{d_{1},d_{2},\dots, d_{m}\}$ be the representatives of
    	the orbits of $X$ under the action of $G$, as used in the construction of $\bar{\zeta}$.
    	Given $b\in X$, we denote by $\hat{b}$ the unique  representative
    	$d_{i}$ satisfying $O_{G}(d_{i})=O_{G}(b)$.
    	
    	On the one hand, we have  that $\bar{\eta}$ sends
    	$g\in M^G$ to $g\cdot\delta\in X^{k}$, which in turn gets sent
    	to a $k$-tuple of equivalence classes $(gS_{\hat{b_{j}}})_{j=1}^{k}$
    	by $\bar{\zeta}$. Note that by definition,
    	\[gS_{\hat{\delta_{j}}}=\{h\in M^G\,|\,S_{\hat{\delta_{j}}}(g,h)\},\] where here we are implicitly using the natural bijection between
    	equivalence classes of $S_{\hat{\delta_{j}}}$ and cosets of the stabilizer of $\hat{\delta_{j}}$.
    	
    	Let $\Gamma_{\xi_{2}}$ be the predicate defining the graph of $\xi_2$ (see Definition~\ref{def:def-int}).
    	Since for each $j$ the subgroup $S_{\hat{\delta}_{j}}$ is definable in $\M_X$ without
    	parameters, the definability of $\Gamma_{\xi_{2}}$ also follows.
    	
    	On the other hand, $\bar{\zeta}$ sends $b\in X$ to the unique coset
    	\[g_{\hat{b}}S_{\hat{b}}\subseteq M^G=G\] consisting of those elements sending $\hat{b}$
    	to $b$, which is in turn sent by $\eta$ to the collection
    	\[Y_{b}:=\{h\delta\,|\,h\in g_{b}S_{\hat{b}}\}.\] Note, however,
    	that
    	\begin{align*}
    		Y_{b}=&\{g_{b} x\,|\,\tp(x,\hat{b})=\tp(\delta,\hat{b})\}=\\
    		=\{g_{b} x\,|\,\tp(g_{b}
    		x,g_{b}\hat{b})=&\tp(\delta,\hat{b})\}=
    		\{y\,|\,\tp(y,b)=\tp(\delta,\hat{b})\}.
    	\end{align*}
    	The first equality in this chain of equalities follows from Observation~\ref{c: rigid tuple}.
    	
    	We may now set
    	\[\Gamma_{\xi_{1}}(x,y)\equiv\bigvee_{j=1}^{m}(\phi_{d_{j}}(x)\wedge
    	\phi_{d_{j},\delta}(x,y)),\]
    	whence the desired conclusion follows.
    \end{proof}
    \begin{remark}
    	\label{r: existential} Suppose
    	that for each tuple $\alpha$, we have that $\phi_{\alpha}$ is an existential
    	formula. Then for all $w\in\wo$, the relation
    	\[\eta[R_{w}]\subseteq\mathcal{B}^{k}\times\mathcal{B}^{k}\] and the predicate
    	$\Gamma_{\xi_{1}}(x,y)$ are definable by an existential formula.
    \end{remark}

    Combining Lemma~\ref{interpretation in B} with Remark~\ref{r:psi-R-i}, we obtain Corollary~\ref{cor:definable-recipe} as
    claimed in the introduction.
    The following lemma relies on Corollary~\ref{cor:ext-qe} which  says  that $\M_X$ 
    has relative quantifier elimination.
    
    
    \begin{lemma}[$X(\Sigma)$ has relative QE]\label{l:quantifier elimination}
    	Assume that $X(\Sigma)$ is a geometric graph satisfying:
    	\begin{enumerate}
    		\item
    		$X(\Sigma)$ admits an exhaustion by strongly rigid tuples.
    		\item
    		For every connected proper domain $D\in \D_X$, there is a tuple $c\in X^{<\omega}$ such that $\Stab_G(c)=G[D]$.
    	\end{enumerate}
    	Then $X(\Sigma)$ has quantifier elimination relative to the collection of
    	$\forall \exists$--formulae.
    \end{lemma}
    
    
    \begin{proof}
    	Let $\phi(x)$ be a (possibly multivariable)
    	parameter--free formula in $\mathcal{L}_B$.
    	The interpretation $\zeta[\phi(x)]$ is equivalent to a Boolean combination of
    	existential formulae in $\Th{\M_X}$, by Corollary~\ref{cor:ext-qe}.
    	
    	Note that under $\eta$, relations in $\M_X$ are interpreted in $X$ via
    	existential formulae (since the formulae isolating types of tuples are existential). Thus, a universal formula in $\M_X$ can be
    	interpreted as a $\forall\exists$ formula in $X$.
    	
    	It now suffices to show that the pullback
    	$\theta'(x)$ by $\eta\circ\zeta$ of a $\forall\exists$-- or a $\exists\forall$--formula $\theta(y)$
    	that is contained in the image of $\eta\circ\zeta$ is existential or
    	universal, respectively. To this end, we note two different ways of
    	expressing $\theta'(x)$:
    	\begin{align*}
    		X\models \forall x\,(\theta'(x)\leftrightarrow(\exists
    		y\,\theta(y)\wedge\Gamma_{\eta\circ\zeta}(x,y))) \\
    		X\models \forall x\,(\theta'(x)\leftrightarrow(\forall
    		y\,\neg\Gamma_{\eta\circ\zeta}(x,y)\vee\theta(y))),
    	\end{align*} where we recall that the predicate $\Gamma_{\eta\circ\zeta}(x,y)$ is existential.
    	The first of these expressions can be used for $\exists\forall$--formulae, and the
    	second for $\forall\exists$--formulae. This establishes the lemma.
    \end{proof}
    
    \subsubsection{Applications to geometric graphs}\label{ss:applications}
      
      In order to obtain results for specific geometric graphs associated to a surface $\Sigma$, it suffices for us to verify the hypotheses
      of Lemma~\ref{interpretation in B} for those geometric graphs. The first condition is the exhaustion of the relevant geometric graph
      by strongly rigid tuples (which are often called finite rigid sets in the literature). We have the following results.
      
      \begin{theorem}\label{thm:strong-rigid}
      	The following geometric graphs of connected surfaces admit exhaustions by strongly rigid tuples:
      	\begin{enumerate}
      		\item
      		The curve graph of a surface that is not a torus with two punctures~\cite{aramayona2016exhausting}.
      		\item
      		The arc graph of a surface with at least one puncture that is not a sphere with three punctures~\cite{ShinkleAGT}.
      		\item
      		The pants graph of a sphere with at least five punctures~\cite{MaungchaungJKTR}.
      		\item
      		The flip graph of a surface that is not a torus with at most one puncture nor a
      		sphere with at most two punctures~\cite{ShinkleTAMS}.
      		\item
      		The nonseparating curve graph of a surface of genus at least three~\cite{depool}.
      	\end{enumerate}
      \end{theorem}
      
      For certain other complexes, such as the pants graph for surfaces with genus or the graph of separating curves,
      strongly rigid tuples are known to exist but the
      existence of exhaustions by them appears to be unknown~\cite{HHLM19,HuangTshi}.
      
      For the second hypothesis of Lemma~\ref{interpretation in B}, we have the following.
      
      \begin{lemma}\label{lem:int-second-hyp}
      	Let $\Sigma$ be a connected surface of negative Euler characteristic that is not a pair of pants.
	Let $X(\Sigma)$ denote one of the following geometric graphs:
      	\begin{enumerate}
      		\item
      		The curve graph of a connected surface, or;
      		\item
      		The pants graph of a connected surface, or;
      		\item
      		The arc graph of a surface with at least one puncture, or;
      		\item
      		The flip graph of a surface with at least one puncture, or;
      		\item
      		The nonseparating curve graph of a surface of genus at least three.
      	\end{enumerate}
      	If $D\in\D_X$ is connected then there is a a tuple $c\in X^{<\omega}$ such that $\Stab_G(c)=G[D]$.
      \end{lemma}
      
      Here, $\Stab_G(c)$ is the stabilizer of the tuple, so that elements of the tuple are pointwise fixed. For the flip graph,
      if $c$ is a tuple of vertices then $\Stab_G(c)$ is finite. Moreover, $D_{\Fill(v)}$ is empty for any vertex $v$. In particular, there are no
      proper nonempty domains in $\D$ for the flip graph, and Lemma~\ref{lem:int-second-hyp} holds vacuously.
      
      \begin{proof}[Proof of Lemma~\ref{lem:int-second-hyp}]
      	Consider first the curve graph. If $D\in\D_X$ is a connected proper domain then $D$ is simply the domain associated to
      	a proper, connected subsurface of $\Sigma$ that is not a pair of pants. We may then consider a finite collection $c$ of simple closed curves
      	that fill the surface $\Fill(D^{\perp})$. Here, we allow $c$ to include curves that are peripheral in $D$ but not in $\Sigma$.

      	Regardless of what $\D_X$ is, if $\psi$ is a mapping class that is supported on $\Fill(D^{\perp})$ and that fixes $c$, then
      	$\psi$ restricts to the identity on $D^{\perp}$, and so $\psi\in G[D]$. Conversely, if $\psi\in G[D]$ then $\psi$ restricts to the identity
      	on $D^{\perp}$ and so $\psi$ stabilizes $c$. The argument for the arc graph and nonseparating curve graph is identical.
      	
      	Now, suppose that $X(\Sigma)$ is the pants graph of $\Sigma$. If $D\in \D_X$ is a proper, connected domain then $D$ is an annulus.
      	It suffices to find a finite collection of pants decompositions of $\Sigma$ whose stabilizer is exactly $G[D]$, which in turn is a subgroup
      	that is (virtually)
      	generated by a (power of a) Dehn twist about the core curve $\alpha$ of $D$. If $P$ is a pants decomposition extending $\alpha$
      	and $g\in G[D]$ then $g$ stabilizes $P$. Note, however, that an element of $G$ that stabilizes $P$ may permute the components of $P$.
      	
      	Note that if $P$ has exactly one component, then $\Sigma$ is a four-times punctured sphere or once-punctured torus. Otherwise,
      	$P$ has at least two components. Let $\beta$ be a curve in $P$ that is distinct from $\alpha$, and let $\beta'$ differ from $\beta$
      	by an elementary move~\cite{margalit,minsky-survey},
      	so that \[P'=(P\setminus\{\beta\})\cup\{\beta'\}\] is also a pants decomposition of $\Sigma$. Now, suppose
      	that $g\in G$ stabilizes both $P$ and $P'$. Then $g$ permutes the components of both $P$ and $P'$. It follows that $g$ stabilizes
      	both $\beta$ and $\beta'$, and thus the domain associated to the surface filled by $\beta$ and $\beta'$. An easy induction, using the
      	fact that the pants graph of a surface is connected, furnishes a finite tuple in the pants graph whose stabilizer coincides with $G[D]$.
      \end{proof}
      
      From the above discussion we deduce the following.
      
      \begin{corollary}[Geometric graphs with relative QE]\label{bi-interpretability of the curve graph}
      Let $\Sigma$ be a connected surface of negative Euler characteristic that is not a pair of pants. For each other following
      geometric graphs $X(\Sigma)$, there exists a suitable $\D=\D_X$ such that $X(\Sigma)$ and $\M_{\D}$ are bi-interpretable, and
      	therefore have quantifier elimination with respect to the class of $\forall\exists$--formulae:
      	\begin{enumerate}
      		\item
      		The curve graph of a connected surface  that is not a torus with two punctures.
      		\item
      		The arc graph of a connected surface with at least one puncture that is not a sphere with three punctures.
      		\item
      		The pants graph of a sphere with at least five punctures.
      		\item
      		The flip graph of a surface that is not a torus with at most one puncture or a sphere with at most two punctures.
      		\item
      		The nonseparating curve graph of a surface of genus at least three.
      	\end{enumerate}
      \end{corollary}
      
      \begin{proof}
      	Let $X(\Sigma)$ be any of the graphs above. By Theorem~\ref{thm:strong-rigid} $X(\Sigma)$ is strongly rigid. Furthermore, by Lemma~\ref{lem:int-second-hyp} if $D \in \mathcal D_X$ is connected then there is a tuple
      	$c \in X^{<\omega}$ such that $\mathrm{Stab}_G(c)=G[D]$. Thus, by Lemma~\ref{l:quantifier elimination} we conclude that $X(\Sigma)$ has quantifier elimination with respect to the class of $\forall\exists$--formulae.
      \end{proof}
      
      In the case of the flip graph, we can actually prove even stronger results. If $X(\Sigma)$ is the flip graph of a surface with at least one puncture, then the collection $\D_X$ is empty. In this case, the
      relational theory of $\M^G_{\D_X}$ is very simple, and so one might expect stronger results about its theory.
      Indeed, we will see in Section 10
      that the flip graph admits quantifier elimination with respect to the class of $\exists$--formulae; see Corollary~\ref{cor:flip-qe} below.
      
  \subsection{The Morley ranks of geometric graphs}\label{sec:geometric_graphs}  Recall
    that geometric graphs are examples of $(G, \D)$-structures, as defined in the previous section.
    \begin{definition}
    	Let $\D$ be a downward closed, $G$--invariant collection of domains. We define its \emph{length} $k(\D)$ as the maximal $n \in \mathbb N$ for which
    	there exists a chain
    	$$\varnothing \subsetneq D_{0} \subsetneq \ldots \subsetneq D_{n} = \C_0(\Sigma)~,$$
    	where for $1\leq i<n$ we have that $D_i\in \D$.
    	
    	When  $X(\Sigma)$ is a geometric graph and $\D=\D_X$, then we write $$k(X(\Sigma)):=k(\D_X).$$
    \end{definition}
    
    \begin{corollary}[$\omega$-stability of other geometric graphs]
    	\label{cor:other graphs}
    	Suppose that $X(\Sigma)$ is a geometric graph.
    	We have that $X(\Sigma)$ is interpretable in $\C(\Sigma)$ and its Morley rank is
    	bounded above as follows: $$RM(X(\Sigma)) \leq \omega^{k(X(\Sigma))} ~.$$
    	In particular, $X(\Sigma)$ is $\omega$-stable.
    \end{corollary}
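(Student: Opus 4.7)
The plan is to invoke Lemma \ref{interpretation in M} to interpret $X(\Sigma)$ in $\M$, and then to compose with the bi--interpretation of Corollary \ref{bi-interpretability of the curve graph} in order to obtain an interpretation in $\C(\Sigma)$. Since $X(\Sigma)$ is a graph, it is a relational structure in a language with a single binary adjacency predicate. Condition (1) of Lemma \ref{interpretation in M} is item (1) of the definition of a geometric graph, and the finiteness of the edge--orbit quotient (item (4)) supplies condition (2). Thus the main content is to verify the stabilizer hypothesis, condition (3).

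Given a finite tuple $b$ of vertices of $X(\Sigma)$, let $V_b$ denote the finite collection of curves (or closed--up arcs) appearing in the entries of $b$, and set $D_b := V_b^{\perp}\in\D_0$. Then $D_b^{\perp} = V_b^{\perp\perp}$ is the domain generated by $V_b$, and $\Stab^{+}(D_b^{\perp})$ is precisely the subgroup of $G$ fixing every curve in $V_b$ with its orientation, which is plainly contained in $\Stab_G(b)$. For the finite--index statement one invokes conditions (2) and (3) of the geometric graph definition: the size of $V_b$ is bounded in terms of the arity of $b$ and the parameter $N$, and all pairwise geometric intersection numbers in $V_b$ are at most $K$, so only finitely many permutations and orientation reversals of the curves and arcs constituting each entry of $b$ are realizable by mapping classes. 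Enlarging $\D$ within its $G$--invariant, downward closed hull if necessary so that each $D_b$ lies in $\D$, Lemma \ref{interpretation in M} yields the desired interpretation of $X(\Sigma)$ in $\M$; composing with Corollary \ref{bi-interpretability of the curve graph} produces the interpretation in $\C(\Sigma)$, and $\omega$--stability then follows from Corollary \ref{cor:b-omega-stab}.

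Finally, for the bound $RM(X(\Sigma)) \leq \omega^{k(X(\Sigma))}$, the idea is to transfer the rank computation through the interpretation. A vertex $v\in V(X(\Sigma))$ is encoded by a coset of $\Stab^{+}(D_v^{\perp})$, so the Morley rank of its orbit reduces to the rank of the corresponding imaginary sort in $\M$. The structural analysis of $\M$ underlying the proof of Theorem \ref{thm:stable} shows that Morley rank decomposes additively along strict chains of connected subdomains, each step contributing one power of $\omega$; Lemma \ref{complexity length} caps the length of such chains inside $D_v^{\perp}$ at the integer $k(X(\Sigma))$. The main obstacle will be the bookkeeping required to confirm that the interpretation actually encodes the constraint ``chains lie inside $D_v^{\perp}$'' rather than allowing them to extend all the way to $\C(\Sigma)$, so that one does not merely obtain the naive estimate $\omega^{3g+b-2}$; this entails carefully tracing the equivalence relation $S_b$ from the proof of Lemma \ref{interpretation in M} through the rank computation.
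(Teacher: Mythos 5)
Your overall strategy matches the paper's (extremely terse) proof: invoke Lemma~\ref{interpretation in M} for the interpretation, Corollary~\ref{cor:b-omega-stab} for $\omega$--stability, and the rank theorems of Section~\ref{sec:morley} for the bound. The verification of condition (3) of Lemma~\ref{interpretation in M} is also in the right spirit: taking $D_b = V_b^\perp$, the subgroup $\Stab^+(D_b^\perp) = \Stab^+(V_b^{\perp\perp})$ fixes every curve of $V_b$ with orientation, and the bounded intersection pattern of $V_b$ together with the Alexander method caps the quotient $\Stab_G(b)/\Stab^+(D_b^\perp)$ by a finite group of permutations and orientation reversals.

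The gap is in the final paragraph. You correctly anticipate the danger of landing on the naive estimate $\omega^{3g+b-2}$, but your proposed fix --- ``carefully tracing the equivalence relation $S_b$ from the proof of Lemma~\ref{interpretation in M} through the rank computation'' --- is not how the bound is actually obtained, and pursued literally it would not get you off the ground: the rank of a definable quotient of $M$ inside $\M_0 = \M(\D_0)$ is still bounded only by $RM(\M_0) = \omega^{3g+b-2}$, regardless of which equivalence relation you quotient by, and tracking $S_b$ cannot cut this down. The resolution is not a bookkeeping exercise inside a fixed model; it is the choice of the restricted model. One takes $\D$ to be the $G$--invariant downward closure of $\{\C\} \cup \{D_v^\perp : v \in V(X(\Sigma))\}$, observes that $D_b \in \D$ for every tuple $b$ (since $D_b$ is a subdomain of $D_{v_i}^\perp$ for any coordinate $v_i$ of $b$), and interprets $X(\Sigma)$ in $\M(\D)$ rather than in $\M_0$. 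Then Theorem~\ref{thm:morley-upper-bound} and Corollary~\ref{lower bound on Morley rank} give $RM(\M(\D)) = \omega^{k(\D)}$, and by construction any chain $\emptyset = D_0 \subsetneq \cdots \subsetneq D_k = \C$ in $\D$ has $D_1,\dots,D_{k-1}$ all contained in a single $D_v^\perp$, so $k(\D) = k(X(\Sigma))$. Since each orbit sort of $V(X(\Sigma))$ is realized as a definable quotient of $M$ in $\M(\D)$, the bound $RM(X(\Sigma)) \leq \omega^{k(X(\Sigma))}$ follows. Two small additional remarks: Lemma~\ref{complexity length}, which you cite as capping chain lengths inside $D_v^\perp$, is in fact about chains all the way up to $\C$ and only gives the global $3g+b-2$; the cap $k(X(\Sigma))$ is by definition, not a consequence of that lemma. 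And the interpretability of $X(\Sigma)$ \emph{in $\C(\Sigma)$} (as opposed to in $\M(\D)$) does indeed use Corollary~\ref{bi-interpretability of the curve graph}, but this is a separate composition that plays no role in the rank estimate, since Morley rank is intrinsic to $X(\Sigma)$.
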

    \begin{proof}
    	
    	
    	By Corollary \ref{cor:geom_interpr}
    	and Corollary~\ref{lower bound on Morley rank} we have:
    	$$ RM(X(\Sigma)) \leq RM (\M_X^G(\Sigma)) = \omega^{k(X(\Sigma)}~,$$ the desired conclusion.
    \end{proof}
    
    Corollary~\ref{cor:other graphs} implies Corollary~\ref{c:interpretability} in a straightforward way.
    
    \begin{figure}[htbp]
    	\includegraphics[width=5cm]{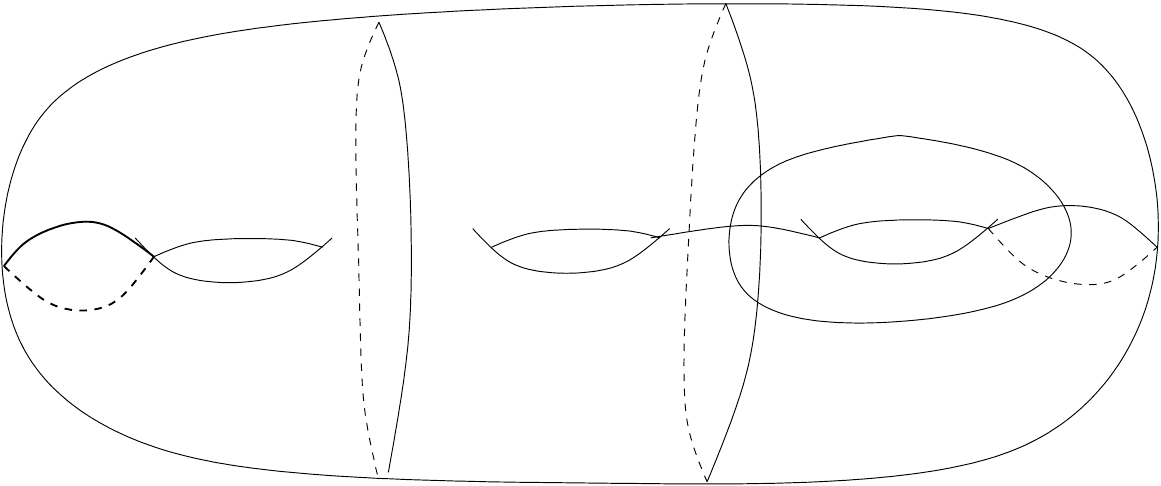}
    	\hspace{3cm}
    	\includegraphics[width=5cm]{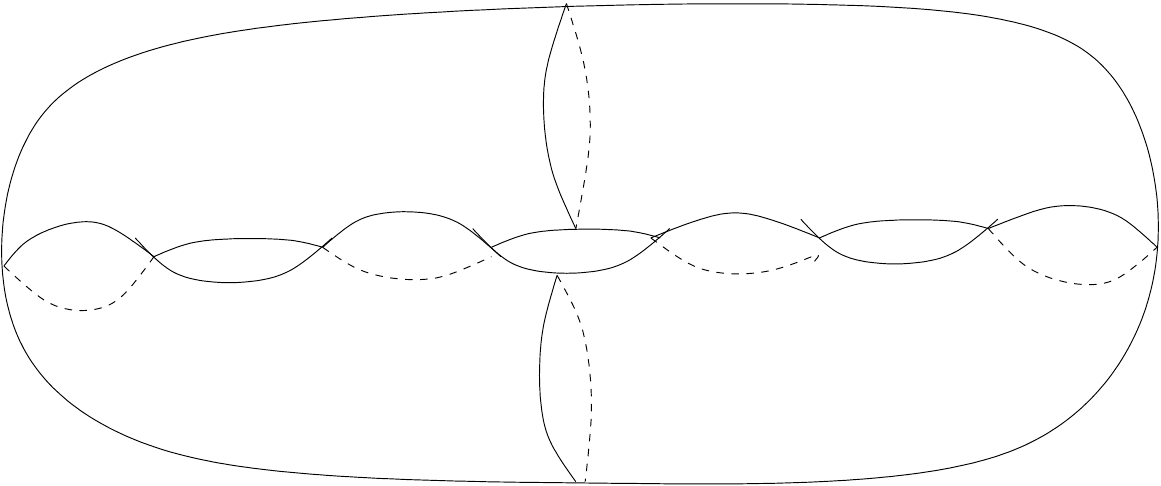}\label{f:morley}
    	\caption{A vertex of the pants graph $\mathcal P(\Sigma)$ is on the left. If $D\in\D_{\mathcal P}$ is proper then there are no larger
    	proper, connected domains contained in $\D_{\mathcal P}$, and this accounts for the small Morley rank of the pants graph.
    	Compare this with the relative freedom of the curve graph on the right.}
    \end{figure}
    
    On the other hand, we have the following result which implies Corollary~\ref{c:non-bi} from the introduction.
    \begin{corollary}\label{cor:pants}
    	Let $\Sigma$ be a surface of genus $g$ and $b$ punctures, let
    	$\mathcal P(\Sigma)$ denote the pants graph of $\Sigma$, let $\mathcal {SC}(\Sigma)$ denote
    	the separating curve graph, and let $\mathcal{A}(\Sigma)$ denote the arc
    	graph. For $\mathcal{SC}(\Sigma)$, assume that $g\geq 2$ and $b\leq 1$, and for $\mathcal{A}(\Sigma)$, assume that
    	$g\geq 2$ and $b=1$. For $\mathcal P(\Sigma)$, assume that if $g=0$ then $b\geq 4$. Then we have
    	\[RM(\mathcal{A}(\Sigma))\leq\omega^{3g-4},\quad RM(\mathcal{SC}(\Sigma))\leq\omega^{3g-4},\quad RM(\mathcal{P}(\Sigma))\leq \omega,\] and
    	$\mC(\Sigma)$ is not interpretable in $\mathcal{SC}(\Sigma)$ nor in $\mathcal{A}(\Sigma)$, nor in
    	$\mathcal{P}(\Sigma)$ provided $3g+b>4$.
    \end{corollary}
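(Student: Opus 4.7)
The plan is to deduce the corollary from two ingredients: the quantitative interpretability Corollary~\ref{cor:other graphs}, which yields Morley rank upper bounds for any geometric graph in terms of an invariant $k(X)$, and the sharp value $RM(\mC(\Sigma)) = \omega^{3g+b-2}$ obtained by combining Theorem~\ref{thm:stable} with the matching lower bound of Corollary~\ref{lower bound on Morley rank}. First I would verify that each of $P$, $S$, $A$ satisfies the four axioms of a geometric graph: finitely many mapping class group orbits on vertices and on edges, uniformly bounded number of curves or arcs per vertex, and uniformly bounded pairwise intersection numbers. These are standard facts from the references discussed in the introduction to each complex. Corollary~\ref{cor:other graphs} then provides the interpretability of $X$ in $\mC(\Sigma)$ and the bound $RM(X) \leq \omega^{k(X)}$, reducing the three Morley rank claims to the computation of $k(X)$ in each case.

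These computations are direct applications of Lemma~\ref{complexity length} to $D_v^\perp$ for a carefully chosen vertex $v$. For the pants graph, a pants decomposition $v$ fills $\Sigma$, so $D_v^\perp$ collapses to annular data and any admissible chain in $D_v^\perp$ has at most one nontrivial term, yielding $k(P) = 2$. For the separating curve graph $S$, a separating curve bounding off a one-holed torus gives a complementary piece $\Sigma_{g-1,1}$ (or $\Sigma_{g-1,2}$ when $b=1$), and Lemma~\ref{complexity length} shows that this maximises the chain length in $D_v^\perp$, yielding $k(S) = 3g-4$. An analogous calculation on the complement of a non-separating essential arc in $\Sigma_{g,1}$ yields $k(A) = 3g-4$.

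For the non-interpretability statements, suppose for contradiction that $\mC(\Sigma)$ were interpretable in one of $X \in \{P, S, A\}$. Then the universe of $\mC(\Sigma)$ is realised as a quotient of a $\varnothing$-definable subset of $X^n$ for some $n \geq 1$; since neither passing to a definable subset nor quotienting by a definable equivalence relation increases Morley rank, we conclude $RM(\mC(\Sigma)) \leq RM(X^n)$. The natural-sum inequality for Morley rank in $\omega$-stable theories gives
\[
RM(X^n) \leq RM(X) \oplus \cdots \oplus RM(X) \leq \omega^{k(X)} \cdot n < \omega^{k(X)+1}.
\]
Combining with $RM(\mC(\Sigma)) = \omega^{3g+b-2}$ forces $3g+b-2 \leq k(X)$. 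For $X = P$ this contradicts the hypothesis $3g+b-2 > 2 = k(P)$; for $X \in \{S, A\}$ the standing hypotheses $g \geq 2$ and $b \leq 1$ (respectively $g \geq 2$ and $b=1$) yield $3g+b-2 \geq 3g-2 > 3g-4 = k(X)$, a contradiction.

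The main subtlety is the exact bookkeeping for the invariant $k(X)$: its definition involves chains of connected domains contained in $D_v^\perp$ concatenated with the final step up to $\mC(\Sigma)$, and one must choose the vertex $v$ that witnesses the maximum before applying Lemma~\ref{complexity length}. A secondary point to verify carefully is the form of the Morley rank inequality for finite products in an $\omega$-stable theory, which is precisely what keeps the bound strictly below $\omega^{k(X)+1}$. Once these are in hand, the argument reduces to ordinal arithmetic.
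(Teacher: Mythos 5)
Your strategy tracks the paper's closely: the first part, computing $k(P) = 2$ and bounding $k(S), k(A) \leq 3g-4$ and feeding these into Corollary~\ref{cor:other graphs}, is essentially what the paper does, and the calculations are correct. The non-interpretability argument, however, has two genuine gaps.

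First, you invoke a ``natural-sum inequality for Morley rank in $\omega$-stable theories'' to obtain $RM(X^n) \leq RM(X) \oplus \cdots \oplus RM(X)$. No such general theorem exists. The Lascar inequalities of this form hold for the Lascar $U$-rank, but the upper inequality can fail for Morley rank even in $\omega$-stable theories (only the lower inequality $RM(ab/C) \geq RM(a/C) + RM(b/aC)$ is automatic). The paper is aware of this: the exact reason Theorem~\ref{thm:morley-upper-bound} is proved is to supply a usable upper bound on $RM(M^r)$ for the specific theory at hand, and even then the bound it delivers is the coarser $\binom{r+1}{2}\,\omega^{k(\Sigma)}$, not $r\,\omega^{k(\Sigma)}$, precisely because additivity is not available as a black box. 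Your argument needs to route through Theorem~\ref{thm:morley-upper-bound} rather than a nonexistent general principle.

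Second, you rely on $RM(\mC(\Sigma)) = \omega^{3g+b-2}$, but the paper establishes the exact rank $\omega^{3g+b-2}$ only for $\M(\D_0)$ (Corollary~\ref{lower bound on Morley rank}); Theorem~\ref{thm:stable} gives only the upper bound for $\mC(\Sigma)$ itself, and bi-interpretability does not preserve the Morley rank of the home sort. The paper sidesteps this by composing interpretations $\M(\D_0) \hookrightarrow \mC(\Sigma) \hookrightarrow X \hookrightarrow \M(\D)$ (where $\D$ is the downward-closed $G$-invariant family of domains generated by vertices of $X$, with $k(\D) = k(X)$), so that the rank comparison is between $RM(\M(\D_0)) = \omega^{3g+b-2}$ and $RM(M^r)$ in $\M(\D)$, to which Theorem~\ref{thm:morley-upper-bound} applies directly. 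If you repair your argument to use that chain and Theorem~\ref{thm:morley-upper-bound}, you recover the paper's proof; as written, the two cited facts are not available.
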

    \begin{proof}
    	A vertex $v$ of $\mathcal {A}(\Sigma)$ is given by a properly embedded essential arc
    	$\alpha$, with both endpoints on
    	the boundary component. If $\alpha$ is nonseparating
    	then the result of cutting $\Sigma$ open along $\alpha$ is a
    	surface of genus $g-1$ and one boundary component. If $\alpha$ is separating then each
    	component of the result of cutting $\Sigma$ open along $\alpha$ has genus at most $g-1$
    	and has one boundary component. The longest chain of domains in $\D_{\mathcal A}$ has length $3g-4$,
    	so that $k(\mathcal{A}(\Sigma))\leq 3g-4$.
    	
    	A vertex $v$ of $\mathcal{SC}(\Sigma)$ is given by a separating curve $\gamma_v$ on $\Sigma$. We have that
    	$\Sigma\setminus\gamma_v$ is the disjoint union of surfaces $\Sigma_1$ and $\Sigma_2$,
    	and that both $\Sigma_1$ and $\Sigma_2$ must have positive genus.
    	If $\Sigma_1$ is exactly a torus with at most two boundary
    	components, then $\Sigma_2$ is homeomorphic to a surface with one boundary component
    	and genus lower by one. In any case, the length of the longest chain of domains in $D_\mathcal{SC}$ is
    	$3g-4$. Thus, $k(\mathcal{SC}(\Sigma))\leq 3g-4$.
    	
    	A vertex $v$ of $\mathcal{P}(\Sigma)$ is given by a pants decomposition of $\Sigma$. If $D$ is a nontrivial,
    	connected, proper domain contained in $D_{\mathcal P}$ then $D$ is
    	a component of the pants decomposition.
    	In particular, $k(\mathcal{P}(\Sigma))=1$.
    	
    	Suppose $\mC(\Sigma)$ were interpretable in $\mathcal{P}(\Sigma)$. Write $\D$ for the $G$--invariant
    	downward closed collection of domains generated by pants decompositions of $\Sigma$.
    	Then $\M^G_\D(\Sigma)$ has Morley rank $\omega$ by Corollary~\ref{lower bound on Morley rank},
    	and $X(\Sigma)$ is interpretable in $\M^G_\D(\Sigma)$. We have a chain of interpretations as follows:
    	$\M^G_{\D_0}(\Sigma)$ is interpretable in $\mC(\Sigma)$, which is interpretable in $\mathcal{P}(\Sigma)$, which is interpretable in
    	$\M^G_\D(\Sigma)$, i.e.~
	\[ \M^G_{\D_0}(\Sigma)\rightsquigarrow \mC(\Sigma) \rightsquigarrow \mathcal{P}(\Sigma)\rightsquigarrow \M^G_\D(\Sigma),\]
	and so \[RM(\M^G_{\D_0}(\Sigma))=\omega^{3g+b-2}.\] Theorem~\ref{thm:morley-upper-bound} gives
    	the desired contradiction, since it follows that the rank of a definable set of
    	imaginaries is bounded from above by that of a definable set of real tuples.
    	The arguments for $\mathcal{A}(\Sigma)$ and $\mathcal{SC}(\Sigma)$ are analogous.
    \end{proof}
    
    A philosophical reason for these bounds on Morley rank can be formulated thus:
    in all three graphs, one is highly constrained in the
    edge relation: one does not have a maximal subsurface curve graph's worth of choices for a
    neighbor in these graphs. This is especially apparent in the pants graph.
    The bounds on the Morley rank can thus be thought of as
    a rigorous expression of this restriction.
    
    We remark that in Margalit's work~\cite{margalit}, it is shown that \[\Aut(\mathcal P(\Sigma))\cong\Mod^{\pm}(\Sigma)\] by exploiting
    what appears to be an interpretation of the curve graph in the pants graph. The basic idea is to take a curve $\alpha$ on
    $\Sigma$ and to look at all
    possible extensions of $\alpha$ to a pants decomposition of $\Sigma$. When $\Sigma$ is a five-times punctured sphere then
    there is a Farey graph $F_{\alpha}$ in the pants graph that corresponds to the possible extensions of $\alpha$ to a pants decomposition.
    One then applies automorphisms of $\mathcal P(\Sigma)$ to $F_{\alpha}$ to get new Farey graphs that correspond to other curves
    on $\Sigma$. Thus, automorphisms of $\mathcal P(\Sigma)$ induce automorphisms of $\C(\Sigma)$, after checking a large number of
    details.
    
    While the association $\alpha\mapsto F_{\alpha}$ appears to be something like an interpretation, a Farey graph is not definable
    by a first order predicate; Corollary~\ref{cor:pants} shows that no such interpretation can be achieved in first order logic. Indeed,
    the definition of $F_{\alpha}$ appears to involve a transitive closure, which is generally not realizable in first order logic.
    
    As a consequence of the geometry of the curve graph and associated totally geodesic Farey subgraphs, we have the following lower
    bound:
    
    \begin{corollary}\label{cor:pants-lower}
    	Suppose $\Sigma$ is not a sphere with fewer than four punctures. Then the Morley rank of $\mathcal P(\Sigma)$ is at least $\omega$.
    \end{corollary}
    
    Corollary~\ref{cor:pants-lower} follows from Corollary~\ref{lower bound on Morley rank}, though we will sketch a more direct proof.
    We have the following result about the Farey graph, which coincides with the curve (or pants)
    complex of the four-times punctured sphere and the once punctured torus.

    \begin{proposition}\label{lem:farey-morley}
    	The Morley rank of the theory of the Farey graph is at least $\omega$.
    \end{proposition}
    \begin{proof}[Sketch of proof]
    	Write $\F$ for the Farey graph. We prove that for all $k\in\omega$, we have $RM(\F)\geq k$. Observe first that $RM(\F)\geq 1$, since
    	$\F$ is infinite. Fixing a basepoint vertex $b_0\in\F$, we write $X_k=\{v\mid d_{\F}(b_0,v)=k\}$. Thus, $X_k$ denotes the sphere
    	of radius $k\in\omega$ about $b_0$. It is trivial to see that $X_k$ is definable in the language of graph theory and that $X_k$
    	is infinite, so that $RM(X_k)\geq 1$ for $k\geq 1$. By induction on $k$, it is straightforward to find infinitely many disjoint subsets of
    	Morley rank $k-1$ inside of the sphere $X_k\setminus X_{k-1}$. Thus, the Morley rank of the theory of the Farey graph exceeds
    	$k$ for every $k\in\omega$, as desired.
    \end{proof}
    
    We now can extrapolate lower bounds on the Morley rank for the pants graph.
    \begin{proof}[Sketch of proof of Corollary~\ref{cor:pants-lower}]
    	This follows from the existence of totally geodesic Farey graphs in the pants graph; see~\cite{APS08,TZ-JTA16}. Thus, the definable
    	sets that witness that the Morley rank exceeds a given $k\in\omega$ can be made to remain disjoint in the pants graph.
    \end{proof}
    
    Finally, for the flip graph, we have the following conclusion:
    \begin{corollary}\label{cor:flip-morley}
    	Suppose that $\Sigma$ is a surface with at least one puncture. Then the Morley rank of the flip graph is at most one, and is equal to
    	one if and only if the flip graph is infinite.
    \end{corollary}
    \begin{proof}
    	Clearly, if the flip graph of $\Sigma$ is zero if and only if it is finite. If $X(\Sigma)$ denotes the flip graph of $\Sigma$, then we have
    	$\D_X$ is empty. Corollary~\ref{cor:other graphs} implies that the Morley rank of $X(\Sigma)$ is at most $\omega^0=1$, and so if
    	$X(\Sigma)$ is infinite then its Morley rank is one.
    \end{proof}
    
  \subsection{Non-definable sets in the curve graphs}
    To close this section, we give a proof of Corollary~\ref{cor:alg-int}, as announced in the
    introduction. We will in fact prove a somewhat more general statement.
    
    \begin{corollary}\label{cor:alg-int-body}
    	Let $\Sigma$ be a surface that is not a torus with two punctures, and let $d$ denote the distance function in $\C(\Sigma)$.
    	Let $Q\subseteq\mC(\Sigma)^2$ be a binary predicate with the property that for any
    	$n\in \N$, there exist pairs
    	$(\alpha,\beta)\in Q$ and $(\alpha',\beta')\in\mC(\Sigma)^2\setminus Q$ with
    	\[d(\alpha,\beta),d(\alpha',\beta')\geq n.\]
    	Then $Q$ is not $\emptyset$--definable in $\mC(\Sigma)$.
    \end{corollary}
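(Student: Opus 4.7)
The plan is to argue by contradiction using the quantifier elimination relative to $\exists$-formulae provided by Corollary~\ref{bi-interpretability of the curve graph}. Suppose $Q$ is defined by a parameter--free formula $\phi(x,y)$. Then $\phi$ is equivalent modulo $\Th{\mC(\Sigma)}$ to a Boolean combination of $\exists$-formulae, each of the form $\exists\, \bar{z}\, \theta_{i}(x,y,\bar{z})$ with $\theta_{i}$ quantifier--free in the language of graphs. Let $r$ be an upper bound on the number of existentially bound variables appearing in any of the finitely many $\exists$-formulae in this representation; $r$ depends only on $\phi$.

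The heart of the argument is a \textbf{stabilization lemma}: there is a threshold $N_{0} \geq 2r + 4$, depending only on $\phi$, such that whenever $(\alpha,\beta)$ and $(\alpha',\beta')$ are pairs of curves with $d(\alpha,\beta), d(\alpha',\beta') \geq N_{0}$, with $\alpha$ and $\alpha'$ in the same mapping class group orbit and with $\beta$ and $\beta'$ in the same orbit, one has $\phi(\alpha,\beta) \Leftrightarrow \phi(\alpha',\beta')$. The corollary then follows by a pigeonhole argument: since $\mC_{0}(\Sigma)$ has only finitely many mapping class group orbits, the hypothesis on $Q$ yields, after passing to a subsequence, two pairs of distance $\geq N_{0}$, one in $Q$ and one in its complement, whose individual components lie in matching orbits, contradicting the lemma.

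To prove the stabilization lemma, one treats each $\exists$-formula \[\psi(x,y) \equiv \exists z_{1}, \ldots, z_{s}\, \theta(x, y, \bar{z})\] with $s \leq r$ separately. Suppose $\psi(\alpha,\beta)$ holds, witnessed by $\bar\gamma$. Because $d(\alpha,\beta) > r+1$, the formula $\theta$ cannot exhibit $\alpha$ and $\beta$ in the same connected component of its positive part, for such a component would provide a graph path in $\mC(\Sigma)$ of length at most $r+1$ between them. The witnessing configuration therefore decomposes as a fragment $\bar\gamma_{\alpha}$ supported within distance $r+1$ of $\alpha$, a fragment $\bar\gamma_{\beta}$ supported within distance $r+1$ of $\beta$, and possibly further components unconstrained in position. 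To produce a witness for $(\alpha',\beta')$, one transports $\bar\gamma_{\alpha}$ by a mapping class carrying $\alpha$ to $\alpha'$, transports $\bar\gamma_{\beta}$ by a mapping class carrying $\beta$ to $\beta'$, and places any residual components far from both $\alpha'$ and $\beta'$, which is possible because $\Sigma$ is non-sporadic.

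The principal obstacle is ensuring that the two transported fragments do not create any unintended adjacency violating a negative conjunct of $\theta$. The bound $N_{0} \geq 2r+4$ guarantees that any curve within distance $r+1$ of $\alpha'$ lies at distance at least two from any curve within distance $r+1$ of $\beta'$, so no spurious edge between the two sides can occur; within each fragment, adjacencies and non-adjacencies are preserved by the transporting mapping class. Making this uniform in the choice of pair --- and in particular independent of pair-specific invariants such as algebraic intersection number, which $\exists$-formulae cannot detect at large distance --- is the delicate point, and can be made rigorous using the bi-interpretation with $\M(\Sigma)$ and the absolute quantifier elimination of $\M(\Sigma)$, reducing the agreement of types to a quantifier--free matching in the auxiliary structure that is visibly invariant under the $G$--action.
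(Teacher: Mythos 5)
Your approach differs from the paper's, and there is a gap at the pigeonhole step. Your stabilization lemma requires that $\alpha$ and $\alpha'$ lie in the same $\Mod(\Sigma)$--orbit and likewise for $\beta$ and $\beta'$, and the transport argument really needs this: the mapping classes carrying $\alpha$ to $\alpha'$ and $\beta$ to $\beta'$ exist only under that assumption. But the hypothesis of the corollary supplies no orbit control at all --- it asserts only that for each $n$ there are pairs at distance $\geq n$ in $Q$ and in its complement, with no constraint on the topological types of the curves involved. Passing to subsequences makes the orbit type constant within each of the two families separately, but nothing forces the constant orbit type of the $Q$--pairs to coincide with that of the non--$Q$--pairs. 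Thus the final contradiction does not arise as stated, and one would have to either upgrade the stabilization lemma to allow mismatched orbit types (which the transport proof does not do) or handle the mismatched case by a separate argument.

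The paper's route sidesteps this entirely by never leaving $\M$: it pulls the defining formula for $Q$ back through the interpretation and invokes the \emph{absolute} quantifier elimination of $\Th{\M}$ (Theorem~\ref{t:M-fullqe}), not merely the relative $\exists$--elimination available in $\mC(\Sigma)$. The pulled--back formula is then a Boolean combination of relations $R_{w}$, and Lemma~\ref{lem-upper-bound-disp} gives an a priori bound on $d_{\mC(\Sigma)}(\gamma,\gamma')$ whenever $R_{w}$ holds for a word $w$ whose domains are proper; at large enough distance all such atoms fail at once, so the Boolean combination takes a single fixed truth value and the contradiction is immediate, with no decomposition of witnesses, no residual bookkeeping, and no orbit matching. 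Your threshold $N_{0}\geq 2r+4$ plays the role of that displacement bound, so you have one of the key ideas, but it is embedded in extra machinery the paper does not need. Your closing remark --- that the delicate uniformity "can be made rigorous using the bi-interpretation with $\M(\Sigma)$ and the absolute quantifier elimination of $\M(\Sigma)$" --- is not a patch on the combinatorial transport but a wholesale replacement of it by the other route, which is precisely the one the paper takes.
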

    
    Corollary~\ref{cor:alg-int-body} follows from quantifier elimination in $\M^G$, where $G<\Mod^{\pm}(\Sigma)$ is the
    pure mapping class group of $\Sigma$, and the fact that $\C(\Sigma)$ can be interpreted therein,
    and thus from a weak version of quantifier elimination in $\C(\Sigma)$.
    
    The following is a
    consequence of Corollary~\ref{cor:alg-int-body},
    together with the existence of pseudo-Anosov elements in the Torelli subgroup of $G$
    (which requires us to exclude a torus with at most one boundary component).
    Such mapping classes have positive translation length in $\mC(\Sigma)$, and also preserve the algebraic
    intersection pairing.
    
    \begin{corollary}
    	Suppose $\Sigma$ has positive genus and is not a torus with
    	fewer than three punctures, and let $S\subset\bN$ be a nonempty proper subset.
    	The predicate $I_S$ that states that two simple closed curves $\alpha$ and $\beta$ have
    	integral algebraic intersection number $ai(\alpha,\beta)$ satisfying $|ai(\alpha,\beta)|\in S$ is not $\emptyset$--definable. Similarly,
    	the predicate $Q$ that states that two curves have algebraic intersection
    	number $0\pmod 2$ is not $\emptyset$--definable.
    \end{corollary}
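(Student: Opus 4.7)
The plan is to verify the hypothesis of Corollary~\ref{cor:alg-int-body} for each of the predicates in question, using a pseudo-Anosov mapping class $\phi$ that lies in the Torelli subgroup $\mathcal{I}(\Sigma)\leq G$. Such a $\phi$ exists precisely under the assumption that $\Sigma$ has positive genus and is not a torus with at most two boundary components: for $g\geq 2$ this is standard (e.g.~via Thurston's construction applied to filling multicurves whose classes span a Lagrangian in homology), and for $S_{1,b}$ with $b\geq 3$ one can likewise build filling curves that are nullhomologous. By construction $\phi$ acts trivially on $H_1(\Sigma;\bZ)$, so it preserves the algebraic intersection form: for any simple closed curves $\gamma,\delta$ and any $k\in\bZ$,
\[ \hat\imath(\gamma,\phi^k\delta)\;=\;\hat\imath(\gamma,\delta). \]
On the other hand, since $\phi$ is pseudo-Anosov it acts loxodromically on $\mC(\Sigma)$, so its translation length $\tau(\phi)>0$ and $d_{\mC}(\delta,\phi^k\delta)\geq \tau(\phi)\cdot|k|-O(1)$.

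Fix $n>0$ and let $Q_n\subseteq \mC(\Sigma)^2$ be the predicate $|\hat\imath(\alpha,\beta)|=n$. To produce pairs in $Q_n$ of arbitrarily large curve graph distance, start with any pair $(\alpha,\beta)$ with $\hat\imath(\alpha,\beta)=\pm n$; this is possible because $\Sigma$ has positive genus, so every integer is realized as the algebraic intersection of some pair of simple closed curves. Then $(\alpha,\phi^k\beta)\in Q_n$ for every $k$, and the triangle inequality gives
\[ d_{\mC}(\alpha,\phi^k\beta)\;\geq\; d_{\mC}(\beta,\phi^k\beta)-d_{\mC}(\alpha,\beta)\;\longrightarrow\;\infty. \]
For pairs outside $Q_n$ of arbitrarily large distance, repeat the construction starting from a disjoint pair $(\alpha',\beta')$, which satisfies $\hat\imath(\alpha',\beta')=0\neq\pm n$. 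Both hypotheses of Corollary~\ref{cor:alg-int-body} are therefore met, so $Q_n$ is not $\emptyset$--definable.

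For the mod $2$ claim, apply the same argument to the predicate $R(\alpha,\beta):\hat\imath(\alpha,\beta)\equiv 0\pmod 2$. One needs to exhibit a pair of curves whose algebraic intersection is odd (e.g.~a standard symplectic pair $(a_1,b_1)$ with $\hat\imath=\pm 1$) and a pair whose intersection is even (e.g.~any disjoint pair); applying high powers of $\phi$ to one curve in each pair preserves parity, by the same homological invariance, while sending the distance in $\mC(\Sigma)$ to infinity. Invoking Corollary~\ref{cor:alg-int-body} once more finishes the proof.

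The only genuine obstacle is the input that $\phi\in\mathcal{I}(\Sigma)$ can be chosen pseudo-Anosov for the allowed surfaces; this is why the torus with at most two boundary components must be excluded (there the Torelli group is too small to contain pseudo-Anosov elements). Everything else is a routine combination of the homological invariance of the intersection pairing under Torelli elements, the linear growth of $d_{\mC}(\cdot,\phi^k\cdot)$, and the general criterion furnished by Corollary~\ref{cor:alg-int-body}.
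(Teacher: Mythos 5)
Your argument is correct and follows essentially the same approach as the paper: the paper's own "proof" is the one-sentence remark preceding Corollary~\ref{cor:alg-int-body}, which says the result is immediate from the existence of pseudo-Anosov elements of the Torelli group, which act loxodromically on $\mC(\Sigma)$ and preserve the algebraic intersection pairing. You correctly unpack that remark into a verification of the hypotheses of Corollary~\ref{cor:alg-int-body}, and your construction of pairs (realizing a given value of $|\hat\imath|$, then pushing one curve by $\phi^k$) is exactly what is intended.

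One small inaccuracy worth flagging: you attribute the exclusion of the torus with two boundary components to the Torelli group being too small to contain pseudo-Anosov elements, but that is not the paper's reason. The paper's parenthetical asserts that the Torelli-pseudo-Anosov existence result only requires excluding the torus with at most \emph{one} boundary component; the further exclusion of $S_{1,2}$ in the corollary's hypotheses is inherited from Corollary~\ref{cor:alg-int-body} itself, which needs $\Sigma\ne S_{1,2}$ because the bi-interpretability with $\mathcal{M}$ (and hence quantifier elimination) is only established away from that case. This does not affect the validity of your proof, since the hypothesis you are given already excludes $S_{1,2}$, but the stated reason is off.
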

    
    Since vertices of $\C(\Sigma)$ are not oriented, we allow the intersection numbers to take on positive or negative values,
    corresponding to the possible orientations of curves.
    To see how this corollary follows, let $\alpha$ and $\beta$ be simple closed curves on $\Sigma$ with an
    algebraic intersection number whose absolute value lies in $S$,
    and let $\psi$ be
    a pseudo-Anosov mapping class that acts trivially on $H_1(\Sigma,\mathbb Z)$; it is a standard fact that there are
    pseudo-Anosov mapping classes in all nontrivial, non-central normal subgroups of $\Mod(\Sigma)$ (see~\cite{ivanov-book}, for instance),
    and so such a $\psi$ exists. Then for any $N,\Delta\geq 0$, there exists an $M$
    such that if $m\geq M$ then $d(\alpha,\psi^m(\beta))\geq \Delta$, but the algebraic intersection number of
    $\alpha$ and $\psi^m(\beta)$ is independent of $m$. We may repeat the same argument for a pair of curves
    whose algebraic intersection number's absolute value lies outside of $S$. All possible algebraic intersection numbers
    are achieved by pairs of curves; indeed, taking two curves that intersect exactly once and then performing Dehn twists about
    one of them, we obtain all possible integer values for the algebraic intersection number. We may now apply Corollary~\ref{cor:alg-int-body}.

    \begin{proof}[Proof of Corollary~\ref{cor:alg-int-body}]
    	Throughout, let $G$ denote the pure mapping class group of $\Sigma$.
    	Suppose the contrary of the conclusion of the corollary. Lemma~\ref{interpretation in M} implies that $\mC(\Sigma)\rightsquigarrow
	\M^G$. A predicate $Q$ as in the statement is defined by
    	a parameter free formula $\theta$ in
    	$\mathcal{L}(\D_0)$, in the sense that $Q(x,y)$ is given
    	by a conjunction of formulae interpreting $\mC(\Sigma)$ in $\M^G$, together with $\theta$ itself.
    	Since $\M^G$ has quantifier elimination by Theorem~\ref{t:M-fullqe}, we may assume that
    	$\theta$ is quantifier free. We may therefore take $\theta$ to be a Boolean combination of relations in
    	the language $\mathcal{L}(\D_0)$, some positive and some negated.
    	
    	If $D$ is a proper domain, then the relation $R_D$ gives
    	rise to a priori bounds on distances between curves that are $D$--related. In terms
    	of mapping classes and curves, this means precisely that if $g\in G[D]$ and if
    	$\gamma\in\mC_0(\Sigma)$, then there is a $K=K(D,\gamma)$ such that
    	$d_{\mC(\Sigma)}(\gamma,g(\gamma))\leq K$. A justification of this claim is a standard result about the geometry of
    	the curve graph, and will be revisited with proof in Lemma~\ref{lem-upper-bound-disp} below.
    	
    	Expressing $\theta$ as a Boolean combination of relational formulae $R_w$, suppose
    	\[\{w_1,\ldots,w_k\}\] are the words which appear in relations, where each domain appearing
    	in each $w_i$ is proper. Let $\gamma\in\mC_0(\Sigma)$ be a simple closed curve.
    	From Lemma~\ref{lem-upper-bound-disp} below,
    	we have that there exists a bound $K=K(\gamma,w_1,\ldots,w_k)$ such that
    	\[\bigvee_{i=1}^k R_{w_i}(\zeta(\gamma),\zeta(\gamma'))\rightarrow
    	d_{\mC(\Sigma)}(\gamma,\gamma')\leq K.\] We remark that here we are slightly abusing
    	notation since $\zeta(\gamma)$ is an equivalence class, though it is not difficult
    	to see that a choice of representative within an equivalence class does not
    	effect any substantive change.
    	
    	By assumption, for any bound $K$, there exist curves
    	$\{\alpha,\beta,\alpha',\beta'\}$, with
    	\[d_{\mC(\Sigma)}(\alpha,\beta)> K,\quad d_{\mC(\Sigma)}(\alpha',\beta')> K,\]
    	and such that
    	$\mC(\Sigma)\models Q(\alpha,\beta)\wedge \neg Q(\alpha',\beta')$.
    	Thus, given a finite list of words $\{w_1,\ldots,w_k\}$ wherein all the domains
    	occurring in these words are proper, there exist curves
    	$\{\alpha,\beta,\alpha',\beta'\}$
    	such that \[\M^G\models \left(\bigwedge_{i=1}^k \neg R_{w_i}(\zeta(\alpha),\zeta(\beta))\right)
    	\wedge
    	\left(\bigwedge_{i=1}^k \neg R_{w_i}(\zeta(\alpha'),\zeta(\beta'))\right),\] and such that
    	$\mC(\Sigma)\models Q(\alpha,\beta)\wedge \neg Q(\alpha',\beta')$, and one can assume
    	that the distances between these curves in $\mC(\Sigma)$ exceed any prescribed bound.
    	
    	We are therefore reduced to two cases. If there exists a finite list of words
    	$\{w_1,\ldots,w_k\}$ such that
    	\[\M^G\models (\forall x\forall y)\left(\bigwedge_{i=1}^k \neg R_{w_i}(x,y)
    	\rightarrow \theta(x,y)\right),\] then there exist curves $\{\alpha,\beta,\alpha',\beta'\}$
    	such that
    	\[\M^G\models \theta(\zeta(\alpha),\zeta(\beta))\wedge
    	\theta(\zeta(\alpha'),\zeta(\beta')),
    	\quad
    	\mC(\Sigma)\models Q(\alpha,\beta)\wedge \neg Q(\alpha',\beta'),\]
    	which is a contradiction.
    	
    	Otherwise, we have \[\M^G\models (\forall x\forall y)(\theta(x,y)\rightarrow R_w(x,y)),\]
    	for some word $w$, wherein all domains are proper.
    	It follows in this case that there exist curves $\{\alpha,\beta,\alpha',\beta'\}$ such that
    	\[\M^G\models \neg\theta(\zeta(\alpha),\zeta(\beta))\wedge
    	\neg\theta(\zeta(\alpha'),\zeta(\beta')),
    	\quad
    	\mC(\Sigma)\models Q(\alpha,\beta)\wedge \neg Q(\alpha',\beta'),\] which is also
    	a contradiction.
    \end{proof}

    \newcommand{\rel}[3]{#1(#2)#3}
\section{The relational theory of $\M^G_{\D}(\Sigma)$}\label{sec:relational-theory}
  Throughout this section, $G<\Mod^{\pm}(\Sigma)$ denotes a finite index subgroup, unless
  otherwise noted.
  We now record a combinatorial calculus for manipulating the relations $R_w$
  occurring in the language $\mL:=\mL(\D)$, where $\D\subset\D_0$ is assumed to be $G$--invariant and downward closed.
  This section is rather technical, but the goal is simple: how does the relational theory of $\M^G_{\D}(\Sigma)$ determine the tuple
  of a type?

  \subsection{Preliminaries}

    We note some properties
    enjoyed by the relations $R_{w}$ in $\mM^G:=\M^G_{\D}$.
    All of them are elementary conditions, which is to  say they
    can be captured by a sentence of $\mL$ and thus will hold in any model of $\Th{\mM^G}$.
    
    \enum{(a)}{
    	\item \label{identity} $R_{1_{G}}$ is the identity relation, where $1_G$ denotes the identity element of $G$.
    	\item \label{composition} $R_{g,h}=R_{gh}$ for any $g,h\in G$.
    	\item \label{intersection}$R_{D}\cap R_{D'}=R_{D\cap D'}$.
    	\item \label{absorption} Suppose that $R_{D}(1_G,g)$ for $g\in G$. Then
    	$R_{D,g}=R_{g,D}=R_{D}$.
    	\item \label{inclusion}$R_{D}\subseteq R_{D'}$  if and only if $D\subseteq
    	D'$. In particular, in this situation $R_{D',D}=R_{D,D'}=R_{D'}$.
    	\item \label{commutation} Suppose $D_{1},D_{2}\in\D$ satisfy $D_{1}\perp
    	D_{2}$. If $x,y$ satisfy $R_{D_{1},D_{2}}(x,y)$, then also
    	$R_{D_{2},D_{1}}(x,y)$.
    	\item \label{conjugatoin} Suppose $D\in\D$ and $g\in G$ are arbitrary. Then
    	$R_{g,D}(x,y)$ is equivalent with $R_{g^{-1}D,g}$.}

    	\newcommand{\W}[0]{R^{*}}

    	Notice that Items (\ref{identity}) and (\ref{composition}) above imply that in any model of $\Th{\mM^G}$,
	for a  given $g\in G$ and $x\in M$,
    	there is a unique $y$ such that $R_{g}(x,y)$.
    	In this situation we might write $y=xg$ and $xG=\{xg\}_{g\in G}$.
    	Recall that $\alp=\alp_{\D}$ consists of the alphabet $\D\cup G$.
    	
    	\begin{definition}[Inclusion relation $\subseteq$ in $\alp$]
    		We extend the inclusion relation $\subseteq$ to  the whole of
    		$\alp$ by declaring the following:
    		\begin{itemize}
    			\item   $g\subseteq D$ if and only if $R_{D}(1,g)$;
    			\item $g\subseteq h$ if and only if $g=h$.
    		\end{itemize}
    	\end{definition}
    	
    	Note that $g\subseteq D$ if and only if $g\in G[D]$.
    	
    	\begin{definition}[Support of $g$]
    		Let $1\neq g\in G$. The \emph{support of }$g$, which we denote by
    		$\mathrm{~supp~}g$, is the isotopy class of the smallest
    		essential subsurface $\Sigma_0 \subseteq \Sigma$ for which there exists a
    		homeomorphism representative $\tilde{g}$ which restricts
    		to the identity on $\Sigma\setminus \Sigma_0$.
    	\end{definition}
    	
    	Note that it is possible for $\Sigma_0=\Sigma$. By convention, we will declare the support of the identity element of $G$ to be empty.
    	
    	\begin{definition}[Orthogonal words]
    		We say that $g\in G\setminus\{1\}$ is
    		\emph{orthogonal} to $D\in\D$, and write $$g\perp D,$$  if $g\in G[D^{\perp}]$.
    		Similarly, given $u\in\wo$ and $D\in\D$ we say that $u$ is
    		\emph{orthogonal} to $D$, and write
    		$$ u \perp D ,$$
    		if each letter of $u$ is orthogonal to $D$, where orthogonality of domains is as defined in Subsection~\ref{ss:ortho}.
    	\end{definition}

    	\begin{definition}[$R_D^\star$-relation]Let $D\in\D$ be given. Let \[\alp(D)=\{X\in\alp\,|\,X\subsetneq D\},\] and denote by
    		$\wo(D)$ the collection of all $w\in\wo$ containing only instances of letters in $\alp(D)$.
    		We will consider the following type-definable relation:
    		\begin{align}
    			\label{jump}\W_{D}(x,y)\equiv R_{D}(x,y)\wedge\bigwedge_{w\in\wo(D)}\neg R_{w}(x,y).
    		\end{align}
    		For $g\in G\setminus 1$, we set $\W_g=R_g$.
    	\end{definition}
    	Observe that in particular that for all $D\in\D$, we have \[\W_D(a,b)\to \bigwedge_{g\in G}\neg R_g(a,b).\]
    	
    	The idea is that $\W_{D}(x,y)$ holds whenever $x,y$ are $R_{D}$-related in a
    	\emph{generic} fashion. In other words they are not linked by any relation stronger
    	than $R_{D}$; we will later show that in this situation $R_{D}$ is in some sense not
    	only a minimal, but in fact the minimal relation between $x$ and $y$.
    	
    	Using well-established ideas from mapping class  group theory  (which we will
    	make  explicit  in the  sequel),  it is  not difficult to see that the
    	$R_D^*$ is finitely  satisfiable  in the usual mapping class group,
    	which is to say there are mapping classes which are $D$--related but not
    	$w$--related for elements $w\in \wo_0\subset \wo$, where  $\wo_0$ is  taken
    	to  be  finite. However, it is generally not possible to realize $R_D^*$ in
    	the usual mapping class  group, since mapping classes supported on a given
    	sufficiently complicated subsurface $\Sigma_0\subset\Sigma$ are often expressible
    	as products of mapping  classes
    	supported on proper subsurfaces of $\Sigma_0$.  This observation will be
    	later codified  by the statement that  $\W_D$ is consistent and non-algebraic;
    	see Corollary~\ref{c: consistency} and Lemma~\ref{l: main lemma}.
    	
    	\begin{definition}[$w$-sequence]
    	Letting $\M:=\M_\D^G(\Sigma)$ for some suitable $\D$ and $G$, let $\mathcal N$ be a
    	model of $\Th{\M}$ with universe $N$.
    	Given $a,b \in N$ and $w=(\delta_1, \ldots, \delta_k) \in \wo$
    	such that $\mathcal{N} \models R_w(a,b)$,
    	we define a \emph{$w$-sequence} from $a$ to $b$ to be a sequence
    	\[s: a=a_{0},a_{1},\ldots, a_{k}=b\] such that
    	$a_i \in N$  for $0\leq i\leq k$, and such that
    	\[\mathcal{N} \models \bigwedge_{i=0}^{k-1}R_{\delta_{i}}(z_{i},z_{i+1})\]
    	for all $i = 0, \ldots, k$.
	\end{definition}
	
    	\begin{figure}[h!]
    		\includegraphics[width=9cm]{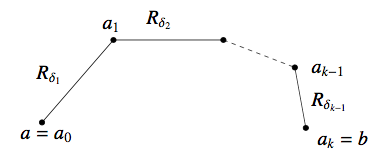}
    		\caption{A $w$-sequence witnessing $R_{w}(a,b)$}
    	\end{figure}
    	
    	A more refined notion of a $w$-sequence is the following:
    	\begin{definition}[Strict $w$-sequence]
    		Given $w\in\wo$, write
    		\[w=\delta_{1}\delta_{2}\cdots \delta_{k},\] with $\delta_i\in\alp$ for each $i$, and let $\W_{w}(x,y)$ stand for the
    		composition of relations \[\W_{\delta_{1}}\circ\W_{\delta_{2}}\circ\cdots \circ
    		\W_{\delta_{k}}.\]
    		In other words,
    		$\W_{w}(x,y)$ is equivalent to the existence of a sequence
    		\[x_{0}=x,x_{1},\ldots, x_{k}=y\] such that $\W_{\delta_{i}}(x_{i-1},x_{i})$
    		for all $1\leq i\leq k$. We will refer to any sequence of the form $\{x_0,\ldots,x_k\}$ as a
    		\emph{strict} \emph{$w$-sequence} from $x$ to $y$.
    	\end{definition}
    	
    	Though it is not entirely obvious from the definition,
    	one can use compactness to show that $\W_w$ is expressible as a
    	parameter--free
    	type-definable condition as well.

    	\begin{remark}
    		We
    		will write $\phi(x,y)\in\W_{D}(x,y)$ and $\phi(y)\in\W_{D}(a,y)$ to indicate that $\phi$ is a finite
    		subconjunction of terms, where the  terms appear
    		in the infinite conjunction defining the corresponding relations in
    		$(\ref{jump})$.
    		Thus, the inclusion relations have as  their target the collection
    		of all finite conjunctions of relations
    		$\W_{D}(x,y)$ and $\W_{D}(a,y)$.
    	\end{remark}

    	\begin{definition}[The inclusion relation $\subseteq$ on $\wo$]
    		Given $u,v\in\wo$ we will write $u\subseteq v$ if we can write
    		\[v=v_{1}v_{2}\cdots v_{k}\quad \textrm{with}\quad v_{j}\in\alp,\quad \textrm{and}\quad
    		u=u_{1}u_{2}\cdots u_{k},\] where
    		for all $1\leq j\leq k$ either:
    		\begin{itemize}
    			\item $v_{j}\in G$ and $u_{j}=v_{j}$
    			\item $v_{j}\in\D$ and either $u_{j}=D$ or $u_{j}\in\wo(D)$
    		\end{itemize}
    		
    		We write $u\subset v$ if $u_{j}\in\wo(Z_{j})$ for
    		at least one index $j$ above.
    	\end{definition}
    	This leads to the following straightforward observation:
    	\begin{observation}
    		Let $w\in\wo$ and suppose that $\{z_{i}\}_{i=0}^{k}$ is
    		a $w$-sequence from $x$ to $y$. Then either $\{z_{i}\}_{i=0}^k$ is strict
    		and thus lies in $\W_{w}(x,y)$, or there exists a $w'\subset w$
    		and a refinement
    		$\{z'_{i}\}_{i=0}^k$ of $\{z_{i}\}_{i=0}^k$ which is a $w'$-sequence from $x$ to $y$.
    	\end{observation}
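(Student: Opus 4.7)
The plan is to proceed by dichotomy on whether the given $w$-sequence is strict, and if not, to surgically expand a single non-strict step into a block of strictly smaller letters. Write $w=\delta_{1}\delta_{2}\cdots\delta_{k}$ and suppose $\{z_{i}\}_{i=0}^{k}$ is a $w$-sequence from $x$ to $y$ that is not strict. Then there is some index $i$ for which $R_{\delta_{i}}(z_{i-1},z_{i})$ holds but $\W_{\delta_{i}}(z_{i-1},z_{i})$ fails.

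The first observation is that the failing letter $\delta_{i}$ cannot lie in $G$: by the convention adopted in the definition of a strict $w$-sequence, $\W_{g}=R_{g}$ for every $g\in G$, so any group-letter step is automatically strict. Hence $\delta_{i}=D$ for some $D\in\D$. Since $\W_{D}(z_{i-1},z_{i})$ fails while $R_{D}(z_{i-1},z_{i})$ holds, unpacking the type-definable formula~(\ref{jump}) produces a word $w_{0}\in\wo(D)$—all of whose letters therefore lie in $\alp(D)=\{X\in\alp\mid X\subsetneq D\}$—such that $R_{w_{0}}(z_{i-1},z_{i})$ holds. By the very definition of $R_{w_{0}}$ as the composition of the letter-relations appearing in $w_{0}=\varepsilon_{1}\cdots\varepsilon_{m}$, there is a witnessing sequence $z_{i-1}=y_{0},y_{1},\ldots,y_{m}=z_{i}$ with $R_{\varepsilon_{j}}(y_{j-1},y_{j})$ for $1\leq j\leq m$.

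Finally, I splice to produce the refinement. Define $w'\in\wo$ by replacing the single letter $\delta_{i}=D$ in $w$ with the block $\varepsilon_{1}\cdots\varepsilon_{m}$, and define $\{z'_{j}\}$ from $\{z_{j}\}$ by replacing the single edge $(z_{i-1},z_{i})$ with the expansion $y_{0},y_{1},\ldots,y_{m}$. By construction $\{z'_{j}\}$ is a $w'$-sequence from $x$ to $y$ whose set of terms contains $\{z_{j}\}$ as a subsequence, so it is a refinement of $\{z_{j}\}$. Under the definition of $\subseteq$ on $\wo$, taking $u_{i}=w_{0}\in\wo(D)$ at position $i$ and $u_{l}=\delta_{l}$ at every other position witnesses $w'\subseteq w$; because $u_{i}\in\wo(D)$ rather than $u_{i}=D$, the inclusion is strict, i.e.\ $w'\subset w$, as required.

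The argument is a direct unpacking of definitions and presents no real obstacle. The only point requiring attention is the initial reduction that the failing letter must be a domain rather than a group element, which follows immediately from the convention $\W_{g}=R_{g}$ and ensures that the failure of strictness at a step is detected by a witness word $w_{0}\in\wo(D)$ whose letters are properly contained in $D$.
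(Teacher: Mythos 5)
Your proof is correct and amounts to a direct unpacking of the definitions, which is exactly what the paper intends by leaving the statement as an unproved observation: if some step $(z_{i-1},z_i)$ fails to be strict, the failing letter must be a domain $D$ (group-letter steps are strict by the convention $\W_g = R_g$), so the negation of the conjunct $\neg R_{w_0}$ in the type $\W_D$ yields a witness $w_0\in\wo(D)$; a witnessing $w_0$-sequence spliced into position $i$ gives the refinement, and the replacement of $D$ by $w_0\in\wo(D)$ is precisely what makes $w'\subset w$ under the paper's definition of the strict inclusion on $\wo$. This matches the paper's (implicit) reasoning.
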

    	
    	In particular, we have:
    	\begin{lemma}[Refining a $w$-sequence to a strict sequence]
    		\label{strictness}
    		Let $\mathcal N$ be a model of $\Th{\M^G}$ with universe $N$, and let $x,y\in N$. Given a $w$-sequence
    		$\{z_{i}\}_{i=0}^k$ between points $x$ and $y$, there exists a
    		finite refinement $\{z'_{i}\}_{i=0}^k$ of $\{z_{i}\}_{i=0}^k$ and a
    		$w'\subset w$ such that
    		$\{z'_{i}\}_{i}$ is a strict $w'$-sequence from $x$ to $y$.
    	\end{lemma}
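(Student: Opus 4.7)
The plan is to argue by well-founded induction on a measure $\mu(w)$ attached to $w\in\wo$. For each $w$, let $\mu(w)$ be the tuple $(f_{N}(w),f_{N-1}(w),\dots,f_{0}(w))$, where $f_{j}(w)$ counts the domain letters of $w$ of complexity exactly $j$ and $N$ is any fixed upper bound on the complexity of domains in $\D$. Group-element letters contribute nothing, reflecting the convention $\W_{g}=R_{g}$. Ordering these tuples lexicographically (high complexity first) gives a well-founded order, and the strategy reduces to showing that any non-strict $w$-sequence admits a refinement to a $w'$-sequence with $\mu(w')<\mu(w)$, from which the lemma follows by induction.

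Fix a $w$-sequence $\{z_{i}\}_{i=0}^{k}$ from $x$ to $y$. If every step already satisfies $\W_{\delta_{i}}(z_{i-1},z_{i})$ then the sequence is strict and we are done with $w'=w$. Otherwise, let $i$ be the least index at which strictness fails. Since $\W_{g}=R_{g}$ for any group-element letter, we must have $\delta_{i}=D$ for some $D\in\D$, and the failure of $\W_{D}(z_{i-1},z_{i})$ forces, by negation of one of the clauses in the defining conjunction, the existence of a word $u\in\wo(D)$ with $\mathcal{N}\models R_{u}(z_{i-1},z_{i})$. Unpacking $R_{u}$ as an iterated composition then produces finitely many witnesses between $z_{i-1}$ and $z_{i}$; splicing them in yields a refinement of the original sequence which is a $w'$-sequence for the word
\[
w'=\delta_{1}\cdots\delta_{i-1}\, u\, \delta_{i+1}\cdots\delta_{k},
\]
and $w'\subset w$ in the sense of the preceding definition.

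Next, the measure decreases strictly: positions other than $i$ contribute identically to $\mu(w)$ and $\mu(w')$, while at position $i$ the single letter $D$ of complexity $k(D)$ is replaced by the letters of $u\in\wo(D)$, each of which lies in $\alp(D)$ and is hence either a group element or a proper subdomain $D'\subsetneq D$ of complexity $k(D')<k(D)$ (via Lemma~\ref{complexity length}). Thus the coordinate $f_{k(D)}$ drops by exactly one in passing from $\mu(w)$ to $\mu(w')$, while only coordinates $f_{j}$ with $j<k(D)$ can grow, giving $\mu(w')<\mu(w)$ lexicographically. The inductive hypothesis applied to the $w'$-sequence produces a further refinement to a strict $w''$-sequence with $w''\subseteq w'\subset w$, completing the proof.

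The main obstacle I anticipate is the bookkeeping for non-connected domains, for which the complexity $k(\cdot)$ is defined in the paper only on connected $D$. The natural remedy is to extend $k$ additively over the canonical decomposition of a domain into pairwise orthogonal connected components (cf.\ the decomposition lemma earlier in Section~3), and to verify that proper inclusion of domains still entails a strict drop in this extended complexity; once this is in place, the well-founded descent above goes through unchanged, and one never actually has to search over infinitely many possible refinements at a given step since $R_{u}$ is witnessed by a single finite sequence of length $|u|$.
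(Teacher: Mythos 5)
Your overall strategy — iterating the preceding Observation and grounding the recursion by well-founded induction on a complexity measure attached to $w$ — is exactly the route the paper intends; the paper gives no written proof of this lemma beyond that Observation. Your inductive step is also correct: at the first non-strict position $i$ with $\delta_i = D$, failure of the infinite conjunction defining $\W_D$ produces a concrete $u\in\wo(D)$ witnessed by a finite path, and splicing it in yields a $w'$-sequence with $w'\subset w$. The gap is in the measure.

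You propose extending the complexity $k$ additively over connected components of a non-connected domain, and you assert that proper inclusion $E\subsetneq D$ then forces $k(E)<k(D)$; but that assertion is false. Take $\Sigma$ closed of genus $2$, so $D=\C_0(\Sigma)$ has $k(D)=4$. Let $\gamma$ be a separating curve and let $E$ be the domain of all curves disjoint from $\gamma$, namely $E=D_{\Sigma_1}\vee D_{\Sigma_2}$ for the two once-punctured-torus halves $\Sigma_1,\Sigma_2$. Then $E\subsetneq D$ has two connected components each of complexity $2$, so the additive convention gives $k(E)=4=k(D)$. If this $E$ occurs as a letter of $u$, the coordinate $f_4$ does not drop and the lexicographic descent stalls. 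The correct remedy is not an extended numerical complexity but a normalization: whenever a non-connected letter $E$ with components $E_1,\dots,E_m$ appears (in the original $w$ or in a word $u$ produced by a step), replace it by the subword $E_1\cdots E_m$. This is legitimate since for pairwise orthogonal $E_i$ one has $\Stab^{+}(E^{\perp})=\prod_i\Stab^{+}(E_i^{\perp})$ in $G$, so $R_E$ and $R_{E_1}\circ\cdots\circ R_{E_m}$ define the same relation in $\M$ and hence in every model of $\Th{\M}$, while $E_1\cdots E_m\in\wo(E)\subseteq\wo(D)$ keeps the $\subset$ relation to $w$. After this normalization every domain letter is connected; a connected proper subdomain of a connected $D$ does have strictly smaller complexity (cf.~Lemma~\ref{complexity length}), and your lexicographic measure — equivalently the ordinal $\Or$ of Section~5 — strictly decreases, closing the induction.
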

    	
    	Unfortunately, the existence of a $w$-strict sequence between $x$ and $y$
    	need not in general determine the type $\tp(x,y)$. In order
    	to fix this  issue, one must require $w$ to contain no redundancies,
    	i.e.~that $w$ be reduced in a very specific sense.
    	This is the focus of the next subsubsection.

  	\subsection{Word equivalence and cancellation}
    	
    	\newcommand{\woi}[0]{\W^{ind}}
    	\newcommand{\woc}[0]{\wo^{c}}
    	
    	Throughout this subsection and for  the rest of the manuscript, we  will
    	implicitly assume (unless otherwise noted) that each word $w$ has \emph{connected letters},
    	that is if $D\in\D$ occurs in
    	$w$  then $D$ is a connected domain.
    	
    	Consider a word \[w=(\delta_{1},\delta_{2},\delta_{3},\ldots,
    	\delta_{k})\in\wo.\] We can obtain a new $w'\in\wo$ by applying one of the following
    	fundamental moves to $w$.
    	\begin{itemize}
    		\item [(Rm)] \label{removing1} Deleting an instance of $1_{G}$.
    		\item  [(Cmp)]\label{merging} Replacing a subword of the form $g_{1},g_{2}$ by
    		$g_{1}g_{2}$.
    		\item [(Swp)]\label{swap}Replacing a subsequence of the form $(D_{1},D_{2})$ where
    		$D_{1}\perp D_{2}$ by $(D_{2},D_{1})$. We sometimes call this a \emph{transposition}.
    		\item [(Jmp)]\label{jumping} Replacing a subsequence of the form
    		$(D,g)$ by $(g,g^{-1}(D))$ or vice versa.
    		\item [($\mathrm{Abs}_{G}$)]\label{absorption1} Replacing a
    		subword of the form $(g,D)$ or $(D,g)$ with $D$, provided
    		$g\subseteq D$.
    		\item [($\mathrm{Abs}_{\subset}$)]\label{absorption2}
    		Replacing a subsequence $(D,D')$ or $(D',D)$  by $D$ in case $D\subsetneq D'$.
    		\item [($\mathrm{Abs}_{=}$)]\label{cancellation1} Replacing a subsequence
    		of the form $(D,D)$ by $D$.
    		
    	\end{itemize}

    	We will occasionally say that two letters in a word $w\in\wo$ which can be transposed
    	\emph{commute} with each other. We remark that this is slightly misleading terminology,
    	since a domain is not orthogonal to itself and hence does not commute with itself in the
    	sense of transpositions. The move $\mathrm{Abs}_{=}$ provides a suitable
    	framework for dealing with instances of the same domain within a word.
    	We remark that if $D$ is a
    	non-annular domain and $E$ is an annular domain that is peripheral
    	to $D$, then $D$ and $E$ commute with each other.
    	
    	\begin{definition}[Permutation of $w$]
    		We say that $w$ is a \emph{permutation} of $w'$ if it can be obtained from $w$
    		by repeated applications of $\{(\mathrm{Swp}), (\mathrm{Jmp}),
    		(\mathrm{Cmp}), (\mathrm{Rm})\}$ and their inverses.
    	\end{definition}

    	Finally, consider the following class of moves:
    	\begin{itemize}
    		\item [($\mathrm{C}$)]\label{cancellation2} Replacing a
    		subsequence of the form $(D,D)$ by a word in $\wo(D)$.
    	\end{itemize}
    	
    	We refer to all the operations listed above as \emph{elementary moves}.
    	
    	\begin{definition}[Reduced word]
    		We say that $w$ is \emph{reduced} if it cannot be transformed by a permutation to
    		a word to which one  of
    		\[\{(\mathrm{Abs}_{\subset}),(\mathrm{Abs}_=),(\mathrm{C})\}\] can be applied.
    	\end{definition}
    	
    	\begin{definition}[Reduction of words, concatenation]
    		We say that $w'$ is a \emph{reduct} of $w$ (equivalently, $w$ reduces to $w'$) if
    		$w'$ can be obtained from $w$ by a successive application of elementary moves. If $w'$ is obtained from $w$ with
    		at least one application
    		one of the moves \[\{(\mathrm{Abs}_{\subset}),(\mathrm{Abs}_=),(\mathrm{C})\},\] then we say that $w'$ is a
    		\emph{strict reduct} of $w$.
    		We say that $w$ reduces to $w'$ \emph{without cancellation} if it reduces
    		to $w'$ without application of $(\mathrm{C})$.
    		We write $[w]$ for the equivalence
    		class of a reduced word up to permutations, and $w_1\simeq w_2$
    		with $w_2$ reduced if $w_1\in [w_2]$
    		without cancellation.
    		Words $w_1$ and $w_2$ can be combined to a word $w_1w_2$ by \emph{concatenation}.
    	\end{definition}

    	\begin{lemma}[Existence of reduced words]\label{lem:reduced-unique}
    		Up to permutation, there is a unique reduced word $w'$ that can be obtained
    		from any given word $w\in\wo$, provided the reduction is performed without
    		cancellation.
    	\end{lemma}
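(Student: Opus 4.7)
The approach is to embed permutation-classes of words in $\wo$ into a semidirect product of $G$ with a trace monoid on $\D$, and then to treat the absorption moves as a terminating, locally confluent rewriting system whose unique normal forms are exactly the reduced words.

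First, let $T(\D,\perp)$ denote the partially commutative monoid generated by the connected domains of $\D$, subject to the commutation relations $D_{1} D_{2} = D_{2} D_{1}$ whenever $D_{1}\perp D_{2}$. Since the action of $G$ on $\D$ preserves the orthogonality relation $\perp$, it extends to an action of $G$ by automorphisms of $T(\D,\perp)$, and one may form the semidirect product monoid $\mathcal{S} := T(\D,\perp)\rtimes G$. The obvious evaluation map $\wo\to\mathcal{S}$, sending $(\delta_{1},\ldots,\delta_{k})$ to the product of its letters, is constant on permutation classes: (Rm) and (Cmp) encode the group operation on $G$; (Swp) encodes the commutation rules of $T(\D,\perp)$; and (Jmp), which replaces $(D,g)$ by $(g,g^{-1}(D))$, encodes the semidirect-product identity $Dg=g\cdot g^{-1}(D)$. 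Invoking the standard solution to the word problem in partially commutative monoids (Foata/Cartier--Foata normal forms), one concludes that two words of $\wo$ are permutation-equivalent if and only if they represent the same element of $\mathcal{S}$.

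Next, I would regard the absorption moves $(\mathrm{Abs}_{G})$, $(\mathrm{Abs}_{\subset})$ and $(\mathrm{Abs}_{=})$ as a rewriting system on $\wo$. Each such move strictly decreases the total number of letters, so the system is terminating. By Newman's lemma, it suffices to verify local confluence: whenever two absorption reductions are applicable to a common permutation class of $w$, both can be completed to the same reduced word up to permutation. Reductions acting on disjoint subwords commute trivially, so the content of the argument is a case analysis of the critical pairs in which the two target subwords share one or two letters. These overlaps necessarily involve a triple of letters with nested or coinciding domains (and possibly a group element), and a short enumeration shows that in each case both reductions descend to a common normal form modulo permutation.

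The main obstacle I anticipate is bookkeeping for the interplay between (Jmp) and the absorption moves. A (Jmp) relabels $D$ as $g^{-1}(D)$, so an $(\mathrm{Abs}_{\subset})$ or $(\mathrm{Abs}_{=})$ which becomes applicable after permutation need not be visible in the original word. The point is that $G$ acts by automorphisms not only of the orthogonality graph $(\D,\perp)$ but also of the poset $(\D,\subseteq)$, so any hidden absorption can be transported through a compensating sequence of (Jmp) moves to an absorption in any chosen representative of the permutation class. Combined with termination, this establishes confluence and hence the desired uniqueness of the reduced word up to permutation.
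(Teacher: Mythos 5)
Your approach is essentially the one the paper has in mind: both proofs hinge on embedding words (up to the permutation moves) into a partially commutative monoid and exploiting the resulting combinatorial control. The paper dispatches the lemma in one sentence, pointing to the discussion in the proof of Lemma \ref{surviving D}, where the embedding into a trace monoid $Y$ and the word-problem solution for right-angled Artin groups are made explicit; but that discussion only handles the permutation moves, and taken literally, ``applying the solution to the word problem'' does not by itself give uniqueness of normal forms under absorption, since absorption changes the element of the monoid. Your proposal supplies exactly the missing ingredient: you view the absorption moves as a terminating rewriting system on permutation classes and invoke Newman's lemma, which is the correct formal framework here. The semidirect-product packaging $T(\D,\perp)\rtimes G$ and the observation that $(\mathrm{Jmp})$ encodes the action identity $Dg = g\cdot g^{-1}(D)$ is a nice way to organise the group letters, and your remark that $G$ preserves both $\perp$ and $\subseteq$ is exactly what justifies transporting hidden absorptions through $(\mathrm{Jmp})$ moves. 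The one place you should not leave as a hand-wave if writing this up carefully is the critical-pair analysis: the overlaps $(D_1,D_2,D_3)$ with $D_1\subsetneq D_2$, $D_3\subsetneq D_2$, or $D_1\subsetneq D_2\subsetneq D_3$, or $(g,D_1,D_2)$ with $g\subset D_1\subsetneq D_2$, etc.\ all converge, but the verification repeatedly uses that $g\subset D$ and $D\subsetneq D'$ imply $g\subset D'$ (since $\Stab^{+}(D^{\perp})\subseteq\Stab^{+}(D'^{\perp})$), and that $g\in\Stab^{+}(D^{\perp})$ fixes every curve of $D^{\perp}$, so $g$ fixes any $D_1\perp D$; these facts are what make the diamond close, and it is worth stating them.
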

    	
    	\begin{proof}
    		This follows from a fairly straightforward induction on the length of $w$. The interested reader may find a detailed proof in Proposition 5.3
    		of~\cite{Baudisch-ample}.
    	\end{proof}

    	\begin{definition}[Partial order on words]
    		We say that $w_{1}\preceq w_{2}$ if for some
    		(possibly empty) collection of domains
    		$\{D_1,\ldots,D_k\}$
    		appearing in $w_2$, we can replace each $D_i$ by a word in $\wo(D_i)$ and
    		apply  a permutation to obtain $w_1$.
    	\end{definition}
    	
    	The  partial order on words, together with the  complexity $k(D)$ of a
    	domain, allow us to define the associated ordinal of a word.
    	
    	\begin{definition}[Associated ordinal]
    		Let $w\in\wo$. We define the \emph{associated ordinal}
    		$\Or(w)$ of $w$ inductively.
    		\begin{enumerate}
    			\item
    			For $D=\varnothing$, we  define  $\Or(D)=0$.
    			\item
    			For $g\in G$, we define $\Or(g)=0$.
    			\item
    			For $\varnothing\neq D\subsetneq \mC_0$ a domain of complexity $k$,
    			we  define  $\Or(D)=\omega^k$.
    			\item
    			Let $w_1,w_2\in\wo$ be words for which $\Or$ is defined. We define
    			\[\Or(w_1w_2)=\Or(w_1)\oplus\Or(w_2);\] that is, we take the symmetric
    			sum  of the two ordinals.
    		\end{enumerate}
    	\end{definition}
    	
    	We recall for the convenience of the reader that the symmetric sum of two ordinals $\mu$ and $\nu$ is the order type of the
    	ordered pair $(\mu,\nu)$, where $\mu\geq \nu$.
    	
    	We remark that $\omega^{k_{\max}}$ is the foundation  rank of the partial
    	order on  words.
    	Notice that the moves
    	\[\{(\mathrm{Rm}),(\mathrm{Cmp}),(\mathrm{Swp}),(\mathrm{Jmp})\}\]
    	preserve $\Or(w)$, while the moves
    	\[\{(\mathrm{Abs}_{\subset}),(\mathrm{Abs}_{=}),(\mathrm{C})\}\]
    	strictly decrease it.
    	
    	A pertinent (if rather trivial) observation is the following:
    	\begin{observation}
    		\label{properly descending}
    		If $w_1$ is a reduct of $w_2$ then $\Or(w_{1})\leq\Or(w_{2})$. If $w_1$ is a strict reduct of $w_2$ then
    		$\Or(w_{1})<\Or(w_{2})$.
    	\end{observation}

  	\subsection{Straightening paths}
    	In this brief subsection, we elaborate on the relationship between the combinatorics of elementary moves and the structure of
    	models of $\Th{\M^G}$.
    	
    	Let $\mathcal N$ be a model of $\Th{\M^G}$ with universe $N$.
    	Let $u\in\wo$ and suppose that $a=\{a_{i}\}_{i=0}^{k}\subset N$ is a $u$-sequence
    	from $a_{0}$ to $a_{k}$. That is, $u=u_1\cdots u_k$, and $R_{u_i}(a_{i-1},a_i)$ for all $i$.
    	We assume that $u$ contains a subword which admits the application of
    	some elementary move as above.
    	
    	If (Rm) can be applied to $u_{i}=1_{G}$, then there is a repetition $a_{i}=a_{i+1}$, and the result of deleting $a_{i}$
    	yields a $u'$ sequence from $a_0$ to $a_k$.
    	
    	Suppose now that $\{a_{i-1},a_{i},a_{i+1}\}$ is a subsequence
    	corresponding to the subword word $u_{i}u_{i+1}$,
    	where $u_{i}u_{i+1}$ now admits the application of  an elementary  move.
    	Let $a'$ be the result of deleting $a_{i}$ from the sequence $a$.
    	We distinguish several cases, according to the  types of moves  that  can be
    	applied:
    	
    	\begin{enumerate}
    		\item In case of
    		\[\{(\mathrm{Cmp}),(\mathrm{Abs}_{G}),(\mathrm{Abs}_{\subset})\},\]
    		the sequence $a'$ is an $u'$-sequence from $x$ to $y$, where $u'$ is the result of the application of the particular move to $u$, along the
    		subword $u_iu_{i+1}$. It can be easily checked that in this case $a'$ is a strict $u'$-sequence if and only if $a$ is a strict $u$-sequence;
    		this last claim importantly relies on the fact that if $\W_D(a_i,a_{i+1})$ holds then $a_i$ and $a_{i+1}$ are not $g$--related for
		any $g\in G$.
    		\item In  case of $(\mathrm{Swp})$ or $(\mathrm{Cmp})$  being
    		applied to the subword $u_{i}u_{i+1}$,
    		then by applying the commutation or conjugation properties of the relation $R_u$,
    		there exists an $a'_{i+1}\in N$ such that the result of replacing $a_{i+1}$ by
    		$a'_{i+1}$ is an $u'$-sequence, where $u$ is the result of applying
    		$(\mathrm{Swp})$ or $(\mathrm{Cmp})$ to $u$ respectively.
    		If $a$ is  strict  then so is $a'$.
    		\item If $(u_{i},u_{i+1})=(D,D)$ for some $D\in\D$, then clearly $a'$
    		is an $u_{0}$-sequence from
    		$a_0$ to $a_k$, where $u_{0}$ is the result of applying move
    		$(\mathrm{Abs}_{=})$ to $u_{i}u_{i+1}$. However, if
    		$a$ is strict there are two mutually incompatible situations:
    		\begin{itemize}
    			\item If $\W_{D}(a_{i},a_{i+2})$ then $a'$ is a strict $u_{0}$-sequence
    			from $a_0$ to $a_k$.
    			\item  $R_{w}(a_{i},a_{i+2})$ for some $w\in\wo(D)$, in which case $a'$
    			is a (possibly non-strict)
    			$u_{1}$-sequence from $a_0$ to $a_k$, where $u_{1}$ is the result
    			of replacing $DD$ by $w$ via a move of type $(\mathrm{C})$.
    		\end{itemize}
    	\end{enumerate}
    	\begin{remark}
    		\label{any sequence} Notice that given $u$ the reduction process sketched
    		above works starting from any (strict)
    		$u$-sequence from $a_0$ to $a_k$. This freedom of choice will be of
    		importance for Lemma \ref{gate property}.
    	\end{remark}
    	
    	The
    	following lemma is straightforward, and we omit  a proof.
    	
    	\begin{lemma}
    		\label{equivalence}If $w$ can be obtained from $w'$ by repeated application
    		of the moves
    		from $(\mathrm{Rm})$ to $(\mathrm{Abs}_{=})$
    		and their inverses, then
    		\begin{align*}
    			\Th{\M^G}\vdash (\forall x\forall y)(R_{w}(x,y)\leftrightarrow
    			R_{w'}(x,y)).\\
    		\end{align*}
    		If $w\simeq w'$, then
    		\begin{align*}
    			\Th{\M^G}\vdash (\forall x\forall y)(\W_{w}(x,y)\leftrightarrow
    			\W_{w'}(x,y)).
    		\end{align*}
    		
    		For all $w,w'\in\wo$, if $w$ reduces to $w'$ then
    		\begin{align*}
    			\Th{\M^G}\vdash (\forall x\forall y)(R_{w'}(x,y)\rightarrow R_{w}(x,y)).
    		\end{align*}
    	\end{lemma}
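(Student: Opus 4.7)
The plan is to reduce all three claims to a single-move case analysis by induction on the number of elementary moves needed to transform $w$ into $w'$. Once reduced to one move, each case becomes a direct appeal either to one of the elementary properties \textup{(a)--(g)} of the relations $R_\delta$ recorded at the beginning of this section, or to the straightening analysis of strict sequences carried out in the previous subsection.

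For the first claim, each of the moves in the list $(\mathrm{Rm})$--$(\mathrm{Abs}_=)$ is matched to exactly one of the elementary properties that yields a $\Th{\M}$-provable equivalence $R_w \leftrightarrow R_{w'}$: $(\mathrm{Rm})$ to property \textup{(a)}, $(\mathrm{Cmp})$ to \textup{(b)}, $(\mathrm{Swp})$ to the commutation property, $(\mathrm{Jmp})$ to the conjugation property, $(\mathrm{Abs}_G)$ to the absorption property, $(\mathrm{Abs}_\subset)$ to the inclusion property, and $(\mathrm{Abs}_=)$ to transitivity and reflexivity of $R_D$. Since each property is first-order expressible and holds in $\M$, it belongs to $\Th{\M}$, and the equivalence propagates through iteration.

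For the second claim, rather than redoing a move-by-move verification, I would appeal to items \textup{(1)--(3)} of the path-straightening analysis already carried out. Items \textup{(1)} and \textup{(2)} show that strict $u$-sequences transfer faithfully to strict $u'$-sequences under all ordinal-preserving moves, and item \textup{(3)} handles $(\mathrm{Abs}_=)$: a strict $(D,D)$-sequence either remains strict after collapse or else is forced into a $(\mathrm{C})$ move. Since $w \simeq w'$ is defined without any application of $(\mathrm{C})$, we always land in the first alternative and strict sequences pass forward; the backward direction follows by the same analysis applied to the inverse moves, using $\aleph_0$-saturation of the ambient model to produce the necessary intermediate vertices. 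This yields $\W_w \leftrightarrow \W_{w'}$.

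For the third claim the only new move beyond those handled by the first claim is $(\mathrm{C})$, which replaces $(D,D)$ by some $w_0 \in \wo(D)$. Each letter $X$ appearing in $w_0$ satisfies $X \subsetneq D$, whence $R_X \subseteq R_D$ by the inclusion property; transitivity of $R_D$ then yields $R_{w_0} \subseteq R_D = R_{D,D}$, and concatenating with the unchanged portions of the word gives $R_{w'} \subseteq R_w$. The main (mild) subtlety across the three statements is precisely why $(\mathrm{C})$ gives only a one-way implication: there can be $R_D$-related pairs with no strict path through proper subdomains of $D$, so the converse inclusion $R_D \subseteq R_{w_0}$ fails in general, and this is what forces the weaker conclusion in the third claim.
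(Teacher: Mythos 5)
Your first and third claims are handled correctly: the move-to-property dictionary for the first claim is accurate (with $(\mathrm{Abs}_{=})$ covered by the $D=D'$ case of property (e), since $R_D$ is a coset equivalence relation and hence reflexive and transitive), and for the third claim the observation $R_X\subseteq R_D$ for every $X\in\alp(D)$, combined with reflexivity and transitivity of $R_D$, indeed gives $R_{w_0}\subseteq R_D=R_{D,D}$ for any $w_0\in\wo(D)$, which concatenates with the untouched portions to yield $R_{w'}\subseteq R_w$. Your closing remark on why $(\mathrm{C})$ can only produce a one-sided implication is also apt.

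The second claim contains a genuine gap in the treatment of $(\mathrm{Abs}_{=})$. You assert that ``since $w\simeq w'$ is defined without any application of $(\mathrm{C})$, we always land in the first alternative'' of item (3). This is a non sequitur: the definition of $\simeq$ governs how the \emph{word} $w$ is brought to $w'$, but item (3) is a dichotomy about an individual \emph{strict sequence}, and which alternative occurs is determined by the points $a_i,a_{i+1},a_{i+2}$, not by the word-level combinatorics. The second alternative can in fact occur: by Corollary~\ref{c: consistency} and compactness one realizes in a saturated model a point $z$ with $\W_{D}(a,z)$ and $\W_{D}(z,b)$ while $R_u(a,b)$ holds for some $u\in\wo(D)$ (for instance with $a=b$, where the empty word in $\wo(D)$ already forces $\neg\,\W_{D}(a,b)$). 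So $\W_{D,D}\not\Rightarrow\W_{D}$, and if $\simeq$ were read to include $(\mathrm{Abs}_{=})$ the claim would be false. The intended reading is that $\simeq$ is generated by the $\Or$-preserving moves $\{(\mathrm{Rm}),(\mathrm{Cmp}),(\mathrm{Swp}),(\mathrm{Jmp})\}$ and their inverses, i.e.\ permutation equivalence between reduced words; under that reading $(\mathrm{Abs}_{=})$ never arises, item (3) is not needed, and items (1) and (2) (together with the deterministic, or saturation-supplied, intermediate vertices for inverse moves) do carry strictness in both directions as you intend.
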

    	
    	Combining Lemma \ref{strictness} with Observation \ref{properly descending},
    	one gets the following consequence:
    	\begin{corollary}\label{reduct cor}
    		Let $\mathcal{N}$
    		be  a  model of $\Th{\M^G}$ with universe $N$, and let $a,b\in N$  be
    		such that $R_{u}(a,b)$ for a  suitable $u\in\wo$.
    		Then there exists a reduct $w\in\wo$ of $u$ such that
    		$\W_{w}(a,b)$.
    	\end{corollary}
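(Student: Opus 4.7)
I would prove this by transfinite induction on the associated ordinal $\Or(u)$, combining Lemma~\ref{strictness} with the sequence-level analysis of the straightening paths subsection. Starting from a $u$-sequence $a=a_0,\ldots,a_k=b$ witnessing $R_u(a,b)$, the goal is to produce a reduct $w$ of $u$ together with a strict $w$-sequence from $a$ to $b$, which is precisely $\W_w(a,b)$.

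The base case $\Or(u)=0$ is immediate: $u$ consists of group elements and empty-domain letters, so repeated application of $(\mathrm{Cmp})$ and $(\mathrm{Rm})$ collapses $u$ to a single $g\in G$, and the sequence collapses to the strict $g$-sequence $(a,b)$ with $b=a\cdot g$. For the inductive step, I would first apply Lemma~\ref{strictness} to refine the given sequence into a strict $u'$-sequence with $u'\subseteq u$. If $u'=u$ and $u$ is reduced, we set $w=u$ and are done. Otherwise, either $u'\subsetneq u$ or some elementary reduction applies to $u$ from the discussion in the straightening paths subsection; in either scenario the result is a new word $u''$ with $\Or(u'')<\Or(u)$ and a $u''$-sequence from $a$ to $b$. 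The strict decrease of $\Or$ follows, in the case $u'\subsetneq u$, from the fact that replacing any connected domain $D$ by a word in $\wo(D)$ drops the ordinal strictly below $\Or(D)=\omega^{k(D)}$, using Lemma~\ref{complexity length} together with the additive indecomposability of $\omega^{k(D)}$ under the symmetric sum. Applying the inductive hypothesis to $u''$ produces a reduct $w$ of $u''$ with $\W_w(a,b)$, and by transitivity of reduction $w$ is a reduct of $u$.

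The main technical obstacle is verifying that the refinement step of Lemma~\ref{strictness}, which can replace a lone occurrence of a domain letter $D$ in $u$ by a word $w''\in\wo(D)$, is itself realized by the listed elementary moves, rather than being merely a notational passage. The way around this is to use the reflexivity $R_D(x,x)$ to split a single $D$-step at $(a_{i-1},a_i)$ into $(a_{i-1},a_{i-1},a_i)$, producing a $(D,D)$ subword in the ambient word on which the move $(\mathrm{C})$ can act to substitute $w''$; the $\Or$-preserving moves $(\mathrm{Swp})$, $(\mathrm{Jmp})$, $(\mathrm{Cmp})$, $(\mathrm{Rm})$, $(\mathrm{Abs}_G)$ then handle all the residual bookkeeping. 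Checking that this procedure terminates and yields a legitimate chain of elementary reductions from $u$ to $w$ is the crux of the argument.
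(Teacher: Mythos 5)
Your proof follows essentially the same route the paper takes: combine Lemma~\ref{strictness} (refine a $u$-sequence to a strict one) with the well-foundedness of $\Or$ via Observation~\ref{properly descending}. Two small remarks. First, Lemma~\ref{strictness} already yields a strict $w'$-sequence with $w'\subseteq u$ in one application, so the explicit transfinite induction is redundant; once you have the strict $w'$-sequence you have $\W_{w'}(a,b)$ by definition and can stop. Second, the concern you flag is legitimate and your proposed patch is where the real subtlety lies: to turn a replacement of a single $D$ by $v\in\wo(D)$ into a chain of elementary moves, you first split $D$ into the subword $(D,D)$ before applying $(\mathrm{C})$, but that splitting is the \emph{inverse} of $(\mathrm{Abs}_{=})$, which is not on the list of elementary moves permitted in the definition of ``reduct.'' This mismatch is actually present in the paper itself: the output of Lemma~\ref{strictness} is a word $w'\subset u$ in the sense of the $\subseteq$ relation on $\wo$, and the paper's downstream use of this corollary (in Lemma~\ref{uniqueness}) invokes it precisely with the conclusion ``there exists $w\subseteq u_{0}Dv_{0}^{-1}$'' rather than with ``reduct'' in the syntactic sense. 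So the cleanest reading is that ``reduct'' in the statement should be taken as $w\preceq u$, in which case your workaround, while clever, is not needed.
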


    	The following lemmas establish the predictability of the elementary moves
    	as operations  on  words in $\alp$. Let $D\in \D_0$ be a domain, and let $w$ be a word such
    	that $D\in w$. Note that under an application of an elementary move $w\mapsto w'$, the letter $D$ is unchanged, undergoes a swap,
    	is replaced by a domain in the $G$--orbit of $D$, is absorbed, or undergoes cancellation.

    	\begin{lemma}\label{lem:group-cancel}
    		Let $u,v\in\wo$ be reduced, and suppose that in the concatenation $uv$, we have that a letter $D$ of $u$ and a letter $E$ of $v$
    		undergo a cancellation or an absorption. Then for any permutation $u'\simeq u$ and any permutation $v'\simeq v$, the
    		corresponding letters $D'$ and $E'$ also undergo a cancellation or an absorption in the concatenation $u'v'$.
    	\end{lemma}
    	\begin{proof}
    		Write \[u=g_0D_1g_1D_2\cdots D_k g_{k}\] and
    		\[v=h_0E_1h_1E_2h_2\cdots E_jh_j,\] and suppose that $E=E_i$ cancels with or is absorbed by $D=D_j$. In order to
    		move $E_i$ into a position that is adjacent to $D_j$, we have that $E_i$ is conjugated by group elements via the swap move,
    		and the corresponding conjugates are either orthogonal to or absorb any domains that are encountered between $D_j$ and $E_i$.
    		
    		Observe that the conclusion of the lemma is straightforward, provided that no new domains or group elements appear
    		via the inverses of the absorption and composition rules.
    		
    		Appearances of new group elements occur via the inverse of the absorption
    		rule, or by the inverse of composition (i.e.~$gg^{-1}=1$). If a group element appears via the inverse of the absorption rule
    		with a domain $F$,
    		then this group element will be orthogonal to $E$ provided that $F$ is. If a pair of group elements $gg^{-1}$ appear
    		through the inverse of the composition rule, so that \[uv\simeq \cdots D\cdots gg^{-1} \cdots E\cdots,\] then in order to
    		move $E$ into a position adjacent to $D$ via absorptions, swaps, and commutations with orthogonal domains, we have that
    		either both or neither of $D$ and $E$ will be conjugated by $g$. This is simply because $E$ is absorbed by or cancels with $D$
    		if and only if $g(E)$ is absorbed by or cancels with $g(D)$.
    		
    		Appearances of new domains can occur only through inverses of the absorption rule, and if a domain yields a domain $FF'$
    		via the inverse of absorption then a domain $D$ will be orthogonal to $FF'$ if it is orthogonal to $F$. The conclusion of the lemma
    		is now clear.
    	\end{proof}
    	
    	We say that all instances of $D$ \emph{survive in any reduct of $w$} if whenever a sequence of elementary moves is applied to $w$,
    	no absorptions or cancellations of the (images of the) $D$ letter occur.
    	
    	\begin{lemma}
    		\label{surviving D}Suppose that a word $w$ can be written as $w=uDv$, where
    		$uD$ and $Dv$ are both reduced. Then all the instances of $D$ in $w$ survive
    		in any reduct of $w$. In particular $w$ cannot reduce to the identity.
    	\end{lemma}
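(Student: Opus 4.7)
The plan is to embed $\wo$ into a partially commutative (trace) monoid and exploit the soluble word problem therein. Using the moves (Rm), (Cmp), and (Jmp), every $w \in \wo$ can be normalized to $g \cdot D_1 D_2 \cdots D_k$ with $g \in G$ and each $D_i$ a connected domain in $\D$, by sweeping all group letters to the left (at the cost of conjugating subsequent domains, which is harmless since orthogonality is $G$-equivariant). Under this normalization, the residual move (Swp) becomes commutation of orthogonal generators in the trace monoid $T$ on connected domains of $\D$ with commutation relation $\perp$: two normalized words are permutation-equivalent iff their prefixes in $G$ agree and their domain parts are equal in $T$. Each element of $T$ carries a canonical multiset of letter occurrences and a canonical Hasse partial order on them, such that two occurrences are Hasse-adjacent precisely when some representative places them consecutively.

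Next, I would rephrase reducedness in terms of $T$. A word $w$ is reduced if and only if the trace of $w$ admits no Hasse-adjacent pair of occurrences $(X, Y)$ satisfying $X = Y$, $X \subsetneq Y$, or $Y \subsetneq X$, because the moves $(\mathrm{Abs}_=)$, $(\mathrm{Abs}_\subset)$, and $(\mathrm{C})$ can each be applied only to a pair that is literally adjacent in some representative word. Let $D^{\circ}$ denote the specified occurrence of $D$ in $w = uDv$. The key claim is that in the trace of $uDv$, the occurrence $D^{\circ}$ is not Hasse-adjacent to any occurrence $X$ with $X = D$, $X \subsetneq D$, or $D \subsetneq X$. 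Suppose for contradiction such an $X$ comes from $u$; then deleting the $v$-letters from the trace (which can only coarsen the Hasse order on the surviving occurrences) preserves the Hasse-adjacency between $X$ and $D^{\circ}$, so $X$ remains Hasse-adjacent to $D^{\circ}$ in the trace of $uD$. One could then permute $uD$ to make $X$ immediately precede $D^{\circ}$ and apply the corresponding reduction, contradicting that $uD$ is reduced. The case $X \in v$ is symmetric.

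Finally, I would induct on reduction length with the ordinal $\Or$ as a well-founded outer measure. An atomic step is either a permutation (harmless for $D^{\circ}$), a deletion of a group letter via $(\mathrm{Abs}_G)$ (harmless), or an application of $(\mathrm{Abs}_=)$, $(\mathrm{Abs}_\subset)$, or $(\mathrm{C})$ to a Hasse-adjacent pair. By the key claim, no such pair in the starting trace involves $D^{\circ}$, so $D^{\circ}$ survives the first step, and one descends to a word of strictly smaller $\Or$ for which the analogous hypothesis must be checked. The main obstacle is the move $(\mathrm{C})$, which replaces a pair $(Y,Y)$ by a word $w' \in \wo(Y)$ of strictly smaller subdomains and can in principle introduce new Hasse-comparabilities with $D^{\circ}$; one handles this by noting that each letter of $w'$ lies strictly below $Y$ in the subdomain order, so its possible Hasse-adjacencies with $D^{\circ}$ are inherited from those of $Y$, which the key claim already controls. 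Combined with the strict descent of $\Or$, this closes the induction and shows that $D^{\circ}$, and by the same argument every instance of $D$ in $w$, persists in every reduct; the assertion that $w$ cannot reduce to the identity is then immediate.
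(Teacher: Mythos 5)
Your proposal follows essentially the same route as the paper: normalize away the group letters via $(\mathrm{Jmp})$, $(\mathrm{Cmp})$, $(\mathrm{Rm})$, embed the domain part into a partially commutative (trace) monoid on connected domains, and argue from the resulting positive-word combinatorics. The normalization, the Hasse-order rephrasing of reducedness, and the key claim --- that $D^{\circ}$ is not Hasse-adjacent in the trace of $w$ to any occurrence $X$ comparable with $D$ in the subdomain order, because such an $X$ from $u$ would make $uD$ reducible and such an $X$ from $v$ would make $Dv$ reducible --- are all sound. In particular, deleting the $v$-letters does preserve an existing Hasse edge between $X\in u$ and $D^{\circ}$, since the trace partial order refines positional order and so every intermediate occurrence already sits in the prefix $uD$.

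The inductive step does not close as written, and that is where the real content of the lemma lives. You flag $(\mathrm{C})$ as the obstacle and assert that the new letters' Hasse-adjacencies with $D^{\circ}$ are ``inherited from those of $Y$, which the key claim already controls.'' But the key claim controls $Y$ only when $Y$ is comparable to $D$ in the subdomain order; if $Y$ is transverse to $D$ with $Y\cap D\neq\varnothing$, a letter of $w'\in\wo(Y)$ can lie inside $D$ --- precisely the dangerous case --- while $Y$ itself fell outside the key claim's scope. In addition, removing occurrences by $(\mathrm{C})$, $(\mathrm{Abs}_{=})$ or $(\mathrm{Abs}_{\subset})$ applied to pairs away from $D^{\circ}$ can create new Hasse edges, and you have not shown that a one-step reduct of $w$ again splits as $u'D^{\circ}v'$ with $u'D^{\circ}$ and $D^{\circ}v'$ reduced, so the stated hypothesis is not available to the induction. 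What actually propagates --- and what the paper's appeal to reduction diagrams for partially commutative groups is standing in for --- is the following stronger observation: since $u$ and $v$ are individually reduced as trace words, every non-permutation move in any reduction of $w$ either involves $D^{\circ}$ (excluded by the key claim) or pairs an occurrence from the $u$ side with one from the $v$ side; trace-adjacency across $D^{\circ}$ forces both such occurrences, and hence every letter such a move produces, to be orthogonal to $D$, so the $D$-dependent portion of the trace (which contains, for every $X\subseteq D$ other than $D^{\circ}$, an occurrence witnessing that $X$ does not cover $D^{\circ}$) is never disturbed. Reformulating your induction around this invariant, rather than around the single Hasse-adjacency condition, is what is needed to complete the argument.
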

    	\begin{proof}
    		Let $u$ and $v$ be expressed as \[uD=g_1D_1g_2D_2\cdots g_kD_kD\] and
    		\[Dv=DE_1h_1E_2h_2\cdots E_jh_j,\] where here $g_i,h_i\in G$ and where
    		$D_i,E_i\in\mathcal{D}$, and where both of these words are reduced up
    		to applications of the move (Rm). Suppose for a contradiction that $u$ and $v$ are a counterexample to the lemma,
    		chosen to minimize $\Or(u)\oplus\Or(v)$, with the case $\Or(u)\oplus\Or(v)=0$ being the obvious base case
    		in which $u$ and $v$ consist only of group elements.
    		
    		By Lemma~\ref{lem:group-cancel}, we may begin by moving all the group elements in $u$ to the left, and all the group
    		elements in $v$ to the right, to get words $g_u u'$ and $v'h_v$, where $u'$ and $v'$ have no group elements.
    		Moreover, it is necessarily
    		the case then that $u'Dv'$ is not reduced, and that a domain $E$ of $v'$ cancels with, absorbs,
    		or is absorbed by a domain $F$ of $u'$.
    		We may assume that in a reduction of $u'Dv'$, this absorption or cancellation is the first that occurs.
    		
    		Without loss of generality, $E$ is a domain orthogonal to $D$ and absorbs, is absorbed, or is canceled by
    		$F$, and that $E$ can be moved
    		into a position adjacent to $F$ by a sequence of commutations of orthogonal domains. We may assume that
    		\[uDv=\cdots F\cdots D\cdots E\cdots\simeq \cdots FE\cdots D.\] After moving $E$ next to $F$ and performing a cancellation
    		or absorption, we obtain a (possibly non-reduced) word $w$, along with subwords
		$w_1D$ and $Dw_2$ given by reading $w$ from the left
    		up to $D$, and from $D$ to the right, respectively.
    		
    		Note first that $\Or(w_1)\oplus\Or(w_2)<\Or(u)\oplus\Or(v)$. Note moreover that $Dw_2$ is reduced, though $w_1D$ may not be.
    		If $F$ absorbs $E$ then $w_1D$ is reduced, and so we are done by the inductive hypothesis. If $F$ is absorbed by $E$ then
    		we may switch the roles of $u$ and $v$, and so we are again done by the inductive hypothesis. We may therefore assume
    		that $E=F$ and that a cancellation occurs.
    		
    		Applying Lemma~\ref{lem:group-cancel} again, we may move all the group elements that are created in the cancellation
    		to the far left, and we observe that all such group elements are necessarily orthogonal to $D$.
    		Let $Z$ be a domain in $w_1$ with which
    		such an absorption or cancellation occurs of $D$ occurs. We have that by a sequence of absorptions of cancellations that do
    		not involve $D$, the letter $D$ may be moved next to $Z$. It follows that all the domains between $D$ and $Z$ in $w_1$
    		must be orthogonal to $D$, including the ones created by the cancellation of $E$ with $F$. Note that here we are using the fact that
    		any group elements created in the cancellation of $E$ with $F$ were also orthogonal to $D$. Since $F\perp D$, we must have
    		that all the domains between $D$ and $Z$ in $u'$ were orthogonal to $D$, and so $u'$ was not reduced, a contradiction.
    		It follows that $D$ is not cancelled or absorbed in any reduct of $w_1D$. The lemma now follows by induction.
    	\end{proof}

    	\begin{corollary}\label{cor:associative}
    		The operation of concatenation
    		followed by reduction without cancellation, viewed
    		as an operation on equivalence classes $[w]$ of reduced words $w\in\wo$,
    		is well--defined
    		and associative.
    	\end{corollary}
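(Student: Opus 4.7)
My plan is to use the trace-monoid embedding constructed in the proof of Lemma~\ref{surviving D}, augmented to keep track of group elements. Given a reduced $w\in\wo$, first apply $(\mathrm{Jmp})$ repeatedly to push each group element past every domain lying to its right, conjugating those domains by the appropriate mapping class as it goes, and then apply $(\mathrm{Cmp})$ to collapse the accumulated group elements into a single letter $g\in G$. This produces a canonical representative of $[w]$ of the form $w_{\mathrm{nf}}=D_{1}D_{2}\cdots D_{k}\cdot g$ with each $D_{i}$ a connected domain. Let $Y$ denote the trace monoid with one positive generator $X_{D}$ for each connected domain $D$ of $\Sigma$ and commutation relations $X_{D}X_{D'}=X_{D'}X_{D}$ whenever $D\perp D'$. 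Define $\Phi(w)=(X_{D_{1}}X_{D_{2}}\cdots X_{D_{k}},\,g)\in Y\times G$. The word-problem solution in $Y$ used in the proof of Lemma~\ref{surviving D} shows that $\Phi$ is invariant under each of the permutation moves $\{(\mathrm{Rm}),(\mathrm{Cmp}),(\mathrm{Swp}),(\mathrm{Jmp})\}$, and conversely that $\Phi(w)=\Phi(w')$ forces $w'\in [w]$.

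Well-definedness of the operation $([w_{1}],[w_{2}])\mapsto [w_{1}w_{2}]_{\mathrm{red}}$ now falls out: if $w_{1}, w_{1}'$ and $w_{2}, w_{2}'$ are two pairs of reduced representatives with $[w_{1}]=[w_{1}']$ and $[w_{2}]=[w_{2}']$, then $\Phi(w_{1}w_{2})=\Phi(w_{1}'w_{2}')$ by invariance of $\Phi$, so after reduction without cancellation we obtain (by the lemma on existence of reduced words, which gives uniqueness up to permutation) reduced words which share a $\Phi$-image, hence lie in a common class. Associativity is then immediate because string concatenation is associative on the nose: $(w_{1}w_{2})w_{3}$ and $w_{1}(w_{2}w_{3})$ are literally the same word, so reducing either expression without cancellation yields the same class $[w_{1}w_{2}w_{3}]_{\mathrm{red}}$, independent of the order in which intermediate reductions are applied.

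The main obstacle to executing this plan is verifying that the $G$-action is compatible with the trace-monoid structure, namely that if $D\perp D'$ then $g(D)\perp g(D')$. This is essentially automatic since $G$ acts on $\Sigma$ by homeomorphisms and $\perp$ is encoded by disjointness of the realizations $|D|$ and $|D'|$. A minor accompanying point is that the normalization procedure must terminate: each application of $(\mathrm{Jmp})$ preserves the total length of the word while strictly decreasing the total number of (domain, group element) inversions remaining to the right of a given group element, so the pushing-to-the-right process halts after finitely many moves. With these points in place the rest of the argument is bookkeeping, and I would not expect any further subtlety.
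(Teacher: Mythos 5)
Your proof is correct and takes essentially the same approach as the paper. The paper's one-line justification points to the trace-monoid embedding from Lemma~\ref{surviving D}; you make that pointer explicit by constructing the invariant $\Phi$ valued in $Y\times G$ (really a semidirect product, with $G$ acting on $Y$ via its action on domains) and then combining $\Phi$-invariance with the uniqueness-of-reduced-words lemma, which supplies the confluence that both well-definedness and associativity really rest on.
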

    	\begin{proof}
    		That the operation of concatenation is well-defined on equivalence classes is straightforward. If $w_1,w_1'\in [w_1]$ and
    		$w_2,w_2'\in [w_2]$ are representatives
    		of equivalence classes, then $w_1w_2$ and $w_1'w_2'$ evidently lie in the same equivalence class of words.
    		
    		For associativity, observe first that concatenation of formal strings of symbols is an associative operation. Associativity on the level
    		of equivalence classes then follows from Lemma~\ref{lem:reduced-unique}.
    	\end{proof}
    	
    	We denote the operation of concatenation followed by reduction without cancellation by $\ast$.

  	\subsection{Symmetric decomposition}
    	
    	In the sequel, we will require a minor variation on Proposition 5.9 from~\cite{BPZ17},
    	which is called the \emph{Symmetric Decomposition Lemma}.
    	Even though the proof is nearly identical to what is given in that paper,
    	we will record the details
    	because the words that we consider may have group elements in them, in contrast
    	to~\cite{BPZ17}.
    	
    	\begin{lemma}[Symmetric Decomposition Lemma]\label{lem:sym-dec}
    		Let $u$ and $v$ be reduced words. Then up to permutations, there are unique
    		decompositions \[u\simeq gu_1u'w,\quad v\simeq wv'v_1h,\] which satisfy the following
    		conditions:
    		\begin{enumerate}
    			\item
    			The letters $g$ and $h$ are group elements, and no group elements
    			occur in $u_1u'w$ and $wv'v_1$;
    			\item
    			The word $w$ consists of a single, possibly disconnected domain;
    			\item
    			The word $u'$ is properly left-absorbed by $v_1$;
    			\item
    			The word $v'$ is properly right-absorved by $u_1$;
    			\item
    			The words $\{u',w,v'\}$ pairwise commute;
    			\item
    			The word $u_1wv_1$ is reduced.
    		\end{enumerate}
    		Thus, $u v$ is a reduct without cancellation of $gu_1wv_1h$.
    	\end{lemma}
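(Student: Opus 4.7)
The plan is to adapt Proposition 5.9 of \cite{BPZ17} to the present setting, treating carefully the group-element letters which are absent in loc.~cit.

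First, I would normalize the positions of group elements. By repeated application of $(\mathrm{Jmp})$ (which moves a group element past a domain at the cost of conjugating the domain) and $(\mathrm{Cmp})$ (which merges two adjacent group elements), all group elements in $u$ can be collected into a single letter $g$ at the left, and all group elements in $v$ into a single letter $h$ at the right. Thus after permutation $u \simeq g \tilde{u}$ and $v \simeq \tilde{v} h$, where $\tilde{u}$ and $\tilde{v}$ are reduced words consisting only of domains.

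Next, I would analyze the reduction of $\tilde{u}\tilde{v}$ via the partially commutative monoid $Y$ constructed in the proof of Lemma \ref{surviving D}. Since $\tilde{u}$ and $\tilde{v}$ are individually reduced, any shortening move applied during a reduction of $\tilde{u}\tilde{v}$ without cancellation must involve one letter from each side. I would then classify each letter of $\tilde{u}\tilde{v}$ according to the role it plays in the unique (by the solution to the word problem in right-angled Artin groups) reduction: letters of $\tilde{u}$ absorbed by a strictly larger letter of $\tilde{v}$ form $u'$; letters of $\tilde{v}$ absorbed by a strictly larger letter of $\tilde{u}$ form $v'$; pairs of equal letters from $\tilde{u}$ and $\tilde{v}$ that merge via $(\mathrm{Abs}_{=})$ contribute one copy each to $w$; the remaining letters of $\tilde{u}$ form $u_1$, and the remaining letters of $\tilde{v}$ form $v_1$.

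Each such reduction requires that the relevant letters be permutable through the interface, which forces the commutation conditions (2) and (5). Conditions (3) and (4) are immediate from the definitions of $u'$ and $v'$, and condition (6)---that $u_1 w v_1$ is reduced---follows from Lemma \ref{surviving D}: if some letter in $u_1 w v_1$ could be cancelled, the corresponding letter would have been involved in a cross-interface reduction, contradicting its membership in $u_1 \cup w \cup v_1$. Uniqueness of the decomposition up to permutation follows from the uniqueness of reduced forms in $Y$, together with Corollary \ref{cor:associative}. The main obstacle is verifying condition (5) rigorously, namely that the three subwords $\{u', w, v'\}$ pairwise commute. This requires showing that the movements of letters of $u'$ to the right (to be absorbed by $v_1$) and of letters of $v'$ to the left (to be absorbed by $u_1$) can be executed simultaneously without interfering with each other or with the letters of $w$. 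This is a confluence statement inside $Y$, and I expect it to follow from the associativity established in Corollary \ref{cor:associative} via a careful induction on the length of $\tilde{u}\tilde{v}$.
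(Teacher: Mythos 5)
Your first normalization step (collecting group elements via $(\mathrm{Jmp})$ and $(\mathrm{Cmp})$) matches the paper exactly. After that, however, you take a genuinely different route, and it is where the proposal has a real gap.

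The paper proceeds by induction on $|u_0|+|v_0|$: it peels the last letter $D$ off $u_0 = u_0'D$, applies the inductive decomposition to $u_0'$ and $v_0$ to get $u_0' \simeq y_1 y' z$, $v_0 \simeq z x' x_1$, and then routes $D$ through: $D$ cannot be absorbed by $z$ or $x'$ (as $u_0$ is reduced), hence is orthogonal to $zx'$ and absorbed by $x_1$; the two cases (proper absorption vs.\ $(\mathrm{Abs}_=)$) produce the updated $u',w,v_1$. All six conditions and uniqueness fall out of this single recursion. Your proposal instead attempts a one-shot \emph{global} classification of the letters of $\tilde u\tilde v$ by the role they play in \emph{the} reduction, appealing to the partially commutative monoid $Y$ of Lemma~\ref{surviving D}.

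The gap is that this global classification is not obviously well-defined, and this is not merely a loose end in verifying~(5). Your four bins (``letter of $\tilde u$ absorbed by a strictly larger letter of $\tilde v$,'' etc.) presuppose that each disappearing letter has a unique, side-determined ``absorber,'' but absorptions can chain: a letter $D \in \tilde u$ may be absorbable by $E \in \tilde v$, which in turn is absorbable by $F\in \tilde u$. Since the monoid $Y$ of Lemma~\ref{surviving D} encodes only the commutation relations (not $(\mathrm{Abs}_\subset)$ or $(\mathrm{Abs}_=)$), uniqueness of reduced forms in $Y$ does not by itself give you this classification; it only tells you the final word is unique up to permutation. You acknowledge that verifying~(5) needs ``a careful induction on the length of $\tilde u \tilde v$,'' but at that point you have essentially rediscovered the paper's strategy, and the global classification you set up beforehand is doing no work. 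In short: the proposal contains the right ingredients, but the central construction (the decomposition itself) is asserted rather than built, and the one place where you flag uncertainty is in fact the substance of the whole lemma. Replacing the global classification by the paper's peel-one-letter induction both constructs the decomposition and proves (1)--(6) and uniqueness in a single pass.
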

    	\begin{proof}
    		Applying the moves $\mathrm{(Jmp)}$ and $\mathrm{(Cmp)}$, we may assume that
    		$u=gu_0$ and $v=v_0h$, where $u_0$ and $v_0$ are reduced words with no occurrences of
    		group elements.
    		
    		We may now proceed by induction on the sum of the lengths of $u_0$ and $v_0$.
    		If the word
    		$u_0v_0$ is already reduced then we set $u_1=u_0$ and $v_1=v_0$, and $v',u',w$
    		to be trivial.
    		
    		Since otherwise $u_0v_0$ is not reduced, without loss of generality, we may write
    		$u_0=u_0'D$, with $D$
    		left--absorbed by $v_0$, so that $D v_0\simeq v_0$. By induction, find words
    		$\{y_1,y',z,x',x_1\}$ which satisfy the conclusions of the lemma, and such that
    		\[u_0'=y_1y'z,\quad v_0=zx'x_1.\]
    		
    		Since $u_0$ is reduced, we have that $D$ cannot be left--absorbed by $z$
    		nor $x'$. It follows that $D$ is orthogonal to $zx'$; and in particular $zD$ is a (generally disconnected) domain,
    		and must be absorbed by $x_1$.
    		If $D$ is properly absorbed by $x_1$, then we set \[u'=y'D,\quad w=z,\quad v_1=y_1.\]
    		If $D$ is absorbed via move $\mathrm{(Abs_=)}$ then we may write (up to permutation)
    		$x_1=Dx_1'$, whereby we set \[u'=y', \quad w=zD,\quad v_1=x_1'.\]
    		
    		Now, clearly $gu_0v_0h\simeq gu_1wv_1h$. The word $w$ is the longest common terminal
    		segment of $u_0$ and $v_0^{-1}$, and $u_1w$ is the longest common initial subword
    		of $u_0$ and $[u_0]\ast [v_0]$, since otherwise $v_1=Dv_1'$ and $u'=Du''$, which
    		contradicts the fact that $u'$ is properly left--absorbed by $v_1$. Similarly,
    		$wv_1$ is the longest common terminal subword of $v_0$ and $[u_0]\ast [v_0]$. This
    		establishes uniqueness of the decomposition.
    	\end{proof}

	\section{Displacement and types }\label{sec:dt}
  	The goal of this and the next section is to use the properties of the mapping class group of $\Sigma$ to get better control over
  	the behavior of types of tuples in the theories under consideration. It is these sections which most directly access invoke the
  	large scale properties of the mapping class group. The climax of this part of the paper is Corollary~\ref{qf characterization},
  	which characterizes the quantifier--free type of a pair.
  	
  	In this and the following section, the action of $G$, which as before will denote a finite index subgroup of $\Mod^{\pm}(\Sigma)$,
  	will generally
  	appear on the left (cf.~Remark~\ref{rmk:left-v-right}). We will generally
  	reserve the letters $\{g,h,k\}$, possibly with subscripts
  	and superscripts, for group elements, and all
  	other letters will denote elements of the universe on which group elements act.
  	
  	Let $\D\subseteq\D_0$ be a $G$--invariant
  	and downward closed family of domains.
  	Consider  a connected domain $D\in\D$.
  	By definition (see Section~\ref{sec:framework}), $D$  is  identified
  	with the curve graph of the underlying realized topological
  	surface $|D|$, together with some boundary  curves. We recall the notation
  	$\C(D)$, which denotes the curve graph  of the subsurface $|D|$; in particular, $\C(D)$ has infinite diameter,
	 provided $D$ is connected, non-annular,
  	and nonempty.
  	
  	If $g\in G[D]$, which is to say $R_{D}(1,g)$, then
  	$g$ acts on $\C(D)$.
  	The kernel \[K\unlhd G[D]\] of this action consists of all the
  	$g\in G$ such that $R_{\partial D}(1,g)$, which is  to say
  	the group generated by the Dehn twists over connected components of the \emph{inner
  	boundary} of $|D|$, i.e.~the boundary components of $|D|$ that lie in the interior of $\Sigma$.
  	
  	\begin{lemma}
  		\label{lem-upper-bound-disp}Let $D\in\D$ be connected. Given
  		$w\in\wo(D)$ and $\alpha\in\C(D)$, there is a constant
  		$K=K(w,\alpha)>0$ such that for each pair $g,g'\in M^G$ satisfying
  		\[R_{w}(g,g'),\quad R_D(1,g),\quad R_D(1,g'),\]
  		then we  have $d_{\C(D)}(g\cdot \alpha,g'\cdot \alpha)\leq K$.
  	\end{lemma}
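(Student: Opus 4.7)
The plan is to bound the displacement as a telescoping sum over the letters of $w$. Fix a $w$-sequence $g=z_{0},z_{1},\ldots,z_{k}=g'$ witnessing $R_{w}(g,g')$ and set $h_{i}:=z_{i-1}^{-1}z_{i}$. Since every letter $\delta_{i}$ of $w$ belongs to $\alp(D)$, it is either a group element in $\Stab^{+}(D^{\perp})$ or a proper subdomain $D_{i}\subsetneq D$, and in either case $h_{i}\in\Stab^{+}(D^{\perp})$ (using the inclusion $\Stab^{+}(D_{i}^{\perp})\subseteq\Stab^{+}(D^{\perp})$ coming from $D^{\perp}\subseteq D_{i}^{\perp}$). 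Consequently $h:=g^{-1}g'=h_{1}\cdots h_{k}$ and all its partial products lie in $\Stab^{+}(D^{\perp})$, preserve $|D|$, and act on $\C(D)$ by isometries. In particular $gD=g'D$, and via the isometry $g^{-1}$ of $\C(\Sigma)$ the problem reduces to bounding $d_{D}(\alpha,h\alpha)$; the triangle inequality combined with the isometric action of the partial products on $\C(D)$ then gives
\[
d_{D}(\alpha,h\alpha)\le \sum_{i=1}^{k}d_{D}(\alpha,h_{i}\alpha).
\]

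Next I would bound each summand $d_{D}(\alpha,h_{i}\alpha)$ by a constant depending only on $\alpha$ and $\delta_{i}$. If $\delta_{i}=c_{i}\in G$, then $h_{i}=c_{i}$ is a fixed element and $K_{i}(\alpha):=d_{D}(\alpha,c_{i}\alpha)$ is a single number. If $\delta_{i}=D_{i}\subsetneq D$ is a subdomain letter, the key topological input is that because $D_{i}$ is a proper connected subdomain of the connected (non-pair-of-pants) domain $D$, there exists a curve $\beta_{i}\in\C(D)\cap D_{i}^{\perp}$, i.e.\ a simple closed curve essential and non-peripheral in $|D|$ which is disjoint from every curve of $D_{i}$; one may take $\beta_{i}$ to be a component of $\partial|D_{i}|$ essential in $|D|$, or any essential curve in a positive-complexity component of $|D|\smallsetminus|D_{i}|$. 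Then $h_{i}\in\Stab^{+}(D_{i}^{\perp})$ fixes $\beta_{i}$, and since $h_{i}$ acts as an isometry of $\C(D)$, the triangle inequality gives
\[
d_{D}(\alpha,h_{i}\alpha)\le d_{D}(\alpha,\beta_{i})+d_{D}(\beta_{i},h_{i}\alpha)=2\,d_{D}(\alpha,\beta_{i}),
\]
a quantity depending only on $\alpha$ and $D_{i}$. Setting $K(w,\alpha):=\sum_{i=1}^{k}K_{i}(\alpha)$ yields the required bound.

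The main (mild) obstacle is verifying the existence of the witness curve $\beta_{i}$ for each subdomain letter. A brief case analysis using the assumptions that $D$ is a connected domain (not a pair of pants) and that $D_{i}\subsetneq D$ is a proper connected subdomain shows that either some component of $\partial|D_{i}|$ is essential and non-peripheral in $|D|$, or else $|D|\smallsetminus|D_{i}|$ contains a positive-complexity component carrying an essential curve; an Euler characteristic count rules out the degenerate configuration in which $|D_{i}|$ and $|D|$ differ only by annular collars on $\partial|D|$. The case of $D$ annular is essentially vacuous, since an annular domain admits no proper connected subdomain in $\D$ and so subdomain letters cannot occur in $w$.
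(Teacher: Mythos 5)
Your proof is correct and takes essentially the same approach as the paper's: both choose, for each proper subdomain letter $D'$ appearing in $w$, a curve of $\C(D)$ disjoint from $D'$ (the paper takes a non-peripheral component of $\partial|D'|$, a special case of your $\beta_i$) and use it as a pivot in a triangle-inequality estimate, exploiting that the corresponding mapping class fixes it and acts isometrically on $\C(D)$. You present the bound as a telescoping sum over the letters, while the paper packages the same computation as an induction on $|w|$; this is only a presentational difference.
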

  	\begin{proof}
  		We may assume that $D$ is not annular so that
  		$\abs{D}$ contains more than one curve,
  		since otherwise $w$ can only contain letters from $G$.
  		Fix $\alpha\in\C(D)$. The proof is by induction on $|w|$. Suppose that $w$ is
  		of the form $(v,D')$, where $D'\subsetneq D$. Choose $\beta$ in
  		$(D\cap\partial D')\setminus\partial D$, so that $\beta$ is a boundary curve
  		of
  		$D'$ which is non-peripheral in $D$.
  		Let $N=d_{\C(D)}(\alpha,\beta)$.
  		If $\M^G\models R_{w}(g,g')$, then there exists $h\in M^G$ such that both
  		$\M^G\models R_{v}(g,h)$  and $\M^G\models R_{D'}(h,g')$ hold. Since
  		$h\cdot \beta=g'\cdot \beta$, we have
  		\[d_{\C(D)}(h\cdot \alpha,g'\cdot \alpha)\leq 2N.\]
  		Using the triangle inequality and the induction hypothesis, it follows that
  		\[d_{\C(D)}(g\cdot\alpha,g'\cdot\alpha)\leq K_{0}+2N:=K\]
  		satisfies the required properties, where $K_{0}=K(v,\alpha)$ is the constant
  		provided by the induction hypothesis. The case in which $w$ is of the form
  		$vg$ follows by an identical argument, mutatis mutandis.
  	\end{proof}
  	
  	Since there are elements $g\in G[D]$ that act on $\C(D)$ with
  	arbitrarily large translation length, we obtain:
  	\begin{corollary}
  		\label{c: consistency}
  		If $D$ is a domain then the type $\W_{D}(x,y)$ is consistent.
  	\end{corollary}
  	
  	\begin{proof}
  		Suppose \[\{w_1,\ldots,w_k\}\subset\wo(D).\] Since for each $i$ the word $w_i$ is
  		a finite word consisting of instances of $\mathcal{A}(D)$, Lemma~\ref{lem-upper-bound-disp} implies that
  		for each $\alpha\in\C(D)$, there  is an absolute
  		bound $C_i$ such that if  $R_{w_i}(1,g)$,  then
  		$d_{\C(D)}(\alpha,g\cdot\alpha)\leq C_i$. However, there exists an $h\in G$
  		such that $R_D(1,h)$
  		and \[d_D(\alpha,h\cdot\alpha)\geq C=\max_{1\leq i\leq k}C_i+1.\] Thus, the type
  		$R_D^*$ is finitely satisfiable and hence consistent.
  	\end{proof}
  	
  	A  converse result can be obtained using the fact that the
  	edge relation in a curve graph is the union of finitely many
  	distinct topological configurations.
  	
  	\begin{lemma}
  		\label{l: displacement}Given a connected
  		proper domain $D\in\D$, a finite collection $F\subset\C(D)$, and
  		a constant $K>0$, there exists
  		 $\phi\in\W_{D}$ such that $M^G\models \phi(1,g)$
  		implies \[d_{\C(D)}(F,g\cdot F)>K.\]
  	\end{lemma}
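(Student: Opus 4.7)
Setting $\phi(x,y)\equiv R_{D}(x,y)\wedge\bigwedge_{i=1}^{N}\neg R_{w_{i}}(x,y)$ for a list of words $w_{1},\dots,w_{N}\in\wo(D)$ yet to be produced, the lemma becomes equivalent to covering
\[
X_{F,K}\;:=\;\{g\in\Stab^{+}(D^{\perp}):d_{D}(F,gF)\leq K\}
\]
by finitely many of the relations $R_{w_{i}}$ with $w_{i}\in\wo(D)$. Indeed, then $\phi\in\W_{D}$ and $\phi(1,g)$ forces $g\notin X_{F,K}$, i.e.\ $d_{D}(F,gF)>K$.

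\textbf{Coset decomposition via cofiniteness.} Let $H:=\Stab(F)\cap\Stab^{+}(D^{\perp})$ be the pointwise stabiliser of $F$ inside $\Stab^{+}(D^{\perp})$. I will argue that the double coset space $H\backslash X_{F,K}/H$ is finite. This rests on the folklore assertion that $\Mod(|D|)$ acts with only finitely many orbits on pairs of curves at distance $\leq K$ in $\C(D)$: the base case of disjoint pairs is the classical topological classification, and the inductive step uses cofiniteness of the stabiliser of a vertex acting on its link, a direct consequence of the change-of-coordinates principle. Applied to $|F|$-tuples and to pairs $(F,gF)$ for $g\in X_{F,K}$, this yields finitely many representatives $g_{1},\dots,g_{N}\in X_{F,K}$ such that every admissible $g$ factors as $g=sg_{j}t$ with $s,t\in H$ and $j\in\{1,\dots,N\}$.

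\textbf{Expressing the factors over $\alp(D)$.} For each $g_{j}$, either $E_{j}:=\Fill(F\cup g_{j}F)\subsetneq|D|$---in which case $g_{j}$ may be chosen supported on $E_{j}$, so $R_{D_{E_{j}}}(1,g_{j})$ holds with $D_{E_{j}}\in\alp(D)$---or $E_{j}=|D|$, in which case a Dehn twist decomposition of $g_{j}$ yields a fixed word $v_{j}\in\wo(D)$ with all letters annular subdomains of $D$ and $R_{v_{j}}(1,g_{j})$. For $s,t\in H$, assuming $F$ is a multicurve not filling $|D|$, the stabiliser $H$ decomposes as the commuting product of $\langle T_{\beta}:\beta\in F\rangle$ with $\Mod(\Sigma')$ for $\Sigma':=|D|\setminus N(F)$; writing $F^{\mathrm{ann}}\in\D$ for the annular multi-domain with cores $F$, this produces $R_{u_{F}}(1,h)$ for every $h\in H$ where $u_{F}:=(F^{\mathrm{ann}},D_{\Sigma'})\in\wo(D)$. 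Sporadic cases---where $F$ fills $|D|$ or $F$ fails to be a multicurve---render $H$ finite and each of its elements can be coded individually by a word in $\wo(D)$. Setting $w_{j}:=u_{F}v_{j}u_{F}\in\wo(D)$, the composition rule (\ref{composition}) gives $R_{w_{j}}(1,sg_{j}t)=R_{w_{j}}(1,g)$, and the list $w_{1},\dots,w_{N}$ is the desired finite cover.

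\textbf{Main obstacle.} The pivotal technical ingredient is the cofiniteness of $\Mod(|D|)$-orbits on tuples of curves within mutual distance $\leq K$ in $\C(D)$. The induction on $K$ is routine once one has cofiniteness of vertex-stabilisers acting on their links, but some care is required when $|D|$ is of low complexity or sporadic, where one invokes the corresponding modification of $\C(D)$ in which the analogous cofiniteness persists. Once this input is granted, every remaining step is bookkeeping with the relational calculus of Section~5, in particular rules (\ref{composition}) and (\ref{conjugatoin}).
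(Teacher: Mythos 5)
There is a genuine gap at the step where you assert that the double coset space $H\backslash X_{F,K}/H$ is finite, which you derive from the ``folklore'' claim that $\Mod(|D|)$ acts with finitely many orbits on pairs of curves at distance at most $K$ in $\C(D)$. This is false for every $K\geq 2$: geometric intersection number is a $\Mod(|D|)$-invariant of an ordered pair and is unbounded among pairs of curves at distance at most $2$. Concretely, fix a curve $\eta$ disjoint from a given $\alpha$ and a curve $\delta$ that intersects $\alpha$ but not $\eta$; if $T_\delta$ denotes the Dehn twist about $\delta$, then $T_\delta^n\alpha$ remains disjoint from $\eta$ for all $n$, so $d_{\C(D)}(\alpha, T_\delta^n\alpha)\leq 2$, while $i(\alpha, T_\delta^n\alpha)\to\infty$. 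These pairs thus fall into infinitely many $\Mod(|D|)$-orbits, hence infinitely many $H$-double cosets of $X_{F,K}$. Your proposed inductive step fails precisely because, although $\Stab(\gamma)$ does act cofinitely on the link of $\gamma$, what is actually needed is cofiniteness of the smaller subgroup $\Stab(\alpha)\cap\Stab(\gamma)$ acting on that link, and this fails.

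The paper's proof is organized to avoid the trap of trying to extract finitely many group elements or cosets. Reducing to the case where $F$ is a single curve $\gamma$, it inducts on the length of a path $\gamma=\gamma_0,\dots,\gamma_t=g\gamma$ and at each step writes $h_{i+1}=h_ik_i'h_j$, where the $h_j$ come from a finite list of edge-moves but $k_i'\in\Stab(a_i)\cap\Stab^{+}(D^{\perp})$ ranges over an infinite group; the whole infinite factor is absorbed by the single relation $R_{D'}$ with $D'=D\cap\{a_i\}^{\perp}\subsetneq D$, a legitimate letter of $\alp(D)$. This is the essential idea your proposal is missing: a word $w\in\wo(D)$ need not certify a single group element but an entire (generally infinite) coset, so covering $X_{F,K}$ by finitely many relations $R_w$ is fully compatible with $X_{F,K}$ decomposing into infinitely many $H$-double cosets. (Your secondary claim that $g_j$ may be chosen supported on $\Fill(F\cup g_jF)$ is also unjustified as stated --- the pairs $(E_j,F)$ and $(E_j,g_jF)$ need not be homeomorphic even when $F$ and $g_jF$ are in the same $\Mod(|D|)$-orbit --- but the cofiniteness failure above is the decisive problem.)
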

  	We do not consider the  case where $D=\mC_0$, and indeed if $D=\mC_0(\Sigma)$
  	then the  statement of the lemma  is not true unless $\D=\D_0$.
  	\begin{proof}[Proof of  Lemma~\ref{l: displacement}]
  		We argue the contrapositive, so that given $K$ we want to prove
  		the existence of a collection $\wo_{0}$ of finitely many words
  		in $\wo(D)$ such that \[d_{\C(D)}(F,g\cdot F)\leq K\] implies
  		$M\models R_{u}(1,g)$ for some $u\in\wo_{0}$.
  		
  		Since the mapping  class group acts by isometries on the curve graph and thus
  		preserves diameters of subsets of the curve  graph,
  		we may assume without loss of generality  that
  		$F$ consists of a single curve $\gamma$.
  		
  		If $D$ is annular, then the Dehn twist $\tau$ about the core curve has the
  		property that $R_{D}(x,y)$ holds if and only if $y=x\tau^{n}$ for some $n$.
  		We have that $\C(D)$ is quasi-isometric to a line on which $\tau$ acts
  		as a loxodromic element, whence the result follows easily; cf.~Definition~\ref{def:curve graph}.
  		
  		Otherwise, $|D|$ is a surface with boundary of genus $g$ and
  		with $b$ boundary components, verifying the inequality $3g+b>3$. Here,
  		we remind the reader that a pair of pants is treated as three disjoint annuli
  		and is therefore not a connected domain. The graph $\C'=\C(|D|)$ is a locally
  		infinite graph of infinite diameter,
  		with vertices curves in $D\setminus\partial D$ and edges
  		between pairs of curves with minimal intersection. The vertices and edges of
  		$\C'$ fall into finitely many orbits under the mapping class group $H$ of $|D|$.
  		
  		Choose $A\subset D$ a finite set of representatives from every orbit of vertices.
  		If $\gamma_1$ and $\gamma_2$ are minimally intersecting simple closed curves
  		on $|D|$, then there are only finitely many topological types of surfaces of the
  		form $|D|\setminus\{\gamma_1\cup\gamma_2\}$.
  		It follows that
  		there exists a finite collection
  		\[\{h_{1},h_{2},\dots, h_{r}\}\subset G[D]\] such that
  		if $\{\alpha,\beta\}$ is an edge in $\C'$ with $\alpha\in A$, then there is an $\alpha'\in A$
  		and \[g\in G[D]\cap\Stab(\alpha)\] such that
  		\[\{\alpha,\beta\}=g\cdot \{\alpha,h_j\cdot \alpha'\}.\]
  		for a suitable index $j$. We fix notation for the finite set of unordered pairs
  		\[E:=\{\{\alpha,h_j\cdot \alpha'\}\mid \alpha\in A,\, 1\leq j\leq r\}.\]
  		
  		We now claim
  		that there is a finite collection $\wo_0=\wo_0(K)$ such that if
  		\[d_{\C(D)}(\gamma,g\cdot \gamma)\leq K\] then $R_w(1,g)$  for some $w\in\wo_0$. If $K=0$, the conclusion is trivial. By
  		induction, suppose we have built a finite collection of such words $\wo_0(K-1)$,
  		and suppose that $\gamma'\in G[D]\cdot\gamma$ is at distance $K$ from $\gamma$.
  		We may then write
  		\[\gamma_{0}=\gamma,\gamma_{1},\ldots,\gamma_{K-1}, \gamma_{K}=\gamma',\]
  		where
  		$\{\gamma_{i},\gamma_{i+1}\}$ is a translate of an element of $E$.
  		For each $1\leq i\leq K$ there is an element $h_i\in G[D]$ and $\alpha_i\in A$
  		such that $\gamma_i= h_i\cdot \alpha_i$, and by induction we may suppose that
  		$R_w(1,h_i)$ for some  $w\in\wo_0(K-1)$, provided that $i<K$. The edge $\{\gamma_{K-1},\gamma_{K}\}$  is a
  		translate of $e_K\in E$ by an element $h_K'\in G[D]$. Note that $h_K$ and $h_K'$
  		differ by an element \[q_K\in\Stab_G(\gamma_K)=h_K\Stab_G(\alpha_K)h_K^{-1},\] so that
  		$h_K'= q_K\cdot h_K$. But then $q_K=h_Kq_K'h_K^{-1}$ for some
  		$q_K'\in\Stab_G(\alpha_K)$, so that \[h_K'=h_K\cdot q_K'.\]
  		
  		Writing $D'=D\cap\{\alpha_K\}^{\perp}$, notice that it is not necessarily true that
  		\[G[D']=\Stab_G(\alpha_K)\cap G[D],\] though we do obtain that
  		\[G[D']\leq\Stab_G(\alpha_K)\cap G[D]\] with finite index; this can occur if $D'$ fails to be a connected domain.
		We therefore find finitely many elements
  		$\{\sigma_1,\ldots,\sigma_m\}\subset G[D]$ such that
  		\[\bigvee_{j=1}^m R_{D',\sigma_j}(1,q_K').\] Therefore, we  may enlarge $\wo_0(K-1)$ to a finite set $\wo_0(K)$ by adding all words
  		of the form \[wD'\sigma_j,\] where:
		\begin{enumerate}
		\item
		$w\in\wo_0(K-1)$;
		\item
		$D'=D\cap\{\alpha_K^{\perp}\}$ (with $\alpha_K$ ranging over
  		$A$);
		\item
		$\sigma_j$ ranges over
  		coset representatives of $G[D']$ in $\Stab_G(\alpha_i)\cap G[D]$.
		\end{enumerate}
		
		Thus we may
  		arrange for $R_w(1,h_K')$ for some $w\in\wo_0(K)$. This furnishes the finite set $\wo_0(K)$ as required by
		the contrapositive, and thus establishes the lemma.
  	\end{proof}

	\section{Simple connectedness}\label{sec:sc}
  	The goal of this section is to show that $\Th{\M^G}$ enjoys
  	a  model theoretic property called \emph{simple connectedness}
  	as introduced by~\cite{BPZ17}, which is made precise below in
  	Lemma~\ref{l: main lemma} and  the preceding discussion, together  with
  	Lemma~\ref{uniqueness}. The most important consequence of simple connectedness will
	be that if $a$ and $b$ are elements in a model of $\M^G$ and $u$ is a word such that $R^*_u(a,b)$, then the word
	$u$ is essentially unique. See Definition~\ref{def:strict} below.
	
	In order to establish
  	simple connectedness, we will  require some  nontrivial  results from surface
  	theory. In this section,
	$G\leq \Mod^{\pm}(\Sigma)$ has finite
  	index,
	$\Sigma$ is a surface such that $3g-3+b\geq 1$,
  	and $\D$ will be a $G$--invariant, downward closed collection of domains. We will write $\C_0$ for
	$\C(\Sigma)$ when the identity of $\Sigma$ is understood from context.
  	
  	\subsection{Certifying non-relatedness}
    	The following result is a rephrasing of Theorem 4.3 in
    	\cite{behrstock2006asymptotic},
    	and is commonly known as the \emph{Behrstock inequality}.
    	We  note that in the original,
    	the inequality is given in terms of complete markings. We refer the reader to Subsection~\ref{ss:projection} for background on
	subsurface projections.
    	\begin{theorem}
    		\label{t: projection}
    		There exists a constant $C\geq 0$ such that given any pair $X_{1}$  and
    		$X_{2}$
    		of transversely intersecting, essential, connected
    		subsurfaces  of $\Sigma$
    		which are not pairs of pants,
    		and a curve $\alpha\in\C_0$ with non trivial projection to both $X_{1}$ and
    		$X_{2}$,
    		we have:
    		\begin{align*}
    			\min\{d_{X_{1}}(\partial X_{2},\alpha),d_{X_{2}}(\partial X_{1},\alpha)\}\leq
    			C.
    		\end{align*}
    	\end{theorem}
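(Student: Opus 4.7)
Following the combinatorial surgery approach of Leininger (the original proof of~\cite{behrstock2006asymptotic} instead passes to an asymptotic cone of $\Mod(\Sigma)$), I argue by contradiction. Suppose $C$ is a constant to be determined and that both projection distances exceed $C$. After passing to geodesic representatives of $\alpha$, $\partial X_{1}$, $\partial X_{2}$ in a fixed hyperbolic metric on $\Sigma$, I may assume these curves are in pairwise minimal position.

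The main tool is the surgery principle for curve graphs of subsurfaces: two essential, non-peripheral, disjoint arcs in a subsurface $Y$ with endpoints on $\partial Y$ yield, after surgery along $\partial Y$, simple closed curves at $\C(Y)$-distance at most two. Applied in $X_{1}$, the hypothesis $d_{X_{1}}(\partial X_{2},\alpha) \geq 3$ forces every component of $\alpha\cap X_{1}$ to cross every component of $\partial X_{2}\cap X_{1}$ essentially; the symmetric conclusion applied in $X_{2}$ exchanges the roles of $X_{1}$ and $X_{2}$.

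The heart of the argument is to extract a subarc $a'$ of $\alpha$ lying inside $X_{1}\cap X_{2}$ whose endpoints both lie on $\partial X_{2}$ and which does not meet $\partial X_{1}$. Such an $a'$ is a component of $\alpha\cap X_{2}$ disjoint from every arc of $\partial X_{1}\cap X_{2}$, and therefore witnesses $d_{X_{2}}(\partial X_{1},\alpha) \leq 2$, contradicting the second hypothesis. I would produce $a'$ by fixing an arc $a$ of $\alpha\cap X_{1}$ and tracking its crossings with $\partial X_{2}$: the large value of $d_{X_{1}}(\partial X_{2},\alpha)$ coerces a large number of such crossings, and since $a$ alternates between the two sides of $\partial X_{2}$ with each crossing, two consecutive crossings bound a subarc of $a$ lying in $X_{1}\cap X_{2}$; provided $C$ is sufficiently large, at least one such subarc will be disjoint from $\partial X_{1}$.

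The main obstacle is the case where $X_{1}$ or $X_{2}$ is an annulus, since then $\C(X_{i})$ is the arc graph of the annular cover and ``disjointness'' must be replaced by bounded intersection reflecting the loxodromic action of the Dehn twist on $\C(X_{i})$. This case is handled by a parallel but separate argument. A secondary technical point concerns pairs of boundary components of $X_{1}$ and $X_{2}$ that coincide; the hypothesis $X_{1}\pitchfork X_{2}$ combined with minimal position excludes this, but some care is required in the bookkeeping.
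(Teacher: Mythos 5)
The paper does not prove Theorem~\ref{t: projection}; it simply cites~\cite{behrstock2006asymptotic}, where the original argument passes through asymptotic cones of $\Mod(\Sigma)$ and is stated in terms of complete markings. You instead propose the elementary surgery proof (which you attribute to Leininger), and this is a genuinely different route: it stays entirely inside curve-graph combinatorics, is more self-contained in the present context, and yields an explicit constant rather than a nonconstructive one. Your outline of the non-annular case is essentially correct, but there is one small slip in the logic. Largeness of $C$ is needed to guarantee that the chosen arc $a$ of $\alpha\cap X_1$ crosses $\partial X_2$ at least three times, so that between some pair of consecutive crossings there is a subarc $a'$ lying on the $X_2$ side of $\partial X_2$ with both endpoints on $\partial X_2$; it is \emph{not} needed to arrange that $a'$ avoids $\partial X_1$, since $a$ is a component of $\alpha\cap X_1$ and therefore already has interior in $\mathrm{int}(X_1)$, so any interior subarc of $a$ is automatically disjoint from $\partial X_1$. (The three-crossing requirement resolves a parity issue: with only two crossings the unique middle subarc can land on the wrong side of $\partial X_2$, depending on whether the endpoints of $a$ on $\partial X_1$ lie inside or outside $X_2$.) The two items you flag but do not carry out are real work: the annular cases of $X_1$ or $X_2$ require replacing the disjointness/surgery dichotomy by a twisting estimate in the annular cover, and shared boundary components of $X_1$ and $X_2$ need a little case analysis in the surgery step. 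As written this is a sound plan consistent with the standard published expositions rather than a complete proof, but the approach is correct and would give the theorem.
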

    	
    	The following lemma is the key ingredient allowing one to describe the
    	structure of a general model of $\Th{\M^G}$. We first give the
    	reader an intuitive idea of its
    	function.
    	
    	Suppose we are given a reduced word $w$ and a curve $\alpha\in\C_0$ such that
    	$\alpha$ meets at least one surface appearing in $w$ in an essential way, and
    	let
    	$h$ satisfy $R_w(1,h)$.
	\begin{definition}
	If $\alpha\neq h\cdot\alpha$ then we say
    	 that $\alpha$ is \emph{perturbed} by $h$.
	 \end{definition}
	The content of
    	the next
    	lemma is that in fact $\alpha$ will be perturbed by any $h$ satisfying
    	the relation $R_w(1,h)$ in a sufficiently generic way. More precisely,
    	given an arbitrary $\beta\in\C_0$, there is an explicit, nonempty subset
    	$\psi_{\alpha,\beta}(x,y)\subset R^{*}_{w}(x,y)$ such that
    	$\psi_{\alpha,\beta}(1,h)$ implies $h\cdot \alpha\neq\beta$.
    	The same conclusion will hold equivariantly, with $(g,gh,g\cdot\alpha)$ in
    	place of $(1,h,\alpha)$, where here $g\in G$ is arbitrary.
    	
    	\begin{lemma}[Generic Perturbation Lemma]
    		\label{l: main lemma}
    		Suppose that \[w=D_{1}D_{2}\cdots D_{k}h\] is a given
    		reduced word, that $g\in G$,
    		and that $\alpha\in\C_0$ with $h(\alpha)\not\perp D_{j}$
    		for some $1\leq j\leq k$.
    		Then there exist formulae $\phi_{i}(x,y)\in\W_{D_{i}}$ for
    		$1\leq i\leq k$ such that
    		\begin{align*}
    			M^G\models\forall x_{0}\forall x_{1}\,\cdots\forall
    			x_{k}\;\left(\left(\bigwedge_{i=1}^{k}\phi_{i}(x_{i-1},x_{i})\right)\rightarrow\neg
    			R_{g,\{\alpha\}^{\perp}}(x_{0},x_{k})\right).
    		\end{align*}
    	\end{lemma}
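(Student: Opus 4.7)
Our plan is to convert the desired non-relation into a statement about subsurface projections and then appeal to the Behrstock inequality (Theorem~\ref{t: projection}) in combination with the displacement lemma (Lemma~\ref{l: displacement}). Unpacking definitions in any model of $\Th{\M}$, the relation $R_{g,\{\alpha\}^{\perp}}(x_{0},x_{k})$ is equivalent to the curve equation $x_{k}\cdot\alpha=x_{0}g\cdot\alpha$. Translating on the left by $x_{0}^{-1}$, we may assume $x_{0}=1$, so the task becomes: produce $\phi_{i}\in\W_{D_{i}}$ such that $\sigma_{1}\sigma_{2}\cdots\sigma_{k}\cdot\alpha\neq g\cdot\alpha$ whenever $\sigma_{i}=x_{i-1}^{-1}x_{i}\in\Stab^{+}(D_{i}^{\perp})$ is certified by $\phi_{i}$.

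The $\phi_{i}$ will be chosen inductively, each one produced by Lemma~\ref{l: displacement} so as to force a prescribed finite witness set in $\C(D_{i})$ to be translated by more than a constant $K_{i}$ to be specified. The anchor of the argument is the subsurface projection to $D_{j}$: the hypothesis $h(\alpha)\not\perp D_{j}$, together with the fact that $h$ is the final letter of the reduced word $w$, guarantees that the $D_{j}$-projection of the target of the putative equation is nonempty and behaves predictably under the sequence of $\sigma_{i}$'s.

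The heart of the argument is a bookkeeping of how $\pi_{D_{j}}(\sigma_{1}\cdots\sigma_{i}\cdot\alpha)$ evolves with $i$. A letter $D_{i}$ orthogonal to $D_{j}$ contributes nothing, as $\sigma_{i}$ then acts as the identity on $D_{j}$. A letter with $D_{i}=D_{j}$ shifts $\pi_{D_{j}}$ by more than $K_{i}$. A letter $D_{i}\tv D_{j}$ is handled by Behrstock applied to $(D_{i},D_{j})$ with reference curve $\partial D_{j}$: either the $\pi_{D_{j}}$-error contributed by $\sigma_{i}$ is uniformly bounded, or $\sigma_{i}$ barely moves $\pi_{D_{i}}(\partial D_{j})$, the second alternative being ruled out as soon as $K_{i}$ exceeds this Behrstock constant. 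Cascading these estimates, the cumulative displacement of $\pi_{D_{j}}$ between a bounded proxy near $\pi_{D_{j}}(h\alpha)$ and $\pi_{D_{j}}(x_{k}\alpha)$ exceeds $d_{D_{j}}(\pi_{D_{j}}(\alpha),\pi_{D_{j}}(g\alpha))$, forcing $x_{k}\alpha\neq g\alpha$.

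The main obstacle is the combinatorial interleaving in $w$ of letters that are orthogonal, transverse, or equal to $D_{j}$: the choice of each $K_{i}$ must dominate the accumulated Behrstock errors contributed by the other indices, which we arrange by a backwards induction on $i$. Reducedness of $w$ is essential here: it rules out the moves $(\mathrm{Abs}_{=})$, $(\mathrm{Abs}_{\subset})$, $(\mathrm{Abs}_{G})$, and $(\mathrm{C})$ after any permutation, thereby preventing the individually large displacements produced by the $\phi_{i}$ from collapsing or cancelling against one another instead of accumulating along the sequence.
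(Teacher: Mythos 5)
Your setup is right — translate to $x_{0}=1$, reduce to showing $\sigma_{1}\cdots\sigma_{k}\cdot\alpha\neq g\cdot\alpha$, anchor at the maximal index $j_{0}$ with $\alpha\not\perp D_{j_{0}}$, and deploy Lemma~\ref{l: displacement} together with the Behrstock inequality. But the central step of your bookkeeping is wrong. You assert that for a transverse letter $D_{i}\tv D_{j}$, either the $\pi_{D_{j}}$-error of $\sigma_{i}$ is uniformly bounded, or else $\sigma_{i}$ barely moves $\pi_{D_{i}}(\partial D_{j})$ (ruled out by a large $K_{i}$). Neither alternative holds in the dangerous case, which is precisely when $\pi_{D_{i}}(\gamma)$ lies near $\partial D_{j}$ for the running curve $\gamma$: then $\sigma_{i}$ is free to move $\pi_{D_{i}}(\gamma)$ far, and Behrstock (applied to $\sigma_{i}\gamma$) forces $\pi_{D_{j}}(\sigma_{i}\gamma)$ to land near $\partial D_{i}$, which can be arbitrarily far from $\pi_{D_{j}}(\gamma)$. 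Behrstock bounds a minimum of two projection distances, not a displacement, so no choice of the constants $K_{i}$, however cleverly cascaded, will make a transverse letter's $\pi_{D_{j}}$-contribution uniformly small. And since the landing point is near $\partial D_{i}$, the eventual comparison $d_{D_{j_{0}}}(g\alpha,\sigma_{1}\cdots\sigma_{k}\alpha)$ can be small if $g\alpha$ also happens to project near $\partial D_{i}$ in $D_{j_{0}}$; your argument as stated cannot rule this out. This failure already occurs for a reduced two-letter word $D_{1}D_{2}$ with $D_{1}\tv D_{2}$, so reducedness does not save the plan.

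What is missing is the idea of changing the subsurface you project to. The paper constructs a strictly descending chain of indices $j_{0}>j_{1}>\cdots>j_{t}$, where $j_{\ell+1}$ is the largest index below $j_{\ell}$ with $D_{j_{\ell+1}}\not\perp D_{j_{\ell}}$; reducedness guarantees these domains are pairwise transverse. It then proves by induction on $\ell$ that $h_{j_{\ell}+1,k}\cdot\alpha$ projects nontrivially to $D_{j_{\ell}}$ and lies within $C$ of $\partial D_{j_{\ell-1}}$ there — each step uses the large displacement furnished by $\phi_{j_{\ell}}$ on the previous domain's boundary, the triangle inequality, and a single application of Behrstock to jump from $D_{j_{\ell}}$ to $D_{j_{\ell+1}}$. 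The chain terminates at $j_{t}$, where every earlier letter is orthogonal to $D_{j_{t}}$ and hence acts trivially on $\pi_{D_{j_{t}}}$, and the final contradiction $d_{D_{j_{t}}}(\beta,g\alpha)>C$ is obtained there, not in $D_{j_{0}}$. Your proposal never changes the projecting subsurface, which is exactly the move that makes the cascade of Behrstock applications close up.
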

    	In Lemma~\ref{l: main lemma}, we implicitly assume that the domains occurring
    	in $w$ lie in $\D$. To demystify the statement of the lemma, consider mapping classes $\psi_1$ and $\psi_2$ such that
    	$\neg R_{g,\{\alpha\}^{\perp}}(\psi_1,\psi_2)$. The meaning of this is that \[\psi_1^{-1}\psi_2\notin g\Stab_G(\alpha),\] and in particular
    	$\psi_2(\alpha)\neq \psi_1\cdot g(\alpha)$.
    	\begin{proof}[Proof of Lemma~\ref{l: main lemma}]
    		\newcommand{\J}[0]{\mathcal{J}}
    		We can assume that each of the $D_{j}$ is connected.
    		We need to show that for any $\alpha\in\C$ with
    		$\alpha\not\perp\bigvee_{i=1}^{k}D_{i}$
    		and $\beta\in\C$ arbitrary,
    		there exist formulae $\phi_{i}(x,y)$ as above such that for any sequence
    		\[h_{1},h_{2},\ldots, h_{k}\] of elements of $G$ satisfying
    		$\phi_{i}(1,h_{i})$
    		for $1\leq i\leq k$, the element \[g=h_{1}h_{2}\cdots h_{k}\in G\] cannot map the curve
    		$\alpha$ to the curve $\beta$.
    		
    		To  begin, let $C$ be the constant provided by Theorem \ref{t: projection}.
    		From Lemma \ref{l: displacement}, for each
    		$1\leq j\leq k$, there exists a formula \[\phi_{j}(x,y)\in\W_{D_{j}}(x,y)\]
    		with the property that for arbitrary $h\in M^G$, the condition
    		$\M^G\models\phi_{j}(1,h)$ implies $h\in G[D_j]$ and
    		\[d_{D_{j}}(A_{j},h\cdot A'_{j})> 2C,\] where  here
    		\begin{align*}
    			A_{j}=\pi_{D_{j}}\left(\{\beta\}\cup\bigcup_{\ell<j}\partial|D_{\ell}|\right) \\
    			A'_{j}=\pi_{D_{j}}\left(\{\alpha\}\cup\bigcup_{\ell>j}\partial|D_{\ell}|\right)
    		\end{align*}
		are the images of the Masur--Minsky projections onto $D_j$.
    		
    		Let $h_{1},h_{2},\dots, h_{k}$
    		be chosen so that $\phi_{i}(1,h_{i})$ for all $1\leq i\leq k$.
    		Write \[h_{i,j}=h_{i}h_{i+1}\cdots h_{j}\] for $0\leq i<j\leq k$.
    		
    		Let $j_{0}$ be the maximum index $1\leq j\leq k$ for which $\alpha$ is
    		not orthogonal to $D_{j}$.
    		Note that this implies $\alpha=h_{j_{0}+1,k}\cdot\alpha$.
    		
    		If for all $j<j_{0}$ we have $D_{j}\perp D_{j_{0}}$, then we obtain the
    		conclusion of the lemma. Indeed, in this case
    		$h_{1,j_{0}-1}$ fixes $D_{j_{0}}$, and thus
    		\begin{align*}
    			\pi_{D_{j_{0}}}(h_{1,j_{0}-1}\cdot\beta)=
    			\pi_{h_{1,j_{0}-1}\cdot D_{j_{0}}}(\beta)=
    			h_{1,j_{0}-1}\cdot\pi_{D_{j_{0}}}(\beta)=\pi_{D_{j_{0}}}(\beta).
    		\end{align*}
    		
    		It follows easily then that
    		\begin{align*}
    			d_{D_{j_{0}}}(g\cdot\alpha,\beta)=
    			d_{D_{j_{0}}}(
    			h_{1,j_{0}-1}h_{j_{0},k}\cdot\alpha,h_{1,j_{0}-1}\cdot\beta)=\\
    			=d_{D_{j_{0}}}(h_{j_{0},k}\cdot\alpha,\beta)=d_{D_{j_{0}}}(
    			h_{j_{0}}h_{j_{0}+1,k}\cdot\alpha,\beta)=
    			d_{D_{j_{0}}}(h_{j_{0}}\cdot\alpha,\beta)>C.
    		\end{align*}
    		Here, we  implicitly allow the possibility that
    		$\pi_{D_{j_{0}}}(\beta)=\emptyset$,
    		since then whereas this estimate is no longer valid,
    		it is obvious that because
    		$w$ is not orthogonal to $\alpha$, we cannot have $g\cdot \alpha=\beta$.
    		
    		For the general case, we define $j_{0}$ as before.
    		We  inductively construct a descending sequence
    		\[j_{0}>j_{1}>\cdots>j_{t}\] of indices
    		by setting $j_{k+1}$ to be the maximum index $j$ which is smaller than $j_{k}$
    		and such that $D_{j}\not\perp D_{j_{k}}$.
    		Eventually, we obtain an index $j_t$
    		such that $D_{j_{t}}$ is orthogonal to $D_{j}$ for all $j<j_{t}$.
    		Necessarily, $D_{j_{\ell}}\tv D_{j_{\ell+1}}$ for $0\leq \ell\leq t-1$.
    		Indeed, $D_{j_{\ell}}$ and $D_{j_{\ell+1}}$
    		are not orthogonal by construction.
    		Moreover, they are incomparable since otherwise $w$ would not be reduced, as
    		every letter occurring in $w$ between these
    		surfaces is orthogonal to $D_{j_{\ell}}$.
    		\begin{claim}\label{claim-sec7}
    			For all $1\leq \ell\leq t$, we have
    			\[h_{j_{\ell}+1,k}\cdot\alpha\not\perp D_{j_{\ell}},\]
    			and
    			\[d_{D_{j_{\ell}}}(h_{j_{\ell}+1,k}\cdot\alpha,\partial D_{j_{\ell-1}})\leq C.\]
    		\end{claim}
    		
    		The conclusion of the lemma follows from the case $\ell=t$ of Claim~\ref{claim-sec7}
    		above for the same choices of $\phi_{j}$ as in the case $t=0$ considered
    		previously.
    		Indeed on the one hand, since	$\phi_{j_{t}}(1,h_{j_{t}})$ holds, we have
    		\[d_{D_{j_{t}}}(\beta,h_{j_{t}}\cdot\partial D_{j_{t-1}})> 2C.\]
    		On the other hand, Claim~\ref{claim-sec7} asserts that
    		\[d_{D_{j_{t}}}(h_{j_{t}+1,k}\cdot\alpha,\partial D_{j_{t-1}})\leq C,\]
    		which in turn implies
    		\[d_{D_{j_{t}}}(h_{j_{t},k}\cdot\alpha,h_{j_{t}}\cdot\partial D_{j_{t-1}})\leq C.\]
    		This allows us to conclude that
    		\[d_{
    		h_{1,j_{t}-1}\cdot D_{j_{t}}}(\beta,g\cdot\alpha)=
    		d_{D_{j_{t}}}(\beta, h_{j_{t},k}\cdot\alpha)> C,\]
    		since $D_{i}\perp D_{j_{t}}$ for $i<j_{t}$.
    		Thus, we obtain $\beta\neq g\cdot\alpha$.
    		
    		\begin{proof}[Proof of Claim~\ref{claim-sec7}] We proceed by induction on $\ell$.
    		Suppose that for some $1\leq \ell<t$ we have already successfully shown that
    		\[d_{D_{j_{\ell}}}(h_{j_{\ell}+1,k}\cdot\alpha,\partial D_{j_{\ell-1}})\leq C.\]
    		Then, we
    		have
    		\[d_{D_{j_{\ell}}}(h_{j_{\ell},k}\cdot\alpha,
    		h_{j_{\ell}}\cdot\partial D_{j_{\ell-1}})\leq C.\]
    		The choice of $h_{j_{\ell}}$ implies that \[d_{D_{j_{\ell}}}(\partial
    		D_{j_{\ell+1}},h_{j_{\ell}}\cdot\partial D_{j_{\ell-1}})> 2C.\]
    		The triangle inequality
    		then implies that
    		\[d_{D_{j_{\ell}}}(\partial D_{j_{\ell+1}},h_{j_{\ell},k}\cdot\alpha)> C.\]
    		Theorem \ref{t: projection} then shows
    		\[d_{D_{j_{\ell+1}}}(h_{j_{\ell},k}\cdot\alpha,\partial D_{j_{\ell}})\leq C.\]
    		Since
    		$D_{s}\perp D_{j_{\ell}}$ for all $j_{\ell+1}<s<j_{\ell}$, the left
    		hand side of this
    		last inequality is equal to
    		\[d_{D_{j_{\ell+1}}}(h_{j_{\ell+1}+1,k}\cdot\alpha,\partial D_{j_{\ell}}),\]
    		which
    		establishes the claim.\end{proof}
    	\end{proof}
    	
  	\subsection{Parametrizing the quantifier--free type of a pair of elements}

    	For $w\in\wo$, we let $w^{-1}$ be the result of writing  the letter
    	occurring in the expression for
    	$w$ in reverse order, and by replacing each occurrence of
    	$g\in G$ with $g^{-1}$. It follows by  definition that
    	$R_{w}(x,y)$ if and only if $R_{w^{-1}}(y,x)$.
    	
    	For the remainder of this subsection $\mathcal N$ denotes a model of $\Th{\M^G}$, with universe $N$.
    	The following result gives strong restrictions on words which can induce the same generic relatedness in models of $\Th{\M^G}$.

    	\begin{lemma}
    		\label{uniqueness}
    		Suppose we are given two elements $a,b\in N$,
    		and let $u$ and $v$ be reduced words such that
    		$\W_{u}(a,b)$ and $\W_{v}(a,b)$. Then $u\simeq v$.
    	\end{lemma}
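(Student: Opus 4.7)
The plan is to argue by contradiction and by transfinite induction on the ordinal $\Or(u)\oplus\Or(v)$. Suppose both $\W_{u}(a,b)$ and $\W_{v}(a,b)$ hold with $u,v$ reduced but $u\not\simeq v$; I will derive a contradiction.

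For the base case $\Or(u)=0$, the word $u$ reduces via $(\mathrm{Cmp})$ and $(\mathrm{Rm})$ to a single group letter $g\in G$ (possibly trivial), so $\W_{u}(a,b)$ forces $b=ag$. If $v$ is not equivalent to $g$, then $v$ must contain at least one domain $D$. Applying Lemma~\ref{l: main lemma} to $v$ with the parameter $g$ and a curve $\alpha\in\C(|D|)$ (chosen so that, after accounting for the trailing group element of $v$, the non-orthogonality hypothesis holds) produces a formula $\phi\in\W_{v}$ with $\phi(a,b)\rightarrow\neg R_{g,\{\alpha\}^{\perp}}(a,b)$. But $b=ag$ directly witnesses $R_{g,\{\alpha\}^{\perp}}(a,b)$, giving a contradiction.

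For the inductive step, both $u$ and $v$ have positive ordinal. The plan is to isolate a leading connected domain $D$ of $u$---a letter one can bring to the front by applications of $(\mathrm{Swp})$ and $(\mathrm{Jmp})$---and show that $D$ must likewise appear as a leading letter of $v$. Applying Lemma~\ref{l: main lemma} to $u$ with a curve $\alpha$ not orthogonal to $D$ yields $\neg R_{g,\{\alpha\}^{\perp}}(a,b)$ for every $g\in G$. If $v$ failed to contain $D$ (or any letter properly containing $D$) as a leading letter, then, using that $v$ is reduced and hence its letters cannot jointly cover $|D|$, one refines the choice of $\alpha\in\C(|D|)$ so that $\alpha$ is orthogonal to every letter of $v$; then $\W_{v}(a,b)$ would imply $R_{1,\{\alpha\}^{\perp}}(a,b)$, contradicting the previous conclusion. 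Hence $D$ must occur as a leading letter of $v$ too. Lemma~\ref{parallel lifting}, applied in a sufficiently saturated model, aligns the intermediate points reached after the initial $\W_{D}$-steps in the two strict sequences, so that the induction hypothesis applies to the shorter words $u^{-}$ and $v^{-}$; Observation~\ref{properly descending} then confirms that $u$ and $v$ are permutations of one another.

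The main obstacle will be the refinement step in the middle paragraph---producing a curve $\alpha\in\C(|D|)$ simultaneously orthogonal to all letters of $v$. When $v$ contains multiple domains transverse to $D$ whose realizations meet $|D|$ in complicated configurations, proving the existence of such $\alpha$ requires delicate combinatorial-topological analysis using the Behrstock inequality (Theorem~\ref{t: projection}) and the fact that, since no letter of $v$ contains $D$, the combined realizations of the transverse letters leave essential curves of $|D|$ uncovered. Once this geometric step is secured, the remainder of the argument is bookkeeping with the partial order $\preceq$ and the ordinal $\Or$.
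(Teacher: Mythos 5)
Your induction on $\Or(u)\oplus\Or(v)$ is the same as the paper's, and the base case via Lemma~\ref{l: main lemma} is fine. The inductive step, however, rests on a claim that does not hold: you want a curve $\alpha\in\C(|D|)$ orthogonal to every letter of $v$, justified by ``$v$ is reduced and hence its letters cannot jointly cover $|D|$.'' Reducedness constrains which elementary moves can be applied to $v$; it does not prevent the collection of domains occurring in $v$ from jointly filling $|D|$ even when no single one of them contains $D$. For a concrete failure, take $D$ to be a large proper subsurface and $v$ a reduced word whose domain letters are several proper subsurfaces of $\Sigma$, none containing $D$, whose union fills $D$: every curve in $\C(|D|)$ meets some letter of $v$, so no orthogonal $\alpha$ exists. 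You flag this step yourself as ``the main obstacle,'' and it is in fact a hole rather than a technicality.

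The paper avoids this entirely by looking at the word $uv^{-1}$ and asking whether it is reducible. If not (while both $u,v$ carry domain letters), then $\W_{uv^{-1}}(a,a)$ contradicts Lemma~\ref{l: main lemma} directly, with $\alpha$ chosen non-orthogonal to some domain appearing in the (irreducible) $uv^{-1}$ -- no curve orthogonal to all of $v$ is ever needed. If $uv^{-1}$ is reducible, write $u\simeq u_0 D$, $v\simeq v_0 E$ with $E\subseteq D$, pick intermediate points $c,d$ with $\W_{u_0}(a,c)$, $\W_D(c,b)$, $\W_{v_0}(a,d)$, $\W_E(d,b)$, and split on whether $\W_D(c,d)$ holds. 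In the first case $\W_{u_0 D v_0^{-1}}(a,a)$ forces, via Corollary~\ref{reduct cor} and Lemma~\ref{surviving D}, a nontrivial reduced loop relation at $a$, contradicting Lemma~\ref{l: main lemma}. In the second case $E=D$ and $\W_{u_1}(c,d)$ for some $u_1\in\wo(D)$, and the induction hypothesis applied to $(a,d)$ with $(u_0u_1,v_0)$ closes the argument. The crucial ingredient you are missing is Lemma~\ref{surviving D} (persistence of the terminal $D$ in any reduct), which substitutes for the unavailable ``orthogonal curve'' step; I would rebuild your inductive step around terminal cancellation in $uv^{-1}$ rather than around a leading letter of $u$.
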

    	\begin{proof}
    		We proceed by induction on $\Or(u)\oplus \Or(v)$. If the concatenation
    		$uv^{-1}$ is irreducible, then either both $u$ and $v$ contain only
    		elements from $G$, or else Lemma \ref{l: main lemma} leads to an immediate contradiction. Indeed, if $u$ or $v$ contains a domain
    		then we write \[uv^{-1}=D_1\cdots D_k h\] for a suitable $h\in G$, let $\alpha$ be such that $h(\alpha)\not\perp D_1$, and let
    		$g=1_G$. Then, we get $\neg R_{\{\alpha\}^{\perp}}(a,a)$, or in other words $1_G\notin\Stab_G(\alpha)$, which is nonsense.
    		
    		We may thus assume assume that $uv^{-1}$ is reducible
    		and that both $u$ and $v$ have letters coming from $\D$.
    		This means that there exist comparable elements $D,E\in\D$ such that
    		$u\simeq u_{0}D$ and
    		$v\simeq v_{0}E$.
    		In view of Lemma \ref{equivalence}, take $c$ such that $\W_{u_{0}}(a,c)$ and
    		$\W_{D}(c,b)$ and $d$ such that $\W_{v_{0}}(a,d)$ and $\W_{E}(d,b)$.
    		
    		Without loss of generality we can assume that $E\subseteq D$.
    		Consider first the case in which $\W_{D}(c,d)$. Then
    		$\W_{u_{0}Dv_{0}^{-1}}(a,a)$. By virtue of Corollary \ref{reduct cor}, there
    		exists a word
    		\[w\subseteq u_{0}Dv_{0}^{-1}\] such that $\W_{w}(a,a)$.
    		By Lemma \ref{surviving D}, we necessarily  have that $[w]\neq [1]$,
    		which contradicts Lemma \ref{l: main lemma}.
    		The remaining possibility is that $E=D$ and $\W_{u_{1}}(c,d)$ for
    		some $u_{1}\in\wo(D)$.
    		In this case, applying the induction hypothesis to the pairs
    		$(a,d)\in N^{2}$ and $(u_{0}u_{1},v)\in\wo^{2}$  instead of
    		$(a,c)$ and $(u,v)$, yields $u_{0}u_{1}\simeq v_{0}$, which implies
    		\[u=u_{0}D\simeq u_{0}u_{1}D\simeq v_{0}D=v,\] the desired  conclusion.
    	\end{proof}
    	
    	We are now justified in positing the following definition.
    	\newcommand{\pa}[1]{\delta(#1)}
    	\begin{definition}[Strict sequences]\label{def:strict}
    		We denote the unique reduced class $[u]$ such that
    		$\W_{u}(a,b)$ by $\pa{a,b}$.
    	\end{definition}
	
	The uniqueness of the class $\pa{a,b}$ is one of the most important consequences of simple connectedness.
    	
    	\begin{observation}
    		Given a triple $(a,a',a'')\in N^3$, we have that words in $\pa{a,a''}$ are reducts of
    		concatenations of representatives of $\pa{a,a'}$  and  $\pa{a',a''}$.
    	\end{observation}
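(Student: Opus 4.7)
The plan is to unpack the definition of $\pa{\cdot,\cdot}$ and chain together strict sequences. First I would choose reduced representatives $v$ of $\pa{a,a'}$ and $w$ of $\pa{a',a''}$; by definition $\W_v(a,a')$ and $\W_w(a',a'')$ hold in $\mathcal{N}$, so concatenating a strict $v$-sequence from $a$ to $a'$ with a strict $w$-sequence from $a'$ to $a''$ yields a (not necessarily strict) $vw$-sequence from $a$ to $a''$, witnessing $R_{vw}(a,a'')$.

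Next, I would invoke Corollary~\ref{reduct cor} to extract from this $vw$-path a reduct $u_0\in\wo$ of $vw$ for which $\W_{u_0}(a,a'')$ holds. Lemma~\ref{uniqueness} then forces $[u_0]=\pa{a,a''}$, so $u_0$ is already one of the desired reduced representatives.

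Finally, if $u$ is any other reduced representative of $\pa{a,a''}$, then by definition of $[u_0]$ we have $u\simeq u_0$, i.e.\ $u$ is obtained from $u_0$ by a finite sequence of permutation moves drawn from $\{(\mathrm{Rm}),(\mathrm{Cmp}),(\mathrm{Swp}),(\mathrm{Jmp})\}$ and their inverses. Appending this sequence to the reduction $vw\to u_0$ exhibits $u$ itself as a reduct of the concatenation $vw$. I do not anticipate a serious obstacle here; the only point to verify is that the passage $u_0\to u$ involves no applications of the cancellation move $(\mathrm{C})$, which is automatic because both $u$ and $u_0$ are already reduced, so the chain of elementary moves from $vw$ through $u_0$ to $u$ is a legitimate reduction in the sense of the definition preceding Lemma~\ref{equivalence}.
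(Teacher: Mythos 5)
Your strategy is the natural one, and since the paper states this observation without proof there is no official argument to compare against; the chain \emph{concatenate $\to$ straighten $\to$ identify via uniqueness} is exactly what the available lemmas support. Two small points need tightening, though. First, your invocation of Lemma~\ref{uniqueness} to conclude $[u_{0}]=\pa{a,a''}$ requires $u_{0}$ to be reduced, and Corollary~\ref{reduct cor} as stated only produces \emph{a} reduct with $\W_{u_{0}}(a,a'')$, not necessarily a reduced one. The repair is routine: if $u_{0}$ is not reduced one can reapply the straightening process to the strict $u_{0}$--sequence, and since $\Or$ strictly decreases under the non-permutation moves (Observation~\ref{properly descending}) the iteration terminates in a reduced word. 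You should either say this or take it as an implicit strengthening of Corollary~\ref{reduct cor}. Second, a cosmetic remark: the concatenated sequence you build is in fact a \emph{strict} $vw$--sequence (each step is $\W$--related), so the parenthetical ``not necessarily strict'' is inaccurate; what is not guaranteed is that $vw$ is reduced. Finally, your last paragraph appeals to the moves $u_{0}\to u$ being permutations, including inverses of $\{(\mathrm{Rm}),(\mathrm{Cmp}),(\mathrm{Swp}),(\mathrm{Jmp})\}$, whereas the definition of ``reduct'' only lists the forward elementary moves. Under the paper's conventions this is almost certainly intended to be read up to permutation equivalence, but it is worth flagging explicitly rather than asserting, as you do, that the chain $vw\to u_{0}\to u$ is automatically ``a legitimate reduction.''
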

	
	Recall that $\qftp$ denotes \emph{quantifier-free type}.
    	
    	\begin{corollary}
    		\label{qf characterization}Given tuples $a=(a_{i})_{i\in I}$
    		and $a'=(a'_{i})_{i\in I}$ of elements from $N$, we have $\qftp(a)=\qftp(a')$
    		if and only if $\pa{a_{i},a_{j}}=\pa{a'_{i},a'_{j}}$ for all $i,j\in I$.
    		In particular,
    		the class $\delta(a,b)$ determines the  quantifier-free type $\qftp^{a,b}(A)$.
    	\end{corollary}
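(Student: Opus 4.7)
My first observation is that the language $\mathcal{L}$ is purely relational and every symbol $R_{w}$ is binary. Consequently, $\qftp(a)$ is just the Boolean combination of the data $\{R_{w}(a_{i},a_{j}) \text{ or } \neg R_{w}(a_{i},a_{j})\}$ as $w$ ranges over $\wo$ and $i,j$ range over $I$. Thus it suffices to show the statement for pairs, i.e.\ that $\qftp(a_{i},a_{j}) = \qftp(a'_{i},a'_{j})$ holds in $\mathcal{N}$ if and only if $\pa{a_{i},a_{j}} = \pa{a'_{i},a'_{j}}$; I will drop the indices and write the pairs as $(a,b)$ and $(a',b')$.

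For the easy direction, suppose $\qftp(a,b) = \qftp(a',b')$. By the very definition \eqref{jump} of $\W_{D}$, and more generally by the definition of $\W_{w}$ as a conjunction of $R_{\delta_{i}}$-steps along a witnessing sequence, the predicate $\W_{w}(x,y)$ is type-definable over $\emptyset$ by a conjunction of formulae and their negations of the form $R_{w'}$. Hence $\W_{u}(a,b)$ holds in $\mathcal{N}$ if and only if $\W_{u}(a',b')$ holds, so $\pa{a,b} = \pa{a',b'}$ by the uniqueness statement in Lemma~\ref{uniqueness}.

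For the harder direction, suppose $\pa{a,b} = \pa{a',b'} = [u_{0}]$. I need to show that for every $w \in \wo$, the relation $R_{w}(a,b)$ holds if and only if $R_{w}(a',b')$ does. Assume $\mathcal{N} \models R_{w}(a,b)$. By Corollary~\ref{reduct cor}, there is a reduct $w' \in \wo$ of $w$ such that $\W_{w'}(a,b)$ holds in $\mathcal{N}$. By Lemma~\ref{uniqueness}, $[w'] = \pa{a,b} = [u_{0}]$, and so also $\pa{a',b'} = [w']$; thus $\W_{w'}(a',b')$ holds. Finally, since $w'$ is a reduct of $w$, the third part of Lemma~\ref{equivalence} gives
\[
\Th{\M} \vdash (\forall x)(\forall y)(R_{w'}(x,y) \rightarrow R_{w}(x,y)),
\]
so $R_{w}(a',b')$ holds. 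The symmetric argument gives the converse, completing the proof.

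The last sentence of the corollary, that $\delta(a,b)$ determines $\qftp(a,b)$, is just the special case $|I| = 2$.
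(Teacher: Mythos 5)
Your argument for the nontrivial direction ($\pa{a_{i},a_{j}}=\pa{a'_{i},a'_{j}}\Rightarrow\qftp(a)=\qftp(a')$) is correct and mirrors the paper's one-line proof, which combines Corollary~\ref{reduct cor}, Lemma~\ref{uniqueness}, and Lemma~\ref{equivalence} into the single characterization: $R_{u}(a,b)$ holds if and only if the representatives of $\pa{a,b}$ are reducts of $u$.

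However, your ``easy direction'' has a real gap. You assert that $\W_{w}(x,y)$ ``is type-definable over $\emptyset$ by a conjunction of formulae and their negations of the form $R_{w'}$.'' That is not what the definition gives you: for $\abs{w}>1$, $\W_{w}$ is defined as a composition of the $\W_{\delta_{i}}$'s along a witnessing sequence, so its defining formulae are existential (quantifying over the intermediate points), not quantifier-free. The paper only remarks that $\W_{w}$ is type-definable, not that it is \emph{quantifier-free} type-definable; indeed the latter is essentially a restatement of the corollary you are trying to prove, so invoking it here is circular. As written, the step ``$\qftp(a,b)=\qftp(a',b')$ implies $\W_{u}(a,b)\leftrightarrow\W_{u}(a',b')$'' is unjustified.

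The fix is cheap and uses the same machinery you already deployed. Suppose $\qftp(a,b)=\qftp(a',b')$ and $\pa{a,b}=[w_{0}]$. Then $R_{w_{0}}(a,b)$ holds, so $R_{w_{0}}(a',b')$ holds; Corollary~\ref{reduct cor} produces a reduced reduct $v$ of $w_{0}$ with $\W_{v}(a',b')$, hence $\pa{a',b'}=[v]$ by Lemma~\ref{uniqueness}, and $\Or(v)\leq\Or(w_{0})$. By symmetry $\Or(w_{0})\leq\Or(v)$, and Observation~\ref{properly descending} then forces $[v]=[w_{0}]$. This is exactly the ``both-directions-at-once'' reading of the paper's proof; your presentation simply needs this argument in place of the claim that $\W_{w}$ is a quantifier-free type.
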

    	\begin{proof}
    		Combining Lemma \ref{uniqueness} and Corollary \ref{reduct cor},
    		we  have that for any $u\in\wo$, the validity in $N$ of
    		$R_{u}(a,b)$ is equivalent to any representative of $\pa{a,b}$
    		being a reduct of $u$. Since the language under
    		consideration contains only binary relations, the result follows.
    	\end{proof}
    	
	\section{Weakly convex sets and their extensions}\label{sec:core}

  	\newcommand{\wc}[0]{weakly convex } 
  	\newcommand{\wcp}[0]{weakly convex.} 
  	
  	\newcommand{\cc}[1]{#1''}
  	\renewcommand{\c}[1]{#1'}
  	We retain the notation that $G\geq\Mod^{\pm}(\Sigma)$ is a finite index subgroup, $\D$ is a $g$--invariant and downward
	closed family of domains, and $\M^G=\M^G_{\D}$.
  	The goal of this section is to establish a certain technical result,
  	Lemma~\ref{core qe}, the \emph{$D$--Step Extension Lemma}, which is the technical engine that will allow us to establish
  	a suitable version of quantifier elimination  and stability for $\Th{\M^G}$. The
  	essential point is to apply Theorem~\ref{thm:backandforth}. We will establish several technical intermediate results along the way.

  	\subsection{Transitivity and parallel lifting}
    	
    	Let $\mathcal A$ be a structure in a first order language $\mL$  such that
    	$\Aut(\mathcal A)$ acts transitively on its
    	universe $A$.
    	It is immediate that for an arbitrary $\mL$--formula in one free
    	variable $\phi(x)$, the theory $\Th{\mathcal A}$ contains the sentence
    	\[(\exists x)\phi(x)\rightarrow (\forall x)\phi(x).\]
    	One can use this observation to prove that if $\mathcal{N}$ is a model of
    	$\Th{\mathcal A}$ and if $p(x,y)$ a consistent type in two variables,
    	the types $p(x,a)$ and $p(a,x)$ are consistent for arbitrary $a\in N$.
    	
	In our context, we can extend this observation further to show that certain sets of a model of $\Th{\M^G}$ can be ``reflected"
	across a domain. This will be crucial in performing back-and-forth constructions.

    	\begin{lemma}
    		\label{parallel lifting}
    		Let $\kappa$ be an infinite cardinal.
    		Suppose that we are given $|I|\leq \kappa$, a tuple
    		$a=(a_{i})_{i\in I}$ in a $\kappa^+$--saturated model $\mathcal{N}$
    		of $\Th{\M^G}$, a basepoint $a_{i_{0}}$ indexed by $i_{0}\in I$, and
    		$D\in\D$. Suppose furthermore that for all $i\in I$, we have $R_{u}(a_{0},a_i)$
    		for some $u\perp D$. Let $a'_{i_{0}}\in N$ be such that
    		$R_{D}(a_{i_{0}},a'_{i_{0}})$. Then $a'_{i_{0}}$ extends to a tuple
    		$(a'_{i})_{i\in I}$ satisfying
    		both \[(a'_{i})_{i\in I}\equiv (a)_{i\in I},\quad
    		\textrm{and}\quad  R_{D}(a_{i},a'_{i})\] for all $i\in I$.
    	\end{lemma}
    	
    	Here, the symbol $\equiv$ is used to denote elementary equivalence, so that the types
    	of these tuples coincide.
    	\begin{proof}[Proof of Lemma~\ref{parallel lifting}]
    		\newcommand{\BA}[0]{A_{0}}
    		Let $x=(x_{i})_{i\in I}$ be tuple of variables.
    		Fix an arbitrary formula $\psi(x)$ in the quantifier--free
    		type $\qftp_{x}(a)$.
    		Let $I_{0}$ be the finite subset of $I$ consisting of indices appearing
    		in $\psi$. We may assume by hypothesis
    		that $i_{0}\in I_{0}$, and that for all
    		$i\in I_{0}$, the formula $\psi$ implies $R_{u}(x_{i_{0}},x_{i})$
    		for a suitable $u\perp D$.
    		Let $(g_{i})_{i\in I_{0}}$ be a tuple of points of $M^G$, viewed
    		as elements of $G$, which witness $\psi(x_{I_{0}})$.
    		Pick an arbitrary $h\in G$ such that $\M^G\models R_{D}(1,h)$.
    		For $i\in I_{0}$, we set \[g'_{i}:=h^{g_{i_{0}}^{-1}}g_{i},\] where here the exponentiation notation denotes conjugation.
    		As the tuple $(g'_{i})_{i\in I_{0}}$ is in the orbit
    		of $(g_{i})_{i\in I_{0}}$ under the action of $\Aut(\M^G)$, it clearly
    		satisfies $\psi$ as well. On the other hand a simple calculation yields
    		\[\M^G\models R_{h_i}(g_{i},g'_{i}),\]  where here
    		\[h_i=h^{(g_{i_{0}}^{-1}g_{i})}.\]
    		
    		Now $k=g_{i_{0}}^{-1}g_{i}\in G[D^{\perp}]$ and $h\in
    		G[D]$, so that
    		$h^{k}\in G[D]$ as well.
    		Hence $R_{D}(g_{i},g'_{i})$ for all $i\in I$.
    		The desired result follows by compactness.
    	\end{proof}
    	
    	Note that in the proof of Lemma~\ref{parallel lifting}, it may be the case that $h\neq h^k$, since the orientations of the boundary curves
    	of $D^{\perp}$ may be reversed by $k$.
    	
  	\subsection{Weak convexity}
    	
    	We will work in the universe $N$ of a fixed, sufficiently  saturated model
    	$\mathcal{N}$ of the theory $\Th{\M^G}$. We now introduce the notion of a \emph{weakly convex} set, which is a crucial technical
	concept that will allow us to perform back-and-forth constructions. In~\cite{BPZ17}, such sets are called \emph{nice}.
    	
    	An expression such as $x_{A}$ will denote the (possibly infinite) tuple of
    	variables $(x_{a})_{a\in A}$.
    	The expression $\qftp^{x_{A}}(A)$ will denote the quantifier-free type of $A$
    	with $x_{a}$ in place of $a\in A$. That is, $\qftp^{x_{A}}(A)$ is the type comprised of the formulae
    	$R_{u}(x_{a},x_{a'})$ for which
    	$R_{u}(a,a')$ holds, and the formulae $\neg R_{u}(x_{a},x_{a'})$ otherwise, for all pairs
    	$a,a'\in A$.\\

    	\begin{definition}\label{def_11}
    		We say that a subset $A\subset N$ is \emph{\wc} if
    		\begin{itemize}
    			\item The set $A$ is a union of orbits, i.e. $A\cdot G= A$.
    			\item For all pairs $a,a'\in A$ lying in the same connected component,
    			and all representatives $w\in \pa{a,a'}\subset\wo$,
    			there exists a strict $w$-sequence from $a$ to $a'$ which is entirely
    			contained in $A$.
    		\end{itemize}
    	\end{definition}

	  	\begin{figure}[htbp]
    		\begin{center}
    			\includegraphics[width=9cm]{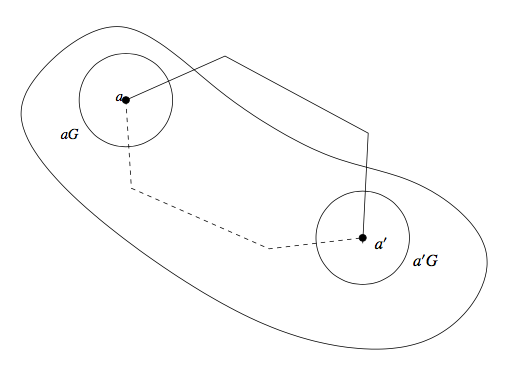}
    			\caption{Definition \ref{def_11}. Weak convexity asserts the existence
    			of the dotted path.}
    		\end{center}
    	\end{figure}

    	\begin{definition}[$D$--step away]
    		\label{one step away} Let \[A\subset N,\quad a_{0}\in A,\quad D\in\D.\]
    		We let $p_{a_{0},A}^{D}(x,y_{A})$ be the type stating that
    		\[R_{D}(x,y_{a_{0}})\wedge \neg R_{u}(x,y_{a}),\] for all $a\in A$ and $u\in\wo(D)$.
    		We say that $b\in N$ is
		\emph{$D$--step away} from $A$ if there is $a_{0}\in A$ such that $\mathcal{N}\models p^{D}_{a_{0},A}(b,A)$.
    		In such a setup, we will refer to $a_{0}$ as a \emph{basepoint} for $b$ in $A$.

    	\end{definition}

    	\begin{lemma}
    		\label{step consistency} For all sets $A\subset N$ of parameters,
    		basepoints
    		$a_{0}\in A$, and domains $D\in\D$, we have that the type $p^{D}_{a_{0},A}$
    		is consistent.
    	\end{lemma}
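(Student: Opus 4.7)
The plan is compactness: it suffices to show the type is finitely satisfiable. Fix a finite $A_0 \subseteq A$ containing $a_0$ and a finite list $u_1, \ldots, u_m \in \wo(D)$, and seek $b \in N$ with $R_D(b, a_0)$ and $\neg R_{u_i}(b, a)$ for every $a \in A_0$ and $1 \leq i \leq m$. The existence of such $b$ is expressible as an existential first-order formula in the parameters $(a_0, (a)_{a \in A_0})$, whose truth in $\mathcal N$ follows from truth in $M$ via elementary equivalence once we know the quantifier-free type of the parameters is realizable in $M$; the latter follows from Corollary~\ref{qf characterization}, which determines the quantifier-free type by the $\delta$-values, and these can always be realized in $M = G$.

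A trivial first reduction removes those $a \in A_0$ not $R_D$-related to $a_0$: every letter of every $u \in \wo(D)$ is either a proper subdomain $E \subsetneq D$ or a group element $g$ with $g \subseteq D$, giving $R_E \subseteq R_D$ and $R_g \subseteq R_D$ by properties~(\ref{inclusion}) and~(\ref{absorption}); composition yields $R_u \subseteq R_D$. Since $R_D$ is an equivalence relation, the constraint $\neg R_{u_i}(b, a)$ is automatic whenever $a$ lies outside the $R_D$-class of $a_0$. Restrict to the finite subset $A'_0 \subseteq A_0$ consisting of those $a$ with $R_D(a_0, a)$, and write $a = a_0 \cdot h_a$ for the unique $h_a \in \Stab^+(D^\perp)$ in $M$; we will produce $b$ of the form $b = a_0 \cdot g$ for some $g \in \Stab^+(D^\perp)$ to be chosen.

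Suppose $D$ is a proper connected domain. A direct calculation shows $R_{u_i}(b, a)$ is equivalent to $g^{-1} h_a \in H_{u_i}$, where $H_{u_i} := \{k \in G : R_{u_i}(1, k)\}$. Fix a curve $\alpha \in \C(D)$; by Lemma~\ref{lem-upper-bound-disp} there exist constants $K_i$ such that $d_D(\alpha, k \cdot \alpha) \leq K_i$ for every $k \in H_{u_i}$. Setting $K := \max_i K_i$ and $F := \{\alpha\} \cup \{h_a \cdot \alpha : a \in A'_0\}$, Lemma~\ref{l: displacement} produces a formula $\phi \in \W_{D}$ (a finite subconjunction of that type) such that any realization $(1, g)$ of $\phi$ in $M$ satisfies $d_D(F, g \cdot F) > K$. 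In particular $d_D(g \cdot \alpha, h_a \cdot \alpha) > K_i$ for every $a \in A'_0$ and every $i$, forcing $g^{-1} h_a \notin H_{u_i}$ and hence $\neg R_{u_i}(b, a)$. The existence of such a $g$ in $M$ follows from the consistency of $\W_{D}$ established in Corollary~\ref{c: consistency}.

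The main remaining obstacle is the case $D = \C$, which Lemma~\ref{l: displacement} explicitly excludes. Here $\Stab^+(\C^\perp) = G$, so $R_{\C}(b, a_0)$ is vacuous; in place of Lemma~\ref{l: displacement} one uses directly the fact that $G$ acts on $\C(\Sigma)$ via pseudo-Anosov elements of positive translation length, producing $g \in G$ with arbitrarily large $\C(\Sigma)$-displacement. The bound from Lemma~\ref{lem-upper-bound-disp} together with the same computation $R_{u_i}(b, a) \iff g^{-1} h_a \in H_{u_i}$ then completes the argument.
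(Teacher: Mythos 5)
Your proof has the right mathematical content and follows the same displacement strategy as the paper, but the logical transfer argument in your opening paragraph contains a genuine error. You claim the quantifier-free type of the parameters $(a_0,(a)_{a\in A_0})$ is ``realizable in $M=G$'' because it is determined by the $\delta$-values, and ``these can always be realized in $M$.'' That is false. In the structure $\M$ itself, for any pair $a\neq a'$ with $R_D(a,a')$ we already have $R_h(a,a')$ where $h=a^{-1}a'\in\Stab^+(D^\perp)$ is a single-letter word in $\wo(D)$, and for $a=a'$ we have $R_\emptyset(a,a)$; so $\W_D$ is never realized in $\M$ and $\delta(a,a')$ is always a lone group letter. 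A saturated $\mathcal{N}$ contains tuples whose $\delta$-values are nontrivial reduced words such as $[D]$, and their quantifier-free type therefore cannot be matched in $\M$. Moreover, even if one \emph{could} match quantifier-free types, inferring the truth in $\mathcal{N}$ of an existential formula from its truth in $\M$ on matching parameters would require quantifier elimination, which at this point in the paper's development is unavailable and would risk circularity (Theorem~\ref{t:Mqe} relies on the very lemma being proved).

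The fix is to do what the paper's own proof does and what your second, third, and fourth paragraphs already accomplish: observe that the finite fragment you want to realize has the form $(\exists x)\phi(x,a_0,\dots,a_k)$ with $\phi$ a finite conjunction of atomic and negated atomic formulas, and that it suffices to verify the universal closure $\forall y_0\cdots\forall y_k\,(\exists x)\phi(x,y)$ in $\M$; this is a first-order \emph{sentence}, so its truth transfers to $\mathcal{N}$ by elementary equivalence with no need to realize any particular tuple. Your argument for proper $D$ (via Lemmas~\ref{lem-upper-bound-disp} and~\ref{l: displacement}, with the proof of Corollary~\ref{c: consistency} supplying the element $g\in G$ realizing the displacement formula in $M$) and your separate pseudo-Anosov treatment of $D=\C$ both work uniformly for an arbitrary tuple $(a_0,\dots,a_k)\in M^{k+1}$, so they genuinely establish that universal statement. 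Once the false realizability claim is replaced by elementary equivalence on the universal sentence, the proof is correct and matches the paper's (much terser) argument.
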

    	\begin{proof}
    		This can be shown using the same argument
    		used to prove that $\W_{D}(x,y)$ is consistent
    		in Corollary~\ref{c: consistency}. Specifically, for arbitrary $k$ and arbitrary finite subsets $\wo_0(D)\subset\wo(G)$, we need
    		\[\mathcal{N}\models (\forall y_1,\ldots,y_k)(\exists x)
    		R_D(y_1,x)\wedge
    		\bigwedge_{j=1}^k\bigwedge_{u\in\wo_0(D)}\neg R_u(y_j,x).\]
    		Given arbitrary
    		mapping classes $\{g_1,\ldots,g_k\}$ on  a  surface  $\Sigma$ and  $B\in\mathbb{N}$,
    		there exists a pseudo-Anosov
    		mapping class $h$ such that the translation length of $g_i^{-1}h$ on the curve graph is at
    		least $B$ for all $i$, whence the displayed formula is satisfiable. Thus, the type is finitely satisfiable, and so the lemma follows
    		from compactness.
    	\end{proof}
    	
    	
    	\begin{lemma}[Gate Property]
    		\label{gate property}Suppose that $A\subset N$ is weakly convex and  that
    		$b\in N$ is $D$--step away from $A$, with basepoint $a_{0}$.
    		Then for all $a\in A$, the class $\pa{b,a}$ is the unique equivalence
    		class of reduced words generated  by
    		$(D,\pa{a_{0},a})$ without using move (C).
    	\end{lemma}

    	In the  case that the
    	language $\D\neq \D_0$ is restricted,
    	we of course insist  that these subdomains lie in $\D$.
    	\begin{proof}[Proof of Lemma~\ref{gate property}]
    		Let $a\in A$ and let $u\in\pa{a_{0},a}$. Suppose  that
    		in the process of reduction of $Du$ to $\pa{b,a}$,
    		some cancellation takes place. Then
    		one can write $u\simeq D u_{2}=u'$ (cf. Remark \ref{any sequence}).
    		Since $A$ is weakly convex, there is a strict
    		$u'$--sequence \[a_{0},a_{1},\ldots, a_{k}=a\] that is
    		contained entirely in $A$. Observe that $R_{D}(b,a_{1})$
    		holds by transitivity of
    		$R_{D}$. If we have $\W_{D}(b,a_{1})$, then we would have
    		$\pa{b,a}=[Du']$, which contradicts our assumption that
    		no cancellation occurs.
    		Otherwise, the point $a_{1}\in A$ witnesses
    		$\mathcal{N}\not\models p^{D}_{a_{0},A}(b,A)$.
    	\end{proof}

    	Given a set $A\subset N$ and $b\in N$,  we write
    	\[\Or(b,A)=\min\{\Or(\pa{b,a})\}_{a\in A}.\]
    	If $\Or(w)=\Or(b,A)$ and $\{b=b_{0},b_{1},\dots, b_{k}\}$ a strict
    	$w$--sequence from $b$ to some $b_{k}=a\in A$, then we say that $\bar{b}$
    	is a \emph{minimizing sequence} from $b$ to $A$ and $w$ a \emph{minimizing word} from $b$ to $A$.

    	\newcommand{\cn}[0]{con^{\perp*}}
    	\newcommand{\tcn}[0]{tcon^{\perp\circ}}
    	\newcommand{\wcn}[0]{con^{\perp\circ}}
    	\newcommand{\wtcn}[0]{wtcon^{\perp\circ}} 

    	Before continuing, we will introduce a certain technical strengthening of the notion of orthogonality. Orthogonality of domains
    	is supposed to capture ``disjointness", and for non-annular domains, it succeeds. The reader my have noted that
    	if $D$ is a domain and $\alpha\in\C_0$ is a boundary curve of $D$ then $\alpha\perp D$. This phenomenon is simply a feature of
    	domains, and we will need to rule it out in certain cases. We emphasize that the fix we introduce here is defined for purely technical
    	purposes; we will call it
    	\emph{strong orthogonality}, and we will denote it by $\perp^*$.

    	\begin{definition}[Strong orthogonality]
    		Given $D_1, D_2 \in \D_0$, we will say that $D_1$ is \emph{strongly orthogonal}
    		to $D_2$ and write $D_1\perp^*  D_2$ if the following holds:
    		\begin{enumerate}
    			\item $D_1\perp D_2$;
    			\item $D_i\not\subset \partial D_{3-i}$ for $i=1,2$.
    		\end{enumerate}
    	\end{definition}

    	In between orthogonality and strong orthogonality is a third notion which we will require for certain technical purposes, and
    	which we call \emph{strict orthogonality}. This last relation is not
    	symmetric.
    	\begin{definition}(Strict orthogonality)
    		Given $D_1, D_2 \in \D_0$, we will say that $D_1$ is \emph{strictly orthogonal}
    		to $D_2$ and write $D_1\perp^{\circ}  D_2$ if the following holds:
    		\begin{enumerate}
    			\item $D_1\perp D_2$;
    			\item $D_2\not\subset \partial D_{1}$
    		\end{enumerate}
    	\end{definition}

    	The definitions above are formulated in such a way that if neither $D_1$ nor $D_2$ are unions of annular domains, then
    	the notions of orthogonality and strong orthogonality coincide.
	
	We can extend the strong orthogonality relation to words in the obvious way. Specifically, we write
    	$w\perp^*D$ if every group element appearing in $w$ is orthogonal to $D$, and every domain occurring in $w$ is strongly
    	orthogonal to $D$.
	
    	For equivalence classes of words, we write $[w]\perp D$ if $w'\perp D$ for some $w'\in[w]$. Similarly, we write
    	$[w]\perp^*D$ (respectively $[w]\perp^{\circ} D$) if $w'\perp^* D$ for some $w'\in[w]$ (respectively if $w'\perp^{\circ}D$ for some
    	$w'\in [w]$).
    	In the definition of $[w]\perp D$ and $[w]\perp^* D$, we require orthogonality
    	only for a representative from the equivalence class, since even if $w\perp D$, we have that $w\simeq wgg^{-1}$ for arbitrary $g\in G$
    	which itself may not be orthogonal to $D$.
	
	\subsection{Fellow traveling}
    	
    	\begin{definition}[$\equiv$--relation on words]
    	Given $w,w'\in\wo$ we say that $w\equiv w'$
    	if it is possible to obtain one from the other by applying the moves
    	\[\{(\mathrm{Cmp}),(\mathrm{Jmp}),(\mathrm{Rm}),(\mathrm{Abs}_{G})\}\]
    	and their inverses. The $\equiv$--equivalence class of a word $w$ will be written $[[w]]$.
	\end{definition}
	
    	Observe that the primary difference between the equivalence relations $\simeq$ and $\equiv$ is that $\simeq$ allows for permutation
    	of domains, whereas $\equiv$ does not; under $\equiv$, only group elements may be moved.
    	
	\begin{definition}[Fellow Traveling]
    	Given two finite sequences $s,s'$ of points in $N$, we say that they
    	\emph{fellow travel}
    	with each other if $s\cdot G=s'\cdot G$.
	\end{definition}
    	
    	\begin{observation}
    		\label{small wiggling} If $w\equiv w'$, then for any strict $w$--sequence $p$,
    		there is a unique strict $w'$--sequence $p'$
    		between the same endpoints that fellow travels with $p$.
    	\end{observation}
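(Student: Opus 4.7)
The plan is to induct on the minimum length of a sequence of elementary moves transforming $w$ into $w'$. It suffices to handle a single forward application of each generating move; the inverse moves follow by symmetry. In each case the candidate $p'$ is obtained by modifying $p$ only within the segment affected by the move, leaving all other points of $p$ unchanged, so fellow-traveling outside that segment is automatic and existence reduces to a local construction.

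The point-deleting (or -inserting) moves $(\mathrm{Rm})$, $(\mathrm{Cmp})$, and $(\mathrm{Abs}_G)$ are essentially canonical. An identity step in $(\mathrm{Rm})$ forces a repeated point that is simply dropped; in $(\mathrm{Cmp})$ the intermediate point is forced by the group action $R_g(x,-)$ and can be dropped; and for $(\mathrm{Abs}_G)$, property (\ref{absorption}) gives $R_{g,D}=R_D$, so the middle point disappears and the resulting $D$-step remains strict, because any would-be witness $w_0\in\wo(D)$ of non-strictness of the new step, when concatenated with the letter $g^{\pm 1}\in\wo(D)$ (which lies in $\wo(D)$ since $g\subseteq D$), would contradict strictness of the original $D$-step in $p$. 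The inverse moves insert a forced translate lying in the $G$-orbit of its neighbors, giving both uniqueness and fellow-traveling for free. The $(\mathrm{Jmp})$ move, should it be needed, is handled analogously using property (\ref{conjugatoin}).

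The principal case is $(\mathrm{Swp})$. A segment $(x_{i-1}, x_i, x_{i+1})$ corresponding to $(D_1, D_2)$ with $D_1\perp D_2$ must be replaced by $(x_{i-1}, y_i, x_{i+1})$ corresponding to $(D_2, D_1)$. Property (\ref{commutation}) produces such a $y_i$. Writing $\alpha = x_{i-1}^{-1}x_i\in\Stab^+(D_1^\perp)$ and $\beta = x_i^{-1}x_{i+1}\in\Stab^+(D_2^\perp)$, the orthogonality $D_1\perp D_2$ realizes $|D_1|$ and $|D_2|$ as isotopic to disjoint subsurfaces, whence $\alpha$ and $\beta$ commute in $G$. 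Consequently $y_i = x_{i-1}\beta = x_i(\alpha^{-1}\beta)\in x_i G$, proving fellow-traveling. For uniqueness within $x_i G$, any alternative $y_i' = y_i g$ forces $g\in\Stab^+(D_1^\perp)\cap\Stab^+(D_2^\perp)$, a subgroup whose elements are supported only on the shared peripheral structure of $|D_1|$ and $|D_2|$; combined with strictness of the new $D_2$- and $D_1$-steps (checked by the same pull-back argument as in the $(\mathrm{Abs}_G)$ case) this forces $y_i'=y_i$. This commutation-and-uniqueness calculation for $(\mathrm{Swp})$ is the principal technical hurdle and is the only substantive geometric input required for the proof.
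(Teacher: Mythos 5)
Your treatment of $(\mathrm{Swp})$ contains the main gap. Writing $\alpha=x_{i-1}^{-1}x_i\in\Stab^+(D_1^\perp)$ and $\beta=x_i^{-1}x_{i+1}\in\Stab^+(D_2^\perp)$ treats the points of $p$ as elements of $G$, which only makes sense in the ground structure $\M$ itself. But $\W_D$ is typically not realizable in $\M$, and the Observation is about strict paths in an arbitrary model $\mathcal{N}$ of $\Th{\M}$; there $\W_D(x,y)$ for a nonempty domain $D$ actually forces $y\notin xG$, since $y=xg$ together with $R_D(x,xg)$ would give $g\in\Stab^+(D^\perp)$, hence $g\in\alp(D)$, contradicting strictness. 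So $x_{i-1}^{-1}x_i$ has no meaning and the commutation computation is vacuous. Property~(\ref{commutation}) does yield some $y_i$ with $R_{D_2}(x_{i-1},y_i)\wedge R_{D_1}(y_i,x_{i+1})$, but nothing in your argument shows $y_i$ can be chosen inside the orbit $x_iG$, which is precisely what fellow-traveling requires; that is the nontrivial content of this case. (By contrast, the other moves are fine: there the inserted or deleted point is determined by a letter of $G$, so it automatically stays in the $G$-orbit of an adjacent point.)

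The uniqueness step for $(\mathrm{Swp})$ is also wrong. You invoke the pull-back argument from $(\mathrm{Abs}_G)$ to conclude that strictness forces $y_i'=y_i$ when $y_i'=y_ig$ with $g\in\Stab^+(D_1^\perp)\cap\Stab^+(D_2^\perp)$, but that argument proves exactly the opposite direction: any nontrivial $g$ in this intersection lies in $\alp(D_1)\cap\alp(D_2)$, so pulling back along $R_g$ shows $\W_{D_2}(x_{i-1},y_i)\Rightarrow\W_{D_2}(x_{i-1},y_ig)$ and $\W_{D_1}(y_i,x_{i+1})\Rightarrow\W_{D_1}(y_ig,x_{i+1})$. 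Strictness is preserved, not destroyed, by such a translation. Concretely, when $|D_1|$ and $|D_2|$ share a boundary curve $\gamma$, the Dehn twist about $\gamma$ lies in $\Stab^+(D_1^\perp)\cap\Stab^+(D_2^\perp)$, and translating $y_i$ by it gives a second strict $(D_2,D_1)$-path fellow-traveling with $p$; this is precisely the wiggle room later quantified in Lemma~\ref{l: wiggling}, and your argument cannot rule it out.
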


    	\begin{lemma}
    		\label{ends}
    		The following statements hold.
    		\begin{enumerate}
    			\item
    			Let $vD$ and $w$ be reduced words and suppose that the word
    			$vDw$ reduces to the identity. Then $w\simeq Dw'$ and the reduction involves the cancellation of the two $Ds$ in $vDDw'$.
    			\item
    			For any reduced word $w$ there exists a unique maximal
    			(possibly disconnected) domain $E$ such that for any reduced word of the form $uD$ equivalent to $w$, we have that $D$ is a union of components of $E$.
    		\end{enumerate}
    	\end{lemma}
	
	  	\begin{definition}\label{def:ends}
    		We denote the letter $E$ furnished by Lemma~\ref{ends}
    		the \emph{right end} of $w$. We define the \emph{left end} of $w$ symmetrically.
    	\end{definition}

    	\begin{proof}[Proof of Lemma~\ref{ends}]
    		We begin with the first statement.
    		By Lemma~\ref{surviving D}, we have that the word $Dw$ cannot be reduced. If $w_0$ is a reduct of $Dw$ then either
    		the leftmost letter $D$
    		is absorbed by a domain in $w$, or it cancels with a copy of $D$ (possibly conjugated by a group element)
    		occurring in $w$. In the latter case, we may write $w\simeq Dw'$,
    		and the conclusion of the first part of the lemma holds.
    		
    		Thus, we may assume that the letter $D$ is absorbed by a letter $E$ occurring in $w$. Since $vDw$ reduces to the identity and
    		since $w$ is itself reduced, we have that $E$ either cancels with or is absorbed by a letter $F$ in $v$, possibly after
    		conjugating by a group element. To eliminate the complications of group conjugation, we write $v=v_0Fv_1$, with
    		$v_1$ orthogonal to $E$ and with $v_1$ containing no group elements.
    		Since $D$ is absorbed by $E$, we have that $v_1$ and $D$ are already orthogonal, so that $vD$ is not
    		reduced, a contradiction.
    		
    		We now consider the second statement. Suppose that \[w\simeq g_0u_0 D_0\simeq g_1u_1 D_1,\]
    		with $D_0\neq D_1$ and with
    		$u_0$ and $u_1$ containing no group elements. We must have that
    		$D_0$ occurs as a letter in $u_1$ and $D_1$ occurs as a letter in
    		$u_0$. It is immediate then that $u_0\simeq u_0' D_1$ and $u_1\simeq u_1' D_0$, and that $D_1$ and $D_0$ must be orthogonal
    		to each other. It follows that $F=D_0D_1$ is a domain with components $D_0$ and $D_1$, and that $w\simeq g_F u_F F$ for
    		a suitable group element $g_F$ and word $u_F$.
    		
    		By induction on $\Or(w)$, we may simply take $E$ to be the concatenation of all domains $D$ for which $w\simeq u D$. The previous
    		paragraph shows that $E$ consists of mutually orthogonal domains, and there are only
    		finitely many such domains by the well-ordering of the class of ordinals. It is clear that $E$ is canonically defined and hence unique.
    	\end{proof}

    	We can now state and prove the following important application of fellow traveling.
    	
    	\begin{lemma}[Fellow Traveling Lemma]\label{lem:equiv-fellow}
    		Let $w$ be a reduced word and $h\in G$, suppose $w\simeq wh$, and let \[a_1,\ldots,a_k\] be a $wh$-sequence. Then there
    		exists a fellow traveling $w$-sequence \[a_1',\ldots,a_k'\] with $a_1=a_1'$ and $a_k=a_k'$.
    	\end{lemma}
    	\begin{proof}
    		The proof is by induction on the length of $w$. By virtue of Observation \ref{small wiggling} we may assume that $w$
    		contains no group elements. Let $E$ be the right end of $w$, and write $w\simeq vE$. Then $wh$ is equivalent to the word $hh^{-1}(vE)$ and it is clear that $h^{-1}(E)$ is the right end of $vEh$.
    		
    		Now, the word $whw^{-1}$ is reducible. By Lemma \ref{surviving D}, this implies that each connected component of $E$ has to cancel with some connected component of $h^{-1}(E)$, and so we may conclude $h^{-1}(E)=E$.
    		
    		Since we have \[vE\simeq vEh\simeq vhE,\] there exists some $vE$-sequence $a''_{0},a''_{1},\dots, a''_{k-1}$ with $a''_{k-1}=a_{k}$.
    		Choose $a$ such that $R_{h}(a_{k-1},a)$ and
    		$\W_E(a,a_{k})$. It follows immediately that $R_{E}(a''_{k-2},a)$, and that
    		the word $vh$ is a reduct of $v\pa{a''_{k-2},a}$.
    		Now, this means that there is a word in right normal form \[ug\in\pa{a''_{k-2},a}\cap\wo(E)\cup\{E\},\]
    		that is right-absorbed by $v$. Observe that, moving from left to right, each letter in $u$ can be permuted to be adjacent to a letter of
    		$v$ into which it can be subsumed using (Abs). No cancellation between letters of $u$ and letters of $v$ can take place: indeed,
    		since $u\in\wo(E)\cup\{E\}$, there would otherwise be a permutation of $vE$ to which the move (Abs) or (C) could be applied.
    		
    		Now, we have $v\simeq v(hg^{-1})$. By the induction hypothesis, there exists a $v$-sequence
    		\[a'_{0}=a_{0},\dots, a'_{k-2}=ag^{-1}\] from $a_{0}$ to $ag^{-1}$ that fellow travels with
    		$a_{0},\dots, a_{k-1}$. Since $g\in G[E]$, we have $R^{*}_{E}(ag^{-1},a_{k})$ and thus letting
    		$a'_{k-1}=a_{k}$ we obtain the desired $vE$-sequence.
    	\end{proof}
    	
    	The following is now straightforward.
    	
    	\begin{corollary}
    		\label{D tracking}Let $v\in \wo$ be a reduced word, let $D$ be domain, and let $w$ be a reduct without cancellation of $Dv$.
    		Then one of the following holds:
    		\begin{itemize}
    			\item
    			There exists a word $uu''u'\in [v]$ such that $u''$ is absorbed by $D$ and $u\perp^{\circ} D$, and such that $uDu''\in [[w]]$.
    			\item There exists some $w'\in [[w]]$ of the form $u E u'$, where
    			$u\perp^{\circ}D$, where $D\subseteq E$, and $u E u'\in [[v]]$.
    		\end{itemize}
    	\end{corollary}


    	\begin{definition}\label{def:normal form}
    		We say that a reduced word $w\in\wo$ is a left (right) normal form if
    		$w$ has at most one occurrence of a letter from $G$,
    		and it is the leftmost (rightmost) letter, which we refer to as
    		its \emph{$G$--term}.
    	\end{definition}
    	
    	Note that the $G$--term of a reduced word is generally not unique, in the sense that equivalent words may have different
    	$G$--terms.

    	\begin{lemma}
    		\label{orthogonality characterization} Let $[w]$ be an equivalence
    		class of reduced words and let $D\in\D$.
    		The following are equivalent:
    		\begin{enumerate}[(a)]
    			\item  $w'\perp D$ for all $w'\in[w]$ in left (right) normal form;
    			\item  $[w]\perp D$;
    			\item  For all $w'\in[w]$, we have
    			$\Th{\M^G}\vdash(\forall x\forall y)\,\,R_{w'}(x,y)\rightarrow R_{D^{\perp}}(x,y)$.
    		\end{enumerate}
    	\end{lemma}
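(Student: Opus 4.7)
The plan is to prove the cycle $(a) \Rightarrow (b) \Rightarrow (c) \Rightarrow (a)$. The first implication is immediate, since any left or right normal form representative of $[w]$ is itself in $[w]$, so if it is orthogonal to $D$ then $[w] \perp D$ by definition.

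For $(b) \Rightarrow (c)$, I would start from a representative $w'' \in [w]$ with $w'' \perp D$. By Lemma \ref{equivalence}, for any $w' \in [w]$ the relations $R_{w'}$ and $R_{w''}$ are equivalent modulo $\Th{\M}$, so it suffices to verify the implication for $w''$ itself. I would then check letter by letter that each atomic relation is contained in $R_{D^{\perp}}$: if a letter is a domain $E$ orthogonal to $D$ then $E \subseteq D^{\perp}$, and property~\ref{inclusion} gives $R_{E} \subseteq R_{D^{\perp}}$; if a letter is a group element $g$ with $gD = D$, then $g$ preserves $D$, which under the identification of $R_{D^\perp}$ with the relation of being related by an element of $\Stab^+(D)$ yields $R_g \subseteq R_{D^\perp}$. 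Because $R_{D^{\perp}}$ is a transitive relation (the stabilizer of $D$ being closed under multiplication), composing these containments along $w''$ yields $R_{w''} \subseteq R_{D^{\perp}}$.

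The hard direction is $(c) \Rightarrow (a)$, and this is the step I expect to be the main obstacle. Fix a right normal form $w' \in [w]$, whose existence and uniqueness up to $(\mathrm{Swp})$ is guaranteed by Lemma~\ref{normal forms}, and suppose toward contradiction that $w'$ is not orthogonal to $D$. Write $w' = D_{1} D_{2} \cdots D_{k} h$ with each $D_{i}$ a connected domain and $h \in G$ (possibly trivial); some letter of $w'$ fails to be orthogonal to $D$, and after using the permutation moves and the structure of the normal form one may assume this letter is a domain $D_{j}$ with $D_{j} \not\perp D$. The strategy now is to build a pair witnessing $R_{w'}$ without satisfying $R_{D^{\perp}}$, thereby contradicting (c).

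To construct such a witness, choose a curve $\alpha \in \mC$ representing an element of $D$ such that $h(\alpha) \not\perp D_{j}$; this is possible because $D_{j} \not\perp D$ and the action of $h$ on the set of domains is a bijection preserving non-orthogonality. Apply Lemma \ref{l: main lemma} with this choice of $w'$, $\alpha$, and parameter $g = 1$ to obtain formulae $\phi_{i} \in \W_{D_{i}}$ such that any sequence $x_{0}, x_{1}, \ldots, x_{k}$ witnessing $\bigwedge_{i=1}^{k} \phi_{i}(x_{i-1}, x_{i})$ satisfies $\neg R_{\alpha^{\perp}}(x_{0}, x_{k})$. Consistency of this configuration (together with the final $G$-letter $h$) is guaranteed by Corollary~\ref{c: consistency} in the saturated model, producing a realization $(x_{0}, x_{k})$ with $\mathcal{N} \models R_{w'}(x_{0}, x_{k})$ and such that $x_{0}^{-1} x_{k}$ fails to preserve $\alpha$. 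Since $\alpha \in D$ and $R_{D^{\perp}}(x_{0}, x_{k})$ would require $x_{0}^{-1} x_{k}$ to preserve every curve in $D$, we deduce $\neg R_{D^{\perp}}(x_{0}, x_{k})$, contradicting (c). This closes the cycle. The parenthetical version with right normal forms follows by the symmetric argument using $w^{-1}$ and the equivalence $R_{w}(x,y) \leftrightarrow R_{w^{-1}}(y,x)$.
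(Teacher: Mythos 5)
Your strategy matches the paper's at a high level: $(a)\Rightarrow(b)$ is trivial, $(b)\Rightarrow(c)$ reduces to $w'=w$ via Lemma~\ref{equivalence} and a letter-by-letter containment, and $\neg(a)\Rightarrow\neg(c)$ goes through Lemma~\ref{l: main lemma}. The first two implications are fine, but the last has a genuine gap. You write $w'=D_1\cdots D_kh$ in right normal form and assert that, after permutation moves, the failing letter may be assumed to be a domain $D_j$. That cannot be arranged: a normal form has exactly one $G$-letter, and no permutation move turns that $G$-letter into a domain. If every $D_i$ is orthogonal to $D$ but $hD\neq D$, there is no $D_j$ to feed Lemma~\ref{l: main lemma}. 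The paper treats this as a separate case and disposes of it directly, since under that hypothesis one already has $\Th{\M}\vdash\forall x\forall y\,\,R_{w'}(x,y)\rightarrow\neg R_{D^{\perp}}(x,y)$, contradicting (c) at once.

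There is also a technical error in the case where some domain does fail. You invoke Lemma~\ref{l: main lemma} on $w'=D_1\cdots D_kh$ with parameter $g=1$, which yields that $x_0^{-1}x_k$ moves $\alpha$. But a $w'$-path terminates at $y=x_kh$, and $\neg R_{D^{\perp}}(x_0,y)$ requires that $x_0^{-1}x_kh$ move some curve of $D$; the trailing $h$ is not absorbed by the choice $g=1$, so the conclusion you want does not follow. The paper avoids this by using the left normal form $w=gu$ with $u\in\D^{*}$ (no trailing $G$-letter), choosing $\alpha\in D$ with $\alpha\not\perp D_j$ directly from $D_j\not\perp D$ (no twist by $h$), applying the lemma to $u$ with parameter $g^{-1}$ to produce $h\in G$ with $R_u(1,h)$ and $h(\alpha)\neq g^{-1}(\alpha)$, and then observing that $h'=gh$ witnesses $R_w(1,h')$ while moving $\alpha\in D$, whence $\neg R_{D^{\perp}}(1,h')$. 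Finally, your claim that an $\alpha\in D$ with $h(\alpha)\not\perp D_j$ exists because $D_j\not\perp D$ and $h$ is a bijection preserving non-orthogonality is not a valid inference: the condition you need is $D\not\perp h^{-1}(D_j)$, which does not follow from $D\not\perp D_j$ unless $h$ stabilizes $D_j$.
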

    	\begin{proof}
    		The implication $(a)\Rightarrow(b)$ is clear. For $(b)\Rightarrow(c)$,
    		notice that the conclusion is clearly true for $w'=w$.
    		Since Lemma~\ref{equivalence} implies
    		\[\Th{\M^G}\vdash (\forall x\forall y)\,R_{u}(x,y)\leftrightarrow R_{u'}(x,y),\]
    		for all pairs of equivalent (reduced) words $u$ and $u'$, the conclusion follows.
    		
    		Let us now show the implication  $\neg(a)\Rightarrow\neg(c)$.
    		Assume  there exists
    		$w'\in[w]$ in normal form such that $w\not\perp D$.
    		One of the following two cases occurs.
    		\begin{enumerate}
    			\item  $D\perp E$ for any $E\in\D$ occurring in
    			$w$, but the $G$--term of $w$ is not in
    			$G[D^{\perp}]$;
    			\item  There exists a $E\in\D$ occurring in $w$ such that $E\not\perp D$.
    		\end{enumerate}
    		
    		The first case is in clear contradiction with (c), since under this assumption we have
    		\[\Th{\M^G}\vdash \forall x\forall y\,\,R_{w'}(x,y)\rightarrow\neg R_{D^{\perp}}(x,y).\]
    		
    		Consider the second case. Write $w=gu$ where $u\in\D^{*}$. There
    		is some $\alpha\in D$ which is not orthogonal to some $E$ appearing in $u$.
    		By Lemma \ref{l: main lemma} there exists an
    		$h\in G$ such that $R_{u}(1,h)$ and such that $h(\alpha)\neq g^{-1}(\alpha)$. The element
    		$h'=gh$ satisfies $R_{w}(1,h')$; however, $\M^G\models\neg R_{D^{\perp}}(1,h')$  since $h'$
    		does not fix $\alpha$, contrary to $(c)$.
    	\end{proof}
    	
    	We note the following consequence:
    	
    	\begin{corollary}
    		\label{concatenation perp} If $w$ is the reduct
    		of a concatenation of words $v_{1},v_{2}$ satisfying $[v_{i}]\perp D$ for $i\in\{1,2\}$, then
    		$[w]\perp D$.
    	\end{corollary}
    	
    	The following is a consequence of Lemma
    	\ref{orthogonality characterization},  and provides a natural refinement of
    	Corollary~\ref{concatenation perp}.
    	\begin{lemma}
    		\label{perp and equivalence}Given a reduced word $w\in\wo$  and $D\in\D$, we have
    		$[w]\perp^{*}D$ if and only if for any $w'\in[w]$ in left (right) normal form, there exists some
    		$w''\in[[w']]$ such that $w''\perp^{*}D$. The same conclusion holds with
    		$\perp^*$ replaced by  $\perp$.
    	\end{lemma}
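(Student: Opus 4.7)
My plan is to reduce both directions of the equivalence to a statement about left normal forms by invoking Lemma~\ref{normal forms}, and then exploit the uniqueness of the left normal form within each $[[\cdot]]$ class. The forward implication is essentially a formality: if $v\in[w]$ satisfies $v\perp^{*}D$, then for any $w'\in[w]$ we have $v\in[w']\subseteq[[w']]$, so the choice $w'':=v$ witnesses the right-hand condition.

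For the reverse implication, I would apply the hypothesis with $w'=w$ to obtain some $w''\in[[w]]$ with $w''\perp^{*}D$. The last clause of Lemma~\ref{normal forms} then gives a left normal form $\hat{w}''$ of $w''$, lying in $[[w'']]=[[w]]$, which still satisfies $\hat{w}''\perp^{*}D$. The key point is that the left normal form of the original $w$ can itself be reached from $w$ using only the permutation moves $(\mathrm{Swp})$, $(\mathrm{Jmp})$, $(\mathrm{Cmp})$, $(\mathrm{Rm})$: one transports all group-letter syllables across domains via $(\mathrm{Jmp})$, merges adjacent group letters with $(\mathrm{Cmp})$, and finally consolidates to a single leftmost group letter, never needing $(\mathrm{Abs}_G)$. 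Consequently, the normal form so produced already lies in $[w]$, and by the uniqueness (up to $(\mathrm{Swp})$) of left normal forms in $[[w]]$, it coincides with $\hat{w}''$. Hence $\hat{w}''\in[w]$ and $[w]\perp^{*}D$ follows.

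The parallel argument for $\perp$ in place of $\perp^{*}$ proceeds identically, once I verify that the nontrivial normal-forming moves preserve $\perp D$. For $(\mathrm{Jmp})$: if $E\perp D$ and $g\perp D$ (so $g\in\Stab(D)$), then $g^{-1}E\perp g^{-1}D=D$; for $(\mathrm{Cmp})$: $\Stab(D)$ is closed under multiplication; and $(\mathrm{Rm})$, $(\mathrm{Swp})$ are trivial. With this in hand the same normal-form argument yields the $\perp$ version.

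The main---though minor---obstacle I anticipate is verifying the combinatorial claim that normal-forming can be carried out using only permutation moves, without ever invoking $(\mathrm{Abs}_G)$, since $(\mathrm{Abs}_G)$ is precisely the move that distinguishes $[[w]]$ from $[w]$. Everything else is bookkeeping built on top of Lemma~\ref{normal forms}.
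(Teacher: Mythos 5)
Your argument is correct and takes essentially the same route as the paper's, reducing matters to left normal forms via the final clause of Lemma~\ref{normal forms}; the paper's written proof only spells out the forward implication (detouring through Lemma~\ref{orthogonality characterization}), whereas you supply both directions, with the forward one reduced to the containment $[w']\subseteq[[w']]$. The combinatorial claim you flag---that normal-forming can be carried out using only permutation moves, so the left normal form already lies in $[w]$---is exactly what makes the reverse implication work, and it is the same fact the paper silently assumes in stating Lemma~\ref{orthogonality characterization}(a) in terms of ``left normal forms in $[w]$''.
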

    	\begin{proof}
    		Suppose that $[w]\perp D$. Lemma \ref{orthogonality characterization}
    		implies that $w'\perp D$ for any left (right) normal form of $w$. Thus, if $w'\in [w]$ is arbitrary, its left or right normal form is orthogonal
    		to $D$, and so the corresponding representative in $[[w']]$ is orthogonal to $D$.
    		
    		Now suppose that $w\perp^*D$. Then, all group elements occurring in $w$ are orthogonal to $D$
    		and all domains are strongly orthogonal.
    		If $w'\simeq w$ is already strongly orthogonal to $D$ then there is nothing to show. If not, then $w'$ was obtained by a sequence
    		of elementary moves, involving at least one
    		composition of the inverses of the moves $\{(\mathrm{Cmp}),(\mathrm{Rm})\}$ resulting in the appearance of a subword of the form
    		$gg^{-1}$, with $g\not\perp D$. If there is a domain $E$ occurring in $w'$ that is not orthogonal to $D$, then $E$ is obtained by
    		conjugating a domain occurring in $w$ by a group element that is not orthogonal to $D$. If there is a
    		connected domain $F$ occurring in
    		$w'$ that is in the boundary of $D$, then $F$ is obtained by conjugating an annular domain occurring in $w$ by a group
    		element that is not orthogonal to $D$.
    		
    		By assumption, if $E$ is a domain of $w$ then $E\perp^* D$. Consider the sequence of moves required to obtain $w'$ from $w$. For
    		each move that is an inverse of $(\mathrm{Rm})$ followed by an inverse of $(\mathrm{Cmp})$ occurs either on the right or
    		on the left of $E$. Up to applying a sequence of moves
    		$\{(\mathrm{Cmp}),(\mathrm{Rm})\}$ and their inverses, we may write
    		\[w'=h g_1 u_1g_2u_2\cdots g_k u_k \sigma(E)u_{k+1} q_1
    		v_1q_2\cdots v_{\ell}q_{\ell+1},\] where:
    		\begin{enumerate}
    			\item
    			The element $h\in G$ is the $G$--terms of $w$;
    			\item
    			We have $\sigma\in G$;
    			\item
    			For each $i$, we have $g_i$ and each $q_i$ is an element of $G$;
    			\item
    			For each $i$, the word $u_i$ and $v_i$ contain no group elements;
    			\item
    			In $G$, the word $g_1g_2\cdots g_kq_1\cdots q_{\ell+1}$ is equal to a word of the form
    			$\prod_{i=1}^j c_i h_i c_i^{-1}$ for elements $\{c_1,\ldots,c_j\}\subset G$,
    			where each $h_i$ is a conjugate by $c_i^{-1}$ to a group element that is orthogonal to $D$;
    			\item
    			In $G$, we have $q_{\ell+1}^{-1}\cdots q_1\sigma$ and $g_1\cdots g_k\sigma$ are equal to a group element that is orthogonal to $D$.
    		\end{enumerate}
    		Thus, moving all the group elements to the far left or right by applications of the move $(\mathrm{Jmp})$ and applying
    		a sequence of the moves $(\mathrm{Cmp})$ and $(\mathrm{Rm})$, we obtain a word whose $G$--term is orthogonal to $D$. The
    		non-annular domain remain orthogonal to $D$, and hence are strongly orthogonal to $D$. Finally, all annular domains are conjugated
    		by group elements that are orthogonal to $D$, and so therefore remain strongly orthogonal to $D$.
    	\end{proof}

    	\begin{corollary}\label{cor:bdy-disj}
    		Let $E\in\D_0$ be a non-annular domain and $F\subset \partial E$ be a subdomain that is a union of annular domains.
    		Suppose $w_1,w_2\in\wo(E)$ are such that no domain occurring in either $w_1$
    		or $w_2$ contains $F$ as a proper subdomain. Then if $u$ is a reduct of $w_1w_2$ in $\wo(E)$,
    		we have that no domain occurring in $u$ contains
    		$F$ as a proper subdomain.
    	\end{corollary}
    	
    	\begin{definition}
    		Given $D\in\D_0$, and sequences $q,q'$ in $N$ we say that $q$ and $q'$ are \emph{$D$-parallel}
    		if for any $a\in q$, there is some $a'\in q'$ such that $R_{D}(a,a')$ and vice versa. If in addition we can take $a'\in aG$, then
    		we say that $q$ and $q'$  \emph{$D$-fellow travel}.
    	\end{definition}

    	\begin{lemma}
    		\label{l: checking wc} Let $C_{0}\subseteq N$ and $E\in\D_0$ be such that $R_{E}(c,c')$ for any $c,c'\in C_{0}$.
    		Assume further that for any $c,c'\in C_{0}$ and any \[w\in\pa{c,c'}\cap(\wo(E)\cup\{E\}),\]
    		there exists a $w$-sequence from $c$ to $c'$ that is contained in $C_{0}$. Then $C:=C_{0}\cdot G$ is weakly convex.
    	\end{lemma}
    	\begin{proof}
    		Let $c,c'\in C$ and $u\in\pa{c,c'}$. Choose $d,d'\in C_{0}$
    		and $g,g'$ such that $R_{g}(c,d)$ and $R_{g'}(c',d')$.
    		
    		The reduced word
    		$v=g^{-1} u g'$ lies in in $\pa{d,d'}$.
    		By Lemma \ref{orthogonality characterization}, there exists
    		a representative \[v'\in[[v]]\cap(\wo(E)\cup\{E\}).\]
    		By assumption there is some $v'$-sequence $p_{1}$ from $d$ to $d'$ entirely contained in $C_{0}$.
    		On the other hand, we know by Observation~\ref{small wiggling} that $p_{1}$ fellows travels with a unique strict $v$-sequence $p_{2}$ from
    		$d$ to $d'$. Clearly $p_{2}\subseteq C$. Dropping the first and last points in $p_{2}$ yields a strict $u$--sequence
    		from $c$ to $c'$ which is entirely contained in $C$.
    	\end{proof}

	\subsection{More on the structure of weakly convex sets}
    	The following lemma is a crucial place where boundaries of domains come into play. The content of the lemma is that for each
	point in a weakly convex set (dominated by a single domain $E$), every point can be made to lie in another smaller
	weakly convex set in which strict sequences avoid prescribed boundary domains of $E$.
    	
    	\begin{lemma}[Weak convexity avoiding boundaries]
    		\label{l: efficient convexity}
    		Let \[E\in\D_0, \quad F\subseteq\partial E, \quad C\subseteq N\] be such that \[R_{E}(c,c')\,\, \textrm{for all}\,\, c,c'\in C,\quad
    		C\cdot G[E]=C,\quad  C\cdot G\,\, \textrm{is weakly convex}.\]
    		Then for each $b_{0}\in C$ there exists a subset $C'\subseteq C$ containing $b_{0}$
    		such that:
    		\begin{enumerate}
    			
    			\item
    			For all $c,c'\in C'$, no representative of \[\pa{c,c'}\cap(\wo(E)\cup \{E\})\] contains components of
    			$\partial E$ that belong to $F$;
    			\item \label{weakly convex}
    			$C'\cdot G$ is weakly convex;
    			\item \label{bringing down sequences} Let $u\in \wo(E)\cup \{E\}$ in which no subdomain of $F$ appears.
    			Then for any $u$--sequence
    			\[c_0,\ldots,c_m\subset C\] with $c_{0}\in C'$, there exist another $u$-sequence
    			\[c_{0}=c'_{0},c'_{1},\dots, c'_{m}\subset C'\] such that $R_{F}(c_i,c_i')$ for all $1\leq i\leq m$.
    		\end{enumerate}
    	\end{lemma}
    	\begin{proof}
    		We prove the statement by induction on the number of components of $F$. The base case $F=\emptyset$ is trivial.
    		Assume now $\alpha\in F$. For $c,c'\in C$ we write $c\sim c'$ if some (equivalently, any)
    		representative of $\pa{c,c'}$ in $\wo(E)\cup\{E\}$ does not contain any domain properly containing
    		$\alpha$. Note that if $g$ is a group element occurring in such a representative
    		of $\pa{c,c'}$ then $g$ must stabilize the boundary components of $E$. In particular, if $E'\subset E$ is a domain that does not
    		properly contain $\alpha$, then neither does $g(E')$.
    		
    		Notice that $\sim$ is an equivalence relation, since given $c,c',c''$ such that $c\sim c'$ and $c'\sim c''$ we have that
    		$\pa{c,c''}$ results from the reduction of a concatenation of words, none of whose domains contains
    		$\alpha$ properly; the fact that $\sim$ is an equivalence relation now follows from Corollary~\ref{cor:bdy-disj}.
    		
    		Let $\mathcal{B}=C/\sim$. For each $B\in\mathcal{B}$ pick a representative $c(B)\in B$; we may assume that $c(B)=b_{0}$.
    		Let $\hat{B}$ be the collection of $c'\in B$ such that $\alpha$ does not appear in any (equivalently some) representative
    		of $\pa{c(B),c'}$ in $\wo(E)$; here we implicitly assume that all domains in a word are connected.
    		We claim that for any $c\in B$ there exists $c'\in\hat{B}$ such that $R_{\alpha}(c,c')$. In other words, $\hat B$ may be
    		properly contained in $B$, but an arbitrary element of $B$ is $\alpha$--related to
    		an element of $\hat B$.
    		
    		To show this, assume $c'\notin\hat{B}$. Since $\alpha$ commutes with all domains contained in
    		$E$ and with all group elements in $G[E]$ there is
    		a representative $v$ of $\pa{c(B),c'}$ of the form $w\alpha$, where
    		$w\in\wo(E)$ does not contain the letter $\alpha$.
    		
    		Since $C\cdot G$ is weakly convex, there is a $v$-sequence \[c_{0}=c(B),c_{1},\ldots ,c_{m},c_{m+1}=c',\] with $c_{i}\in C\cdot G$ for
    		$0\leq i\leq m+1$. Using the fact that $v\in \wo(E)$ and $C=C\cdot G[E]$, one sees that $c_{i}$ is in $C$.
    		In fact, we have $c_{i}\in \hat{B}$ for $0\leq i\leq m$, since $\pa{c(B),c_{i}}$ has representatives in $\wo(E)$,
    		none of whose domains contains $\alpha$.
    		In particular, $R_{\alpha}(c_m,c')$, as claimed.
    		
    		Now, let \[\hat{C}=\bigcup_{B\in\mathcal{B}} \hat{B}.\] For $c,c'\in\hat{C}$, no reduced representative $w$ of
    		$\pa{c,c'}$ in $\wo(E)\cup\{E\}$ can contain $\alpha$. Indeed, if on the one hand $c\in B$ and $c'\in B'$ with $B\neq B'$,
    		then by the definition of the equivalence
    		relation $\sim$, the reduced word $w$ contains a domain properly containing $\alpha$.
    		Then, there cannot be any occurrence of $\alpha$ in $w$, since this occurrence would be absorbed into the domain properly
    		containing $\alpha$, contradicting the fact that $w$ is reduced.
    		
    		If on the other hand $c,c'\in B$ for some $B\in\mathcal{B}$, then $w$ does not contain $\alpha$. This follows from
    		Corollary~\ref{cor:bdy-disj},
    		because $w$ can be obtained as
    		a reduction using the moves $\{(\mathrm{C}),(\mathrm{Abs}),(\mathrm{Comm})\}$ of a word in $\pa{c,c(B)}\pa{c(B),c'}$, which
    		in turn does
    		not include any domain containing $\alpha$.
    		
    		To complete the proof, we have the following sublemma.
    		
    		\begin{sublemma}
    			\label{l: hatC0 is weakly convex}
    			The following hold.
    			\begin{enumerate}
    				\item
    				Suppose that $c_0,\ldots,c_m$ is a $u$-sequence with $u\in\wo(E)\cup\{E\}$ in which $\alpha$ does not appear,
    				and suppose that $c_1\in\hat C$. Then there exists a $u$-sequence
    				$c_{0}=c'_{0},c'_{2},\dots, c'_{m}$ in $C'$ such that $R_{F}(c_i,c_i')$ for all $1\leq i\leq m$.
    				\item
    				$\hat {C}\cdot G$ is weakly convex.
    			\end{enumerate}
    		\end{sublemma}
    		\begin{subproof}
    			We start by observing that for an arbitrary $v$-sequence \[p\colon c_{0},c_{1},\dots, c_{m}\] in some
    			$B\in\mathcal{B}$ for which $v$ does not contain $\alpha$, there exists a $v$-sequence
    			\[p'\colon c'_{0},c'_{1},\dots, c'_{m}\] contained in $\hat{B}$, satisfying $R_{\alpha}(c_{i},c'_{i})$ for any $1\leq i\leq m$.
    			
    			To argue this precisely, we pick $c''_{i}\in\hat{B}$ satisfying
    			$R_{\alpha}(c_{i},c''_{i})$ for all $1\leq i\leq m$. We have
    			that $\pa{c''_{i},c''_{i+1}}$ must be of the form $\pa{c_{i},c_{i+1}}g_{i}$, for a suitable element $g\in\wo(\alpha)$. Indeed, otherwise the occurrence of $\alpha$ does not cancel in the reduction of $\pa{c(A),c''_{i}}\pa{c''_{i},c''_{i+1}}$. We may therefore set
    			$c''_{i}=c'_{i}\cdot g$ for a suitable $g\in G$, and thus obtain a $v$-sequence. If $c_{0}\in C'$, we can assume that $c''_{0}=c_{0}$. Similarly, by a symmetric construction, if $c_{m}\in C'$ we can assume that $c''_{m}=c_{m}$.
    			
    			In case in which both $c_{0}$ and $c_{m}$ are in $C'$, the sequence
    			\[c''_{0}=c_{0},c_{1},\dots, c''_{m}=c_{m}\] is of type $vh$ for some $h\in G[\alpha]$, so that
    			$vh\simeq v$ by Lemma \ref{uniqueness}.
    			By Lemma \ref{lem:equiv-fellow}, there exists a $v$-sequence \[c''_{0}=c_{0},c'''_{1},\dots, c'''_{m}=c_{m}\]
    			that fellow travels with $c''_{0},c''_{1},\dots, c''_{m}$, and is thus contained in $C'\cdot G$. This proves (1) and (2) in case
    			the endpoints of the corresponding sequence lie inside of a single class $\hat B$.

    			Now,
    			let $c,c'\in\hat{C}$ be an arbitrary pair of points that do not lie in a single class $\hat B$. We will show that for any
    			\[w\in\pa{c,c'}\cap(\wo(E)\cup\{E\}),\] there is some $w$-sequence from $c$ to $c'$ that is entirely contained in $\hat{C}$.
    			By Lemma \ref{l: checking wc}, this will conclude the proof.
    			
    			By assumption, there is a $w$-sequence \[p\colon c=c_{0},c_{1},\dots, c_{n}=c'\] from $c$ to $c'$ contained in $C\cdot G$,
    			since the latter is weakly convex. Thus, we see that in fact $p\subseteq C$.
    			Notice that $\pa{c_{i'},c_{j'}}$ is a subword of $\pa{c_{i},c_{j}}$ for $i\leq i'<j'\leq j$; in particular, the latter contains a domain properly containing $\alpha$ whenever the former does. It follows that $\{i\in [0,n]\,|\,c_{i}\in B\}$ consists of consecutive integers
    			for any $B\in\mathcal{B}$.
    			
    			Since $c$ and $c'$ are not contained in a common $\hat B$,
    			there are distinct sets $\{B_{1},B_{2},\dots, B_{r}\}$ with $r\geq 2$, and
    			\[m_{1}=-1,m_{2},\dots, m_{r+1}=n\]such that $c_{i}\in B_{\ell}$ for $i\in[m_{\ell}+1,m_{\ell+1}]$.
    			We can replace each subsequence \[p_{i}=(c_{i})_{m_{\ell}+1\leq i\leq m_{\ell+1}}\] by some new sequence
    			\[p'_{i}=(c'_{i})_{m_{\ell}+1\leq i\leq m_{\ell+1}}\subseteq \hat{B}_{i},\] as we have already
    			argued previously, ensuring that
    			\[c'_{0}=c_{0}=c\in B_{1}\quad \textrm{and}\quad c'_{r}=c_{r}=c'\in B_{r}.\]
    			We claim that the resulting sequence $(c'_{i})_{i=0}^{n}$ is still a
    			$w$-sequence from $c$ to $c'$.
    			
    			Indeed, let $0\leq i\leq n-1$.
    			If \[\{i,i+1\}\subseteq [m_{l}+1,m_{l+1}]\] for some $1\leq \ell\leq r$ then $\pa{c_{i},c_{i+1}}=\pa{c'_{i},c'_{i+1}}$, as
    			follows directly from the choice of the $c'_{j}$. Otherwise, any \[w\in\pa{c_{i},c_{i+1}}\in\wo(E)\cup\{E\}\]
    			contains a domain $D$ properly containing $\alpha$. Since
    			$R_{\alpha D\alpha}(c'_{i},c'_{i+1})$ and since $R_{\alpha D\alpha}$ is equivalent to $R_{D}$, the lemma now follows.
    		\end{subproof}
    		
    		Lemma \ref{l: hatC0 is weakly convex} allows us to apply the inductive hypothesis to $\hat{C}$ and $F\setminus\{\alpha\}$,
    		which yields $C'\subseteq C$ with $C'\cdot G$ weakly convex, $C'\cdot G[E]\subset C$, and such that no
    		sequence \[w\in\pa{c,c'}\cap(\wo(E)\cup\{E\})\] with connected domains contains any $\beta\in F\setminus\{\alpha\}$;
    		thus it follows that
    		no sequence $w$ contains a letter $\beta\in F$ by the choice of $\hat{C}$.
    	\end{proof}
    	
    	\begin{definition}[Cones and Twisted Cones]\label{def:cones}
    		Let \[D\in\D_0,\quad  A\subset N, \quad a_{0}\in A.\]
    		We define the \emph{cone}
    		\[
    			\wcn_{A}(a_{0},D)=\{a\in A\,|\,\pa{a_{0},a}\perp^{\circ} D\},\] and the \emph{twisted cone}
    			\[\tcn_{A}(a_{0},D)=\{ag\,|\,a\in\wcn_{A}(a_{0},D),\,g\in G\}.
    		\]
    	\end{definition}
    	
    	
    	\begin{lemma}
    		\label{l: cone}If $A$ is weakly convex, then $\tcn_{A}(a_{0},D)$ is weakly convex.
    	\end{lemma}
    	\begin{proof}
    		It suffices to prove that $\wcn_A(a_0,D)$ satisfies the hypotheses of Lemma~\ref{l: checking wc}.
    		Let $w_{1}\in\delta(c,a_{0})$ and $w_{2}\in\delta(a_{0},c)$ both be
    		strictly orthogonal to
    		$D$. Notice that $\delta(c,c')$ is a reduct of $w_{1} w_{2}$, and
    		that $w_{i}\perp^{\circ}D$ by definition,
    		so that
    		Corollary~\ref{cor:bdy-disj} implies
    		the existence of a $u'\in[[u]]$ such that
    		$u\perp^{\circ}D$.
    		
    		By Observation \ref{small wiggling},
    		there exists a sequence $p'$ from $c$ to $c'$
    		of type $u'$, which fellow travels with $u$.
    		On the one hand, $p'$ is clearly
    		contained in $A$, since A is a union of $G$--orbits. On the other
    		hand, for an arbitrary point $b$ appearing in $p'$, we have that
    		$\delta=\pa{a_{0},b}$ is a reduct of the concatenation of
    		$w_{1}^{-1}$ with an initial segment of $u'$, both
    		of which are strictly orthogonal to $D$.
    		A further application of Corollary~\ref{cor:bdy-disj}
    		yields $\delta\perp^{\circ}D$, and thus $b\in C_{0}$.
    	\end{proof}

    	The following is one of the key technical result of the entire paper, and establishes
    	the back--and--forth property needed to prove quantifier elimination in
    	$\mathcal{M}^G$. Specifically, the quantifier--free type of a weakly convex set determines its type; see Theorem~\ref{t:Mqe}
	below.
    	
    	\begin{figure}[h!]
    		\psfrag{a}{$a_0$}
    		\psfrag{a'}{$a_0'$}
    		\psfrag{C}{$C_0$}
    		\psfrag{C'}{$C_0'$}
    		\psfrag{D}{$D$}
    		\psfrag{A}{$A$}
    		\psfrag{A'}{$A'$}
    		\includegraphics[width=4cm]{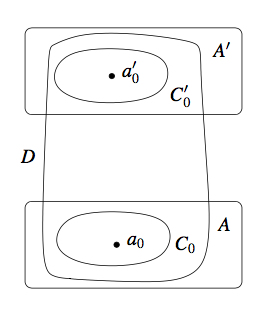}
    		\caption{Lemma~\ref{core qe}}
    	\end{figure}
    	
    	\begin{lemma}[$D$--Step Extension Lemma]
    		\label{core qe}
    		Suppose that $A\subseteq N$ be a weakly convex set of cardinality
    		less  than $\kappa$, let $a_{0}\in A$ be a basepoint, and let $D\in\D_0$.
    		Let $C_0\subset A$ be a set of parameters given by applying Lemma~\ref{l: efficient convexity} to the set
    		$\wcn_{A}(a_{0},D)$, with $D=E$ and $F=\partial D$.
		
		Let \[C=C_0\cdot G\subset\tcn_{A}(a_{0},D).\]
    		Consider the type:
    		\begin{align*}
    			q^{D}_{a_{0},A}(x_{C},y_{A}):=p^{D}_{a_{0},A}(x_{a_{0}},y_{A})\cup
    			\qftp^{x_{C}}(C)\cup \qftp^{y_{A}}(A)
    			\cup\{R_{D}(x_{c},y_{c})\,|\,c\in C_{0}\}.
    		\end{align*}
    		The following conclusions hold:
    		\enum{(i)}{
    			\item \label{item extension}
    			Given a subset $\cc{A}=(\cc{a})_{a\in A}$ inside
    			$N$ that is isomorphic to $A$ and an $\c{a}_{0}$ such that
    			$\c{a}_{0}\cc{A}\models p^{D}_{a_{0},A}$, there exists a copy $\c{C}$ of $C$
    			extending $\c{a}_{0}$ such that $\c{C}\cc{A}\models q^{D}_{a_{0},A}$.
    			\item \label{item uniqueness}
    			The type  $q^{D}_{a_{0},A}$
    			implies the quantifier-free type $\qftp(x_{C},y_{A})$.
    			\item \label{item convexity}
    			Let $\c{C}\cc{A}$ be a tuple in an arbitrary model of $\Th{\M^G}$ such that
    			\[\c{C}\cc{A}\models q^{D}_{a_{0},A}.\] Then $\c{C}\cc{A}$
    			is \wc as a set.
    		}
    	\end{lemma}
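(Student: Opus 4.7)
The plan is to establish the three clauses in order, using Lemma \ref{parallel lifting} for (i), the gate property together with Lemma \ref{surviving D} for (ii), and the weak convexity of $A$ and of $\tcn_A(a_0,D)$ for (iii).

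For (i), I apply Lemma \ref{parallel lifting} to the subfamily $(c'')_{c \in C_0} \subseteq A''$ with basepoint $a_0''$ and domain $D$. The hypothesis is satisfied: for each $c \in C_0$, the class $\pa{a_0,c}$ is $\perp^* D$, hence $\perp D$, by the very definition of $\cn_A(a_0,D)$, and the required seed $a_0'$ with $R_D(a_0'',a_0')$ is provided by the clause $p^D_{a_0,A}$. The lifting lemma produces a tuple $(c')_{c \in C_0}$ extending $a_0'$, elementarily equivalent to $(c)_{c \in C_0}$, and satisfying $R_D(c',c'')$ for every $c \in C_0$. To extend to $C = C_0 \cdot G$, set $(c_0 g)' := c_0' \cdot g$ via $R_g$; $G$-equivariance of types and freeness of the $R_g$-action make this well-defined and show the extended tuple is a copy of $C$.

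For (ii), by Corollary \ref{qf characterization} it suffices to determine $\pa{\cdot,\cdot}$ for every pair of elements in $C' \cup A''$. Internal pairs are directly given by $\qftp^{x_C}(C)$ and $\qftp^{y_A}(A)$. Since $p^D_{a_0,A}$ says exactly that $a_0'$ is one step $D$ away from $A''$ with basepoint $a_0''$, the gate property (Lemma \ref{gate property}) yields $\pa{a_0',a''} = [D] \ast \pa{a_0,a}$ for every $a \in A$. The heart of the proof is to upgrade this to arbitrary $c \in C_0$: I claim that, for each such $c$, the element $c'$ is also one step $D$ away from $A''$ with basepoint $c''$. If not, then $R_u(c',a'')$ for some $u \in \wo(D)$ and $a \in A$, and hence $\pa{c',a''} \in \wo(D)$. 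On the other hand, the triangle
\[ c' \xrightarrow{\pa{c,a_0}} a_0' \xrightarrow{D} a_0'' \xrightarrow{\pa{a_0,a}} a'' \]
exhibits $\pa{c',a''}$ as a reduct of $\pa{c,a_0} \cdot D \cdot v$, where $v$ is a reduced representative of $\pa{a_0,a}$. The strong orthogonality $\pa{c,a_0} \perp^* D$ (from the definition of $C_0$) prevents any letter of $\pa{c,a_0}$ from absorbing $D$, so both $\pa{c,a_0} \cdot D$ and $D \cdot v$ are reduced, and Lemma \ref{surviving D} forces $D$ to persist in every reduct, contradicting $\pa{c',a''} \in \wo(D)$. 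With the claim established, the gate property delivers $\pa{c',a''} = [D] \ast \pa{c,a}$ for every $a \in A$ and $c \in C_0$; the remaining pairs with $c = c_0 g \in C \setminus C_0$ follow by an $R_{g^{-1}}$-twist on the left.

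For (iii), the $\delta$-data from (ii) determines $\qftp$ on $C' \cup A''$, so it suffices to produce a strict path realising $\pa{x,y}$ for each pair $x,y$ in the same connected component. Pairs inside $A''$ are handled by the weak convexity of $A$ transported across $A'' \cong A$, and pairs inside $C'$ by the weak convexity of $C = \tcn_A(a_0,D)$, established in the lemma immediately preceding this one. For a mixed pair $(c',a'')$ with $c \in C_0$, concatenate a single $\W_D$-step $c' \to c''$ with a strict $\pa{c,a}$-path inside $A''$; the first step is strict because the claim of (ii), taken with $a = c$, forces $\pa{c',c''} = [D]$, and the resulting concatenation is a strict sequence of type $D \cdot \pa{c,a}$ realising $\pa{c',a''} = [D] \ast \pa{c,a}$. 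For $c = c_0 g \in C \setminus C_0$, prepend a single step of type $g^{-1}$. The principal technical obstacle is the surviving-$D$ argument in (ii): it depends crucially on $\pa{c,a_0} \perp^* D$ rather than mere $\perp D$, which is precisely why $C_0$ is defined via strong orthogonality.
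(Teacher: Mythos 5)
Your item (i) is fine and matches the paper (it is a direct application of Lemma~\ref{parallel lifting} plus the free $G$-action). The trouble is in (ii) and propagates into (iii).

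In (ii), after drawing the triangle $c' \to a_0' \to a_0'' \to a''$, you assert that \emph{both} $\pa{c,a_0}\cdot D$ and $D\cdot v$ are reduced, where $v\in\pa{a_0,a}$, and then invoke Lemma~\ref{surviving D}. Strong orthogonality $\pa{c,a_0}\perp^{*}D$ does justify reducedness of $\pa{c,a_0}\cdot D$, but it says nothing about $D\cdot v$, and in general $D\cdot v$ is \emph{not} reduced: take $a$ to be one $D$-step from $a_0$ inside $A$, so that $v$ starts with a letter that interacts with $D$, or $v$ contains a subword in $\wo(D)$ that $D$ absorbs. Lemma~\ref{surviving D} is stated precisely under the hypothesis that both flanks $uD$ and $Dv$ are reduced, so you cannot apply it. The paper sidesteps this by analysing $\pa{a_0',a''}$ rather than $\pa{c',a''}$: the gate property guarantees that $\pa{a_0',a''}$ is a reduct \emph{without cancellation} of $D\pa{a_0,a}$, which is weaker than reducedness but still forces $D$ or a superdomain of $D$ to survive in $\pa{a_0',a''}$. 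On the other hand $\pa{a_0',a''}$ is also a reduct of $\pa{a_0,c}\,u$, and since every domain in $\pa{a_0,c}$ is $\perp^{*}D$ and every domain in $u\in\wo(D)$ is a proper subdomain of $D$, no domain in any reduct of $\pa{a_0,c}\,u$ can contain $D$. That is the contradiction; it does not pass through Lemma~\ref{surviving D} at all. Trying to run your version by first replacing $Dv$ by its no-cancellation reduct and then reasoning about $\pa{c,a_0}$ meeting that reduct also fails, because the surviving superdomain $D'\supsetneq D$ need not be orthogonal to $\pa{c,a_0}$.

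The same gap recurs in (iii). You produce a strict sequence of type $D\cdot\pa{c,a}$ from $c'$ to $a''$, but a witness for weak convexity must have type $w$ for $w\in\pa{c',a''}$, and $\pa{c',a''}=[D]\ast\pa{c,a}$ only coincides with $[D\cdot\pa{c,a}]$ when $D\cdot\pa{c,a}$ is already reduced. When $D$ is absorbed by a domain of $\pa{c,a}$, or when $\pa{c,a}$ contains a subword that $D$ absorbs (the two cases the paper's proof splits into, using Observation~\ref{small wiggling} and Lemma~\ref{D tracking}), your one-step-then-$A''$-path construction has the wrong type. You would need to reroute the path: in the first case use $w\simeq uDu'$ with $u\perp^{*}D$ and a suitable middle point chosen in $C'$ or $A''$; in the second case replace $D$ by the absorbing $E$ and use the preexisting $R_E$-relation across the junction. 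These are exactly the two cases the paper treats explicitly, and omitting them leaves the argument incomplete.
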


    	Before giving the proof of Lemma~\ref{core qe}, we explain its meaning.
    	The type
    	$p^{D}_{a_{0},A}(x_{a_{0}},y_{A})$  is the type of a new basepoint,
    	whose role is
    	analogous to that of $a_0\in C_0$. The tuple $y_A$ corresponds
    	to a copy of $A$,
    	which is thought of as  the ``original" copy of  $A$.
    	The type $\qftp^{x_{C}}(C)$
    	corresponds to a new copy of  $C$  in the $x$--variables.
    	The type  $\qftp^{y_{A}}(A)$
    	corresponds to the original copy of $A$ in the $y$--variables, and the type
    	\[\{R_{D}(x_{c},y_{c})\,|\,c\in C_{0}\}\] says  that
    	each point in  the new  copy
    	of $C_0$  (in the $x$--variables)  is $D$--related
    	to the corresponding point
    	in the original copy of $C_0$.

    	\begin{proof}[Proof of Lemma~\ref{core qe}]
    		Item (\ref{item extension}) is a particular instance of Lemma \ref{parallel lifting},
    		and follows from the consistency of the relevant types.
    		
    		For (\ref{item uniqueness}),  let $(c,a)\in C\times A$.
    		By Corollary~\ref{qf characterization}, it suffices to show that
    		$q^{D}_{a_{0},A}$ completely determines the value of $\pa{x_{c},y_{a}}$.
    		Now, for arbitrary choices of $g\in G$ and $c,d\in N$,
    		the type $\pa{cg,d}$ is completely determined by
    		$\pa{c,d}$. By virtue of this and Lemma \ref{gate property},
    		it is enough to prove that
    		\[q^{D}_{a_{0},A}(x_{C},y_{A})\vdash p^{D}_{A,c}(x_{c},y_{A})\] whenever
    		$c\in C_{0}$.
    		
    		So, suppose that we  are given $C'$ and $A''$ such that
    		\[\c{C}\cc{A}\models q^{D}_{a_{0},A}(x_{C},y_{A}).\]
    		Clearly, $R_{D}(\c{c},\cc{c})$ for any $c\in C_{0}$, and where $c''$ denotes the element of $A''$ corresponding to $c$.
    		Now, assume for a
    		contradiction that there exists a
    		$c\in C_{0}$ and $a\in A$ such that $\pa{\c{c},\cc{a}}=[u]$
    		for some $u\in\wo(D)$.
    		Then we have that
    		$\pa{\c{a}_{0},\cc{a}}$ is a reduct without cancellation of
    		\[D\pa{\c{a}'_{0},\cc{a}}=D\pa{a_{0},a}\] by Lemma \ref{gate property}, and a reduct
    		of \[\pa{\c{a}_{0},\c{c}}u=\pa{a_{0},c}u.\]
    		Now,  the  definition  of $C_0$ implies  that  $\pa{a_0,c}$ is
    		strictly orthogonal
    		to $D$. Since $u\in\wo(D)$, we have that if
    		$D$ is annular, then every letter occurring in $u$ is a group element. It follows that
    		$D$ cannot  occur in $[\pa{a_{0},c}u]$ in  the  case  where
    		$D$ is  non-annular, and $D$ cannot be absorbed by any domain
    		occurring in $[\pa{a_{0},c}u]$.
    		This contradicts the assumption that
    		$\pa{\c{a}_{0},\cc{a}}$ is a reduct without cancellation of $D\pa{a_{0},a}$.
    		
    		Now consider part (\ref{item convexity}).
    		We need to show that for all pairs of elements
    		$e'_{1}\in C'$ and $e''_{2}\in\cc{A}$ corresponding to points  $e_1\in C$ and
    		$e_2\in A$ respectively,
    		and for all $w\in\pa{e_{1},e_{2}}$,
    		there exist a strict $w$--sequence from
    		$e'_{1}$ to $e''_{2}$ which is contained entirely in $\c{C}\cc{A}$.
    		
    		By replacing $e_1$ by an element in its $G$--orbit,
    		we may clearly assume that $e_1\in C_{0}$.
    		We know that $\pa{e'_1,e''_2}$ is the result of reducing
    		$D\pa{e_1,e_2}$ without cancellation, by Lemma~\ref{gate property}.

    		Suppose  first that $D$ is not absorbed by any domain occurring
    		in $\pa{e_1,e_2}$.
    		By Observation \ref{small wiggling} and Corollary \ref{D tracking},
    		we may assume that $w$ is of the form
    		$uDu'$, where $u\perp^{\circ} D$ and where
    		\[v=uu''u'\in\pa{e_1,e_2}.\]
    		Here, the word $u''$ is absorbed  by $D$.
    		Let $d_1,\ldots,d_k$  be a $uu''u'$--sequence in $A$, with $d_1=e_1$
    		and  $d_k=e_2$, and let
    		$d_i$ and $d_j$ denote the starting and ending points, respectively, of the sequence $u''$. Notice that
    		\[\{d_{1},\dots, d_{i}\}\subset \wcn_{A}(a_{0},D),\] since $u\perp^{\circ}D$. Now, we observe that there is a $u$-sequence
    		\[d_1=f_1,f_2,\ldots,f_i\subset C_0,\] satisfying
    		$R_{\partial D}(d_{\ell},f_{\ell})$ for $2\leq \ell\leq i$, by Lemma~\ref{l: efficient convexity}.

    		Since  $u''\in\wo(D)$, we have $R_D^*(f_i', d_j'')$ and so \[f_1',f_2',\ldots,f_i',d_j'',\ldots d_{k}''\] is a $w$-sequence from
    		$e_1'$ to $e_2''$.

    		Finally, we suppose that $D$  is  absorbed by a domain in $\pa{e_1,e_2}$,
    		so that we may assume that prior  to application of the
    		absorption  move $(\mathrm{Abs}_{\subset})$, the word $w$ is of the form
    		$uDEu'$, so that we may write
    		\[w=uEu'\in\pa{e_1,e_2}=\pa{e_1',e_2''},\]
    		with $u\perp^{\circ} D$. As before, let $e_1=d_1,\ldots,d_k=e_2$ be a $w$-sequence from $e_1$ to $e_2$, and
    		write \[d_1,\ldots,d_i\subset\wcn_A(a_0,D)\] for a $u$-sequence. Choose
    		a $u$-sequence \[d_1=f_1,f_2,\ldots,f_i\subset C_0\] satisfying
    		$R_{\partial D}(d_{\ell},f_{\ell})$ for $2\leq \ell\leq i$, as allowed by Lemma~\ref{l: efficient convexity}. There is a point $n\in C'A''$
    		such that $R_D(f_i',n)$ and $R_E(n,d_i'')$, and so then since $R_E(f_i',d_{i+1}'')$, we have
    		that \[f_1',\ldots,f_i', d_{i+1}'',\ldots,d_k''\] is a $w$-sequence in $C'A''$ between $e_1$ and $e_2''$.
    	\end{proof}

	\section{Relative quantifier elimination and $\omega$-stability}\label{sec:rel-qe}
  	
  	We are  now ready to finish proving some of the main results of this paper.
  	We include this section for completeness, as the arguments
  	are nearly identical to those in~\cite{BPZ17}.
  	
  	\subsection{Relative quantifier elimination}
    	Our preliminary result on quantifier elimination is as follows.
    	\begin{theorem}\label{t:Mqe}
    		The quantifier-free type of a weakly convex set of a model of $\Th{\mathcal{M}^G}$
    		determines its type.
    	\end{theorem}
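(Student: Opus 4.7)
The plan is to derive the theorem from Theorem \ref{thm:backandforth} applied to the family $\mathcal{I}$ of partial isomorphisms $f\colon A\to A'$, where $A\subseteq N$ and $A'\subseteq N'$ are weakly convex subsets of $\kappa$-saturated models $\mathcal{N},\mathcal{N}'$ of $\Th{\mathcal{M}}$ with $|A|,|A'|<\kappa$. By Corollary \ref{qf characterization}, the quantifier-free type of a tuple is recorded by the family of classes $\pa{\cdot,\cdot}$, so any bijection preserving them belongs to $\mathcal{I}$. One direction of the back-and-forth suffices; the other is symmetric.

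Fix $f\colon A\to A'\in\mathcal{I}$ and a target element $c\in N$. If $c$ is unrelated to every element of $A$ in the sense that $\pa{c,a}$ is undefined for all $a\in A$, then by Lemma \ref{step consistency} and $\kappa$-saturation one finds $c'\in N'$ in an analogous disconnected component and extends by the $G$-orbits on both sides. Otherwise choose $a\in A$ minimizing $\Or(\pa{c,a})$, fix a reduced $w=\delta_{1}\cdots\delta_{k}$ in $\pa{c,a}$, and a strict $w$-sequence $c=c_{0},c_{1},\ldots,c_{k}=a$. We construct a descending chain $A=B_{k}\subseteq B_{k-1}\subseteq\cdots\subseteq B_{0}$ of weakly convex enlargements with $c_{i}\in B_{i}$ and matching enlargements $B_{i}'$ on the primed side, so that every pair $(B_{i},B_{i}')$ lies in $\mathcal{I}$.

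At the inductive step, having built $(B_{i+1},B_{i+1}')\in\mathcal{I}$, set $D=\delta_{i+1}$. Minimality of $\Or(w)$ forces $c_{i}$ to be step $D$ away from $B_{i+1}$ with basepoint $c_{i+1}$: indeed, any relation $R_{u}(c_{i},b)$ with $u\in\wo(D)$ and $b\in B_{i+1}$, concatenated with the initial segment $c=c_{0}\to\cdots\to c_{i}$ and reduced via Lemma \ref{equivalence} and Corollary \ref{reduct cor}, would yield a word of strictly smaller ordinal connecting $c$ to some element of $A$, contradicting minimality. By Lemma \ref{step consistency} and $\kappa$-saturation, pick $c_{i}'\in N'$ realizing $p^{D}_{f(c_{i+1}),B_{i+1}'}$. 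Apply Lemma \ref{core qe} with $(B_{i+1},c_{i+1},D)$ on both sides: item (i) provides a parallel cone $C$ on the unprimed side extending $c_{i}$ and a cone $C'$ on the primed side extending $c_{i}'$, each realizing $q^{D}_{c_{i+1},B_{i+1}}$; item (ii) promotes the equality of these types to equality of quantifier-free types, so $(B_{i},B_{i}'):=(B_{i+1}\cup C,\;B_{i+1}'\cup C')$ again lies in $\mathcal{I}$; item (iii) records that weak convexity is preserved. After $k$ iterations $c$ has been adjoined.

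The main obstacle is verifying the step-$D$-away condition at each stage, which is a combinatorial assertion about reductions of words in $\wo$. It rests on the uniqueness of reduced classes (Lemma \ref{uniqueness}), the non-cancellation property (Lemma \ref{surviving D}) which prevents hidden collapses of the letter $D$, and the gate property (Lemma \ref{gate property}) governing the classes $\pa{c_{i},b}$ for $b\in B_{i+1}$. With these combinatorial facts in hand, Lemma \ref{core qe} packages the extension cleanly and Theorem \ref{thm:backandforth} delivers the conclusion.
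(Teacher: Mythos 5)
Your proposal is correct and follows essentially the same route as the paper: a back-and-forth argument between weakly convex subsets of $\omega$-saturated models, with each new point adjoined via Lemma~\ref{core qe}. Where the paper condenses the reduction to ``by a straightforward induction on the length of a minimal path from $c$ to $A$, we may assume that $c$ is a step $D$ away from $A$,'' you unpack that induction explicitly, building the chain $B_k\subseteq\cdots\subseteq B_0$ and verifying the step-$D$-away condition at each stage; the combinatorial fact you are using there is what the paper later records as Lemma~\ref{several steps away}, so the content agrees.
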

    	\begin{proof}
    		Let $\kappa$ be an infinite cardinal, and
    		let  $\mathcal{N}$ and  $\mathcal{N}'$ be two  given $\kappa$-saturated models
    		of $\Th{\mathcal{M}^G}$. Let \[f:A\longrightarrow \tilde{A}\] be a  given  isomorphism
    		between two weakly convex sets, each of  which is the union of less than $\kappa$ many
    		$G$-orbits,  and let and $c\in N\setminus A$.
    		We will show  that the map $f$ extends to a partial isomorphism
    		\[\tilde{f}:B\longrightarrow\tilde{B},\] where here $B$ and $\tilde{B}$ are  both
    		weakly convex with less than $\kappa$ many $G$-orbits and $B$ contains $a$, thus showing the back-and-forth property.
    		Notationally, we will write that
    		$f$  sends $a\in A$ to $\tilde{a}\in \tilde{A}$.

    		By a straightforward induction on the length of a
    		minimal sequence from $c$ to $A$, we may assume that
    		$c$ is a step $D$ away from $A$ over a basepoint
    		$a_{0}$, for a suitable $D$ and
    		$a_0$.
    		By Lemma \ref{step consistency} and the fact that $\mathcal{N}'$ is
    		$\kappa$-saturated, there exists
    		$\tilde{c}\in N'$ such that \[\mathcal{N}'\models
    		p^{D}_{a_{0},A}(\tilde{c}).\]
    		Part (\ref{item extension})
    		of Lemma \ref{core qe} yields realizations $B$ and $\tilde{B}$ of
    		of $q^{D}_{a_{0},A}$ in $\mathcal{N}$ and $\mathcal{N}'$ extending $cA$ and
    		$\tilde{c}\tilde{A}$ respectively.
    		
    		Both  $B$ and $\tilde{B}$ are weakly convex, by item (\ref{item convexity})
    		of Lemma \ref{core qe}. Finally, item (\ref{item uniqueness})
    		furnishes a unique extension of $f$  to \[\tilde{f}:B\longrightarrow\tilde{B},\] as desired.
    	\end{proof}
    	
    	\begin{corollary}\label{cor:ext-qe}
    		The first order theory of $\M^G$ has quantifier elimination relative to existential formulae.
    	\end{corollary}
    	\begin{proof}
	As before, let $\mathcal N$ be a sufficiently saturated model of $\Th{M^G}$ with universe $N$.
    		Let $a,b$ be finite tuples in $N$ with the same existential type. That is, for all formulae $\phi(x,y)$, we have \[\mathcal N\models
    		\exists x \phi(x,a)\leftrightarrow \exists x\phi(x,b).\] Now, Lemma~\ref{core qe} shows that $a$ can be extended to a finite
    		tuple $(a,a')$, with the property that $(a\cup a')\cdot G$ is weakly convex, so therefore the quantifier--free type of $(a\cup a')\cdot G$
    		determines its type by Theorem~\ref{t:Mqe}.
    		Since $a$ and $b$ have the same existential type, we have that there exists a tuple $b'$ such that
    		$(a,a')\equiv (b,b')$, and so the types of $(a,a')$ and $(b,b')$ coincide.
    	\end{proof}
    	
  	\subsection{$\omega$--stability}
    	
    	Let $\mathcal{N}$ be an $\omega$--saturated model of $\Th{\M^G}$ with universe $N$.
    	Given $a,b\in N$, we define $\Or(a,b)=\Or(\pa{a,b})$.
	
	\begin{definition}[Basepoints]
    	Generalizing basepoints from Definition~\ref{one step away},
	we say that $a_{0}\in A$ is a \emph{basepoint} for $b$ in $A$ if
    	$\Or(b,a_0)=\Or(b,A)$.
	\end{definition}
    	
    	The following lemmas are straightforward, and so we omit the details of their proofs.
    	
    	\begin{lemma}
    		\label{several steps away}Let $A\subset N$ be a weakly convex set, and
    		let $a_{0}\in A$ be a basepoint for a point
    		$b\in N\setminus A$.
    		Let $w\in\pa{b,a_{0}}$, and suppose that $b'$ is the
    		penultimate point in a strict $w$--sequence
    		from $b$ to $a_{0}$. Suppose furthermore that $\W_{D}(b',a_{0})$
    		for a domain $D$.
    		Then $b'$ is $D$--step away from $A$ with basepoint $a_{0}$.
    		
		Moreover, suppose $\{a_{0}\}\cup A\subseteq B$ is weakly convex, and that $B$ is constructed by iterated $D$--step extensions
		as in Lemma~\ref{core qe}. Then
    		the element $b'$ is a basepoint for $b$ in $B$.
    	\end{lemma}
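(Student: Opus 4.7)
First I would decompose $w=w''D$, where $w''$ corresponds to the initial segment $b=b_0,\ldots,b_{k-1}=b'$ of the strict $w$-sequence; reducedness of $w$ gives reducedness of $w''$, so $\pa{b,b'}=[w'']$ and $\Or(w)=\Or(w'')\oplus\omega^{k(D)}$. For the first assertion, it remains to show $\neg R_u(b',a)$ for every $a\in A$ and $u\in\wo(D)$. Assuming to the contrary that $R_u(b',a)$ holds, concatenation of the strict $w''$-path from $b$ to $b'$ with a $u$-path from $b'$ to $a$ produces $R_{w''u}(b,a)$, so by Corollary~\ref{reduct cor} and the fact that reductions do not increase $\Or$ we have $\Or(\pa{b,a})\leq\Or(w'')\oplus\Or(u)$. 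Each letter of $u\in\wo(D)$ is either a group element (of associated ordinal $0$) or a proper subdomain $D'\subsetneq D$ with $k(D')<k(D)$ (of associated ordinal $\omega^{k(D')}<\omega^{k(D)}$), so their Hessenberg sum $\Or(u)$ lies strictly below $\omega^{k(D)}$. Hence $\Or(\pa{b,a})<\Or(w)=\Or(b,A)$, contradicting the choice of $a_0$.

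For the second assertion, I would apply Lemma~\ref{core qe}(\ref{item extension}) with $\cc{A}=A$ and $\c{a}_0=b'$, justified by the first assertion (which says $b'$ realizes $p^D_{a_0,A}$), to construct $B=\c{C}\cup A$ weakly convex with $b'\in\c{C}$. The goal is $\Or(b,b')\leq\Or(b,\tilde{a})$ for each $\tilde{a}\in B$. For $\tilde{a}\in A$ this follows from $\Or(b,\tilde{a})\geq\Or(b,A)=\Or(w)>\Or(w'')=\Or(b,b')$, and for $\tilde{a}=b'$ it is equality. For the remaining $\tilde{a}\in\c{C}$, corresponding to some $c_0 g\in C=\tcn_A(a_0,D)$ with $c_0\in C_0$, the relation $R_D(\tilde{c}_0,c_0)$ supplied by $q^D_{a_0,A}$ together with $G$-equivariance (the $(\mathrm{Jmp})$ move) yield $R_{g^{-1}D}(\tilde{a},c_0 g)$ with $c_0 g\in A$ and $k(g^{-1}D)=k(D)$; the ordinal estimate from the previous paragraph, with $g^{-1}D$ in place of $D$, then rules out any $\tilde{a}$ with $\Or(b,\tilde{a})<\Or(w'')$.

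The main obstacle is the identification of the $R_{g^{-1}D}$-relation in the last case. It rests on the short group-theoretic computation that conjugation by $g$ carries $\Stab^+(D^\perp)$ to $\Stab^+((g^{-1}D)^\perp)$, which is exactly the content of $(\mathrm{Jmp})$ applied to $R_D(\tilde{c}_0,c_0)$. Once this identification is in hand, the whole proof is driven by the single ordinal inequality $\Or(u)<\omega^{k(D)}$ for $u\in\wo(D)$, used symmetrically in the two assertions.
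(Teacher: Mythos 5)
Your proof is correct. The paper itself omits a proof of this statement, asserting in Section~\ref{sec:rel-qe} that it (together with Lemma~\ref{gate property many steps}) is straightforward, so there is no written argument to compare against; yours is precisely the routine verification via the well-founded rank $\Or$ that the authors had in mind, and the decomposition $w=w''D$ with $\pa{b,b'}=[w'']$ is the natural starting point. Your first part is fully correct: $\Or(u)<\omega^{k(D)}$ for $u\in\wo(D)$ because each letter of $u$ contributes either $0$ or $\omega^{k(D')}$ with $D'\subsetneq D$ connected, hence $k(D')<k(D)$, and $\omega^{k(D)}$ is closed under Hessenberg sum.

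The only place your write-up is compressed is the last step of the second assertion. There you concatenate with the single letter $g^{-1}D$, whose weight is exactly $\omega^{k(D)}$ and not strictly less, so what drives the conclusion is strict monotonicity (equivalently, cancellation) of the Hessenberg sum, not a re-run of the bound $\Or(u)<\omega^{k(D)}$. Concretely: if $\Or(b,\tilde a)<\Or(w'')$, then concatenating $\pa{b,\tilde a}$ with the relation $R_{g^{-1}D}(\tilde a, c_0 g)$ and applying Corollary~\ref{reduct cor} gives
\[
\Or(\pa{b,c_0 g})\leq \Or(b,\tilde a)\oplus\omega^{k(g^{-1}D)}=\Or(b,\tilde a)\oplus\omega^{k(D)}<\Or(w'')\oplus\omega^{k(D)}=\Or(w)=\Or(b,A),
\]
contradicting $c_0 g\in A$ and the basepoint property of $a_0$. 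That is what the phrase ``the ordinal estimate from the previous paragraph with $g^{-1}D$ in place of $D$'' must unpack to; your identification of $R_{g^{-1}D}(\tilde a,c_0 g)$ via $(\mathrm{Jmp})$ and the observation $k(g^{-1}D)=k(D)$ are correct and are indeed the only non-ordinal ingredients.
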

    	
    	\begin{lemma}
    		\label{gate property many steps}If $A\subset N$ is weakly convex and
    		if $a_{0}\in A$ is a basepoint for $b\in N\setminus A$, then for all
    		$a\in A$, the class $\pa{b,a}$ is the unique equivalence class of
    		reduced words that can be obtained by reduction from
    		$\pa{b,a_{0}}\pa{a_{0},a}$ without using the move (C).
    	\end{lemma}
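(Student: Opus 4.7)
The plan is to proceed by induction on the length of a reduced representative $w$ of $\pa{b,a_{0}}$. The base case $|w|=1$ is immediate: if $w$ consists of a single group element $g\in G$, then $b=a_{0}g^{-1}$ lies in $A$ since weak convexity forces $A$ to be a union of $G$--orbits, contradicting $b\notin A$; the remaining subcase $w=D$ for some $D\in\D$ is precisely the original gate property (Lemma \ref{gate property}).

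For the inductive step, pick a strict $w$--sequence $b=b_{0},b_{1},\ldots,b_{k}=a_{0}$ witnessing a reduced expression $w=\delta_{1}\cdots\delta_{k}$, set $b':=b_{k-1}$, and write $\delta:=\delta_{k}$, so that $\W_{\delta}(b',a_{0})$. By Lemma \ref{uniqueness}, $\pa{b,b'}=[\delta_{1}\cdots\delta_{k-1}]$, which has strictly smaller length. If $\delta\in G$, then $b'=a_{0}\delta^{-1}\in A$ is itself a basepoint for $b$ in $A$, and the inductive hypothesis applied to $(b,b',A)$ settles the case, since the passage from $\pa{b,a_{0}}\pa{a_{0},a}$ to $\pa{b,b'}\pa{b',a}$ only requires applications of $(\mathrm{Cmp})$ and $(\mathrm{Rm})$, neither of which is the forbidden move $(\mathrm{C})$.

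The main case is $\delta=D\in\D$. By Lemma \ref{several steps away}, the element $b'$ is one step $D$ away from $A$ with basepoint $a_{0}$. Applying parts (\ref{item extension}) and (\ref{item convexity}) of Lemma \ref{core qe} to this data inside a sufficiently saturated elementary extension, we obtain a weakly convex set $B\supseteq A\cup\{b'\}$; the concluding clause of Lemma \ref{several steps away} further guarantees that $b'$ is a basepoint for $b$ in $B$. Since $\pa{b,b'}$ is strictly shorter than $\pa{b,a_{0}}$, the inductive hypothesis applied to $(b,b',B)$ yields, for every $a\in A\subseteq B$, that $\pa{b,a}$ is the unique reduced class obtained from $\pa{b,b'}\pa{b',a}$ without cancellation. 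On the other hand, the one--step gate property (Lemma \ref{gate property}) applied to $b'$ over $A$ with basepoint $a_{0}$ asserts that $\pa{b',a}$ is uniquely obtained from $D\cdot\pa{a_{0},a}$ without cancellation.

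Composing these two cancellation--free reductions and invoking the well--definedness and associativity of the operation $\ast$ of concatenation--followed--by--reduction--without--cancellation (Corollary \ref{cor:associative}), we conclude that $\pa{b,a}$ is the unique reduced class obtained from
\[\pa{b,b'}\cdot D\cdot\pa{a_{0},a}\;=\;\pa{b,a_{0}}\cdot\pa{a_{0},a}\]
without cancellation, as required. The principal technical point, and the one likely to be the main obstacle in a completely rigorous write--up, is verifying that the composition of two cancellation--free reductions is itself cancellation--free; this is exactly the content of Corollary \ref{cor:associative}, which in turn rests on the embedding of formal strings of domains into a partially commutative monoid established in the proof of Lemma \ref{surviving D}.
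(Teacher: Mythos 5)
Your proof proceeds along the same lines as the paper's own, which contents itself with the single remark that the lemma ``follows from repeated application of Lemma~\ref{several steps away}.'' You spell out the underlying induction and correctly marshal the three ingredients: Lemma~\ref{several steps away} (to pass from $A$ to a weakly convex $B\supseteq A\cup\{b'\}$ while keeping $b'$ a basepoint for $b$), the one-step gate property (Lemma~\ref{gate property}), and Corollary~\ref{cor:associative} (associativity of cancellation-free concatenation), and your observation that this last is the technical crux is on point.

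There is, however, a small gap in the inductive bookkeeping. In the main case $\delta=D\in\D$ you apply the inductive hypothesis to the triple $(b,b',B)$, which requires $b\in N\setminus B$. But if $\pa{b,b'}$ consists only of a group letter --- equivalently, if $\pa{b,a_{0}}\simeq gD$ for a nontrivial $g\in G$ --- then $b\in b'G\subseteq B$ and this hypothesis fails. Your base case for $|w|=1$ with $w=g\in G$ handles the group-letter situation only by deriving a contradiction from $b\notin A$, so it does not furnish the needed conclusion at the point where it is invoked. Two easy repairs: (i) when the reduced representative of $\pa{b,a_{0}}$ contains a group letter, use $(\mathrm{Jmp})$ and $(\mathrm{Cmp})$ to move it to the last position so that the $\delta\in G$ branch always fires first; this funnels the $gD$ case into the branch where $b'\in A$, no passage to $B$ is required, and $b\notin A$ holds by hypothesis. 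Or (ii) replace the contradiction in the group-letter base case by the direct observation that when $\pa{b,a_{0}}=[g]$ the conclusion $\pa{b,a}=[g]\ast\pa{a_{0},a}$ holds for trivial reasons (the relations are equivariant under the right $G$-action), so the induction hypothesis need not assume $b\notin A$. With either fix the argument is complete and correct.
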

    	\begin{proof}
    		This follows from repeated application of Lemma~\ref{several steps away}.
    	\end{proof}
    	
    	We obtain the following corollaries.

    	\begin{corollary}
    		\label{type parametrization}
    		Let $\mathcal{N}$ be a model of $\Th{\M^G}$, let
    		$A\subseteq N$  be weakly convex, and let $b,b'\in N$.
    		Then there is an $a_{0}\in A$ that is a basepoint for both $b$ and $b'$;
    		moreover, if $\pa{b,a_{0}}=\pa{b',a_{0}}$,
    		then $\tp(b/A)=\tp(b'/A)$.
    	\end{corollary}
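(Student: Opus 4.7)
The plan is to construct, in a sufficiently saturated elementary extension $\mathcal{N}$ of $\mathcal{M}$, weakly convex sets $B \supseteq A \cup \{b\}$ and $B' \supseteq A \cup \{b'\}$ together with a partial isomorphism $\phi \colon B \to B'$ fixing $A$ pointwise and sending $b$ to $b'$. Once $\phi$ is in hand, Theorem~\ref{t:Mqe} immediately yields $\tp(b/A) = \tp(b'/A)$.

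Fix a reduced representative $w = \delta_1 \delta_2 \cdots \delta_k$ of the common class $\pa{b,a_0} = \pa{b',a_0}$, and choose in $\mathcal{N}$ strict $w$-sequences $b = b_0, b_1, \ldots, b_k = a_0$ and $b' = b'_0, b'_1, \ldots, b'_k = a_0$. Set $A_0 = A_0' = A$ and let $\phi_0$ be the identity on $A$. The strategy is to construct, by downward induction on the index $k - i$, weakly convex sets $A_i$ containing $A \cup \{b_{k-i}, \ldots, b_k\}$ and $A_i'$ containing $A \cup \{b'_{k-i}, \ldots, b'_k\}$, together with partial isomorphisms $\phi_i \colon A_i \to A_i'$ that fix $A$ and send $b_{k-j} \mapsto b'_{k-j}$ for $0 \leq j \leq i$. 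Lemma~\ref{several steps away} provides the key ingredient for the step: once $A_i$ has been built as above, the element $b_{k-i-1}$ is $\delta_{k-i}$-step away from $A_i$ with basepoint $b_{k-i}$, and analogously $b'_{k-i-1}$ is $\delta_{k-i}$-step away from $A_i'$ with basepoint $b'_{k-i}$.

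For the inductive step itself, we apply Lemma~\ref{core qe}(i) with $A_i$ in the role of $A$, $b_{k-i}$ in the role of $a_0$, and $b_{k-i-1}$ serving as the new basepoint (which by the previous paragraph satisfies the type $p^{\delta_{k-i}}_{b_{k-i}, A_i}$). This yields a weakly convex extension $A_{i+1}$ of $A_i$ containing $b_{k-i-1}$, and repeating the procedure on the primed side yields a weakly convex $A_{i+1}'$ containing $b'_{k-i-1}$. Part~(ii) of Lemma~\ref{core qe} asserts that the quantifier-free type of each extension is determined entirely by the data $q^{\delta_{k-i}}_{b_{k-i}, A_i}$, and since $\phi_i$ matches the corresponding data on both sides, it extends canonically to a partial isomorphism $\phi_{i+1}\colon A_{i+1} \to A_{i+1}'$ with the required properties. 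After $k$ iterations we obtain $B := A_k$, $B' := A_k'$, and $\phi := \phi_k$, and Theorem~\ref{t:Mqe} applied to the weakly convex pair $(B, B')$ produces the desired equality of types.

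The most delicate point in the plan is verifying, at each stage, that the basepoint property of $b_{k-i}$ with respect to $b_{k-i-1}$ genuinely propagates to the \emph{enlarged} set $A_i$ rather than to just $A$; this is the exact content of Lemma~\ref{several steps away} combined with the way $A_i$ is produced by Lemma~\ref{core qe}. Everything else is routine bookkeeping in the inductive application of Lemma~\ref{core qe}, which is tailored precisely for this iterative extension procedure.
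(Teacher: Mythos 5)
Your argument is correct, but it takes a more circuitous route than the paper's. The paper's proof is a two-line reduction: since $a_0$ is a basepoint for both $b$ and $b'$ in $A$, Lemma~\ref{gate property many steps} (the iterated gate property) directly computes $\pa{b,a}$ from $\pa{b,a_0}$ and $\pa{a_0,a}$ for every $a\in A$, and identically for $b'$; thus $\pa{b,a}=\pa{b',a}$ for all $a\in A$, so Corollary~\ref{qf characterization} gives $\qftp(b/A)=\qftp(b'/A)$ immediately, and Theorem~\ref{t:Mqe} then upgrades this to equality of types. You instead rebuild the engine underneath Theorem~\ref{t:Mqe}: you fix a strict $w$-sequence, manufacture the chain of weakly convex one-step extensions via Lemma~\ref{core qe}, carry along the partial isomorphism, and rely on Lemma~\ref{several steps away} to keep the basepoint property alive at each stage. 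This is valid (indeed Lemma~\ref{gate property many steps} is itself proved by repeated application of Lemma~\ref{several steps away}, so your induction is unwinding the same lemma), but it duplicates the back-and-forth machinery already encapsulated in the proof of Theorem~\ref{t:Mqe}. The paper's approach is cleaner precisely because Corollary~\ref{qf characterization} lets one work at the level of the invariants $\pa{\cdot,\cdot}$ rather than at the level of explicit weakly convex hulls and their isomorphisms. A small presentational caveat in your write-up: when extending $\phi_i$ to $\phi_{i+1}$ via Lemma~\ref{core qe}(ii), one should note explicitly that $\phi_i$, being an isomorphism taking $(b_{k-i}, A_i)$ to $(b'_{k-i}, A'_i)$, identifies the parametrizing types $q^{\delta_{k-i}}_{b_{k-i},A_i}$ and $q^{\delta_{k-i}}_{b'_{k-i},A'_i}$, so that part~(ii) applies to two realizations of (isomorphic copies of) a single type; as stated, the conclusion of part~(ii) concerns realizations over a fixed parameter set, and the transport along $\phi_i$ is the glue that makes the extension well-defined.
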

    	
	\begin{definition}[The partial type $p^w_{a_0,A}$]\label{def:main-type}
    	Let $w=\pa{b,a_0}$. We write $p^w_{a_0,A}$ for the type furnished by Corollary~\ref{type parametrization}.
    	We also denote by $p^{w}_{a_{0},N}$ the global type which is the union of $p^{w}_{a_{0},A}$, where $A$ ranges over all weakly convex sets containing $a_{0}$. 
	\end{definition}

   \begin{observation}
   	\label{o: all global types are parametrized}Notice that the same minimization argument that led us to conclude that any $1$-type over a weakly convex set $A$ is of the form $p^{w}_{a_{0},A}$ for some reduced word $w$ and some $a_{0}\in A$ generalizes to show that any global $1$-type is of the form $p^{w}_{a_{0}N}$. 
   \end{observation}
    	
    	\begin{proof}[Proof of Corollary~\ref{type parametrization}]
    		First we get \[\qftp(b/A)=\qftp(b'/A)\] by Corollary \ref{qf characterization}
    		and Lemma \ref{gate property}. By
    		Theorem~\ref{t:Mqe}, we have \[\tp(b/A)=\tp(b'/A),\] as desired.
    	\end{proof}

	The following result is part of Theorem~\ref{thm:stable} from the introduction.
    	\oldversion

    	\begin{proof}
    		Let $\mathcal{N}$ be a sufficiently saturated model of $\Th{\mathcal{M}^G}$.
    		We need to show that given a countable set of parameters
    		$A\subset N$, there are at most countably
    		many distinct types in one variable over $A$.
    		If $A$ is weakly convex, this follows immediately from Corollary
    		\ref{type parametrization}, since
    		there are only countably many choices for
    		$a_{0}$ and countably many choices for $\pa{b,a_{0}}$.
    		For the general case, note that an arbitrary countable subset of $N$
    		is contained in a countable weakly convex set, by repeated applications
    		of the construction in Lemma \ref{core qe}.
    	\end{proof}
    	
    	\subsection{The flip graph}
	
	    	To complete the discussion in this section,
	we make a few brief remarks about the case $\D=\varnothing$, which is the collection of domains $\D_X$
    	we obtain when $X$ is the flip graph. In this case, the results in the preceding sections are more or less trivial, since the only
    	relations are $R_g$ for $g\in G$. In particular, any union of $G$--orbits is automatically weakly convex.
	In this case we obtain that $\mathcal M^G$ has absolute quantifier elimination. In particular,
    	absolute quantifier elimination holds in the case where $G=\Mod^{\pm}(\Sigma)$. As a consequence of the fact that
    	the natural bi-interpretation between the flip graph and $\M^G$ introduces an existential quantifier, we have the following:
    	
    	\begin{corollary}\label{cor:flip-qe}
    		The theory of a flip graph of a surface with at least one puncture
		admits relative quantifier elimination with respect to $\exists$--formulae.
    	\end{corollary}

	\section{Quantifier elimination in $\M^G$}\label{sec:full-qe}
  	In this section, we improve Theorem~\ref{t:Mqe} to absolute quantifier elimination in $\Th{\M^G}$, where $G$ is a certain
  	finite index subgroup consisting of pure mapping classes. In order to carry this task out, we will need to investigate the properties of
  	definable equivalence relations on $\M^G$, and the relationship between definable and algebraic closures.
  	Throughout, $\emptyset\neq\D\subset\D_0$ is a $G$--invariant, downward
  	closed collection of domains.
  	
  	\subsection{Pure mapping classes and orthogonality}
    	
    	Recall that if $g$ is a mapping class of a surface $\Sigma$,
    	then there is a unique, smallest
    	(possibly empty) collection of isotopy classes of pairwise disjoint simple closed curves $C_g\subset\mC_0(\Sigma)$ such
    	that $g(C_g)=C_g$, called a \emph{canonical reduction system}. See~\cite{BLM-duke} for details and background. A mapping class $g$
    	will be called \emph{pure} if $g$ fixes $C_g$ elementwise, and if the restriction of $g$ to each component of $\Sigma_g$ is either
    	trivial or a pseudo-Anosov mapping class. It is a standard fact that if $G<\Mod^{\pm}(\Sigma)$ denote the kernel of the natural map
    	\[G\leq \Mod^{\pm}(\Sigma)\longrightarrow \Aut(H_1(\Sigma,\bZ/3\bZ)),\]
	then $G$ consists of pure mapping classes~\cite{farb_margalit_11}. For the
    	rest of this section, we will fix $G$ to be this kernel.
    	
    	We will need the following fundamental result that relates pure mapping classes to orthogonality.
    	\begin{lemma}\label{lem:pure-ortho}
    		Let $E,F\in\D$ be distinct domains, and let $g\in G$. If $g(E)=F$ then $E\not\perp F$.
    	\end{lemma}
    	\begin{proof}
    		Clearly it suffices to show that if $E$ is proper and nonempty then either $g(E)\not\perp E$ or $g(E)=E$.
    		It is a standard fact that if $g\in G$ and $\alpha\in\mC_0(\Sigma)$
    		then either $g(\alpha)\cap\alpha\neq\emptyset$ or $g(\alpha)=\alpha$
    		(see~\cite{BestBromFuji}, for example). This immediately implies the conclusion for annular regions.
    		
    		If $E$ is non-annular and $g(E)\neq E$ then either \[\Fill(E)\cap g(\Fill(E))\subset\partial E,\] or there exists an annular region
    		$D\subset \partial E$ such that $D\not\perp g(D)$. In the second case, we are clearly finished. In the first case, suppose that
    		there is at least one curve $\alpha\in E$ that is not peripheral in $E$. Then $\alpha$ is not fixed by $g$ and $g(\alpha)\perp\alpha$,
    		which is a contradiction. Thus we may conclude that $E$ has no non-peripheral curves and so must be a pair of pants.
    		Since $E$ consists exactly of its boundary curves, we must
    		then have that $g(E)=E$.
    	\end{proof}
    	
    	By an almost identical argument, one can establish the following:
    	
    	\begin{lemma}\label{lem:pure-ortho-contain}
    		Let $E,F\in\D$ be distinct domains, and let $g\in G$. If $E\perp F$ then $g(E)\not\subseteq F$.
    	\end{lemma}
    	
    	Indeed, if $E$ and $F$ are annular, distinct, and orthogonal, then no pure mapping class can take $E$ to $F$. Similarly, if $E$ and $F$
    	are distinct and orthogonal pairs of pants then no pure mapping class can take $E$ to $F$. If $F$ is more complicated then a pair of pants
    	and $E$ is orthogonal to $F$, then
    	the existence of $g$ implies that there
    	exists a curve $\alpha\in E$ such that $g(\alpha)\in F$ is nonperipheral and hence strongly orthogonal
    	to $E$. Thus, $\alpha$ and $g(\alpha)$ are disjoint, violating the fact that $g$ is pure.
    	
  	\subsection{Imaginaries and rigidity of weakly convex sets}\label{ss:imagine-wc}
    	We retain notation from the previous section, so that $\mathcal{N}$ denotes a countable
    	$\omega$--saturated model of $\Th{\M^G}$.
    	Denote by $\hat{N}$ the collection of all imaginaries of the form $[a]_{D}$, i.e.~the $R_D$ equivalence class of $a$,
    	where $a\in N$ and $D\in\D$. Given a subset $A\subseteq N$, we denote by
    	$\hat{A}$
    	the collection of all the classes of the form $[a]_{D}$ with $a\in A$.
    	
	\begin{definition}[Imaginary algebraic closure]
	We will write $\widehat{\acl}(A)$ for the \emph{imaginary algebraic closure} of $A$, defined to be
	the intersection of $\acl^{\EQ}(A)$ with $\hat{N}$.
	\end{definition}
    	
    	Given $D\in\D$ and $a\in N$ we let $\subg{[a]_{D}}_{G}$
    	be the collection of all classes of the form $[a']_{E}$, where
    	$a'=a g$ for some $g$ such that $g^{-1}(D)\subseteq E$. Notice that
    	\[\subg{[a]_{D}}_{G}\subseteq \dcl^{\EQ}([a]_D).\]
	
	We direct the reader to Subsection~\ref{sssec:imaginaries} for a discussion of $\dcl^{\EQ}$ and $\acl^{\EQ}$
    	
    	\begin{lemma}
    		\label{l: wc and acl}If $[a']_{E}$ with $a'=a g$ for some $g\in G$ is such that
    		$D\nsubseteq g(E)$,
    		then the orbit of $[a']_{E}$ under the action of the group of
    		automorphisms of $a G$ which preserve
    		$[a]_{D}$ is infinite.
    	\end{lemma}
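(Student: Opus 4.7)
The plan is to exhibit an explicit subgroup of automorphisms of $aG$ stabilizing $[a]_{D}$ whose action on $[a']_{E}$ already has infinite orbit, and then reduce infinitude of that orbit to an elementary fact about mapping class groups acting on curve graphs.

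First I would identify the induced $\mL$-structure on $aG$ with the standard copy of $G$: the $R_{g}$-relations make $aG$ into a free, transitive $G$-torsor, and fixing $a$ as a basepoint yields an isomorphism $a\cdot h\leftrightarrow h$ of $\mL$-structures. Under this identification, left translation by any $k\in G$ commutes with the right $G$-action and preserves every relation $R_{D}$, so it is an automorphism of $aG$; it fixes $[a]_{D}=a\cdot\Stab^{+}(D^{\perp})$ setwise precisely when $k\in\Stab^{+}(D^{\perp})$.

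Next, writing $a'=a\cdot g$, we have $[a']_{E}=a\cdot g\Stab^{+}(E^{\perp})$, and left translation by $k\in\Stab^{+}(D^{\perp})$ sends it to $a\cdot kg\Stab^{+}(E^{\perp})$. Two such images agree precisely when $k_{1}^{-1}k_{2}\in g\Stab^{+}(E^{\perp})g^{-1}=\Stab^{+}(g(E^{\perp}))$, so the orbit of $[a']_{E}$ under our subgroup is in bijection with
\[
\Stab^{+}(D^{\perp})\big/\bigl(\Stab^{+}(D^{\perp})\cap\Stab^{+}(g(E^{\perp}))\bigr),
\]
and it suffices to show this coset space is infinite.

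Finally, since taking orthogonal complements is $G$-equivariant and involutive, the hypothesis $D\not\subseteq g(E)$ is equivalent to $g(E^{\perp})\not\perp D$, yielding a curve $\beta\in g(E^{\perp})$ with nonempty subsurface projection $\pi_{|D|}(\beta)\in\C(|D|)$. The pure mapping class group of $|D|$ embeds in $\Stab^{+}(D^{\perp})$ by extension by the identity and contains elements acting with positive translation length on $\C(|D|)$ (pseudo-Anosovs in the non-annular case, Dehn twists when $|D|$ is an annulus); such elements translate $\pi_{|D|}(\beta)$ to infinitely many distinct vertices of $\C(|D|)$, so the $\Stab^{+}(D^{\perp})$-orbit of $\beta$ is infinite and its pointwise stabilizer has infinite index in $\Stab^{+}(D^{\perp})$. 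Since this stabilizer contains $\Stab^{+}(D^{\perp})\cap\Stab^{+}(g(E^{\perp}))$, the required coset space is infinite. The main obstacle will be rigorously justifying the existence of the left-translation automorphisms in an arbitrary saturated model $\mathcal{N}$ rather than the standard copy of $M$; this reduces to the observation that $aG$ is a $G$-torsor under the $R_{g}$-relations and that the commuting left action of $G$ preserves each $R_{D}$, both of which are first-order consequences of $\Th{\M}$.
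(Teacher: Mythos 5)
Your proposal is correct and follows essentially the same route as the paper: identify $aG$ as a $G$-torsor under the $R_g$-relations, exhibit left translations $\phi_h$ as automorphisms, observe that $\phi_h$ preserves $[a]_D$ exactly when $h\in\Stab^+(D^\perp)$, and reduce the claim to the infinitude of the coset space $\Stab^+(D^\perp)/\bigl(\Stab^+(D^\perp)\cap\Stab^+(g(E)^\perp)\bigr)$. The paper stops after this reduction, leaving the infinite-index step implicit; you supply it via subsurface projection and translation length, which is a fine completion rather than a different approach.
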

    	\begin{proof}
    		Indeed, let $h\in G$. There is a unique automorphism $\phi_{h}$ of the orbit
    		$a G$ sending $a$ to $a h$, which sends $a k$ to $a hk$ for any
    		$k\in G$. In particular, if we let $Q=G[E]$ then
    		\[\phi_{h}(a')=\phi_{h}(a g)=a hg\]
    		and $\phi_{h'}([a']_{E})=\phi_{h}([a']_{E})$ if and only if
    		$hgQ=h'gQ$, i.e.~
    		if and only if $(h^{-1}h')^{g}\in Q$, i.e., if and only if
    		$h^{-1}h'\in Q^{g^{-1}}=G[g(E)]$.
    	\end{proof}
    	
    	\begin{definition}[Weakly convex hulls]
    		Let $B$ be a weakly convex set and let $p$ be an ordinal--minimizing sequence from $a$ to $B$,
    		with basepoint $b_{0}\in B$.
    		We denote by $\mathcal{H}(p,B)$ the collection of all weakly convex sets that are furnished
    		by iterated $D$-step extensions along the domains occurring in $p$, as prescribed by Lemma \ref{core qe}. We call
		$\mathcal{H}(p,B)$ the \emph{weakly convex $p$--hull} of $B$.
    	\end{definition}
    	
    	The following is an easy observation which is left to the reader:
    	\begin{observation}
    		\label{fellow travelling convexity} Let $p$ and $B$ be as above and let $p'$
    		be a sequence (of the same length) which fellow travels with $p$.
    		Then $\mathcal{H}(p,B)=\mathcal{H}(p',B)$.
    	\end{observation}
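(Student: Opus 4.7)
The strategy is to unpack $\mathcal{H}(P,B)$ as a chain of one-step extensions via Lemma~\ref{core qe}, and to observe that each such extension depends only on the sequence of $G$-orbits and associated domains traversed by the path, rather than on the specific representatives chosen. Since fellow traveling preserves exactly this orbital data, the two collections must coincide.

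First I would write $P = (a = p_0, p_1, \dots, p_k = b_0)$ and unfold the definition: a weakly convex set $C$ lies in $\mathcal{H}(P,B)$ precisely when there is a chain $B = B_0 \subseteq B_1 \subseteq \cdots \subseteq B_k = C$ of weakly convex sets, such that each $B_j$ is obtained from $B_{j-1}$ by the construction of Lemma~\ref{core qe} applied with basepoint (a representative of) the orbit of $p_{k-j+1}$, new point (a representative of) the orbit of $p_{k-j}$, and domain $D_j$ drawn from a minimizing word $w$ realized by $P$. Then I would argue by induction on $j$ that the same $C$ arises from a chain along $P'$. At the inductive step I would invoke three facts: Observation~\ref{small wiggling} forces $P'$ to be a strict $w'$-path with $w' \simeq w$, so the sequence of domains $D_j$ available is the same; Definition~\ref{def_11} makes every weakly convex set a union of $G$-orbits, hence $p_{k-j+1}' \in B_{j-1}$ and $p_{k-j}' G = p_{k-j} G$; and by $G$-equivariance of the relations $R_w$, the property of being step-$D_j$ away from $B_{j-1}$ and the derived set $\mathrm{tcon}^{\perp}$ depend only on the orbit of the basepoint, not its specific representative.

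The main obstacle is showing that the construction in Lemma~\ref{core qe} is invariant under replacing the basepoint by a $G$-translate in a strong enough sense, namely that it produces the same set $B_j$ and not merely an isomorphic one. This reduces to a careful bookkeeping exercise: for $a_0' = a_0 g$, the set $\mathrm{con}^{\perp}_A(a_0', D)$ is the right-$g$-translate of $\mathrm{con}^{\perp}_A(a_0, D)$, so after closing under the $G$-action in passing to $\mathrm{tcon}^{\perp}$ one obtains the identical collection of orbits; Lemma~\ref{parallel lifting} then provides identical families of $D$-related new orbits, and hence the identical weakly convex closure $B_j$. Assembling these observations yields both inclusions $\mathcal{H}(P,B) \subseteq \mathcal{H}(P',B)$ and the reverse, establishing the observation.
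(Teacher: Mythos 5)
The observation is left to the reader in the paper, so there is no proof to compare against directly; what follows is an assessment of your argument on its own terms. Your overall strategy — unfolding $\mathcal{H}(P,B)$ into a chain of one-step extensions and arguing that each step depends only on orbital data — is the right approach. However, there is a genuine gap in the bookkeeping at the heart of the argument.

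Your key claim is that, for $a_0' = a_0 g$, the set $\mathrm{con}^{\perp}_A(a_0', D)$ is the right-$g$-translate of $\mathrm{con}^{\perp}_A(a_0, D)$, and hence that the step-$D$ property and the set $\mathrm{tcon}^{\perp}$ depend only on the orbit of the basepoint. This is false. Unwinding the definition, $\pa{a_0 g, a g} = [g^{-1}]\ast\pa{a_0,a}\ast[g]$, and a short computation with the moves $(\mathrm{Jmp})$ and $(\mathrm{Cmp})$ shows that $\pa{a_0 g, a g}\perp^\ast D$ if and only if $\pa{a_0,a}\perp^\ast g(D)$. So the correct identity is $\mathrm{con}^{\perp}_A(a_0 g, D) = \mathrm{con}^{\perp}_A(a_0, g(D))\cdot g$, and hence $\mathrm{tcon}^{\perp}_A(a_0 g, D) = \mathrm{tcon}^{\perp}_A(a_0, g(D))$, which differs from $\mathrm{tcon}^{\perp}_A(a_0, D)$ whenever $g(D)\neq D$. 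Closely related is the claim that $w'\simeq w$ ``so the sequence of domains $D_j$ available is the same'': even granting $w'\simeq w$ (which follows from Lemma~\ref{uniqueness}, not from Observation~\ref{small wiggling}, which asserts the opposite implication), equivalence of reduced words under $\simeq$ permits moves of type $(\mathrm{Abs}_G)$ and $(\mathrm{Jmp})$ that replace a letter $D_j$ by $g_j^{-1}(D_j)$, so the domains appearing in $P'$ need not coincide with those in $P$.

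The fix is precisely to track both changes simultaneously. If $p_i' = p_i g_i$ then the domain in the $i$-th step of $P'$ is $D_i' = g_i^{-1}(D_i) = g_{i-1}^{-1}(D_i)$, and with this correction the translation formula becomes $\mathrm{con}^{\perp}_A(a_0 g, g^{-1}(D)) = \mathrm{con}^{\perp}_A(a_0, D)\cdot g$, so $\mathrm{tcon}^{\perp}_A(a_0 g, g^{-1}(D)) = \mathrm{tcon}^{\perp}_A(a_0, D)$. One also checks directly that the constraint $R_{g^{-1}(D)}(x_c g, cg)$ is equivalent to $R_D(x_c, c)$, so the type $q^{D'}_{a_0',A}$ cuts out literally the same weakly convex set as $q^D_{a_0,A}$. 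It is this cancellation between the basepoint translation and the twisted domain, not invariance of $\mathrm{tcon}^{\perp}$ under translating the basepoint alone, that makes the observation go through.
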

    	
    	Given a strict sequence $p$ from $a\in N$ to $b\in N$
    	we shall write $\mathcal{H}_{p}$ in place of $\mathcal{H}(p,bG)$ and given two points
    	$a,b$ we will write $\mathcal{H}_{a,b}$ for the union of $\mathcal{H}_{p}$ where $p$ ranges among all strict
    	sequences between $a$ and $b$.
    	
    	\begin{observation}
    		\label{isomorphic paths}   	Given two $w$-sequences with endpoints $a,b$ and $a',b'$
    		respectively, an iterated application of Lemma \ref{core qe} yields the existence
    		of an isomorphism between an arbitrary $H\in\mathcal{H}_{p}$ and an arbitrary $H'\in\mathcal{H}_{p'}$
    		that sends $(a,b)$ to $(a',b')$. Thus, $p\equiv p'$ by relative quantifier elimination (Corollary~\ref{cor:ext-qe}).
    	\end{observation}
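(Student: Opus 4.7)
The plan is to construct the desired isomorphism by iterating Lemma~\ref{core qe} along the strict $w$-sequences. Write $w = \delta_1 \cdots \delta_k$, and express $P$ as $a = x_0, x_1, \ldots, x_k = b$ and $P'$ as $a' = x_0', \ldots, x_k' = b'$. The base case is the tautological $G$-equivariant bijection $bG \to b'G$ sending $b \mapsto b'$; this is a partial isomorphism since every $R_g$ and every $R_D$ relation between elements of a single $G$-orbit depends only on the group element implementing right multiplication.

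At stage $j$, I assume we have built weakly convex sets $B_j \subseteq H$ and $B_j' \subseteq H'$ containing $x_j, \ldots, x_k$ and $x_j', \ldots, x_k'$ respectively, together with a partial isomorphism $\phi_j \colon B_j \to B_j'$ identifying $x_i$ with $x_i'$ for $i \geq j$. Strictness of $P$ and $P'$, combined with the gate property (Lemma~\ref{gate property}), ensures that $x_{j-1}$ is one step $\delta_j$ away from $B_j$ with basepoint $x_j$, and similarly for $x_{j-1}'$; the key point is that no relation stronger than $R_{\delta_j}$ can hold between $x_{j-1}$ and any point of $B_j$, since otherwise the minimizing word producing $B_j$ would fail to be reduced. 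I apply part (i) of Lemma~\ref{core qe} on each side to realize the type $q^{\delta_j}_{x_j,B_j}$, producing weakly convex extensions $B_{j-1} \supseteq B_j \cup \{x_{j-1}\}$ and $B_{j-1}' \supseteq B_j' \cup \{x_{j-1}'\}$; part (ii) asserts that this type determines the quantifier-free type of the extension, so $\phi_j$ extends uniquely to an isomorphism $\phi_{j-1} \colon B_{j-1} \to B_{j-1}'$ sending $x_{j-1} \mapsto x_{j-1}'$, and part (iii) guarantees that both $B_{j-1}$ and $B_{j-1}'$ remain weakly convex.

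After $k$ iterations we obtain an isomorphism $\phi_0 \colon B_0 \to B_0'$ sending $a \mapsto a'$ and $b \mapsto b'$, where $B_0 \supseteq P$ and $B_0' \supseteq P'$. For an arbitrary $H \in \mathcal{H}_P$, one continues the one-step extension procedure beyond $B_0$; each further step is again governed by Lemma~\ref{core qe}, so the same argument extends $\phi_0$ to an isomorphism of $H$ onto the corresponding element $H' \in \mathcal{H}_{P'}$. Since $H$ and $H'$ are both weakly convex and $\phi_0$ maps $ab$ to $a'b'$, Theorem~\ref{t:Mqe} yields $\tp(ab) = \tp(a'b')$, which is exactly the claim $P \equiv P'$.

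The main subtlety is verifying the hypothesis of Lemma~\ref{core qe} at each stage: namely that $x_{j-1}'$ realizes the full type $p^{\delta_j}_{x_j',B_j'}$, not merely $R_{\delta_j}(x_j',x_{j-1}')$. This is where strictness of $P'$ is essential, since it guarantees no relation of the form $R_u$ with $u \in \wo(\delta_j)$ links $x_{j-1}'$ to $x_j'$; propagating this through $B_j'$ again requires the gate property. Once this bookkeeping is in place, the rest is a routine transfinite matching of extensions.
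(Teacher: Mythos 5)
Your overall strategy is the intended one: iterate Lemma~\ref{core qe} along the two strict $w$-paths, matching one-step extensions on each side and using parts (i), (ii), (iii) to produce and glue the partial isomorphisms. The base case (a $G$-equivariant bijection $bG\to b'G$) is correct, and the closing appeal to Theorem~\ref{t:Mqe} to deduce $P\equiv P'$ is exactly right.

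There is, however, a gap in the inductive step as you have written it. To run Lemma~\ref{core qe} you must verify that $x_{j-1}$ realizes the \emph{full} type $p^{\delta_j}_{x_j, B_j}$ -- in particular that $\neg R_u(x_{j-1},a)$ for \emph{every} $a\in B_j$ and every $u\in\wo(\delta_j)$, not only for $a$ on the path. You attribute this to the gate property (Lemma~\ref{gate property}) and a reducedness argument. But the gate property \emph{presupposes} that the point is already one step away from the weakly convex set; it does not establish it. Moreover, $B_j$ is much bigger than the orbits of $x_j,\dots,x_k$: each one-step extension adjoins a full copy of $\tcn(\cdot,\cdot)$, and you need to rule out short links from $x_{j-1}$ into those new points. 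The statement that actually does this bookkeeping is Lemma~\ref{several steps away} (and its corollary Lemma~\ref{gate property many steps}): it shows inductively that the penultimate point of the strict subpath from $a$ to the current basepoint is one step away from the current weakly convex set, and that the new point becomes the basepoint in the one-step extension. You should cite that lemma in place of the informal ``the minimizing word producing $B_j$ would fail to be reduced,'' which as written does not obviously close the circle.

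A smaller point of confusion: your sentence ``one continues the one-step extension procedure beyond $B_0$'' suggests $H$ properly contains $B_0$, but by the definition of $\mathcal{H}_P$ the set $B_0$ you construct (after $k$ one-step extensions) \emph{is} an element of $\mathcal{H}_P$. The correct phrasing is that an arbitrary $H\in\mathcal{H}_P$ comes equipped with a chain $bG=B_k\subsetneq\cdots\subsetneq B_0=H$ of one-step extensions, and similarly for $H'$, and you run the induction along those two chains, using part (ii) of Lemma~\ref{core qe} at each stage to get a unique extension of the partial isomorphism. With that adjustment and the replacement of the gate-property citation by Lemma~\ref{several steps away}, your proof goes through.
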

    	
	The following lemma shows that a strict, minimizing sequence from an arbitrary point to a weakly convex sets admits an essentially
	canonical decomposition into two sequences which are orthogonal to each other, one of which lies entirely in the weakly convex set.
    	
    	\begin{lemma}
    		\newcommand{\hull}[0]{\mathcal{H}(p,B)} \label{lem-convex-extensions}
    		Let $p$ be a strict minimizing sequence of type
    		$w$ between a point $a\in N$ and a weakly convex set $B$ with basepoint $b_{0}$.
		
    			\label{alignment}For an arbitrary
    			$H\in\hull$ contained in $N$ and $c\in H$ there exist
    			\[\{c_{1}\in c G, c_{0}\in H,  b_{1}\in B\}\] such that:
    			\begin{enumerate}
    				\item
    				The element $c_{0}$
    				lies in a strict sequence from $a$ to $b_{0}$;
    				\item
    				We have \[\pa{a,c_{1}}=\pa{a,c_{0}}*\pa{c_{0},c_{1}},\] where
    				\[\delta=\pa{b_{0},b_{1}}=\pa{c_{0},c_{1}},\quad \delta\perp^{*}\pa{c_{0},b_{0}},\] and where
    				both  $\pa{c_{0},b_{0}}$ and $\pa{c_{0},c_{1}}$ have representatives in containing no group elements.
    			\end{enumerate}
    	\end{lemma}
    	\begin{proof}
    		We may assume that $\pa{a,b}$ admits at least one representative that contains no group elements, and we write
    		\[p\colon a_{0}=a,a_{1},a_{2},\ldots, a_{k}=b_{0},\] where here $R_{D_{i}}(a_{i},a_{i+1})$.
    		
    		By induction, we may assume the result holds for given data in which the sequence is
    		of length strictly smaller than $k$. Set $H_{k}:=B$ and for $0\leq i\leq k-1$,
    		let $H_{i-1}$ be a one-step extension of $H_{i}$ of type $D_{i}$ through $a_{i}$
    		with basepoint $a_{i+1}$, and so that $H_{0}=H$.

    		Both statements are clearly true if $H_{k}=H_{0}$, by taking $c_{0}=b_{0}$. It remains to show that
    		the validity of the statement for $H_{j}$ implies its
    		validity for $H_{j-1}$.
    		The induction hypothesis clearly implies the validity of the first claim for any $c\in H_{j}$,
    		since any point in a reduced sequence from $a_{j}$ to $b_{0}$ is also in a reduced sequence from $a_{0}$ to $b_{0}$.

    		Pick $c\in H_{j-1}\setminus H_{j}$. There exist points $\tilde{c}\in c G$ and
    		$d\in H_{j}$ such that $\pa{a_{j-1},\tilde{c}}=\pa{a_{j},d}$ has representatives without group elements and
    		is strongly orthogonal to $D_{j-1}$, and such that $D_{j-1}\in\pa{\tilde{c},d}$.
    		
    		By induction, we know that
    		there are
    		$d_{0},d_{1}\in H_{j}$ and $b'_{1}\in B$ such that
    		$d_{1}\in d G$ and such that
    		\begin{enumerate}
    			\item \label{item1} $d_{0}$ lies in a strict sequence from
    			$a_{j}$ to $b_{0}$;
    			\item \label{item2}$\pa{a_{j},d_{1}}=\pa{a_{j},d_{0}}*\pa{d_{0},d_{1}}$;
    			\item \label{item3} $\pa{d_{1},b'_{1}}=\pa{d_{0},b_{0}}\perp^{*}\pa{d_{0},d_{1}}=\pa{b_{0},b'_{1}}$;
    			\item \label{item4} both $\pa{d_{0},d_{1}}$ and $\pa{d_{0},b_{0}}$ have representatives without group elements.
    		\end{enumerate}
    		
    		Let $h$ be chosen so that $d=d_{1}h$. Since $\pa{d_{0},d_{1}}$ admits representatives without group elements, so does
    		\[h^{-1}\pa{d_{0},d_{1}}h=\pa{d_{0}h,d}.\] It follows from Item \ref{item2} above that
    		$\pa{d_{0}h,d}\perp^{*}D_{j-1}$, and thus \[\pa{d_{0},d_{1}}\perp^{*}h(D_{j-1}).\]
    		
    		Now, $\pa{b_{0},b'_{1}}=\pa{d_{0},d_{1}}$ admits a representative of the form $uE$, where \[u\perp^{*}D_{j-1}',\quad u\in\D^{*},\quad
    		E\subseteq h(D_{j-1}).\]
    		Using the weak convexity of $H_{k}$, we may choose $b_{1}\in H_{k}$ such that
    		\[\pa{b_{0},b_{1}}=[u]\quad  \textrm{and}\quad  \pa{b_{1},b'_{1}}=E.\]

    		Notice that $\pa{a_{j},d_{0}h}\perp^{\circ} D_{j-1}$ since \[\pa{a_{j},d}\perp^{*}D_{j-1}\quad \textrm{and}\quad
		 \pa{d_{0}h,d}\perp D_{j-1},\]
		so that
    		$R_{\partial D_{j-1}}(d_{0}h,d'_{0})$ for some $d'_{0}$ that lies in the subset of \[\wcn_{H_{j}}(a_{j},D_{j-1})\]
    		that is lifted in the construction of $H_{j-1}$. Let $e'_{0}\in H_{j}$ be a lift of $d_{0}'$.
    		Then \[\pa{e'_{0},d_{0}h}=[D_{j-1}]\] and $\pa{e'_{0},\tilde{c}}$ is strongly orthogonal to $D_{j-1}$ and a
    		reduct of $D_{j-1}h^{-1}(uE) D_{j-1}$; here, the notation $h^{-1}(u)$ for a word $u$ means that $h^{-1}$ is applied to each domain,
		and conjugates every group element appearing in $u$. It follows that $\pa{e'_{0},\tilde{c}} $ must admit a
    		representative of the form
    		$gh^{-1}(u)$, where $g\in G[\partial D_{j-1}]$.
    		
    		Let \[c_{0}=e'_{0}gh^{-1},\quad c_{1}=\tilde{c}h^{-1}.\] On the one hand,
    		$c_{0}$ is in a reduced sequence from $a_{j-1}$ to $b_{0}$, since $d_{0}$ was in a reduced
    		sequence from $a_{j}$ to $b_{0}$.
		On the other hand,
    		$h(D_{j-1})\perp^{*}u$, so that \[\pa{c_{0},b_{0}}=h(D_{j-1})\pa{d_{0},b_{0}}\perp^{*}u.\] Thus,
    		the points $c_{0},c_{1},b_{1}$ satisfy the requirements of the lemma. \qedhere

    	\end{proof}

    	\begin{definition}
    		Let $B,B'\subseteq N$. We say that a map $f:B\to B'$ is a \emph{homomorphism} if
    		\[\pa{f(b_1),f(b_2)}\preceq\pa{b_1,b_2}\] for any $b_1,b_2\in B$. If
    		$A\subseteq\hat{B}$, we say that $f$ is an \emph{$A$--homomorphism} if it
    		preserves each class in $A$.
    		
    		Given
    		$A\subseteq\hat{B}$ we say that $B$ is
    		\emph{strongly incompressible} over $A$ if
    		all $A$--homomorphisms $f:B\to B'$ are isomorphic embeddings.
    	\end{definition}
    	
    	\begin{lemma}\label{retract}
    		Suppose $A\subsetneq B\subseteq N$, where $A$ is weakly convex. Then there is is a
    		homomorphic retraction $f:B\to A$. In particular, if $B$ is
    		weakly convex and strongly incompressible over some $A\subseteq B$, then
    		any $A$-homomorphism from $B$ to itself is an isomorphism.
    	\end{lemma}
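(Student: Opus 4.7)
The plan is to define $f : B \to A$ by setting $f|_A = \mathrm{id}_A$ and sending each $b \in B \setminus A$ to a choice of basepoint $a_0(b) \in A$ for $b$, i.e., an element minimizing the ordinal $\Or(b, a)$ over $a \in A$. Such basepoints exist because $\Or$ takes values in the ordinals, which are well-ordered. The retraction property is immediate, so it suffices to verify that $f$ is a homomorphism: $\pa{f(b_1), f(b_2)} \preceq \pa{b_1, b_2}$ for all $b_1, b_2 \in B$.

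The case $b_1, b_2 \in A$ is trivial. If $b_1 \in A$ and $b_2 \in B \setminus A$ with basepoint $a_0 = f(b_2)$, Lemma~\ref{gate property many steps} identifies $\pa{b_1, b_2}$ as the unique cancellation-free reduct of $\pa{b_1, a_0} \ast \pa{a_0, b_2}$. Inspecting the admissible moves, every letter of $\pa{b_1, a_0}$ either persists (possibly after a permutation of type $(\mathrm{Jmp})$ or $(\mathrm{Swp})$) or gets absorbed into an equal or strictly larger domain letter via $(\mathrm{Abs}_=)$ or $(\mathrm{Abs}_\subset)$. Meanwhile, the letters contributed by $\pa{a_0, b_2}$ can be removed via the $\preceq$ relation by replacing them with the empty word, which lies in $\wo(D)$ for any $D$. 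Reading off the $\preceq$ substitution gives $\pa{f(b_1), f(b_2)} = \pa{b_1, a_0} \preceq \pa{b_1, b_2}$. The case $b_1, b_2 \in B \setminus A$ is handled analogously by applying the gate property twice, so that $\pa{b_1, b_2}$ is the cancellation-free reduct of $\pa{b_1, a_1} \ast \pa{a_1, a_2} \ast \pa{a_2, b_2}$, where $a_i = f(b_i)$, and the middle segment $\pa{a_1, a_2}$ persists up to absorption.

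The principal technical obstacle is the bookkeeping for the triple concatenation — specifically, ruling out that a letter of the middle segment $\pa{a_1, a_2}$ gets so thoroughly absorbed by a letter of $\pa{b_1, a_1}$ or $\pa{a_2, b_2}$ that it cannot be recovered by any $\preceq$-substitution. Lemma~\ref{surviving D}, which asserts that the letters of a reduced central block cannot disappear under a reduction without cancellation, is precisely what is needed; together with the already-established calculus of elementary moves, it will deliver the required $\preceq$-inequality in all cases.

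For the second assertion, let $g : B \to B$ be an $A$-homomorphism with $B$ weakly convex and tight over $A \subseteq \hat{B}$. Tightness forces $g$ to be an isomorphic embedding, so $g(B) \cong B$ is itself weakly convex. Assume for contradiction that $g(B) \subsetneq B$. The first part of the lemma, applied to $g(B) \subsetneq B$, yields a homomorphic retraction $r : B \to g(B)$; composing with the isomorphism $g^{-1} : g(B) \to B$ yields an $A$-homomorphism $g^{-1} \circ r : B \to B$ which is not injective, contradicting tightness. Hence $g(B) = B$ and $g$ is an isomorphism.
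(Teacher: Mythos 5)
Your proposal diverges from the paper's argument in both halves. For the first part, the paper does not write down a closed-form retraction at all: it fixes a maximal weakly convex $A\subseteq H\subseteq N$ admitting a homomorphic retraction to $A$ and uses the one-step extensions of Lemma~\ref{core qe} (which come equipped with retractions back to the base) to show $B\subseteq H$. Your attempt to define $f$ directly by ``a choice of basepoint'' runs into concrete problems. A basepoint is never unique: if $a_0$ is a basepoint for $b$ then so is $a_0h$ for every $h\in G$, since $\Or$ is blind to the group letter in $\pa{a_0,b}$; and a careless choice already fails in the easy case $b_1\in A$, $b_2\in B\setminus A$. For instance, if $\pa{a_0,b_2}\simeq Dh$ in right normal form with $h\neq 1$ and $\pa{b_1,a_0}=[E]$ with $E\perp D$, then $\pa{b_1,b_2}=[EDh]$, and $\preceq$ can delete or shrink domains but can never discard the group letter $h$, so $[E]\not\preceq[EDh]$. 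Worse, the case $b_1,b_2\in B\setminus A$ is not ``the gate property applied twice'': Lemma~\ref{gate property many steps} only computes $\pa{b,a}$ for $a$ \emph{inside} the weakly convex set, so it gives no information about $\pa{b_1,b_2}$ when both lie outside $A$. Your asserted identity $\pa{b_1,b_2}=\pa{b_1,a_1}\ast\pa{a_1,a_2}\ast\pa{a_2,b_2}$ (cancellation-free) is precisely the nontrivial content one would have to establish, and generically one can only conclude that $\pa{b_1,b_2}$ is a $(\mathrm C)$-reduct, not a cancellation-free one; Lemma~\ref{surviving D} controls a single reduced central \emph{letter}, not an arbitrary middle segment, so it does not rescue this. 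Repairing all of this — choosing basepoints $G$-equivariantly within orbits, arranging $\pa{a_0(b),b}\in\D^*$, and actually proving the $\preceq$-inequality across $B\setminus A$ — would in effect reconstruct the iterated one-step hull argument the paper uses.

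Your treatment of the ``in particular'' clause is a genuine (and cleaner) alternative to the paper's implicit finitely-many-$G$-orbits counting argument in Corollary~\ref{isomorphisms between tight sets}: a retraction onto a proper subset fixes that subset pointwise and therefore cannot be injective, so composing with $g^{-1}$ yields a non-injective map, contradicting strong incompressibility. Two points are left unaddressed, though both can be filled in: first, that $g(B)$ is weakly convex and a union of $G$-orbits (true because $g$ is an isomorphic embedding, hence preserves $\pa{\cdot,\cdot}$, strictness, and the relations $R_h$); second, and more importantly, that $g^{-1}\circ r$ is an $A$-homomorphism, which is needed before tightness can be invoked. For each $[b]_E\in A$ one has $R_E(b,g(b))$, so $\pa{b,g(b)}$ is a reduct of $[E]$; the homomorphism inequality $\pa{r(b),g(b)}\preceq\pa{b,g(b)}$ keeps it below $[E]$, whence $R_E(r(b),g(b))$, and applying the isomorphism $g^{-1}$ gives $R_E(g^{-1}(r(b)),b)$. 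Absent that verification the appeal to tightness is not justified.
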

    	\begin{proof}
    		We follow the proof of Lemma 7.11 in \cite{BPZ17}. We first note that there is a
    		homomorphic retraction from $A$ to itself. Let $A\subset H\subset N$ be a
    		maximal weakly convex subset that admits a retraction to $A$,
    		and let $C=H\cap B$. We claim that $C=B$.
    		If not, let $b\in B\setminus C$. The
    		Lemma~\ref{core qe} furnishes a weakly convex extension
    		$H'$ of $H$ containing $b$ which retracts to $H$, and which by composition retracts
    		to $A$. This violates the maximality of $H$.
    	\end{proof}
    	
    	We know proceed to adapt the construction of strongly incompressible sets over a finite set of parameters in \cite{BPZ17}.
    	\newcommand{\ti}[0]{\mathcal{H}^{t}}
    	\begin{definition}
    		Let \[e=e_{1}e_{2}\dots e_{k}\subset \hat{N}\] be a finite tuple. We define the \emph{twisted $e$--hull}
		$\ti(e)\subseteq\mathcal{P}(N)$
    		by induction on $k$. For $k=1$, set $\ti(a_{1})=\{aG\}$. For $k>1$, we set $\ti(e)$ to be the union of all
    		$\mathcal{H}(p,H)$, taken over all pairs $(p,H)$ with $H\in\ti(e_{2},\dots, e_{k})$ and with $p$ a minimizing sequence
    		from some $a\in e_{1}$ to $H$, for which $\Or(\pa{a,H})$ is minimized among all $a$ and $H$.
    	\end{definition}
    	
    	The following technical result will be necessary to prove rigidity of weakly convex sets in Lemma~\ref{lem: incompressibility}
    	below.
    	
    	\newcommand{\uv}[0]{u_{|}}
    	\newcommand{\uh}[0]{u_{-}}
    	\begin{lemma}
    		\label{absorption-orthogonality}Let \[\uv,\uh,v,w\in\wo,\quad g_{1},g_2,g_3\in G\] be such that
    		\begin{itemize}
    			\item  $vg_{3}w\uv$ is reduced
    			\item $\uv\perp^{*}\uh$
    			\item $v\perp^{*}w\uv$
    			\item $[g_{1}w]*[\uh h]\simeq g_{2}vg_{3}w\uv h$
    			\item $\uv,\uh\in\D^{*}$
    		\end{itemize}
    		Then $\uv$ is the trivial word.
    	\end{lemma}
	
	Here, the subscripts in the symbols $\uv$ and $\uh$ are intended to evoke ``verticality" and ``horizontality".
    	\begin{proof}[Proof of Lemma~\ref{absorption-orthogonality}]
    		We proceed by induction on the number of domains in a reduced representative of $[g_{2}vg_{3}w\uv h]$, the base case of zero
    		being trivial.
    		We may assume that the reduction of $[g_{1}w]*[\uh h]$ involves only absorption of letters in $w$ by letters in $\uh$,
    		by removing redundant letters in $\uh$ and without changing the properties of $\uh$.
    		
    		Let $K_{1},K_{2},\dots, K_{m}$ be the components of the right end of $w$ and let $J_{|}$ be the
    		collection of $i\in\{1,\dots, m\}$ such that $K_{i}\perp \uv$ and $J_{-}$ the collection of $i\in\{1,\dots, m\}$
    		for which either $K_{i}$ is orthogonal to $\uh$, or $K_i$ is orthogonal to all the domains in
    		$\uh$ except one, which it contains properly.
    		
    		Let $F^{|}_{1},\dots, F^{|}_{r_{|}}$ be the components of the right end of $\uv$ and $F^{-}_{1},\dots, F^{-}_{r_{-}}$ those components the right end of $\uh$ which are not properly absorbed on the right by $w$. Also, let $F^{v}_{1},\dots, F^{v}_{r_{v}}$ be the components of $h^{-1}(v)$ that are orthogonal to $w\uv$.
    		
    		Let \[\zeta=[g_{1}w]*[\uh h]=[g_{2}vg_{3}w\uv h].\]
    		On the one hand, the right end $E$ of $\zeta$ must consist of the components
    		$$
    		\{h^{-1}(K_{i})\}_{i\in J_{-}}\cup\{h^{-1}(F^{-}_{1}),\dots, h^{-1}( F^{-}_{r_{-}})\},
    		$$
    		
    		while on the other hand it must consists of the components
    		$$
    		\{K_{i}\}_{i\in J_{|}}\cup\{F^{|}_{1},\dots, F^{|}_{r_{|}}\}\cup\{F^{v}_{1},\dots, F^{v}_{r_{v}}\}.
    		$$
    		By Lemma \ref{ends}, we know that these two collections are equal to each other.
    		
    		Combining Lemma \ref{lem:pure-ortho} and the fact that $\uv\perp^*v$ and $\uv\perp^*\uh$,
    		we conclude
    		\begin{itemize}
    			\item $\{F^{|}_{1},\dots, F^{|}_{r_{|}}\}\subseteq \{h^{-1}(K_{i})\}_{i\in J_{-}}$;
    			\item $\{F^{v}_{1},\dots, F^{v}_{r_{v}}\}\subseteq\{h^{-1}(F^{-}_{1}),\dots, h^{-1}( F^{-}_{r_{-}})\}$.
    		\end{itemize}
    		
    		We also claim that for $i\in J_{|}$, we have $K_{i}=h^{-1}(K_{j})$
    		for some $j\in J_{-}$ if and only if $i\in J_{|}\cap J_{-}$, in which case $i=j$ and $h\in G[K_{i}\vee K_{i}^{\perp}]$.
    		The only if part follows from Lemma \ref{lem:pure-ortho-contain}, together with the mutual orthogonality of $\{K_{i}\}_{0\leq i\leq m}$.
    		For the if part, take $i\in J_{-}\cap J_{|}$ and assume
    		$K_{i}\notin\{K_{i}\}_{i\in J_{|}}$. Then \[K_{i}\in\{h^{-1}(F^{-}_{1}),\dots, h^{-1}( F^{-}_{r_{-}})\},\] but then
    		$h^{-2}(F^{-}_{\ell})=K_{i}$ for some $1\leq \ell\leq r_{-}$, contradicting Lemma \ref{lem:pure-ortho-contain} again.
    		
    		In view of this discussion, we can strengthen the first bullet point above:
    		\[\{F^{|}_{1},\dots, F^{|}_{r_{|}}\}\subseteq \{h^{-1}(K_{i})\}_{i\in J_{-}\setminus J_{|}}.\]
    		
    		Let $J_{-}^{abs}\subseteq J_{|}$ be the collection of $i\in\{1,\dots, m\}$ for which $K_{i}$ is absorbed on the left by some
    		letter in the right end of $v_{-}$, and by $J^{nabs}_{-}$ the collection of indices for which it is not.  Observe that:
    		\begin{align*}
    			w\simeq v'K_{1}\dots K_{m}  &&
    			u_{-}=u_{-}'F^{-}_{1}\dots F^{-}_{r_{-}}\\
    			u_{1}=u_{1}'F^{1}_{1}\dots F^{1}_{r_{-}} &&
    			v=v'F^{v}_{1}\dots F^{v}_{r_{v}}.
    		\end{align*}

    		We conclude that $\zeta$ admits representatives of the form
    		\begin{align*}
    			g_{1}w'\left(\bigvee_{i\in J_{-}^{nabs}\setminus J_{-}}K_{i}\right)v'_{-}hE
    			&& g_{2}v'g_{3}w'\left(\bigvee_{i\in J\setminus J_{|}}K_{i}\right)u_{|}'E .
    		\end{align*}
    		
    		By Lemma \ref{ends}, we find that there exists an element $h'\in G[E]$ such that
    		\begin{align*}
    			\label{equivalence line}g_{1}w'\left(\bigvee_{i\in J_{-}^{nabs}\setminus J_{-}}K_{i}\right)
    			u'_{-}hh'\simeq g_{2}v'g_{3}w'\left(\bigvee_{i\in J\setminus J_{|}}K_{i}\right)u_{|}'.
    		\end{align*}
    		
    		Now, consider the words
    		\begin{align*}
    			u^{0}_{-}=\left(\bigvee_{i\in (J_{|}\cap J^{nabs}_{-})\setminus J_{-}}K_{i}\right)u'_{-} &&  &u^{0}_{|}=
    			\left(\bigvee_{i\in J_{-}\setminus J_{|}}K_{i}\right)u'_{|} \\
    			w^{0}=w'\left(\bigvee_{i\in J \setminus(J_{-}\cup J_{|})}K_{i}\right) && &v^{0}=v'.
    		\end{align*}
    		We claim that the tuple \[g_{1},g_{2},g_{3},v^{0},w^{0},u^{0}_{|},u^{0}_{-}\]
    		satisfies the assumptions of the induction hypothesis. That the expression $v^{0}g_{3}w^{0}u^{0}_{|}$ is reduced is clear.
    		Likewise, the fact that $u^{0}_{-}$ and $u^{0}_{|}$ are strongly orthogonal is immediate from our
    		assumption about the reduction of $[g_{1}w]*[u_{-}h]$ and the fact that the product $wu_{|}$ was reduced.
    		
    		To conclude, it suffices to observe that $u^{0}_{|}$ is not trivial. Indeed, otherwise  $J_{-}\subseteq J_{|}$,
    		contradicting the fact that \[\emptyset\neq\{F^{|}_{1},\dots, F^{|}_{r_{|}}\}\subseteq \{h^{-1}(K_{i})\}_{i\in J_{-}\setminus J_{|}},\]
    		concluding the proof.
    	\end{proof}
    	
    	We can now establish the following rigidity result for weakly convex sets.
    	
    	\begin{lemma}
    		\label{lem: incompressibility}
    		For a finite tuple $e\subset \hat N$ and $H\in\ti(e)$ we have:
    		\begin{itemize}
    			\item $H$ is strongly incompressible over $e$;
    			\item Automorphisms of $H$ preserve the $G$-orbits in $H$.
    		\end{itemize}
    	\end{lemma}
    	\begin{proof}
    		The proof is by induction on the length $m$ of $e=(d_{i})_{i=1}^{m}$. Write
    		$e_{i}=[d_{i}]_{E_{i}}$, and fix $H\in\ti(e)$ such that
    		$H\in\mathcal{H}(p,H')$ for some \[H'\in\ti(e_{2},\dots, e_{m})\]
    		and some minimizing sequence $p$ from $d_{1}$ to a basepoint $b_{0}\in H'$.
    		We assume that the associated ordinal minimized over possible choices of $d_{1}$ and $H'$.
    		
    		Consider an automorphism $\phi:H\to H$ fixing $e$. It suffices to show that in this situation, $\phi$ is injective and
    		preserves the $G$-orbits in $H$. For the first of the claims of the lemma,
    		note that we can always post-compose with a retraction of $N$ onto $H$,
    		and by the inductive hypothesis, $\phi_{\restriction H'}$ is an isomorphic embedding.
    		
    		We first apply Lemma \ref{lem-convex-extensions} to $\phi(d_{1})\in[d_{1}]_{E_{1}}$. This yields
    		points \[\{c_{0},c_{1},b_{1},h\}\subset H\] such that $c_{1}=\phi(d_{1})h$,
    		such that $c_{0}$ is on a strict sequence from $d_{1}$ to $b_{0}$, with \[\pa{d_{1},c_{1}}=\pa{d_{1},c_{0}}*\pa{c_{0},c_{1}},\] and
    		$$\pa{c_{0},c_{1}}=\pa{b_{0},b_{1}}\perp^{*}\pa{c_{0},b_{0}}=\pa{c_{1},b_{1}}.$$
    		The minimality of $\Or(\pa{d_{1},H'})$ for the choice of $d_{1}$ in $e_{1}$ implies the existence of a group element $g$ such that
    		$c_{0}=d_{1}g$.
    		Now, let \[c'_{0}=\phi(b_{0})(h')^{-1},\quad c'_{1}=b'_{1}\] be the points resulting of applying
    		Lemma \ref{lem-convex-extensions} to $\phi(b_{0})$.
    		Minimality of $\pa{d_{1},b_{0}}$ for any choice of $H'$ and $d_{1}$ implies that \[\pa{\phi(d_{1}),\phi(b_{0})}=\pa{d_{1},b_{0}}.\]
    		
    		Let $\pa{c'_{0},c'_{1}}=[u_{-}]$, where here \[u_{-}\in\D^{*},\quad \pa{c'_{0},b_{0}}=[u_{|}],\quad u_{|}\in\D^{*},\] so that
    		in particular $u_{|}\perp^{*}u_{-}$. We can write $\pa{d_{1},c'_{0}}=g'w$ with $w\in\D^{*}$.
    		
    		The class $\pa{d_{1},\phi(b_{0})}$ has a representative in
    		$[g'w]*[u_{-}h']$, and another as a reduct of the word
    		$gu_{E_{1}}h^{-1}wu_{|}$. By minimality of $\Or(\pa{d_{1},b_{0}})$ and the fact that $[gu_{E_{1}}h]$
    		has representatives in $\wo(E_{1})\cup\{E_{1}\}$, the second reduction can only involve the absorption of conjugates
    		of letters in
    		$u_{E_{1}}$ by letters in $u_{|}$. Thus, we may assume that $gu'_{E_{1}}hwu_{|}$ is reduced, where here
    		$u'_{E_{1}}$ is an initial subword of $u_{E_{1}}$. Notice that by virtue of Lemma \ref{perp and equivalence} the word $u'_{E_{1}}$
    		must be strongly orthogonal to $wu_{|}$.
    		
    		Lemma \ref{absorption-orthogonality} implies that the word $u_{|}$ is trivial, so that $\phi(b_{0})\in H'$. Post-composing with a retraction onto $H'$, the second inductive assumption yields some
    		$k\in G$ such that $\phi(b_{0})k=b_{0}$.
    		
    		We now claim that $\phi(H')\subseteq H'$; since $H'$ consists of a finite number of $G$-orbits, we obtain $\phi(H')=H'$.
    		We suppose the contrary and pick a $b\in H'$ such that $\phi(b)\nin H'$. This time, we let
    		\[c'_{0},c'_{1},b'_{1}\] be the points obtained by applying Lemma \ref{lem-convex-extensions} to the point
    		$\phi(b_{0})$. Since $\phi(H')$ is weakly convex, there must exist some point $e\in\phi(H')$ such that
    		$\pa{e,c'_{1}}=\pa{c'_{0},c'_{1}}$ and $\pa{e,b_{0}}=\pa{c'_{0},b_{0}}$.
    		Let $F\in\D^{*}$ be the system of curves consisting of all the common boundaries between domains in representatives of $\pa{c'_{0},c'_{1}}$ in $\D^{*}$ and representatives of $\pa{c'_{0},c'_{1}}$ in $\D^{*}$.
    		It is easy to see that $R_{F}(b,c'_{0})$. Clearly \[\Or(F)<\Or(\pa{c'_{0},b_{0}})\] unless
    		$\pa{c'_{0},b_{0}}=[F]$; the latter conclusion is ruled out since $\pa{c'_{0},b_{0}}$ and
    		$\pa{c'_{0},c'_{1}}$ are strongly orthogonal. It follows that \[\Or(\pa{d_{1},\phi(b)})<\Or(\pa{d_{1},b_{0}}),\]
    		contradicting the minimality of $\Or(\pa{d_{1},b_{0}})$.
    		
    		We now revisit $\phi(d_{1})$ and the associated points
    		$c_{0},c_{1},b_{1}$, where $\phi(d_{1})=c_{1}h$.
    		Since $\pa{b_{0},c_{0}}\perp^{*}\pa{c_{0},c_{1}}$, it follows that the concatenation \[k\pa{b_{0},c_{0}}\pa{c_{0},c_{1}}h\] is reduced and thus a representative of $\pa{\phi(b_{0}),\phi(d_{1})}$. Since $\phi$ is contracting and
    		$\Or(\pa{b_{0},c_{0}})=\Or(\pa{b_{0},d_{1}})$, it follows that \[c_{0}=c_{1},\quad \phi(d_{1})\in d_{1}G\quad
    		\Or(\pa{\phi(d_{1}),\phi(b_{0})})=\Or(\pa{d_{1},b_{0}}).\]

    		To conclude, let  \[a_{0}=d_{1},\dots, a_{k}=b_{0}\] be the minimizing sequence from $d_{1}$ to $H'$ used to construct
    		$H$, where $R^{*}_{D}(a_{j-1},a_{j})$. Let $H_{k}=H'$, and for each $0\leq j\leq k-1$, write $H_{j-1}$ for the one step extension of
    		$H_{j}$ by $a_{j-1}$ in the construction of $H$. Thus, we have that $H_{0}=H$.
    		
    		We claim that $\phi$ restricts to an isomorphism of $H_{j}$ that preserves each of the $G$-orbits in $H_{j}$. We proceed
    		by reverse induction on $j$; begin by supposing the desired conclusion for $j$.
    		Since $\phi(d_{1})\in d_{1}G$ and \[\pa{\phi(d_{1},\phi(a_{j-1})}=\pa{d_{1},a_{j-1}},\] it follows that
    		$a_{j-1}\nin H_{j}$. Since \[\pa{\phi(a_{j-1}),\phi(a_{j})}=\pa{a_{j-1},a_{j}},\]
    		it necessarily follows that $\phi(a_{j-1})\in H_{j-1}$. Lemma \ref{lem-one step rigidity}, together with
		the equality \[\phi(a_{j})\cdot G=a_{j}\cdot G,\] implies
    		$\phi(a_{j-1})G=a_{j-1}G$.
    		For any $b\in\wcn_{H_{j}}(a_{j},D_{j-1})$ we can now use the fact that \[\Or(\phi(a_{j-1}),\phi(b))\leq \Or(a_{j-1},b)\] to
    		show that its lift $\tilde{b}$ to $H_{j-1}\setminus H_{j}$ satisfies $\phi(b)G=bG$, completing the induction step.
    		
    		\begin{lemma}
    			\label{lem-one step rigidity} Let $B$ be a weakly convex set and $A$ a one-step extension of $B$ by a point $a_{0}$ one $D$-step away from $B$ over the basepoint $b_{0}$.
    			Suppose that there are $b\in B$ and $a_{0},a'_{0}\in A\setminus B$ such that \[\Or(\pa{a,b})=\Or(\pa{a',b})=\Or(D).\]
    			Then $aG=a'G$.
    		\end{lemma}
    		To justify Lemma~\ref{lem-one step rigidity}, we have that $R_w(a,a')$, where $w=g_1ug_2$ for a suitable $u\perp^* D$. We
    		have a representative $h_1Dh_2\in\pa{a',b}$, as follows from Lemma~\ref{gate property} and Lemma~\ref{core qe}.
    		It follows that $\pa{a,b}$ is a reduct of
    		$g_1ug_2h_1Dh_2$. Observe that if $g_2h_1$ is pure then we are done, since by Lemma~\ref{lem:pure-ortho-contain} we have
    		that conjugates of $D$ cannot be absorbed by $u$. So, $u$ must be trivial and $a,a'$ lie in the same $G$-orbit. We note
    		that it is not difficult to show the same conclusion even without the assumption that the mapping classes under consideration are
    		pure.
    	\end{proof}
    	
    	The following are now straightforward, using projections and incompressibility over $e$.
    	
    	\begin{corollary}\label{cor:permutation}
    		Let $e\subset \hat N$ be a finite tuple, and let $e'$ be a permutation of $e$. Then the sets $\ti(e)$ and $\ti(e')$ are equal. Moreover,
    		given $H\in \ti(e)$ and $H'\in\ti(e')$, there is an automorphism of $N$ that induces an isomorphism $H\to H'$ fixing $e$.
    	\end{corollary}
    	
    	We can now deduce the following strong homogeneity result, which will be crucial for characterizing types of triples; see
    	Corollary~\ref{types of triples} below.
    	
    	\begin{corollary}
    		\label{isomorphisms between tight sets}
    		Let $p$ and $p'$ be strict $w$-sequences between points $a,b$ and $a',b'$
    		respectively, and let \[H\in \mathcal{H}_{a,b},\quad H'\in \mathcal{H}_{a',b'}.\]
    		Then there is an automorphism of $\mathcal{N}$ sending $p$ to $p'$ and $H$ to $H'$.
    	\end{corollary}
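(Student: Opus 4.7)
The plan is to combine Observation~\ref{isomorphic paths} with the relative quantifier elimination result Theorem~\ref{t:Mqe} to obtain an elementary partial map, and then invoke the saturation of $\mathcal{N}$ to promote it to a global automorphism.

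First, I would appeal directly to Observation~\ref{isomorphic paths}, which, via an iterated application of Lemma~\ref{core qe}, produces an $\mathcal{L}$-structure isomorphism $\phi\colon H \to H'$ carrying the endpoints $(a,b)$ of $P$ to the endpoints $(a',b')$ of $P'$. Because $H$ and $H'$ are assembled by the same scheme of one-step extensions along the paths $P$ and $P'$ respectively, the uniqueness clause (\ref{item uniqueness}) of Lemma~\ref{core qe} forces $\phi$ to send the intermediate points of $P$ to those of $P'$ as well; hence $\phi(P)=P'$ pointwise. If needed, one can use Observation~\ref{fellow travelling convexity} to rigidify the choice of the weakly convex hulls so that the construction is insensitive to the particular $w$-sequence representing $P$ and $P'$.

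Next, I would upgrade $\phi$ from a mere structure isomorphism to an elementary map. By construction each $H \in \mathcal{H}_{a,b}$ is weakly convex (clause (\ref{item convexity}) of Lemma~\ref{core qe}), and the same holds for $H'$. Theorem~\ref{t:Mqe} asserts that the quantifier-free type of a weakly convex subset of a model of $\Th{\M}$ determines its complete type. Since $\phi$ is a quantifier-free type preserving bijection between such sets, it is an elementary bijection $H \cong H'$ inside $\mathcal{N}$.

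Finally, since $\mathcal{N}$ is sufficiently saturated (i.e.\ may be taken to be a monster model, as in the background section), any elementary bijection between small subsets extends to an automorphism of $\mathcal{N}$. Applying this to $\phi$ yields an automorphism of $\mathcal{N}$ carrying $P$ to $P'$ and $H$ to $H'$, as desired. The only real subtlety is ensuring that the isomorphism supplied by Observation~\ref{isomorphic paths} aligns the full paths $P$ and $P'$ and not just the endpoints; this is handled by the inductive nature of the Lemma~\ref{core qe} construction together with the uniqueness statement (\ref{item uniqueness}) recorded there.
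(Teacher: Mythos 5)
Your plan reaches the right framework and your last two steps (upgrading the structure isomorphism to an elementary map via Theorem~\ref{t:Mqe} and then extending it via saturation) do match the paper. However, you elide the step the paper works hardest on, and I think there is a genuine gap in the first half of your argument.

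You read Observation~\ref{isomorphic paths} as directly producing a structure isomorphism $\phi\colon H\to H'$ between the \emph{given} subsets $H$ and $H'$ of $N$. But the observation, and the iterated application of Lemma~\ref{core qe} underlying it, really only produce an isomorphic embedding $\phi\colon H\to N$ whose image is \emph{some} weakly convex set sitting over $a',b'$ and realizing the correct type; there is no guarantee that the image is the prescribed $H'$. The paper does not assume it is. Instead, it corrects the image by composing with a homomorphic retraction $\pi\colon N\to H'$ supplied by Lemma~\ref{retract}. The point is then twofold: (i) because $\phi(H)$ is strongly incompressible over $\{a',b'\}$ (Lemma~\ref{lem-two-point-tightness}), and $\pi$ is an $\{a',b'\}$--homomorphism, the restriction $\pi\restriction_{\phi(H)}$ is an isomorphic embedding; (ii) surjectivity onto $H'$ then holds, because if $\pi(\phi(H))\subsetneq H'$, a further retraction onto $\pi(\phi(H))$ would embed $H'$ isomorphically into a proper subset of itself, contradicting the fact that $H'$ comprises only finitely many $G$-orbits. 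Neither the retraction, nor the appeal to strong incompressibility, nor this finiteness argument appears in your proposal, and without them you have not produced a bijection between the \emph{given} $H$ and $H'$, only a bijection between $H$ and some isomorphic copy. A further, more minor, mismatch: your justification that $\phi(P)=P'$ via clause~(\ref{item uniqueness}) of Lemma~\ref{core qe} implicitly takes $H\in\mathcal{H}_{P}$ and $H'\in\mathcal{H}_{P'}$, whereas the statement allows the weaker $H\in\mathcal{H}_{a,b}$, $H'\in\mathcal{H}_{a',b'}$; the observation you cite is stated only for $\mathcal{H}_{P}$ and $\mathcal{H}_{P'}$, so at minimum you should record why the more general hypothesis reduces to this case. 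Once the $\pi\circ\phi$ step is supplied, the conclusion by relative quantifier elimination and saturation follows exactly as you write.
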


    	\begin{proof}
    		We know there is an isomorphic embedding
    		$\phi:H\to N$ sending $p$ to $p'$, and let $\pi\colon N\to H$ be a homomorphic retraction.
		By Lemma \ref{lem: incompressibility}, we have that $\phi(H)$ is
    		strongly incompressible over $a',b'$.
    		It follows that $\pi$ is injective on $\phi(H)$, and thus $\pi\circ\phi$ an isomorphic embedding of $H$ into $H'$ sending
    		$a,b$ to $a',b'$. By construction, both $H$ and $H'$ consist of the same finite number of orbits.
    		It thus follows that $\pi\circ\phi$ is in fact an isomorphism between $H$ and $H'$.
    		The result then follows from relative quantifier elimination (Corollary \ref{cor:ext-qe}).
    	\end{proof}

  	\subsection{Wobbling, absorption, and definability}
    	We now adapt the concept of the \emph{wobbling} of a sequence from \cite{BPZ17}. Roughly, this is the ambiguity in expressing
	a reduced concatenation of words, consisting of domains that can be both left absorbed and right absorbed by consecutive
	factors.
	To set up the discussion, we have the following
    	calculus for cancellation of reduced words.

    	\begin{observation}\label{obs:lad}
    		Given a reduced $w\in\wo\cap\D^{*}$, there is a
    		(possibly disconnected) $D\in\D$ such that
    		$[u]*[w]=[w]$ if and only if $u\in\wo(D)\cup D$.
    		The domain $D$ only depends on the equivalence class of $w$.
    	\end{observation}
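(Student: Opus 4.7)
My plan is to identify $D$ directly from $w$'s non-commutation structure and prove the characterization first for single-letter $u$, then for general $u$ by induction on length. For each letter $Y$ occurring in $w$, let $\mathrm{Front}(Y)$ be the set of letters of $w$ forced to precede $Y$ in every permutation of $w$---equivalently, the strict predecessors of $Y$ in the partial commutation DAG induced by $(\mathrm{Swp})$. Set $\mathrm{Abs}(Y) := Y\cap\bigcap_{Z\in\mathrm{Front}(Y)}Z^{\perp}$, a possibly disconnected subdomain of $Y$, and define $D := \bigvee_Y\mathrm{Abs}(Y)$. A short case check shows the $\mathrm{Abs}(Y)$ are pairwise orthogonal: if $Y_1\perp Y_2$ this is immediate, and otherwise the order in $w$ forces one, say $Y_1$, into $\mathrm{Front}(Y_2)$, giving $\mathrm{Abs}(Y_2)\perp Y_1\supseteq\mathrm{Abs}(Y_1)$. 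Hence $D$ is a well-defined possibly disconnected domain in $\D$ that depends only on $[w]$, since the commutation DAG is an invariant of the equivalence class.

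The heart of the proof is the single-letter statement: a connected letter $F$ satisfies $[F]*[w]=[w]$ if and only if $F\subseteq D$. For the ``if'' direction, the pairwise orthogonality of the $\mathrm{Abs}(Y)$'s together with connectedness of $F$ forces $F\subseteq\mathrm{Abs}(Y)$ for a unique $Y$; I would rearrange $w$ via $(\mathrm{Swp})$ so that only letters of $\mathrm{Front}(Y)$ precede $Y$, commute $F$ across $\mathrm{Front}(Y)$ (possible since $F$ is orthogonal to each such letter), and reduce $FY$ to $Y$ via $(\mathrm{Abs}_{\subset})$ or $(\mathrm{Abs}_{=})$. For the ``only if'' direction, Lemma~\ref{surviving D} forces the leading $F$ in any reduction of $Fw$ to $w$ to be absorbed by some letter $Y$ of $w$ with $F\subseteq Y$; this requires $F$ to commute past every $Z\in\mathrm{Front}(Y)$, placing $F$ in $\mathrm{Abs}(Y)\subseteq D$.

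For general $u$ I induct on its length using associativity of $*$ from Corollary~\ref{cor:associative}. The forward direction is routine: writing $u=Fu'$, if every letter of $u$ is connected and $\subseteq D$, then by the inductive hypothesis $[u']*[w]=[w]$ and by the single-letter case $[F]*[w]=[w]$, so $[u]*[w]=[F]*([u']*[w])=[w]$. For the converse, set $w':=[u']*[w]$ and examine $[F]*[w']=[w]$. If $F$ absorbs into $w'$ then $w'=w$ and induction applies directly to $u'$; otherwise $[Fw']=[w]$ exhibits $F$ as leading-capable in $w$ (so $F\subseteq D$), and cancellation in the partially commutative monoid embedding of Lemma~\ref{surviving D} identifies $w'$ with $w$ minus one occurrence of $F$, reducing the problem to a strictly shorter word. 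The principal obstacle is precisely this second branch of the converse, where one must rule out conspiracies in which a letter of $u$ lying outside $D$ is nonetheless removed through interactions with other letters of $u$; the essential tool is cancellation in the partially commutative monoid, which lets us peel off matched occurrences of $F$ from both sides of the identity $[Fw']=[Fw'']$ and complete the induction on the remainder.
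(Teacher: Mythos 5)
Your construction of $D$ and the forward half of the argument are sound. Defining $\mathrm{Abs}(Y)=Y\cap\bigcap_{Z\in\mathrm{Front}(Y)}Z^{\perp}$ and $D=\bigvee_Y\mathrm{Abs}(Y)$, the pairwise orthogonality check is correct (including the subtle case of two occurrences of the same domain), and the single-letter ``if'' and ``only if'' arguments are fine: in $Fw\to w$ with $w$ reduced, exactly one absorption occurs, no letter of $w$ can absorb another letter of $w$, and a multiset count rules out a letter of $w$ absorbing into $F$. The forward multi-letter induction via associativity of $*$ is also fine.

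The converse of the multi-letter induction has a genuine gap, concentrated in the branch you yourself flag. Three problems. First, the dichotomy ``$F$ absorbs into $w'$'' versus ``$F$ becomes the leading letter of $w$ and $w'=w-F$'' is incomplete: since reduction without cancellation still permits $(X,Y)\to X$ with $Y\subsetneq X$, a leftover letter $Y$ of $w'$ (coming from $u'$ or from $w$) can absorb \emph{into} $F$, so $[F]*[w']$ need not equal $Fw'$ as a trace even when $F$ itself is not absorbed. Second, even granting $w'\simeq w-F$, the plan to ``reduce to a strictly shorter word'' does not close the induction: removing a leading occurrence $F$ deletes $F$ from $\mathrm{Front}(Y)$ for letters $Y$ after it, which \emph{enlarges} $\mathrm{Abs}(Y)$ in directions transverse to $F$, so $\labs(w-F)\not\subseteq D$ in general, and the inductive conclusion $u'F\in\wo(\labs(w-F))$ does not yield $u\in\wo(D)$. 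Third, the appeal to ``cancellation in the partially commutative monoid embedding of Lemma~\ref{surviving D}'' is not well-founded: that monoid records only the commutation relation $(\mathrm{Swp})$; the classes $[\,\cdot\,]$ under $*$ live in a quotient where proper absorption $(\mathrm{Abs}_{\subset})$ also acts, and cancellativity of the trace monoid does not transfer to these classes. (A smaller point: $\wo(D)$ also admits group letters $g\subsetneq D$, which your argument never treats, as it tacitly assumes $u\in\D^{*}$.)

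The paper itself states this as an unproved Observation, so there is no official proof to compare against, but the surrounding machinery strongly suggests the intended tool is Lemma~\ref{lem:sym-dec}. Applying the symmetric decomposition to the pair $(u,w)$ and equating $[u]*[w]=[g\,u_1\,c\,v_1\,h]$ with $[w]=[c\,v'\,v_1\,h]$, one uses cancellativity of the genuine trace monoid (where it does apply, since these are reduced words in $\D^{*}$) together with the commutation of $v'$ with $c$ to force $g=h=1$ and $u_1=v'=\varnothing$, hence $u\simeq u'c$ with $c$ a commuting prefix of $w$ and $u'$ properly left-absorbed by the complementary suffix $v_1$. The letters of $c$ are leading in $w$ so lie in $D$, and each letter $X$ of $u'$ commutes with $c$ and absorbs into some $Y\in v_1$ past $\mathrm{Front}_{v_1}(Y)$; combining these two orthogonalities places $X$ in $\mathrm{Abs}_w(Y)\subseteq D$. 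This avoids the first-letter case split entirely and is the natural replacement for your Case~2.
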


    	\newcommand{\labs}[0]{LA}
    	\newcommand{\rabs}[0]{RA}
    	
    	\begin{definition}[Left absorption and right absorption]
    		We denote the domain $D$ furnished by Observation~\ref{obs:lad}
    		by $\labs(w)$, for \emph{left absorption}.
    		Symmetrically, we have the notion of \emph{right absorption} by a word $w$, which we denote by $\rabs(w)$.
    	\end{definition}

    	We have the following characterization of triples $(g_1,w,g_2)$ such that $g_1wg_2\simeq w$.
    	
    	\begin{lemma}
    		\label{l: element absorption}Let $w\in\wo\cap\D^{*}$ be reduced and $g_{1},g_{2}\in G$. Then $g_{1}wg_{2}\simeq w$ if and only if there is $g_{\perp}\perp w$, i.e.~supported on a domain orthogonal to all the domains of $w$, such that
    		\[g_{1}g_{\perp}\in G[LA(w)],\quad  \textrm{and}\quad g_{2}g_{\perp}^{-1}\in G[RA(w)].\] In particular,
    		$g_{\perp}$  commutes with both $g_1g_{\perp}$ and $g_{2}g_{\perp}^{-1}$.
    	\end{lemma}
    	\begin{proof}
    		The proof is by induction on the number of domains in $w$. It follows from $g_{1}wg_{2}\simeq w$
    		that there are points $x,y\in N$ and strict sequences
    		strict sequences $p,p'$ of type $w$ and $g_{1}wg_{2}$ from $x$ to $y$. The latter implies also the existence of a sequence
    		of type $g_{1}g_{2}g_{2}^{-1}(w)$ between those points. By concatenation we obtain a sequence of type
    		\[wg_{2}^{-1}(w^{-1})g_{1}^{-1}g_{2}^{-1}\] from $x$ to itself.
    		
    		Let $E_{1}$ be the right end of $w\simeq w_{0}E_{1}$, which for compactness of notation we view as a single letter.
    		Clearly $E_{2}:=g_{2}^{-1}(E_{1})$ is the right end of $g_{2}^{-1}(w)$. Fix sequences $p_{1}$ and $p_{2}$ of type $w$ and $g_{1}g_{2}g_{2}^{-1}(w)$ from $x$ to $y$ and let $y_{i}$ be the point in $p_{i}$ for which $R^{*}_{E_{i}}(y_{i},y)$. Simple connectedness
    		(Lemma~\ref{uniqueness})
    		implies that $E_{1}=E_{2}=E$ and $R_{u}(y_{1},y_{2})$, where $u\in\wo(E)$ does not contain any entire connected component of $E$.
    		We therefore have that $R_{E\vee E^{\perp}}(1,g_2)$, i.e.~$g_2\in G[E\vee E^{\perp}]$, since $G$ consists of pure mapping classes.

    		On the other hand we can write $u\simeq u'h$ where $u'\in\D^{*}$ and $h\in G[E]$. Let $z=y_{2}\cdot h^{-1}$. Clearly $g_{1}w_{0}g_{2}\in\pa{x,y_{2}}$, and thus $g_{1}w_{0} g_{2}h^{-1}\in\pa{x,z}$. We claim that also $w_{0}\in\pa{x,z}$. On the one hand, we have
    		$\Or(\pa{x,z})=\Or(w_{0})$. On the other hand, no cancellation can occur in the reduction of $w_{0}u'$, since that would imply that $R_{w_{0}'E}(x,z)$ for some $w_{0}'$ with $\Or(w_{0}')<\Or(w_{0})$, contradicting the assumption that $R^{*}_{w}(x,y)$.
    		It follows that all the reduction consists simply of the absorption of $u'$ into $w_{0}$, whence we obtain $w_0\in\pa{x,z}$.
    		
    		By induction, there exists an element $g_{0}\perp w_{0}$ such that \[g_{2}h^{-1}g_{0}^{-1}\in RA(w_{0}),\quad g_{1}g_{0}\in LA(w_{0}).\]
    		Since these group elements are pure mapping classes, we have that the supports of $g_{0}$ and $g_{2}h^{-1}g_{0}^{-1}$
    		are disjoint.
    		Since \[(g_{2}h^{-1}g_{0}^{-1})g_{0}=g_{2}h^{-1}\in \Stab(\partial E),\] it follows that
    		$g_{0}\in \Stab(\partial E)$ as well. Hence, using $\supp$ to denote the support of a group element,
    		we can write $g_{0}=\bar{g}_{0}g_{\perp}$ where
    		$\bar{g}_{0}$ is supported on $E\cap\supp g_0$ and $g_{\perp}$ is supported on $E^{\perp}\cap\supp g_0$.
    		
    		We have:
    		\begin{align*}
    			\supp(g_{1}g_{\perp})=\supp((g_{1}g_{0})\bar{g}_{0}^{-1})\subseteq \supp(g_{1}g_{0})\vee \supp(\bar{g}_{0})\\\subseteq LA(w_{0})\vee (w_{0}^{\perp}\cap E)=LA(w); \\
    			\supp(g_{2}g_{\perp}^{-1})=\supp(g_{2}h^{-1}g_{0}^{-1}(\bar{g_{0}}h))\subseteq\\
    			\supp(g_{2}h^{-1}g_{0}^{-1})\vee \supp(\bar{g}_{0}h)\subseteq (RA(w_{0})\cap(E\vee E^{\perp}))\vee E=RA(w);
    		\end{align*}
    		This establishes the lemma.
    	\end{proof}
    	
    	\begin{corollary}\label{cor:absorption-general}
    		If $u,w$ are words in $\D^*$ and $g\in G$, and if $guw\simeq w$, then $gw\simeq w$ and $g$ satisfies the conclusions of
    		Lemma~\ref{l: element absorption}.
    	\end{corollary}
    	
    	\begin{definition}[Wobble]
    		For reduced words $w$ and $w'$,
    		we write \[w \wr w':=\labs(w^{-1})\cap\labs(w')\] for the \emph{wobble} of the product $ww'$.
    	\end{definition}

    	\begin{lemma}
    		\label{l: wiggling}
    		Let \[w=D_{1}D_{2}\cdots D_{k}\in\wo\cap\D^{*}\] be a reduced word, and let
    		\[a_{0},a_{1},\ldots, a_{k}\quad \textrm{and}\quad a'_{0},a'_{1},\ldots, a'_{k}\]
    		be two strict $w$-sequences between two points $a_{0}=a'_{0}$ and $a_{k}=a'_{k}$.
    		For each $1\leq i\leq k$, let \[u_{i}=D_{1}D_{2}\cdots D_{i-1},\quad
    		v_{i}=D_{i}D_{i+1}\cdots D_{k}.\] Then for all $i$, we have $R_{E_{i}}(a_{i},a'_{i})$ with $E_{i}=u_{i}\wr v_{i}$.
    	\end{lemma}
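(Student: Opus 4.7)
\medskip

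\noindent\textbf{Proof plan.} I propose to prove the lemma by induction on $k$, combined with a structural analysis of how two strict paths of the same type can differ. The case $k\leq 1$ is essentially trivial since the endpoints coincide. For the induction step, the starting observation is that by strictness each segment $a_0,\ldots,a_i$ is a strict $D_1\cdots D_{i-1}$-path followed by a strict step, and similarly on the primed side; hence by Lemma~\ref{uniqueness} (uniqueness of the class of a strict path) we obtain $\pa{a_0,a_i}=\pa{a_0,a'_i}$ and $\pa{a_i,a_k}=\pa{a'_i,a_k}$, where each of these equals the corresponding initial/terminal subword of $w$.

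Next I would fix $i$ and let $[t]=\pa{a_i,a'_i}$. By the observation stated right after Definition of $\pa{\cdot,\cdot}$, the class $\pa{a_0,a'_i}$ is a reduct of any concatenation of representatives of $\pa{a_0,a_i}$ and $\pa{a_i,a'_i}$. Combining this with Step~1 forces
\[
[D_1\cdots D_{i-1}]\ast[t]\;\simeq\;[D_1\cdots D_{i-1}],
\]
and symmetrically $[t]\ast[D_i\cdots D_k]\simeq[D_i\cdots D_k]$. That is, $t$ is absorbed on the right by a representative of $u_i$ and on the left by a representative of $v_i$. By Observation~\ref{obs:lad} applied to the reversed word $u_i^{-1}$ (so that left-absorption becomes right-absorption) and directly to $v_i$, these two conditions translate into
\[
t\in\bigl(\wo(\labs(u_i^{-1}))\cup\{\labs(u_i^{-1})\}\bigr)\;\cap\;\bigl(\wo(\labs(v_i))\cup\{\labs(v_i)\}\bigr).
\]
By the definition of $u_i\wr v_i=\labs(u_i^{-1})\cap\labs(v_i)=E_i$ and the structure of $\wo$, this is equivalent to $t$ being a word entirely in $\wo(E_i)\cup\{E_i\}$. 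Hence $R_{[t]}(a_i,a'_i)$ implies $R_{E_i}(a_i,a'_i)$ via Lemma~\ref{equivalence}, since $[t]$ reduces into $E_i$ and $R_{[t]}\subseteq R_{E_i}$.

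The induction is used essentially only to carry through the case where one of the displayed reducts could involve cancellations between $u_i$ and $t$ (or $t$ and $v_i$) that are not purely absorptive; in that situation one truncates the paths at the first place they diverge and reapplies the statement to shorter paths inside an initial (or terminal) segment of $w$. The main obstacle I expect is justifying precisely why ``$[u_i]\ast[t]\simeq[u_i]$ forces $t$ to be a word in $\wo(\labs(u_i^{-1}))\cup\{\labs(u_i^{-1})\}$'': this is exactly the content of Observation~\ref{obs:lad}, but one must be careful that both $u_i$ and $t$ live in $\wo\cap\D^*$ (so no group-element letters obstruct the normal-form analysis), which is guaranteed by the hypothesis $w\in\wo\cap\D^*$ and by the fact that $t$ is determined by a strict subpath of the combined configuration. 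Once this is handled, the translation between absorption of words and the relation $R_{E_i}$ is routine using Lemma~\ref{surviving D} and the calculus of reduced words developed in the preceding subsections.
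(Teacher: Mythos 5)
Your outline correctly identifies what needs to be shown (that $[t]=\pa{a_i,a_i'}$ is right-absorbed by $u_i$ and left-absorbed by $v_i$, then apply Observation~\ref{obs:lad} and the definition of $\wr$), and the final translation from absorption to $R_{E_i}(a_i,a_i')$ is also fine. However, there is a genuine gap at the step where you claim that $\pa{a_0,a_i}=\pa{a_0,a_i'}$ together with the observation after the definition of $\pa{\cdot,\cdot}$ ``forces'' $[u_i]\ast[t]\simeq[u_i]$. That observation only says that $\pa{a_0,a_i'}$ is \emph{some} reduct of a concatenation of representatives of $\pa{a_0,a_i}$ and $\pa{a_i,a_i'}$, and ``reduct'' there allows the cancellation move $(\mathrm{C})$, whereas the operation $\ast$ is by definition reduction \emph{without} cancellation. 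Knowing that a word $u$ has a reduct $r$ does not in general tell you that the canonical cancellation-free reduction of $u$ is $r$ (for instance $DD$ reduces via $(\mathrm{C})$ to any word in $\wo(D)$, but its cancellation-free reduction is the single letter $D$), so you cannot pass from ``$\pa{a_0,a_i}$ is a reduct of $u_i\,t$'' to ``$[u_i]\ast[t]=[u_i]$'' without further argument. You misidentify the obstacle: the translation via Observation~\ref{obs:lad} is routine once absorption is known; it is the absorption claim itself that does not follow from what you cite.

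Your back-stop (``induction on $k$, truncate at the first place the paths diverge'') does not repair this, because if the two paths already diverge at $a_1$ (the generic case), truncation yields the same two paths and $k$ is not reduced. The paper's proof takes a different and more careful route: an induction on $i$, not on $k$. The base case $i=1$ establishes absorption of $w_1=\pa{a_1,a_1'}$ into $D_1$ directly from the relation $R_{D_1D_1}(a_1,a_1')$ --- this works because \emph{every} reduct of $D_1D_1$, with or without cancellation, lies in $\wo(D_1)\cup\{D_1\}$, so no confluence issue arises. The inductive step then uses the relation $R_{D_{i+1}w_iD_{i+1}}(a_{i+1},a_{i+1}')$ together with a decomposition $w_i\simeq w_i^1w_i^2$ (with $w_i^1$ absorbed by $D_{i+1}$ and $w_i^2\perp D_{i+1}$, coming from the inductive hypothesis) to reduce $D_{i+1}w_iD_{i+1}$ to a word of the form $D_{i+1}D_{i+1}w_i^2$ and from there read off the required absorptions for $w_{i+1}$. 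This packages the ``cancellation can only happen inside a single $D$'' phenomenon into the induction so that it never has to be deduced from an equality of $\pa{\cdot,\cdot}$-classes. To salvage your argument you would need to prove the absorption claim by some comparable device; as written the reduction to Observation~\ref{obs:lad} does not suffice.
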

    	\begin{proof}
    		Following the proof of Lemma 6.19 in \cite{BPZ17}, we proceed by induction on $i<k$.
    		Suppose first that $i=1$. Then we have that $a_1$ and $a_1'$ are related to $a_0$ by
    		$D_1$, so that $R_{D_1D_1}(a_1,a_1')$. It follows that any
    		reduced word $w_1$ such that
    		$R^*_{w_1}(a_1,a_1')$ is absorbed by $D_1$.
    		Similarly, we have that
    		\[R_{D_2D_3\cdots D_k D_k\cdots D_2}(a_1,a_1'),\]
    		since $a_1$ and $a_1'$ are related to $a_k$ by $D_2D_3\cdots D_k$. Since the word
    		$D_2\cdots D_k$ is reduced and \[w_1D_2\cdots D_k\simeq D_2\cdots D_k,\]
    		we must have that $w_1$ is fully absorbed by $D_2\cdots D_k$. This claim is established for domains occurring in $w_1$ by
    		Observation~\ref{obs:lad} and for group elements by Lemma~\ref{l: element absorption}.
    		It follows that $a_1$ and $a_1'$
    		are related by an $E_1$ which is absorbed by both $D_1$ and $D_2D_3\cdots D_k$.
    		
    		Now suppose that $w_i$ and $w_{i+1}$ are
    		reduced words such that \[R_{w_{i}}(a_{i},a_{i}')\quad
    		\textrm{and}\quad R_{w_{i+1}}(a_{i+1},a_{i+1}').\]
    		By induction, $w_i$ is fully absorbed by
    		$D_1\cdots D_i$ and by $D_{i+1}\cdots D_k$. We may
    		(up to equivalence) write $w_i$ as a product
    		$w_i^1w_i^2$, where $w_i^1$ is left absorbed by $D_{i+1}$ and where $w_i^2$ is orthogonal
    		to $D_{i+1}$ and is left absorbed by $D_{i+2}\cdots D_k$. We therefore have that
    		$D_{i+1}w_iD_{i+1}$ reduces to $w_i^2D_{i+1}D_{i+1}$, which then further reduces to
    		$w_{i+1}$. It follows that $w_{i+1}$ is a reduct of $w_i^2y_i$, where $y_i$ is absorbed
    		by $D_{i+1}$.
    		
    		Since $R_{D_{i+2}\cdots D_k}(a_{i+1},a_k)$ and since $D_{i+2}\cdots D_k$ is reduced,
    		we have that $y_{i}$ must be left absorbed by $D_{i+2}\cdots D_k$. It follows then
    		that $w_{i+1}$ is left absorbed by $D_{i+2}\cdots D_k$. Since $w_i^2$ is orthogonal to
    		$D_{i+1}$ and is right absorbed  by $D_1\cdots D_i$ by induction, we have that
    		$w_i^2$ is left absorbed by $D_1\cdots D_{i+1}$. Since $y_i$ is absorbed by $D_{i+1}$,
    		we see that $w_{i+1}$ is right absorbed by $D_1\cdots D_{i+1}$. The lemma follows.
    	\end{proof}
    	
    	\newcommand\Osc{\mathrm{Osc}}
    	
    	\begin{definition}[Oscillation]
    		Let \[w=D_{1}D_{2}\cdots D_{k}\in\wo\cap\D^{*},\] and let $a_0,\ldots,a_k$ be a strict $w$--sequence.
    		As in Lemma~\ref{l: wiggling}, we write \[u_{i}=D_{1}D_{2}\cdots D_{i-1},\quad
    		v_{i}=D_{i}D_{i+1}\cdots D_{k}\] for each $1\leq i\leq k$, and $E_i=u_i\wr v_i$.
    		We define the oscillation \[\Osc_{w}(a_{0},a_{k})=\left(\bigcup_{i=1}^k\subg{[a_{i}]_{E_{i}}}_{G}\right).\]
    		
    		We write \[\Osc(a_0,a_k)=\bigcup_{w\in\pa{a_{0},a_{k}}\cap\D^{*}} \Osc_{w}(a_{0},a_{k}).\]
    		For general $a,b\in N$ we define $\Osc(a,b)$ by $\Osc(a',b)$ where $a'\in a G$
    		is such that
    		$\pa{a,b}\cap\D^{*}=\emptyset$.
    	\end{definition}
    	
    	Observe that the definition of $\Osc(a,b)$ is independent of the choice of $a'$, by Lemma~\ref{l: element absorption}.
	The reader may remind themself of the definition of $\subg{[a]_{D}}_{G}$ at the beginning of Subsection~\ref{ss:imagine-wc}.
    	
    	\begin{lemma}\label{lem:observation is definable}
    		We have $\Osc(a,b)\subseteq \dcl^{\EQ}(a,b)$
    	\end{lemma}

    	\begin{proof}
    		We may assume $\pa{a,b}\cap\D^{*}=\emptyset$.  If $e\in \Osc_{w}(a,b)$ then $e$ lies
    		in the definable closure of some $[a_{i}]_{D_{i}}$, where $a_{i}$ denotes the $i^{th}$
    		point of some strict sequence of type $w$ from $a$ to $b$. In turn,
    		$[a_{i}]_{D_{i}}$ can be uniquely characterized as the $D_{i}$ class of the $i^{th}$ step
    		of some strict sequence of type $w$ from $a$ to $b$. Since any sequence of type $w$ from
    		$a$ to $b$ is automatically strict, the latter
    		can be expressed by a first order formula.
    	\end{proof}
    	
    	Recall that $\acl^{\EQ}(A)$ stands for the collection of all imaginary classes whose orbit
    	under the point-wise stabilizer of $A$ is finite.
    	We obtain the following corollary:

    	\begin{corollary}
    		\label{acl in dcl}Let $a,b\in N$ then \[\hat{N}\cap \acl^{\EQ}(a,b)=\Osc(a,b).\]
    		In particular, \[\hat{N}\cap \acl^{\EQ}(a,b)\subseteq \dcl^{\EQ}(a,b).\]
    	\end{corollary}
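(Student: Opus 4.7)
The first inclusion $\Osc(a,b)\subseteq \hat N\cap \acl^{\EQ}(a,b)$ is immediate from the preceding observation, which in fact gives $\Osc(a,b)\subseteq \dcl^{\EQ}(a,b)$. Granting the equality, the ``In particular'' statement then follows at once. So the plan is to show the reverse inclusion $\hat N\cap \acl^{\EQ}(a,b)\subseteq \Osc(a,b)$.

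First I would normalize: by replacing $a$ by an element of $aG$, we may assume $\pa{a,b}\cap\D^{*}\neq\emptyset$, since translating $a$ within its $G$--orbit is an $\emptyset$--definable operation (so it does not change algebraic or definable closures of the pair) and leaves $\Osc(a,b)$ unchanged by its very definition. Fix $w\in \pa{a,b}\cap\D^{*}$, a strict $w$-path $P$ from $a$ to $b$, and a hull $H\in\mathcal{H}_{P}$; by Corollary \ref{isomorphisms between tight sets} the isomorphism type of the pair $(P,H)$ over $\{a,b\}$ is determined, and any automorphism of $\mathcal{N}$ fixing $a,b$ can be arranged to carry $P$ to any other strict $w$-path between $a$ and $b$ (and $H$ to a corresponding hull). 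This is the source of the ``wobble'' that I will exploit to produce infinite orbits.

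Now suppose $[c]_{E}\in \hat N$ lies outside $\Osc(a,b)$; I will argue that the orbit of $[c]_{E}$ under $\Aut(\mathcal{N}/\{a,b\})$ is infinite. Using the saturation of $\mathcal N$ together with Lemma \ref{core qe}, extend $H$ to a weakly convex set $H^{+}$ containing a representative $c'$ of $[c]_{E}$; by relative quantifier elimination (Theorem \ref{t:Mqe}) this extension can be moved by any $\{a,b\}$--fixing automorphism alongside the wobbling of $P$. Lemma \ref{l: wiggling} describes exactly the wobble available: if $P'$ is any other strict $w$-path between $a$ and $b$, then its $i$-th vertex is $(u_{i}\wr v_{i})$-related to $a_{i}$. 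Thus $[a_{i}]_{D_{i}}$ (and more generally any class in $\subg{[a_{i}]_{D_{i}}}_{G}$, after varying $w$) is $\{a,b\}$--definable, confirming again $\Osc(a,b)\subseteq \dcl^{\EQ}(a,b)$; conversely, the hypothesis $[c]_{E}\notin \Osc(a,b)$ means that even after allowing $w$ to range over $\pa{a,b}\cap \D^{*}$, the class $[c]_{E}$ cannot be expressed as any $\subg{[a_{i}]_{D_{i}}}_{G}$ relative to a path. Applying Lemma \ref{l: wc and acl} (to the $G$--orbit through which $c$ is encoded, relative to whichever step $a_i$ of the path is closest to $c$) then yields that the stabilizer of the relevant path-data has infinitely many distinct images for $[c]_{E}$; lifting these back to automorphisms of $\mathcal{N}$ via Corollary \ref{isomorphisms between tight sets} provides infinitely many $\{a,b\}$--fixing automorphisms moving $[c]_{E}$.

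The main obstacle I expect is the bookkeeping in the last step: translating the combinatorial wobble guaranteed by Lemma \ref{l: wiggling} into actual imaginary-class motion for a $c$ which a priori lies outside $H$. The cleanest route is to argue that any $[c]_{E}\in \acl^{\EQ}(a,b)$ must actually be realized (up to definable closure) inside some hull $H\in\mathcal{H}_{a,b}$ extended along a step $D$ away from $H$; then Lemma \ref{lem-convex-extensions}(\ref{alignment}) exhibits $[c]_{E}$ as built out of $G$--translates of path vertices, forcing it into $\Osc(a,b)$. Conversely, any excess ``direction'' not captured by $\Osc$ gives, by Lemma \ref{l: wc and acl}, an infinite family of non-conjugate realizations over $\{a,b\}$, contradicting algebraicity.
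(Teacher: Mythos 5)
Your proposal follows essentially the same route as the paper's proof: after the same normalization, both use the wiggling description of Lemma~\ref{l: wiggling}, Lemma~\ref{l: wc and acl}, and Corollary~\ref{isomorphisms between tight sets} to produce infinitely many $\{a,b\}$--conjugates of any imaginary outside $\Osc(a,b)$. The bookkeeping you flag is handled in the paper by a clean case split ($e\notin\hat H$ versus $e=[c]_E$ with $c\in H$), with the relevant domain pinned down as $D=w_1\wr w_2$ where $[w_1]=\pa{a,c}$ and $[w_2]=\pa{c,b}$, after translating $c$ to a vertex of a strict path.
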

    	\begin{proof}
    		It suffices to prove that \[\widehat{\acl}(a,b)\subseteq \Osc(a,b).\]
    		Assume that \[\pa{a,b}\cap\D^{*}\neq\emptyset,\] and
    		let $e\in\hat{N}\setminus \Osc(a,b)$.
    		Let $p$ be a strict sequence from $a$ to $b$, and pick an arbitrary $H\in\mathcal{H}_{p}$.
    		If $e\notin\hat{H}$, then we are done by Lemma \ref{l: wc and acl}.
    		
    		Assume now that $e=[c]_{E}$, with $c\in H$. By definition, there is some $c'$ of the form
    		$c h$
    		that occurs in some strict sequence of type $w_{1}*w_{2}\in\D^{*}$ from $a$ to $b$,
    		where here \[\pa{a,c'}=[w_{1}],\quad \pa{c',b}=[w_{2}].\] Write $E'$ for the result of conjugating $E$ by $h$, and
    		let $D=w_{1}\wr w_{2}$. We have that the $[c']_{E'}$ is interdefinable with $[c]_E$.
		Since $D$ is not contained in $E'$ by assumption, it follows from
    		Lemma \ref{l: wc and acl} that the orbit of $e$ under the action of
    		$G[D]$ on the right on $(c G)/R_{E}$ is infinite.
    		By Corollary \ref{isomorphisms between tight sets}, this implies that the orbit of
    		$e$ under $\Aut_{a,b}(\mathcal{N})$ is infinite and hence $e\notin \acl^{\EQ}(a,b)$.
    	\end{proof}

    	\begin{lemma}
    		\label{global invariant types} Let
    		$w,w'\in\wo$ be reduced words and let $n_{0},n'_{0}\in N$. The equality
    		$p^{w}_{n_{0},N}=p^{w'}_{n'_{0},N}$ holds if and only if there is an element $g\in G$
    		such that $w'\simeq wg$ and $R_{Dg}(n_{0},n'_{0})$, where here $D=RA(w)$.
    		In particular, we obtain \[\Or(w)=\Or(w*\pa{n_{0},n'_{0}}).\]
    	\end{lemma}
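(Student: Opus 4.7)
The plan is to prove both implications by translating equality of types into concrete statements about reduced $\delta$-classes and the ordinal $\Or$, with Corollary~\ref{type parametrization} supplying the bridge: a type in one variable over $N$ is determined by a basepoint together with the reduced class to it.

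For $(\Leftarrow)$, I would suppose $w'\simeq wg$ and $R_{Dg}(n_0,n'_0)$ and take any realisation $b$ of $p^{w}_{n_0,N}$ in a sufficiently saturated model; it suffices to show $b$ also realises $p^{w'}_{n'_0,N}$, since the two complete types will then coincide. Choose $n''_0\in N$ with $R_D(n_0,n''_0)$ and $n'_0=n''_0 g$. Any representative of $\pa{n_0,n''_0}$ is a reduct of $D$, hence lies in $\wo(D)\cup\{D\}$; by the characterisation of $D=\labs(w^{-1})$ given in Observation~\ref{obs:lad}, this forces $[w]\ast\pa{n_0,n''_0}=[w]$, so $\pa{b,n''_0}=[w]$ and $\pa{b,n'_0}=[w]\ast[g]=[wg]=[w']$. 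Since group elements contribute nothing to $\Or$, we have $\Or(w')=\Or(w)$, so $n'_0$ inherits basepointhood from $n_0$ (which is a basepoint in all of $N$), and Corollary~\ref{type parametrization} closes this direction.

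For $(\Rightarrow)$, take $b$ realising the common type, so $\pa{b,n_0}=[w]$, $\pa{b,n'_0}=[w']$ and both $n_0,n'_0$ are basepoints for $b$, giving $\Or(w)=\Or(w')$. I would embed $n_0 G\cup n'_0 G$ into a weakly convex $A\subset N$ with $b\notin A$ by iterated application of Lemma~\ref{core qe} together with saturation; since $n_0$ is a basepoint for $b$ in $N$, it remains one in $A$. Setting $v:=\pa{n_0,n'_0}$, Lemma~\ref{gate property many steps} yields $[w']=[w]\ast[v]$ as the unique reduction without cancellation. Put $v$ in right normal form $v\simeq v_0 g$ with $v_0\in\D^{\ast}$ using Lemma~\ref{normal forms}; then $\Or([w]\ast[v_0])=\Or(w')=\Or(w)$, which forces every domain letter of $v_0$ to be absorbed into $w$, since a surviving letter would contribute its nonzero complexity to the symmetric sum defining $\Or$. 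Observation~\ref{obs:lad} then gives $v_0\in\wo(D)\cup\{D\}$ for $D=\labs(w^{-1})$, whence $w'\simeq wg$. Setting $n''_0:=n'_0 g^{-1}$, one has $R_{v_0}(n_0,n''_0)$, hence $R_D(n_0,n''_0)$, and therefore $R_{Dg}(n_0,n'_0)$. The ``in particular'' clause is then immediate, since $[w\ast\pa{n_0,n'_0}]=[wg]$ and $g\in G$ contributes $0$ to $\Or$.

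The principal technical point is the absorption step in the forward direction, namely that $\Or([w]\ast[v_0])=\Or(w)$ forces $[w]\ast[v_0]=[w]$. This rests on the partially commutative monoid embedding already used in the proof of Lemma~\ref{surviving D}: any domain letter of $v_0$ surviving after reduction without cancellation would persist as an extra generator whose complexity enlarges the symmetric sum of $\omega$-powers and strictly raises the ordinal. The saturation step ensuring $b\notin A$ is routine; the combinatorial content sits entirely in the $\labs(w^{-1})$ characterisation supplied by Observation~\ref{obs:lad}.
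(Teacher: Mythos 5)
Your proposal is correct in outline and travels much the same road as the paper's proof: both sides are reduced to the basepoint/gate property (Corollary~\ref{type parametrization} and Lemma~\ref{gate property many steps}) to obtain $[w']=[w]\ast\pa{n_0,n'_0}$, double basepointhood forces $\Or(w)=\Or(w')$, and the conclusion is extracted from the characterisation of $\labs(w^{-1})$. The $(\Leftarrow)$ direction is fine and matches what the paper dispatches by appealing to associativity of~$\ast$.

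Where you genuinely diverge is the final step of $(\Rightarrow)$. The paper first observes that the two words must be prefixes of each other up to equivalence, so that $w'\simeq wg$ for some $g\in G$, and then uses the identity $p^{wg}_{n_0 g,N}=p^{w}_{n_0,N}$ to reduce to the case $w=w'$, where the $\labs$ definition applies directly. You instead argue straight from $\Or([w]\ast[v_0])=\Or(w)$ that $[w]\ast[v_0]=[w]$. That implication is true, but your stated reason --- ``a surviving letter of $v_0$ would contribute its nonzero complexity'' --- ignores that letters of $w$ could themselves be properly absorbed by letters of $v_0$, trading away $\omega$--powers on the other side of the symmetric sum. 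The appeal to the partially commutative monoid embedding from Lemma~\ref{surviving D} is not what closes this: what one actually needs is the symmetric decomposition Lemma~\ref{lem:sym-dec}. Writing $w\simeq g\,w_1 u' z$, $v_0\simeq z v' v_1$ and $w\ast v_0\simeq g\,w_1 z v_1$, the $\Or$--equality gives $\Or(u')=\Or(v_1)$; but every letter of $u'$ is \emph{properly} absorbed by a letter of $v_1$ and therefore has strictly smaller complexity, so a maximal-complexity letter of $u'$ cannot be matched, forcing $u'$ (and hence $v_1$, which has no group letters) to be trivial. This is the cleanup your argument needs; with it, both proofs end up in the same place, and the ``in particular'' clause follows just as you say.
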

    	\begin{proof}
    		We will limit ourselves to proving the direction from left to right,
    		since the converse implication follows easily from the associativity of
    		reduction without cancellation (see Corollary~\ref{cor:associative}). 
    		
    		The type $p^{w}_{n_{0},N}=\tp(b/N)$ implies that \[\pa{b,n'_{0}}\simeq [w]*\pa{n_{0},n'_{0}},\]
    		and the same holds after exchanging the roles of $(w,n_{0})$ and $(w',n_{0})$, and replacing $b$ by a suitable $b'$. It follows that up to equivalence of words, each of $w,w'$ is an initial subword of the other. It follows immediately that
    		$\Or(w)=\Or(w')$.
    		As \[p^{wg}_{n_{0}\cdot g, N}=p^{w'}_{n'_{0},N},\] we may that
    		in fact $w$ is  equal to $w'$, and then
    		the result follows immediately from the definition of $LA(w^{-1})$ and Lemma~\ref{l: element absorption}.
    		\qedhere
    		
    	\end{proof}
    	
    	We will need a variation of Lemma~\ref{global invariant types} for classes. Let $a\in N$ and let $D\in\D$. Given a weakly convex set $H\subset N$, let $n_0\in H$ and $w\in\wo$ be such that $w\in\pa{c,n_0}$ for some $c\in [a]_D$, chosen in such a way so that $\Or(w)$ is minimized over all such possible choices.  We will refer to $n_0$ as a \emph{basepoint} for $[a]_D$ in $H$.
    	
    	For $w\in\wo$ we let let $p^{D,w}_{n_0,H}$ be the type of the class $[a]_D$ over $H$ for some $a\in N$ satisfying 
    	$p^{w}_{n_{0},D}$ and we define its global extension $p^{D,w}_{n_{0},N}$ analogously, where $p^{w}_{n_{0},H}$ and $p^{w}_{n_{0},N}$ are as in Definition \ref{def:main-type}. 
    	
    	\begin{observation}
    		\label{obs: reduced} Let $n_{0}\in H$ be a basepoint for $e=[a]_{D}$ in the weakly convex set $H$. If 
    		$a$ minimizes $Or(\pa{a,n_{0}})$ over all points in $e$, then either $D\pa{a,n_{0}}$ is reduced, or 
    		$D$ is properly left-absorbed by $\pa{a,n_{0}}$. Indeed, otherwise the resulting reduction of $D\pa{a,n_{0}}$ would
		contradict the minimality of $Or(\pa{a,n_{0}})$.
    	\end{observation}
    	
    	\begin{lemma}\label{lem:global inv type class}
    		Let $D\in\D$, $w$ a word such that either $Dw$ is reduced or such that $D$ is properly
    		left absorbed by $w$ and $n_{0},n'_{0}$ two points in a weakly convex class $H$. 
     		Then we have that $p^{D,w}_{n_0,H}=p^{D,w}_{n_0',n}$ if and only if $R_u(n_0,n_0')$,
    		with $u\subseteq RA(D\ast w)$.
    	\end{lemma}
    	\begin{proof}
    		For the only if direction, if the two types coincide, then they can be realized by the same $D$-class $e$. 
    		So, we pick such a class $e$ and $a,a'\in e$ with \[[w]=\pa{a,n_0}=\pa{a',n_0'}.\]
    		Let $u_D\subseteq D$ be such that $R_{u_D}(a,a')$, and
		write $u=\pa{n_0,n_0'}$. Observe that $u_D w u^{-1}$ reduces to $w$, and $u_D^{-1} w u$ also reduces
    		to $w$. Moreover, we have $w* u=u_D w$ and $w* u^{-1}=u_D^{-1} w$. If on the one hand
		$D* w$ is reduced, then we have that
    		\[Dw*u=Du_Dw\simeq Dw.\] On the other hand, if $D$ is properly absorbed by $w$ then since $u_D$ is subordinate to $D$,
    		we have $u_Dw\simeq w$. In either case, we have $u$ is right absorbed by $D*w$. 
    		
    		The if direction is clear.
    	\end{proof}
    	
	\begin{remark}
    	\label{rem: canonical bases}It follows from Lemma~\ref{lem:global inv type class} that the type $p^{w}_{n_{0},N}$ is invariant under an automorphism $\sigma$
    	of $\mathcal N$ if and only if $\sigma$ preserves the class $[n_0]_E$, where $E=RA(Dw)$, i.e., that	$[n_0]_{E}$ ia a \emph{canonical base} for the global type $p^{D,w}_{n_0,N}$. 
    	
    	In view of Observation \ref{o: all global types are parametrized} and the fact that each such type extends uniquely to a global type over $\hat{N}$ we conclude that in $\hat{N}$ every global $1$-type has a real canonical base. 
	\end{remark}

It follows from Lemma~\ref{acl in dcl}, that given $a,b\in N$ any
    	\emph{$\widehat{\acl}(a,b)$--invariant} $1$-type, i.e.~ one invariant under all automorphisms fixing $\widehat{\acl}(a,b)$,
    	is actually $ab$ invariant. An analogous observation holds for types over single classes in $\hat{N}$, following Lemma~\ref{lem:global inv type class}. This, together with the previous remark yields:
    	
    	\begin{corollary}
    		\label{stationarity} Given $a,b\in N$ all types in $S^{1}(ab)$ are 
		stationary. Moreover, if $e\in \hat N$ then all types in $S^1(e)$ are stationary.
    	\end{corollary}

  	\subsection{Type of triples and quantifier elimination}
    	
    	The following is an adaptation of Proposition 7.21 in \cite{BPZ17}. Recall that the
    	notation $w^{-1}$ for a word denotes the word obtained by writing the letters occurring in it
    	in reverse order, and inverting the group elements which appear.
    	
	
    	\begin{figure}[h!]
    		\includegraphics[width=12cm]{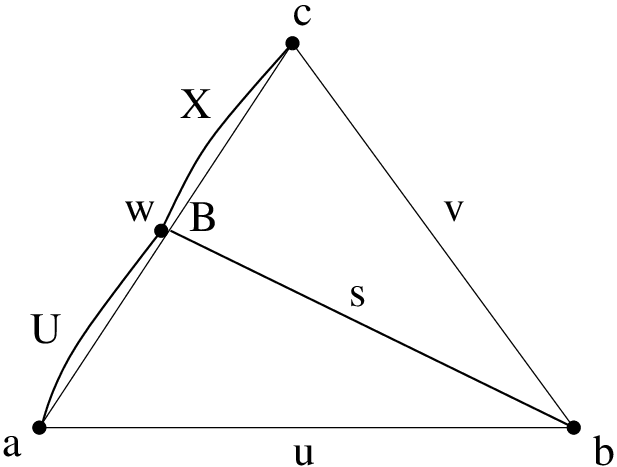}
    		\caption{Lemma~\ref{triangle lemma}}
    	\end{figure}

    	\begin{lemma}
    		\label{triangle lemma} \leavevmode
    		\begin{enumerate}
    			\item\label{item:decompose} Suppose that we are given $u,v,w\in\wo$, such that
    			$uv$ reduces to $w$. Then there are reduced decompositions
    			\begin{align*}
    				u\simeq&\, u_{1}\alpha^{-1}s^{-1},  \\
    				v\simeq&\,s\beta v_{1},\\
    				w\simeq&\,u_{1}xv_{1},
    			\end{align*}
    			where $\{\alpha,\beta\}$ are strongly orthogonal, where $x$ is orthogonal to both $\alpha$ and $\beta$,
    			and where $x$ is properly right-absorbed by
    			$s$, where $\alpha$ is properly left-absorbed by $v_1$, and where $\beta$ is
    			right-absorbed by $u_1$.
    			These decompositions are unique up to permutations.
    			
    			\item\label{item:triples} Let $a,b,c\in N$ be given, such that
    			\[\pa{a,b}=[u],\quad
    			\pa{b,c}=[v],\quad
    			\pa{a,c}=[w].\] There is a strict sequence $p$ of type $w$ from
    			$a$ to $c$,
    			a weakly convex set $H\in\mathcal{H}_{p}$ containing $a$ and $c$, and
    			a basepoint $b_{0}$ of $b$ in $H$ such that:
    			\begin{align*}
    				\pa{a,b_{0}}=[u_{1}], \\
    				\pa{b_{0},c}=[xv_{1}], \\
    				\pa{b,b_{0}}=[s\alpha\beta].
    			\end{align*}
    			
    		\end{enumerate}
    		
    	\end{lemma}
    	
    	We recall many of the details of the proof, following the outline of the corresponding result in~\cite{BPZ17},
	for the convenience of the reader.
    	\begin{proof}[Proof of Lemma~\ref{triangle lemma}]
    		We first prove Item~\ref{item:decompose}. Suppose $R_w(a,c)$, that
    		$R_u(a,b)$ and $R_v(b,c)$. We consider the collection of weakly convex
    		sets $\mathcal{H}_p$ containing
    		both $a$ and $c$. For a strict sequence $p$
    		from $a$ to $c$ and let $b_0$ be a basepoint for $b$ in
    		$p$. Let \[[w_1]=\pa{a,b_0},\quad [w_2]=\pa{b_0,c},
    		\quad [y]=\pa{b,b_0}.\] We choose $H\in\mathcal{H}_p$ and $y$
    		in order to minimize
    		$\Or(y)$. This implies that
    		no terminal segment of $y$ is absorbed by $w_1\wr w_2$, since then we could replace the basepoint $b_0$ by another point
    		and obtain a $y$ with a smaller associated ordinal.
    		
    		We now have that $[y]* [w_1^{-1}]= [u^{-1}]$ and that $[y]* [w_2]= [v]$,
    		by weak convexity and the minimality of $y$. By Lemma
    		~\ref{lem:sym-dec}, we have unique decompositions
    		\[w_1\simeq u_1 x' \theta,\quad y\simeq s_1^{-1}
    		\beta,\quad u\simeq u_1\theta s_1,\] where
    		$x'$ and $\beta$ are orthogonal, $x'$ is
    		properly left-absorbed by $s_1$, where $\theta$ is word made up of
    		pairwise commuting letters which is orthogonal to both $x'$
    		and $\beta$, and where $\beta$ is
    		right-absorbed by $u_1$. Moreover, we may assume that the only group elements occurring in these
    		decompositions are the initial letter of $u_1$ and the terminal letter of $s_1$.
    		By expanding $u_1$ to include $\theta$, we may assume $\theta$
    		is trivial. Expanding $u_1$ further, we may also assume that $x'$ is trivial.
    		Analogously and by performing similar expansions of the constituent words if necessary,
    		we write \[w_2\simeq x v_1,\quad y\simeq s_2\alpha,\quad v\simeq s_2 v_1,\]
    		where $x$ is orthogonal to $\alpha$, where $x$ is
    		properly right-absorbed by $s_2$ and where $\alpha$
    		is left-absorbed by $v_1$.
    		
    		Observe that
    		$y\simeq s_2\alpha\simeq s_1^{-1}\beta$. The minimality assumption on $y$
    		(that is, no terminal segment of $y$ is absorbed $w_1\wr w_2$)
    		now implies that no end of $\alpha$ can coincide with an end of
    		$\beta$, since then this end would be contained in $w_1\wr w_2$.
    		It follows that all ends of $\alpha$ are orthogonal to $\beta$, whence
    		$\alpha$ is orthogonal to $\beta$ and is a terminal segment of $s_1^{-1}$; this is proved by a straightforward
    		induction on the length of $\alpha$, which itself does not contain any group elements; cf.~Lemma 5.3 of
    		~\cite{BPZ17}. Repeating this line of reasoning for $\beta$ and $s_2$, we are
    		able to write
    		\[s_1^{-1}\simeq s \alpha,\quad s_2\simeq s\beta.\]
    		
    		That $\alpha$ and $\beta$ are strongly orthogonal follows from the fact that they are orthogonal, the fact that $\alpha$ is
    		left-absorbed by $v_1$, and the fact that $\beta$ is right-absorbed by $u_1$.
    		
    		We now claim that $\beta$ and $x$ are orthogonal.
    		Were this not the case, we would
    		be able to write $x\simeq x_1 D x_2$, where $x_1$ and $\beta$ are orthogonal but where
    		$\beta$ is not orthogonal to $D$. We have that $x$ is right-absorbed by $s_2\simeq s\beta$, which
    		implies that $D$ must be absorbed by $\beta$. It follows that $D$ is
    		right-absorbed by $u_1$.
    		Since $x_1$ is orthogonal to $\beta$, we obtain that $D$ is orthogonal to $x_1$. We thus
    		see that $w=u_1 x v_1$ is not reduced, which is a contradiction. It follows now
    		that $x$ is properly right-absorbed by $s$.
    		
    		We may therefore write
    		\[u\simeq u_1\alpha^{-1} s^{-1},\quad v=s\beta v_1,\quad
    		w=w_1 w_2\simeq u_1 x v_1,\quad y\simeq s\alpha\beta.\]
    		
    		We may assume that $\alpha$ is
    		properly left-absorbed by $v_1$. Were this not the case,
    		we obtain \[\alpha\simeq\alpha'\eta,\quad \eta v_2\simeq v_1,\] where $\eta$ is
    		made up of pairwise commuting letters, where $\alpha'$ is left absorbed by $v_2$, and
    		where $\alpha'$ commutes with $\eta$.
    		
    		Since $w_2\simeq\eta x v_2$,
    		we apply Lemma~\ref{global invariant types}
    		to find a basepoint $b_0'$ in the sequence $p$ such
    		that $R_{\eta}(b_0,b_0')$. The word $\eta$ is right-absorbed by
    		$y\simeq s\beta\alpha'\eta$, we may substitute \[u_1\mapsto u_1\eta,\quad
    		v_1\mapsto v_2,\quad \alpha\mapsto\alpha',\quad \beta\mapsto\beta\eta,\] in order to
    		obtain new words \[\{u_1,v_1,s,x,\alpha,\beta\}\] with the desired properties.
    		
    		The uniqueness part of the lemma is mostly formal and is a reprise of the proof of
    		Proposition 7.21 in~\cite{BPZ17}. The main point is that the classes of
    		\[\{u_1,v_1,x,\alpha,\beta,s\}\] are canonically defined from the classes
    		\[\{[u],[v],[u]\ast [v]=[w]\},\] and hence are unique up to permutations.
    		We omit further details.
    		The conclusion of Item~\ref{item:triples} is now immediate.
    	\end{proof}
    	
    	The following is essentially the same as the proof of Corollary 7.22 in~\cite{BPZ17}.
    	The roles of Lemma
    	6.4 and Corollary 6.5 are played by Lemmas~\ref{core qe} and Theorem~\ref{t:Mqe}.
    	\begin{corollary}
    		\label{types of triples} Given $a,b,c\in N$, the type
    		$\tp(abc)$ is uniquely determined by the triple
    		\[(\pa{a,b},\pa{b,c},\pa{a,c}).\]
    	\end{corollary}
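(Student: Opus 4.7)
The plan is to reduce the statement to Theorem~\ref{t:Mqe} by embedding both triples in weakly convex sets whose quantifier-free types are completely determined by the three pairwise displacement classes. Concretely, I would start with two triples $(a,b,c)$ and $(a',b',c')$ satisfying $\pa{a,b}=\pa{a',b'}$, $\pa{b,c}=\pa{b',c'}$, and $\pa{a,c}=\pa{a',c'}$, and aim to produce an automorphism of $\mathcal{N}$ carrying one to the other.

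First I would apply Lemma~\ref{triangle lemma} to each triple. The uniqueness up to permutations of the symmetric decomposition in part~(1) of that lemma guarantees that the two applications of part~(2) furnish compatible data: strict paths $P$ from $a$ to $c$ and $P'$ from $a'$ to $c'$ of a common type $w$, weakly convex sets $H \in \mathcal{H}_P$ and $H' \in \mathcal{H}_{P'}$, and basepoints $b_0 \in H$ of $b$ and $b_0' \in H'$ of $b'$, with matching displacement classes
\[\pa{a,b_0}=\pa{a',b_0'}=[u_1], \quad \pa{b_0,c}=\pa{b_0',c'}=[xv_1], \quad \pa{b,b_0}=\pa{b',b_0'}=[s\alpha\beta].\]

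Next I would enlarge $H$ to a weakly convex set $H_1$ containing $b$ by absorbing the path from $b_0$ to $b$: by Lemma~\ref{several steps away} the penultimate point of a strict $[s\alpha\beta]$-path from $b$ to $b_0$ is one step away from $H$ over the basepoint $b_0$, and Lemma~\ref{core qe} then adjoins it as a one-step extension. Iterating this procedure yields a weakly convex set $H_1 \supseteq H \cup \{b\}$. Performing the parallel construction on the primed side gives $H_1' \supseteq H' \cup \{b'\}$. By Corollary~\ref{qf characterization} together with part~(ii) of Lemma~\ref{core qe}, the quantifier-free types of $H_1$ and $H_1'$ under the natural bijection of their building data are identified, so by Theorem~\ref{t:Mqe} the two sets realize the same type in $\mathcal{N}$. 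In particular, there is an automorphism of $\mathcal{N}$ sending $(a,b,c)$ to $(a',b',c')$.

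The main technical point is the bookkeeping in the iteration: at each new one-step extension both copies must receive matching displacement data. This is precisely the content of part~(ii) of Lemma~\ref{core qe}, which asserts that the type $q^{D}_{a_0,A}$ determines the quantifier-free type of the resulting extension; combined with the matching initial data from Lemma~\ref{triangle lemma}, the correspondence propagates through the iteration. No new model-theoretic ingredient is needed beyond the three results already invoked.
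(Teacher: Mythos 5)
Your plan follows the same high-level structure as the paper's proof: apply Lemma~\ref{triangle lemma} to both triples to obtain weakly convex sets $H$, $H'$ and basepoints $b_0$, $b_0'$ with matching displacement data; extend to weakly convex sets containing $b$ and $b'$; then invoke Theorem~\ref{t:Mqe}. The one place where your argument is imprecise, and which the paper handles explicitly, is the step where you claim the quantifier-free types of $H_1$ and $H_1'$ are "identified under the natural bijection of their building data." There is no a priori natural bijection: $H$ and $H'$ are furnished by two independent applications of Lemma~\ref{triangle lemma}, and elements of $\mathcal{H}_{a,c}$ involve non-canonical choices of one-step extensions. The isomorphism $H \to H'$ carrying $b_0$ to $b_0'$ must be established, and the paper does this by citing Corollary~\ref{isomorphisms between tight sets}, whose proof goes through the strong incompressibility of sets in $\mathcal{H}_{a,c}$ (Lemma~\ref{lem-two-point-tightness}) and the retraction Lemma~\ref{retract}. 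Once that isomorphism is in hand, your iterated extension to $H_1$, $H_1'$ via Lemma~\ref{several steps away} and part~(ii) of Lemma~\ref{core qe}, followed by Theorem~\ref{t:Mqe}, runs exactly as in the paper. So the missing ingredient is small but genuine: you should cite Corollary~\ref{isomorphisms between tight sets} (or at minimum Observation~\ref{isomorphic paths}) to justify the existence of an isomorphism $H \to H'$ matching the basepoints before you start the iteration.
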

    	\begin{proof}
    		Let $abc$ and $a'b'c'$ two triples of points such that
    		\[(\pa{a,b},\pa{b,c},\pa{a,c})=(\pa{a',b'},\pa{b',c'},\pa{a',c'}).\]
    		Lemma~\ref{triangle lemma} yields weakly convex sets
    		\[H\in\mathcal{H}_{a,c}\quad \textrm{and}\quad H'\in\mathcal{H}_{a',c'},\] together with
    		basepoints $b_{0}$ for $b$ in $H$ and $b'_{0}$ for $b'$ in $H'$,
    		with the property that \[\pa{b_{0},a}=\pa{b'_{0},a'},\quad \pa{b_{0},c}=\pa{b'_{0},c'},
    		\quad
    		\pa{b_{0},b}=\pa{b'_{0},b'}.\] By Corollary \ref{isomorphisms between tight sets},
    		there is an isomorphism between
    		$H$ and $H'$ which sends $b_{0}$ to $b'_{0}$.
    		An iterated application of \ref{core qe} yields an extension to an
    		isomorphism between weakly convex sets sending $b$ to $b'$, so that $\tp(abc)=\tp(a'b'c')$ by virtue of Theorem \ref{t:Mqe}.
    	\end{proof}

    	If $\mathcal N$ is a model of $\Th{\mathcal M^G}$ and if $e_1,e_2\in\hat N$ are classes, we write $\pa{e_1,e_2}$ for the collection
    	of strict sequences between $e_1$ and $e_2$. We can finally establish quantifier elimination in $\Th{\mathcal{M}^G}$.
    	
    	\begin{theorem}\label{t:M-fullqe}
    		$\Th{\mathcal{M}^G}$ has absolute quantifier elimination. Moreover, suppose that $N$ is a model of
    		$\Th{\mathcal M^G}$. If \[\{e_1,\ldots,e_n\}\subset\hat N\] are classes, then $\tp(e_1,\ldots,e_n)$ is determined
    		by the tuple $\{\pa{e_i,e_j}\}_{1\leq i,j\leq n}$.
    	\end{theorem}
    	\begin{proof}
    		Suppose that $\{e_1,\ldots,e_n\}\subset\hat N$ are classes, and let
    		$\{e_1',\ldots,e_n'\}\subset\hat N$ satisfy
    		$\pa{e_i,e_j}=\pa{e_i',e_j'}$ for all $i$ and $j$. Choose representatives $a_1\in e_1$ and $a_1'\in e_1'$ so that
    		$a_1$ is independent from $e_2\cdots e_n$ over $e_1$, and so that $a_1'$ is independent from $e_2'\cdots e_n'$ over $e_1'$.
    		Note that we have the equalities \[\tp(a_1/e_1)=\tp(a_1'/e_1'),\quad \tp(e_1e_k)=\tp(e_1'e_k'), \,\, k\geq 2.\]
    		Stationarity of the relevant types (Corollary~\ref{stationarity}) implies then that $\tp(a_1e_1e_k)=\tp(a_1'e_1'e_k')$ for all $k$.
    		It follows that $\tp(a_1e_k)=\tp(a_1'e_k')$.
    		Next, choose $a_2\in e_2$ and $a_2'\in e_2'$ that are independent of $a_1,e_3,\ldots,e_n$ over $e_2$, and of
    		$a_1',e_3',\ldots,e_n'$ over $e_2'$, respectively.
		By stationarity, we see that $\tp(a_2e_2e_k)=\tp(a_2'e_2'e_k')$ for $k\geq 2$, so that
    		in particular $\tp(a_2e_k)=\tp(a_2'e_k')$, and that
    		$\tp(a_1a_2)=\tp(a_1'a_2')$. Recall that $\tp(a_1a_2)=\tp(a_1'a_2')$ if and only if $\pa{a_1,a_2}=\pa{a_1',a_2'}$.
    		By an easy induction, we see that there exist representatives $a_i\in e_i$ and $a_i'\in e_i'$ for
    		all indices $i$, such that $\tp(a_ia_j)=\tp(a_i'a_j')$ for all $i$ and $j$.
    		
    		By virtue of Proposition \ref{qf characterization}, it now suffices to show
    		that for all pairs of $n$--tuples
    		$\{a,a'\}\subset N^n$ which satisfy $\pa{a_{i},a_{j}}=\pa{a'_{i},a'_{j}}$
    		for all
    		$1\leq i<j\leq n$, it is possible to construct weakly convex sets $A$ and $A'$ extending
    		$a$ and $a'$ respectively and an isomorphism from $A$ to $B$ sending $a_{i}\mapsto b_{i}$
    		for all $1\leq i\leq n$.
    		
    		It now suffices to show that given an
    		isomorphism $\phi$ between subsets $C$ and $C'$ of $N$ such that
    		$(a,b)\mapsto (a',b')$, and given a representative $w\in\pa{a,b}$, there are strict sequences $p$ and $p'$ of type $w$
    		from $a$ to $b$ and from $a'$ to $b'$ respectively such that $\phi$
    		extends to an isomorphism
    		\[C\cup p\to C'\cup p'.\]
    		Given a strict sequence $p_{0}$ of type $w$ from $a$ to $b$,
    		let $q(x,y,z)=\tp(p_{0},a,b)$. Note that this type is uniquely determined by $w$.
    		
    		We let $p$ be a realization of $q(x,a,b)$ independent from $C$,
    		and $p'$ a realization
    		of $q(x,a',b')$ that is independent from $C'$. Denote by $\psi$ the unique
    		isomorphism between $p$ and $p'$ taking $a$ to $a'$ and $b$ to $b'$.
    		For $c\in C$, we have that
    		$\tp(cab)=\tp(\phi(c)a'b')$, by Corollary \ref{types of triples}.
    		By Lemma \ref{stationarity}, both $\tp(c/ab)$ and $\tp(\phi(c)/a'b')$ are stationary,
    		which together with the fact that $\tp(abp)=\tp(a'b'p')$ implies that
    		$\tp(cp)=\tp(\phi(c)p')$. Since our language consists only of binary relations,
    		this suffices to show that $\phi\cup\psi$ is an isomorphism. This establishes the theorem.
    	\end{proof}

	\section{Morley rank}\label{sec:morley}
  	
  	We now turn our attention to the problem of finding upper and lower bounds
  	for the Morley rank
  	of types in $\Th{\M^G_\D}$ for a fixed $\D\subseteq\D_{0}$, as in Definition \ref{def-D}. In this section, $G$ will denote a
  	finite index subgroup of the extended mapping class group of $\Sigma$, unless otherwise noted.
  	For compactness of notation, we will suppress $G$ and $\D$ where it does not cause confusion.
  	Throughout, we will use $\mathcal{N}$ to denote an appropriately saturated (usually at least $\omega$-saturated)
  	model of $\Th{\mathcal{M}}$.

  	\begin{definition}\label{def:mathcal S}
  		We write $\mathcal{S}$ for the collection of triples of the form
  		$(\bar{w},\bar{b},\bar{v})$, where
  		\[\bar{w}=(w_{j})_{j=1}^{r}, \quad \bar{v}=(v_{i,j})_{1\leq i<j\leq r}\]
  		are tuples of of reduced words, and where $b_1\cdots b_r=\bar b\in N^r$ is an $r$--tuple of elements. To any
  		$\tau=(\bar{w},\bar b, \bar{v})\in\mathcal{S}$, we associate the $r$-variable formula
  		$$
  		\psi_{\tau}(x)\equiv\bigwedge_{j=1}^{r}R_{w_{j}}(x_{j},b_j)
  		\wedge\bigwedge_{1\leq i<j\leq n+1}R_{v_{i,j}}(x_{i},x_{j}),
  		$$
  		together with the ordinal
  		\[\Or(\tau):=\left(\left(\bigoplus_{1\leq j\leq r}\Or(w_{j})\right)
  		\oplus\left(\bigoplus_{1\leq i<j\leq r}\Or(v_{i,j})\right)\right).\]
  	\end{definition}

  	\begin{theorem}\label{thm:morley-upper-bound}
  		Let $G$ be a fixed finite index $G<\Mod^{\pm}(\Sigma)$
  		consisting of pure mapping classes. Then for any
  		$\tau=(\bar{w},\bar b, \bar{v})\in\mathcal S$, we have $RM(\psi_{\tau})\leq \Or(\tau)$.
  		In particular, for any $r>0$ the Morley Rank of $M^{r}$ is at most
  		\[\binom{r+1}{2}\omega^{k(\Sigma)},\] where $k(\Sigma)$ is the
  		maximal length $k$ of a chain connected domains
  		\[\emptyset\subsetneq D_{0}\subsetneq D_{1}\subsetneq D_{2}\cdots
  		\subsetneq D_{k}=\C\]
  		in $\D$.
  	\end{theorem}
  	\begin{proof}
  		By quantifier elimination (Theorem~\ref{t:M-fullqe}), any type $q\in S^{r}(N)$ containing
  		$\psi_{\tau}$ (except at most one) must contain a formula of one of the following forms:
  		\begin{itemize}
  			\item $R_{w'_{j}}(x,b_j')$, where $b_j'\in B$ and $\Or(w'_{j})<\Or(w_{j})$
  			for some $1\leq j\leq r$;
  			\item $R_{v'_{i,j}}(x_{i},x_{j})$ for some proper reduct $v'_{i,j}$ of
  			$v_{i,j}$ and $1\leq i<j\leq r$.
  		\end{itemize}
  		In both cases, $q$ contains a formula of the form
  		$\psi_{\tau'}$ for some $\tau'\in\mathcal{S}$ with
  		$\Or(\tau')<\Or(\tau)$.
  		By induction,
  		\[RM(\psi_{\tau})\leq \Or(\tau')<\Or(\tau)\] and thus $RM(q)<\Or(\tau)$.
  		It follows from the characterization of the Morley rank of a type over $N$ as its
  		Cantor--Bendixon rank as a point in the space $S^{r}(N)$ and that of a formula as the
  		Cantor--Bendixon rank of the corresponding clopen subset of $S^{r}(N)$, that
  		$RM(\psi_{\tau})\leq \Or(\tau)$, as desired.
  	\end{proof}
  	
  	\begin{remark}
  		\label{rem: r=1}One can in fact show with relative ease that the result above is also valid in case $r=1$, without restrictions
  		on $G$.
  	\end{remark}
  	
  	The Morley Rank can be bounded from below applying the same strategy as in
  	\cite{BPZ17}. To this end, we use $\mathcal{N}$ to denote the
  	monster model of $\Th{\M}$. For the remainder of this section, let $G=\Mod^{\pm}(\Sigma)$.

  	\begin{lemma}
  		\label{characterization of forking}
  		Let $D\in\D$ and let $e\in N/R_D$. Suppose that $B\subseteq C\subset N$ are weakly convex sets.
  		We have $\indep{e}{B}{C}$ if and only if some
  		basepoint for $e$ in $C$ lies in $B$.
  	\end{lemma}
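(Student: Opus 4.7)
The plan is to reduce the statement to a comparison of the ordinal invariants $\Or(a,B)$ and $\Or(a,C)$, via the characterization of Morley rank by the associated ordinal of a word. First I would note the following monotonicity: since $B\subseteq C$, we have $\Or(a,B)\geq \Or(a,C)$, and if some $c_{0}\in B$ is a basepoint for $a$ in $C$, then
\[\Or(a,B)\leq \Or(a,c_{0})=\Or(a,C)\leq \Or(a,B),\]
so equality holds throughout and $c_{0}$ is automatically a basepoint for $a$ in $B$ as well. Conversely, if no basepoint for $a$ in $C$ lies in $B$, then every $b\in B$ satisfies $\Or(a,b)>\Or(a,C)$, whence $\Or(a,B)>\Or(a,C)$ strictly.

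By Corollary~\ref{type parametrization}, for weakly convex $A$ and a basepoint $a_{0}\in A$ for $a$, the type $\tp(a/A)$ coincides with $p^{w}_{a_{0},A}$ where $w=\pa{a,a_{0}}$, and is thus determined by the pair $(a_{0},w)$. Theorem~\ref{thm:morley-upper-bound}, applied to the formula $R_{w}(x,a_{0})$, yields the upper bound $RM(\tp(a/A))\leq \Or(w)=\Or(a,A)$. I would pair this with the matching lower bound $RM(\tp(a/A))\geq \Or(a,A)$ (the deferred Corollary~\ref{lower bound on Morley rank}), proved by induction on $\Or(w)$: for each connected domain $D$ appearing in $w$, one exhibits infinitely many pairwise inconsistent extensions whose associated words are proper reducts of $w$, hence of strictly smaller ordinal. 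The geometric input needed is Lemma~\ref{l: displacement}, which supplies mapping classes $D$--related to the identity but arbitrarily far from it in $\C(D)$, combined with Lemma~\ref{parallel lifting} to propagate these choices consistently over the tuple indexing $A$. Together these give $RM(\tp(a/A))=\Or(a,A)$.

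With this rank formula in hand, the lemma follows. Since $\Th{\M}$ is $\omega$--stable (Theorem~\ref{thm:omega-stab}), $\tp(a/C)$ fails to fork over $B$ if and only if $RM(\tp(a/B))=RM(\tp(a/C))$, which by the identity above reads $\Or(a,B)=\Or(a,C)$, and by the monotonicity observation this is equivalent to the existence of a basepoint for $a$ in $C$ that lies in $B$.

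The main obstacle is the lower bound on Morley rank; the upper bound and the monotonicity are essentially formal. The lower bound requires exhibiting, at each stratum of $w$, an infinite family of pairwise inconsistent extensions with prescribed ordinal, all compatible with the weak convexity structure already established. This is where the asymptotic richness of the action of $\Stab^{+}(D^{\perp})$ on $\C(D)$ via pseudo-Anosov elements of large translation length is genuinely used, transported across $A$ by the parallel lifting of Lemma~\ref{parallel lifting}.
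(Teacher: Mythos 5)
Your plan has a circularity that would need to be broken before it could stand as a proof. You invoke Corollary~\ref{lower bound on Morley rank} for the crucial lower bound $RM(\tp(a/A))\geq\Or(a,A)$, but in the paper that corollary is \emph{derived from} Lemma~\ref{characterization of forking} (together with Lemma~\ref{approaching extensions}): the paper explicitly says ``Putting this information together with Lemmas~\ref{characterization of forking} and~\ref{approaching extensions} yields some lower bounds on the Morley Rank.'' So using the corollary to prove the lemma is not available. Moreover, even setting circularity aside, the bound the paper eventually establishes is $RM(p^{w}_{b,B})\geq\Or^{r}(w)$, where $\Or^{r}$ is the foundation rank of the coarser relation $\prec^{r}$, and one only has $\Or^{r}(w)\leq\Or(w)$ in general; the exact identity $RM(\tp(a/A))=\Or(a,A)$ that your argument hinges on is not proven in the paper, and your sketch of a direct proof (``for each connected domain $D$ appearing in $w$, one exhibits infinitely many pairwise inconsistent extensions...'') glosses over exactly the issue that $\prec^{r}$ was introduced to control: not every ordinal below $\Or(w)$ arises as $\Or(w')$ for a reduct $w'$ obtainable in a rank-decreasing way, and letters landing in $\partial D$ must be excluded.

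The paper's proof is more elementary in the sense that it never computes the rank. For the non-forking direction it uses Lemma~\ref{global invariant types}: a basepoint $b_{0}\in B$ makes $p^{w}_{b_{0},N}$ a global $B$-invariant extension of $\tp(a/C)$, which immediately gives $\indep{a}{B}{C}$. For the forking direction it runs the standard Morley sequence argument: starting from a basepoint $c_{0}\in C\setminus B$, it builds a Morley sequence $(c_{\ell})$ of a $B$-invariant type over $B$, and uses the non-commutativity in the reduction calculus (via the associativity of $\ast$ from Corollary~\ref{cor:associative}) to show the types $p^{w}_{c_{\ell},N}$ are pairwise distinct, which combined with indiscernibility over $B$ forces forking. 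If you want to repair your route, you would need to prove the rank equality $RM(\tp(a/A))=\Or(a,A)$ independently, which amounts to reproducing (and strengthening) the content of Lemmas~\ref{characterization of forking} and~\ref{approaching extensions} anyway.
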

  	\begin{proof}		
  		Pick a basepoint $c_{0}$ for $e$ in $C$, 
  		let $w$ such that $[w]=\pa{a,c_0}$ has minimal ordinal among all $a\in e$, and let $F=RA(Dw)$.
  		By virtue of Lemma \ref{lem:global inv type class} and Remark \ref{rem: canonical bases}, the canonical base of 
  		the stationary type $p^{w}_{c_{0},C}$ is precisely $[c_{0}]_{F}$.
		Since $[c_{0}]_{F}\cap C$ is also the collection of basepoints of $e$ in $C$, it suffices to verify that
		\[[c_{0}]_{F}\cap B=\emptyset\quad \textrm{if and only if}\quad
  		[c_{0}]_{F}\notin \widehat{\acl}(B).\]
		The only if direction is clear. For the if direction, let $v$ be a minimizing word from $c_{0}$ to a basepoint $b_{0}$ for
		$c_{0}$ in $B$, let $q=p^{v}_{b_{0},N}$ be satisfied by $c_{0}$, and let
		$(c_{i})_{i\in\omega}$ a Morley sequence of $q$ over $B$. By assumption, $v$ is not contained in $F$, and thus neither is
  		$[v]*[v^{-1}]$. However, we have that $\pa{c_{i},c_{j}}=[v]*[v^{-1}]$ for $i\neq j$,
		whence it follows that $[c_{i}]_{F}\neq[c_{j}]_{F}$ for $i\neq j$ and $[c_0]_F\notin \widehat\acl(B)$, the desired conclusion. 
  	\end{proof}

    Since a class in $\hat{N}$ intersecting a set $B\subseteq N$ is definable over $B$, the following holds:
    \begin{corollary}
    	Every $1$-type in $\hat{N}$ over a weakly convex set is stationary. 
    \end{corollary}
  	
  	\newcommand{\rd}[0]{\prec^{r}}
  	\newcommand{\rde}[0]{\preceq^{r}}
  	\begin{definition}
  		Given reduced words $v,w\in\wo$, we write $v\rd_{0}w$ if
  		$v\simeq v_{0}v_{1}$ and $w\simeq v_{0}D$, where $v_{1}\in\wo(D)\cap\D^{*}$,
  		and where none of the domains occurring as letters in $v_{1}$ are contained in $\partial D$.
  		We let $\rd$ be the transitive closure of $\rd_{0}$. Clearly $\rd\subset\prec$,
  		so it is well-founded, and we let $\Or^{r}$ be the corresponding foundation rank on $\wo/\simeq$.
  	\end{definition}
  	
  	\begin{definition}
  		Given $E\in\D$, let $\wo_{E}$ be the collection of $w\in\wo$ such that a reduction without cancellation of $Ew$ only involves
  		proper absorptions of $E$ into $w$. We denote by $\Or^{r}_{E}$ the foundation rank of $\rd$ when restricted to $\wo_{E}/\simeq$.
  	\end{definition}
  	Notice that $\wo_E$ and $\wo(E)$ are very different notions.
  	For any $w\in \wo$, we let $\Or^{+}(w)$ be equal to the (non-commutative) sum \[\Or(D_{1})+\Or(D_{2})+\dots+\Or(D_{k}),\] where
  	here $D_{1},\ldots, D_{k}$ are read in order. We set $\Or^{[+]}([w])$ to be the maximum of $\Or^{+}(w')$, taken over $w'\in[w]$.

  	We include the proof of the following easy fact for the sake of completeness:
  	\begin{lemma}
  		\label{l: ordinal sum lemma}For arbitrary ordinals $\alpha,\beta,\gamma$ with $\gamma<\omega^{\beta}$, we have
  		$\alpha\oplus\gamma<\alpha+\omega^{\beta}$.
  	\end{lemma}
  	\begin{proof}
  		Suppose we are given a counterexample, where by induction we may assume that
  		$\alpha$ is minimized. Then there must exist a well-founded linear order
  		$(J,<)$ of type $\alpha\oplus\gamma$ and a partition \[J=J_{\alpha}\coprod J_{\gamma},\] where $<$ is isomorphic to
  		$\alpha$ and to $\gamma$ when restricted to $J_{\alpha}$ and to $J_{\gamma}$, respectively, together with a linearly
  		ordered set $(K,<)$, which is the concatenation of $K_{\alpha}$ (isomorphic to $\alpha$) and $K_{\omega^{\beta}}$
  		(isomorphic to $\omega^{\beta}$), equipped with an isomorphism $f$ between $K$ and some initial segment of $J$.
  		
  		The key observation is that for arbitrary ordinals $\delta,\delta'<\omega^{\beta}$, we have $\delta\oplus\delta'<\omega^{\beta}$.
  		This implies that the restriction of the order to \[L=K_{\omega^{\beta}}\cap f^{-1}(J_{\alpha})\]
  		must be isomorphic to $\omega^{\beta}$.
  		
  		It follows in turn that
  		\[\alpha=\alpha'+\omega^{\beta}+\alpha''\] for suitable ordinals $\alpha'$ and $\alpha''$, where here $\alpha'$ corresponds
  		via the isomorphism between $J_{\alpha}$ and $\alpha$ to the intersection $f^{-1}(J_{\alpha})\cap K_{\alpha}$, and
  		$\omega^{\beta}$ to $f(K_{\omega^{\beta}}\setminus L)\subseteq J_{\gamma}$.
  		
  		On the one hand, the map $f$ witnesses the fact that
  		$\alpha\leq\alpha'\oplus\gamma'$ for some $\gamma'<\omega^{\beta}$. On the other hand, the induction hypothesis implies that $\alpha'\oplus\gamma'<\alpha'+\omega^{\beta}$ and thus $\alpha<\alpha'+\omega^{\beta}$, contradicting the previous paragraph.
  	\end{proof}
  	
  	\begin{lemma}
  		\label{l: calculating r-ank}Let $E\in\D$ and let $w\in\wo_{E}$. Then $\Or^{r}_{E}([w])=\Or^{[+]}([w])$.
  	\end{lemma}
  	\begin{proof}
  		We proced by induction on $\Or(w)$. Let us show the inequality $\Or^{r}(w)\leq \Or^{[+]}(w)$ first.
  		Suppose that $w\simeq w'D$, and consider an arbitrary $z\in\wo(D)$ such that $w'z$ is reduced.
  		By the induction hypothesis, we have \[\Or^{r}([w'z])\leq \Or^{[+]}([w'z]).\] We claim that in fact
  		\[\Or^{[+]}([w'z])< \Or^{[+]}([w]).\] Indeed, for any $v\in[w'z]$, the partial sum the ordinals corresponding to
  		letters coming from $w'$ (reading as a word $v'\in[w']$) is bounded by above by $\Or^{[+]}([w'])$.
  		
  		By Lemma \ref{l: ordinal sum lemma}, we obtain
  		$$\Or^{+}(v)\leq \Or^{+}(v')\oplus \Or(z)<\Or^{+}(v')+\Or(D)\leq \Or^{[+]}([w]),$$
  		and since the choice of $v$ is arbitrary, we conclude that \[\Or^{r}_{E}([w])\leq \Or^{r}([w])\leq \Or^{[+]}([w]).\]
  		
  		For the other inequality, we can assume that $w=w'D$ satisfies \[\Or^{+}(w)=\Or^{[+]}([w]).\] We may
  		also assume that the complexity of the domain $D$ is greater than one,
  		since otherwise the result follows immediately from the inductive assumption on $w'$.
  		Let $k=k(D)$. We make the following claim:
  		\begin{claim*}
  			For any $m\in\mathbb{N}$, there exists a word
  			\[D_{1}\cdots D_{m}\in\wo(D)\cap\D^{*}\] such that the complexity of each $D_i$ is $k-1$, and where
  			the reduction without cancellation of $uw'D_{1}\cdots D_{m}$ only involves proper absorption of letters of $u$
  			into $w'D_{1}\cdots D_{m}$.
  		\end{claim*}
  		This is enough to finish the proof of the Lemma, since then the induction hypothesis yields
  		$$\Or^{r}_{E}(w'D_{1}\cdots D_{m})\geq \Or^{[+]}([w'D_{1}\cdots D_{m}])\geq \Or^{[+]}([w'])+m\omega^{k-1}.$$
  		Since $m$ is arbitrary, it follows that \[\Or^{r}_{E}(w')\geq \Or^{[+]}(w')+\omega^{k}=\Or^{[+]}([w]).\]
  		
  		In order to prove the claim, recall first that for any word $w$, there exists a unique maximal domain $E_{0}\in\D$ such that
  		whenever a reduced word $Ew'$ is equivalent to $w$ then $E$ is a union of components of $E_{0}$; this is the content of
  		Lemma~\ref{ends}.
  		Consider now $F:=\rabs(Ew')$. By assumption, $D$ is not absorbed $F$.
  		
  		Using the infinite diameter of $\C(D)$ we can choose $\{D_{1},\ldots, D_{m}\}$ with $D_{i}$ of complexity $k-1$, and with
  		\[D_{i}\tv D_{i+1},\quad E_{0}\nsubseteq D_{i},\quad D_{i}\nsubseteq F.\] It follows that the only possible absorption in the reduction without cancellation of $Ew'D_{1}\cdots D_{m}$ is a proper absorption of $E$ into some letter of $w'$.
  	\end{proof}

  	Observe that any finite $\prec^{r}$ descending chain starting at a word $w$ gives rise to a chain of
  	extensions of any type $p^{w}_{b_{0},B}$, where $B$ is weakly convex, which follows from an iterated
  	application of the following lemma.

  	\begin{lemma}
  		\label{approaching extensions}Let $a\in N$, and let $B\subseteq N$
  		be a weakly convex set such that there is a minimizing sequence
  		of type $w D$ from $a$ to $B$. Let $v\in\wo(D)$ be a (possibly empty) word
  		such that $wv$ is reduced. Then there is a weakly convex set $B'$ containing $B$
  		and a minimizing sequence of type $wv$ from $a$ to $B$.
  	\end{lemma}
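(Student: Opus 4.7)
The plan is to construct $B'$ via an application of Lemma~\ref{core qe} to $B$ with basepoint $b_0$ and extension domain $D$, enriched so that the new basepoint additionally realizes a $wv$-type over $a$. To begin, I would extract a strict minimizing $wD$-sequence $a = a_0, a_1, \ldots, a_{k-1} = a', a_k = b_0$ from $a$ to $B$, so that $b_0 \in B$ and $\W_D(a', b_0)$ exhibits $a'$ as one step $D$ away from $B$ with basepoint $b_0$ (cf.~Lemma~\ref{several steps away}). Since $v \in \wo(D)$ and each letter of $v$ has complexity strictly less than $k(D)$, the associated ordinals satisfy $\Or(wv) = \Or(w) \oplus \Or(v) < \Or(w) \oplus \omega^{k(D)} = \Or(wD)$.

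Next, I would apply Lemma~\ref{core qe} with $A = B$, basepoint $b_0$, and domain $D$, choosing the new extension point $c \in B'$ to additionally realize the type condition $R_{wv}(a, x)$. Consistency of this enriched type relative to $p^D_{b_0, B}$ should follow from Lemma~\ref{step consistency} combined with Lemma~\ref{l: displacement}: concretely, $wv$ arises as a reduct of $wD \cdot D$ by applying the move $(\mathrm{C})$ to the terminal $DD$-subword, and one exhibits an explicit mapping class realizing both the generic $D$-step-away condition and the additional $R_v$-relation to $b_0$ via the same genericity argument that appears in the proof of Corollary~\ref{c: consistency}. The resulting weakly convex set $B' \supset B$ then contains $c$ with $\W_{wv}(a, c)$, witnessed by concatenating the original $w$-subpath from $a$ to $a'$ with a strict $v$-subpath from $a'$ to $c$ inside $N$.

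I then need to verify two claims: first, that $B'$ is weakly convex (immediate from part~(iii) of Lemma~\ref{core qe}); second, that $wv$ is a minimizing word from $a$ to $B'$. For points $b \in B$, the latter is trivial, since $\Or(\pa{a,b}) \geq \Or(wD) > \Or(wv)$. For newly added points $b \in B' \setminus B$, Lemma~\ref{gate property many steps} expresses $\pa{a,b}$ as a reduction without cancellation of $[wv]$ concatenated with $\pa{c, b}$, and the construction of the extension via $\tcn_B(b_0, D)$ in Lemma~\ref{core qe} ensures that $\pa{c, b}$ is strongly orthogonal to $D$, so that this concatenation cannot shrink the ordinal below $\Or(wv)$.

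The main obstacle will be to ensure that the intermediate points $c_1, \ldots, c_{m-1}$ of the strict $v$-subpath from $a'$ to $c$ remain outside $B'$. If any such $c_i$ were in $B'$, its $\delta$-class from $a$ would be $[w\delta_1 \cdots \delta_i]$ of ordinal strictly less than $\Or(wv)$, breaking minimality. The freedom of choice in Lemma~\ref{core qe} together with $\omega$-saturation of $\mathcal{N}$ and the selective enrichment of the realized type should allow us to position $c$ generically so that no intermediate point of the $v$-subpath lies in any $G$-orbit meeting $B'$. Verifying this genericity is the technical heart of the argument, and closely mirrors the analogous construction in~\cite{BPZ17}.
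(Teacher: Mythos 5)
Your proposal follows the same overall construction as the paper's proof: take the penultimate point (your $a'$, the paper's $a_{r-1}$) of a strict minimizing $wD$-sequence, find a new point $c$ at the end of a strict $v$-path from it, check that $c$ is one step $D$ away from $B$ with basepoint $b_0$, and take the one-step extension of Lemma~\ref{core qe} through $c$ to form $B'$. However, the two halves of your argument each contain a gap.

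On the consistency/construction side, the enriched-type apparatus is unnecessary and your argument for it is muddled. If you simply choose any $c$ with $\pa{a', c}=[v]$, then $\pa{a,c}=[wv]$ is forced (since $wv$ is reduced, no further reduction of the concatenation is possible), $\W_D(c,b_0)$ is automatic (any $u\in\wo(D)$ with $R_u(c,b_0)$ would give $R_{vu}(a',b_0)$ with $vu\in\wo(D)$, contradicting $\W_D(a',b_0)$), and $c$ is automatically one step $D$ away from $B$ with basepoint $b_0$ by the same reasoning applied to all $b\in B$. There is nothing to "enrich." Your invocation of Lemma~\ref{l: displacement} and of ``the additional $R_v$-relation to $b_0$'' is confused: $c$ should have $\pa{c,a'}=[v^{-1}]$, but $\W_D(c,b_0)$, and any nontrivial $R_v$-relation between $c$ and $b_0$ would in fact destroy the step-away property you want.

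The more serious gap is in the minimality verification. You invoke Lemma~\ref{gate property many steps} to write $\pa{a,b}$ as a cancellation-free reduct of $[wv]\pa{c,b}$, but that lemma requires $c$ to already be a basepoint for $a$ in $B'$ — which is precisely what minimality asserts, so the invocation is circular. Worse, the claim that strong orthogonality of $\pa{c,b}$ to $D$ prevents the ordinal from dropping below $\Or(wv)$ is not valid: orthogonality to $D$ only controls interaction with the terminal $v$-block, not with the prefix $w$, and an absorption or $(\mathrm{C})$-cancellation between a letter of $\pa{c,b}$ and a letter of $w$ is not excluded by your hypotheses. The paper closes exactly this gap by a contradiction: any such interaction would produce a path from $a$ to a point of $B$ of ordinal strictly less than $\Or(wD)$, contradicting the minimality of the original $wD$-sequence. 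This contradiction, not the genericity of the $v$-subpath's intermediate points, is the actual technical heart, and it is the part your proposal does not supply.
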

  	\begin{proof}
  		We may assume that $w\in\wo\cap\D^{*}$.
  		Suppose that \[a=a_{0},a_{1},a_{2},\ldots, a_{r}=b_{0}\in B\] is a minimizing
  		sequence of type $w$ from $a$ to $B$ and pick $a'\in N$ such that $\pa{a,a'}=[v]$.
  		Notice that $a'$ must be one step $D$ away from
  		$B$ with basepoint $b_{0}$, just like $a_{r-1}$. Let $B'$ be one-step extension
  		of $B$ through $a'$. We claim that for any strict $w$-sequence
  		\[a_{r-1}=a'_{r-1},a'_{r},\ldots, a'_{s}=a'\] from $a_{r-1}$ to $a'$, the resulting sequence
  		\[a_{0},a_{2},\ldots, a_{r-1},a'_{r},\ldots, a'_{s}\] is a minimizing
  		sequence from $a$ to $B'$.
  		
  		Indeed, suppose not. Since $vw$ is reduced, we have that there exists a point
  		$b_{1}\in B'$ such that
  		$\pa{a,b_{1}}=[w']$, with $\Or(w')<\Or(wv)$. Since
  		\[[w']= [w]\ast [v]\ast\pa{a',b_{1}},\] this can only take place if at least one cancellation move is
  		used in the reduction process. The fact that $v\perp D$, together with the assumption
  		that none of the letters in $v_{1}$ are contained in $\partial D$, implies that such a
  		cancellation involves a letter in $\pa{a',b_{1}}$, together with a letter
  		in $w$. This contradicts the minimality of the original sequence
  		$a_{0},a_{1},\ldots, a_{r}$.
  	\end{proof}
  	
  	Recall that in the $\omega$-stable context, a type extension $p\subset q$ is
  	non-forking if and only if $RM(p)=RM(q)$. It follows that
  	the foundation rank (on the class of complete types over varying sets of parameters) of
  	the relation $\prec^{f}$, given by $q\prec^{f}p$ if and only if $q$ is a
  	forking extension of $q$ (known as the \emph{Lascar $U$-rank})
  	bounds the Morley rank of a type
  	from below.
  	
  	\begin{lemma}
  		\label{lower bound on Morley rank-Lascar}
  		Let $B\subseteq N$ be weakly convex, let $E\in\D$, and let $w$ be a word such that either $Ew$ is reduced or $E$
  		is properly left-absorbed by $w$. Then the Lascar $U$--rank satisfies
  		\[U(p^{E,w}_{b,B})= \Or^{r}_{E}(w).\]
  	\end{lemma}
  	\begin{proof}
  		Without loss of generality, we assume $w\in\D^{*}$. Pick any realization $a$ of the type $p^{w}_{b,B}$,
  		so that $e=[a]_{E}$ is a realization of $p^{E,w}_{b,B}$. Pick any finite descending chain
  		\[w=w_{0}\succ^{r}w_{1}\succ^{r}\cdots \succ^r w_{k}\] such that for each $1\leq i\leq k$, either $Ew_{i}$ is reduced or $E$
  		is properly left-absorbed by $w_{i}$. Iteratively applying Lemma \ref{approaching extensions}, we find an
  		ascending chain \[B=B_{0}\subset B_{1}\subset \cdots\subset B_{k}\] of weakly convex sets such that there is a
  		minimizing sequence of type $w_{i}$ from $e$ to $B_{i}$.
  		
  		Our assumption on the interaction between $w_{i}$ and $E$ implies that for all \[0\leq i<j\leq k,\]
  		no reduct of a word of the form $u_{E}w_{i}$, where here $u_{E}\in\wo(E)\cup\{E\}$, can be equal to a reduct of a word of the
  		form $u'_{E}w_{j}$, where $u'_{E}\in\wo(E)\cup\{E\}$. Lemma \ref{characterization of forking}
  		then implies that $\tp(e/B_{j})$ forks over $B_{i}$. The $\geq$ inequality follows by induction on
  		$<^{r}$, using Observation \ref{l: calculating r-ank}.
  		
  		For the reverse inequality, begin by noticing that for any set $B$ of parameters, we can consider a weakly convex
  		$\hat{B}$ containing $B$ that is independent from $e$ over $B$. Then, $\tp(e/\hat{B})$ does not fork over $\tp(e/B)$,
  		and thus the Lascar $U$--rank satisfies \[U(\tp(e/\hat{B}))=U(\tp(e/B)).\]
  		On the other hand, if $B\subseteq C$ are weakly convex then we have
  		$\tp(e/C)$ forks over $\tp(e/B)$, or equivalently \[U(\tp(e/C))<U(\tp(e/B)),\] if and only if
  		$\Or_{E}^{r}([w])<\Or_{E}^{r}([v])$, where here $v$ and $w$ are minimizing words from $e$ to $B$ and $C$, respectively.
  		The inequality then follows easily by induction.
  	\end{proof}

  	It is useful to remark that Lemma~\ref{lower bound on Morley rank-Lascar} can be applied to the case where $w$ is a single domain $D$,
  	in which case $E$ must be distinct from a component of $D$. We then get that $U(p^{E,D}_{b,B})= \Or^{r}_{E}(D)$.
  	Putting this information together with Lemmas \ref{characterization of forking} and
  	\ref{approaching extensions} yields lower bounds on Morley Rank.
  	
  	\begin{corollary}
  		\label{lower bound on Morley rank}
  		Let $b\in N$ and let $B\subseteq N$ be weakly convex. Then
  		we have $RM(p^{w}_{b,B})\geq \Or^{r}(w)$. In particular,
  		\[RM(R^{*}_{D}(x,a))=RM(R_{D}(x,a))\geq\Or(D)\] for any $D\in\D\setminus\{\C\}$, and
  		\[RM(x=x)=\omega^{k(\D)}.\]
  	\end{corollary}
  	
  	The reader may recall the definition of the complexity $k(\D)$ of a domain $D$ in Section~\ref{sec:geometric_graphs}.
  	
  	\begin{corollary}
  		Let $X(\Sigma)$ be a geometric graph satisfying the assumptions of Lemma
  		\ref{interpretation in B}. Then $RM(X(\Sigma))$,
  		i.e.~the Morley Rank of the formula $x=x$ in $\Th{X(\Sigma)}$, is at least $\omega^{k(\D_{X(\Sigma)})}$.
  	\end{corollary}
  	\begin{proof}
  		We fix notation \[X=X(\Sigma),\quad G=\Mod^{\pm}(\Sigma),\quad \D=\D_{X},\quad \M=\M_{\D}^{G}.\]
  		By Lemma \ref{interpretation in M}, there is an interpretation $\bar{\zeta}$ of $X$ into $\M$ which sends the universe of
  		$X$ into a finite disjoint union of imaginary sets of the form
  		$M/R_{D}$, where $D$ is a proper subdomain.
  		Choose one of these sets, which we denote by $Y$. Since the single domain $D$ cannot be a component of $\C$,
  		it follows from the remarks immediately after Lemma \ref{lower bound on Morley rank-Lascar} that
  		$RM(Y)\geq\omega^{k(\D)}$.
  		
  		Now, by Lemma \ref{interpretation in B} there is also an interpretation $\bar{\eta}$ of $\M$ in $X$ such that
  		$(\bar{\eta},\bar{\zeta})$ form a bi-interpretation between $X$ and $\M$. Let $Z$ be the image of the definable
  		imaginary set $Y$ by $\bar{\eta}$. The fact that $\bar{\zeta}$ is an interpretation implies that $RM(Z)\geq RM(X)$.
  		On the other hand, the fact that the pair is a bi-interpretation implies provides a $\emptyset$-definable bijection
  		between $Z$ and a $1$-variable definable subset of $X$, so that also $RM(X)\geq\omega^{k(\D)}$.
  	\end{proof}
  	
  	In the context of forking, we collect the following two facts, which will be used in the sequel. A theory is \emph{totally trivial} if for
  	an arbitrary model $N$, an arbitrary set of parameters $C$, and an arbitrary tuple $(a,b,c)$, we have that if $a$ is independent from both
  	$b$ and $c$ over $C$ then it is also independent from $(b,c)$ over $C$; see~\cite{trivial-goode}, for instance.
  	\begin{lemma}
  		\label{total triviality} $\Th{\hat{\mathcal{M}}}$ is totally trivial.
  	\end{lemma}
  	\begin{proof}
  		The proof is an almost verbatim reprisal of that of Proposition 7.26 in \cite{BPZ17}, using Theorem~\ref{t:M-fullqe}.
  	\end{proof}
  	
  	 Recall that a theory has \emph{weak elimination of imaginaries} if every imaginary is in
	the definable closure of a finite tuple in the home sort, and the tuple lies in the algebraic closure of a the imaginary.
	Recall that a \emph{canonical base} of a stationary type $p$ is a set $C$ such that for any automorphism $\alpha$ of
  	a sufficiently saturated model $\mathcal N$, we have
  	$\alpha$ fixes $C$ if and only if it fixes $p$. A canonical base is unique up to interdefinability.
  	An $\omega$--stable theory will have weak elimination of imaginaries if the canonical base of a stationary
  	type can always be chosen in the home sort.
  	
  	\begin{corollary}
  		\label{c: weak elimination of imaginaries}$\Th{\hat{\mathcal{M}}}$ has weak elimination of imaginaries.
  	\end{corollary}
  	\begin{proof}
  		This follows from stationarity (Corollary \ref{stationarity}) and total triviality, by a standard argument
  		as in the proof of Corollary 7.28 in \cite{BPZ17}.
  	\end{proof}

	\section{Interpretation rigidity of the curve graph}\label{sec:int-rigid}
  	
  	The goal of this section is to prove \emph{interpretation rigidity} for curve graphs. Namely, if the curve graphs of surfaces $\Sigma_1$ and
  	$\Sigma_2$ are mutually interpretable, then except for some sporadic low-complexity cases, it must be the case that
  	$\Sigma_1$ and $\Sigma_2$ are homeomorphic.
  	
  	\subsection{Algebraic closure and definable closure}
    	Consider the theory $\Th{\M_D^G}$, where here as before
    	$G<\Mod^{\pm}(\Sigma)$ denotes the pure subgroup consisting of mapping classes
    	acting trivially on homology modulo $3$. As before, we use $\mathcal N$ to denote a sufficiently saturated model of $\Th{\M_D^G}$,
    	and $\D\subset \D_0$ is downward closed an $G$--invariant.
    	
    	Our first goal will be to show that for a finite tuple of elements in $\hat N$, algebraic and definable closures coincide.
    	
    	\begin{lemma}
    		\label{lem: refined convex extensions} Let $H_{0}$ be a weakly convex set with finitely many $G$-orbits,
    		$p$ a minimizing sequence from a point $a\in N$ to a basepoint $b_{0}\in H_{0}$ and $H\in\mathcal{H}(p,H_{0})$.
    		Then for any $a'\in H$, we have $H\in\mathcal{H}(a',H_{0})$ if and only if \[\Or(\pa{a',b'_{0}})=\Or(\pa{a,b_{0}})\]
    		for any basepoint $b'_{0}$ for $a'$ in $H_{0}$. Moreover, given $a_{1},\dots, a_{k}\in H$ satisfying said condition, there are
    		representatives $a'_{i}\in a_{i}G$ and $b_{0}^{i}\in H_{0}$ for $1\leq i\leq k$ such that
    		\begin{enumerate}
    			\item \label{one}$b_{0}^{i}$ is a basepoint for $a_{i}'$ in $H_{0}$;
    			\item $\pa{a'_{i},b_{0}^{i}}=[u_{v}]$ for some $u_{v}\in\D^{*}$ that does not depend on $i$;
    			\item \label{three}$u_{v}\perp^{*}\pa{a'_{i},a'_{j}}=\pa{b_{0}^{i},b_{0}^{j}}$ for distinct $1\leq i< j\leq k$;
    			\item \label{four}for all $c\in H$, there exist \[c_{1}\in cG,\quad b_{1},c_{0}^{i}\in H,\quad 1\leq i\leq k\] such that:
    			\begin{itemize}
    				\item $c^{i}_{0}$ is in a strict sequence between $a'_{i}$ and $b_{0}^{i}$
    				\item $\pa{a'_{i},c^{i}_{0}}=\pa{a'_{j},c^{j}_{0}}$
    				\item $\pa{a'_i,c_{1}}=\pa{a'_i,c^{i}_{0}}*\pa{c_{0}^{i},c}$
    				\item  $\pa{c^{i}_{0},b_{0}^{i}}=\pa{c,b_{1}}\perp^{*}\pa{b_{0}^{i},b_{1}}=\pa{c_{0}^{i},c_{1}}$
    				\item $\pa{c,b_{1}}$ admits a representative in $\D^{*}$. 
    			\end{itemize}
    		\end{enumerate}
    		
    	\end{lemma}
    	\begin{proof}
    		Suppose we are given a minimizing sequence \[p:b_{k}=a,b_{k-1},\dots, b_{0}\] from $a$ to $b_{0}\in H_{0}$,
    		where $\pa{b_{k},b_{k-1}}=E_{k}$. Let $H_{j}$ the $j^{th}$ step in the construction of $H$ from $H_{0}$ using $p$.
    		
    		For the if part of the first statement, let $\{a',b'_{0}\}$ satisfy the right-hand side equality.
    		Since any $H'\in\mathcal{H}(a',H_{0})$ is incompressible over $\{a',H_{0}\}$, by considering a retraction to
    		$H$, we may assume that $H'\subseteq H$. If $H=H'$ we are done, so we may suppose this is
    		not the case. If $a\in H'$, considering the retraction onto $H'$ yields a contradiction with the incompressibility of
    		$H$ over $\{a,H_{0}\}$. So, suppose that $a\notin H'$.
    		
    		From equality $\Or(\pa{a',b'_{0}})=\Or(\pa{a,b_{0}}$ and \ref{lem-convex-extensions} we obtain that after replacing $a'$ by an element in its $G$-orbit, we may choose $b'_{0}$ so that
    		$$\pa{a',b'_{0}}=\pa{a,b_{0}}\perp^{*}\pa{a,a'}=\pa{b_{0},b'_{0}}.$$
    		where both sides have representatives in $\D^{*}$.
    		
    		By weak convexity, there must exist some $\tilde{a}\in H'$ such that $\pa{\tilde{a},a'}=\pa{a,a'}$ and
    		$\pa{\tilde{a},b_{0}}=\pa{a,b_{0}}$. Necessarily, we have $R_{D}(a,\tilde{a})$, where $D$ is a disjoint collection
    		of curves properly left absorbed by $\pa{a,b_{0}}$ and $\pa{a,a'}$. This is because a strict sequence from $a$ to $\tilde a$ can be
    		expressed as a reduction of two different concatenations of words which are orthogonal to each other, and so we have
    		that $\pa{a,\tilde a}$ must be supported on the common boundary.
    		It follows that the ordinal of the
    		type of a minimizing sequence from $\tilde{a}$ to $H_{0}$ is the same as $\Or(\pa{a,b_{0}})$. Indeed, if
    		$H_{k-1}$ is the penultimate step in the construction of $H$ then if $\tilde{a}\in H_{k-1}$, the type
    		$\pa{a,\tilde{a}}$ would be of the form
    		$E_k*[v]$ for some $v$, but this cannot be equal to $D$. Indeed, otherwise $E_k$ would be absorbed into $\delta(a,a')$, violating strong orthogonality.
    		
    		Therefore $b_{0}$ is still a basepoint for $\tilde{a}$ in $H_{0}$. By quantifier elimination, we have that
    		$\tp(a,a'H_{0})=\tp(\tilde{a},a'H_{0})$. Let $H''$ be the image of $H'$ under the composition of an automorphism of
    		$\mathcal{N}$ fixing $H_{0},a'$ and sending $\tilde{a}$ to $a$. We see that
    		$H''$ contains $a$, and so we are done.
    		
    		For the only if part, notice that if $H=\mathcal{H}(a',H_{0})$ then (applying
    		Lemma \ref{lem-convex-extensions}, for instance) the type of a shortest sequence from $a\in H$ to $H_{0}$ must have ordinal not
    		exceeding the ordinal associated to a minimal sequence from $a'$ to $H_{0}$.
    		
    		Now, let $a_{1},\dots, a_{k}$ be as in the second part of the statement.
    		It suffices to show that the result holds for $a'_{i}=a_{i}$ in case $\pa{a,a_{i}}$ has representatives in $\D^{*}$ for $1\leq i\leq k$.
    		
    		Since $a_{i}\in H_{k}\setminus H_{k-1}$, there exist \[c_{i}= a_{i}g_{i},\quad g_{i}\in G\]  such and
    		$c'_{i}\in \wcn_{H_{k-1}}(b_{k-1},E_{k})$   with $R^{*}_{E_{k}}(c_{i},c'_{i})$, and for distinct indices \[1\leq i\leq j\leq k\] with
    		\[\pa{b_{k-1},c'_{i}}=\pa{b_{k},c_{i}},\quad \pa{c'_{i},c'_{j}}=\pa{c_{i},c_{j}},\quad
    		\pa{b_{k},c_{i}}, \pa{c_{i},c_{j}}\perp^{*}E_{k}.\]
    		
    		Since $\pa{a,a_{i}}$ admits representatives without group elements, we conclude that $g_{i}\perp E_{k}$,
    		and we may set $c_{i}=a_{i}$. The existence of $b^{i}_{0}$ satisfying (\ref{one}) to (\ref{three}) now follows by induction,
    		applying the inductive hypothesis to the tuple $(c_1',\ldots,c_k')$.
    		Item (\ref{four}) can be shown using the same argument as in the proof of \ref{lem-convex-extensions} and it is left to the reader.
    	\end{proof}

    	\begin{lemma}\label{lem:acl=dcl imaginary}
    		For all $a\in H$, there exists a (not necessarily connected) $D_{a}\in\D$ such that for all $a'\in a\cdot G$,
    		there exists an automorphism $\phi\in\Aut_{e}(H)$ with $\phi(a)=a'$ if and only if $R_{D_{a}}(a,a')$.
    		As a result, if \[e=(e_{i})_{i=1}^{k}=([a_{i}]_{D_{i}})_{i=1}^{k}\] is a finite tuple of imaginaries in $\hat{N}$, then
    		$\hat\acl(e)=\hat\dcl(e)$.
    	\end{lemma}
    	
    	Here, $\hat\acl$ and $\hat\dcl$ denote the algebraic and definable closure in $\hat N$.
    	
    	\begin{proof}[Proof of Lemma~\ref{lem:acl=dcl imaginary}]
    		Let $H\in\ti(e)$ and $\mathfrak{A}=\Aut_{e}(H)$.
    		Using Lemma~\ref{l: wc and acl}, it is easy to show that $f\in\hat{N}$ can only be algebraic over $e$ if $f\cap H\neq\emptyset$.
    		Moreover, it follows from the incompressibility of $H$ and the fact that homomorphic retractions preserve equivalence classes,
    		that if $f_{1},f_{2}\in \widehat{\acl}(e)$ are in the same orbit over $e$ then there exists an automorphism $\phi\in \mathfrak{A}$
    		and points $a_{i}\in f_{i}\cap H$ such that $\phi(a_{1})=a_{2}$.
    		
    		We claim that the equality $\hat\acl(e)=\hat\dcl(e)$ follows from the first statement of the lemma.
    		Indeed, take $[a]_{E}\in\widehat{\acl}(e)$. We have that any element in the $\mathfrak A$-orbit of
    		$[a]_{E}$ is of the form $[a\cdot g]_{E}$, with $D_a$ and $g\in G[D_{a}]$ chosen appropriately.
    		If $D_{a}\subseteq E$, then $[a\cdot g]_E=[a]_E$ and $[a]_{E}\in\hat{\dcl}(e)$.
    		If $D_{a}\nsubseteq E$, then the collection of cosets $|G[D_{a}]/G[E\cap D_{a}]|$ is infinite.
    		Since $[a\cdot g]_{E}=[a\cdot h]_{E}$ if and only if $gG[E]=hG[E]$, it follows
    		that the orbit of $[a]_{E}$ under $\mathfrak{A}$ and thus under $\Aut_{e}(\mathcal{N})$ is infinite,
    		and consequently that $[a]_{E}\nin\widehat{\acl}(e)$.
    		
    		It therefore suffices to prove just the first statement of the lemma.
    		We proceed by induction on the number of $G$-orbits comprising $H\in\ti(e)$.
    		If $H$ is a single orbit then we may assume $k=1$, in which case the result is clear.
    		Indeed, one can replace $e_{i}$ with some $e'_{i}$ that is interdefinable with it, and such that
    		\[e_{0}:=\bigcap_{1\leq i\leq k}e_{i}\neq\emptyset.\] Then $e_{0}\in\hat{N}$ is interdefinable with $e$.
    		
    		Assume now $H$ consists of more than one $G$--orbit. Recall that by Corollary \ref{cor:permutation}
    		the expression $\ti(e_{1},\dots, e_{k})$ does not depend on the order of the arguments. Therefore after permuting the $e_{i}$, we may assume that the number of $G$-orbits in
    		any member of $\ti(e_{r},\dots, e_{k})$ is strictly less than in $H$,  but the number of $G$ orbits in $H$ is equal to the number
    		of orbits in any \[H'\in\ti(e_{i},e_{r},\dots, e_{k})\quad \textrm{for}\quad  1\leq i\leq r-1,\] i.e.~$H\in\ti(e_{i},e_{r},\dots, e_{k})$.
    		
    		For $1\leq i\leq r-1$, let
    		\[H_{0}^{i}\in\ti(e_{r},\dots, e_{k}),\quad  H_{0}\subseteq H,\quad a_{i}\in e_{i}\cap H\] be such that
    		$\Or(\pa{a_{i},H_{0}^{i}}$ is minimal among all the choices of
    		$a_{i}$ and $H_{0}$. Note that this value is the same as the one we would obtain minimizing over all $a_{i}\in e_{i}$ and $H_{0}\in\ti(e_{r},\dots, e_{k})$. Using Lemma \ref{lem: incompressibility}, one readily sees that we may assume
    		$H\in\mathcal{H}(q_{i},H_{0}^{i})$, where $q_{i}$ is a sequence from $a_{i}$ to $H_{0}^{i}$.
    		
    		For all $1\leq i\leq r-1$, let \[\alpha_{i}=\Or(\pa{a_{i},H_{0}^{i}}),\quad \beta_{i}=\Or(\pa{a_{i},H_{0}^{1}}).\] For $2\leq i\leq r-1$,
    		Lemma \ref{lem-convex-extensions} applied to $H\in\mathcal{H}(q_{1},H_{0}^{1})$  implies that $\pa{a_{i},H_{0}^{1}}$ is a
    		terminal subword of some representative of $\pa{a_{1},H_{0}^{1}}$, so that $\alpha_{1}\geq\beta_{i}$.
    		A symmetric argument provides $\alpha_{i}\geq\beta_{1}$. By definition, we must have $\beta_{i}\geq\alpha_{i}$ for
    		$1\leq i\leq r-1$. Therefore, for all $1\leq i\leq r-1$, we have  $\alpha_{i}=\beta_{i}=\alpha_{1}$. Consequently, there exist
    		minimizing sequences $p_{i}$ from $a_{i}$ to any basepoint in $H_{0}:=H_{0}^{1}$, and $H\in\mathcal{H}(p_{i},H_{0})$.

    		%
    		
    		It follows from the above discussion that we can apply Lemma \ref{lem: refined convex extensions} to $H_{0}$ and $a_{1},\dots, a_{r-1}$. Up to replacing $e_{i}$ with an interdefinable class, we may assume that $a'_{i}=a_{i}$ in the conclusion of Lemma
    		\ref{lem: refined convex extensions}. For each $1\leq i\leq r-1$, let $b_{0}^{i}\in H_{0}$ be the resulting basepoint.
    		Recall that according to Lemma \ref{lem: refined convex extensions}, we have $\pa{a_{i},b_{0}^{i}}=u_{|}^{max}$ for some
    		$u_{|}^{max}\in\D^{*}$ strongly orthogonal to $u_{i,j}$ for any distinct $1\leq i,j\leq r-1$.
    		
    		It follows by a straightforward argument, using the minimality of the choice of $p_i$, that
    		no component of the left end of $u_{|}^{max}$ can be contained in $D_{i}$ for $1\leq i\leq r-1$.
    		Let $e'=(e_{\ell})_{\ell=r}^{k}$. For each $1\leq i\leq r-1$, let $e^{0}_{i}=[b_{0}^{i}]_{D'_{i}}$, where here
    		$D'_{i}=RA(D_{i}u^{max}_{|})$, and let $\mathfrak{A}_{0}=\Aut_{(e',e^{0})}(H_{0})$.
    		The inductive hypothesis applies to $H_{0}$ and $(e',e^{0})$, and so we have that for all
    		$a\in H_{0}$, there exists some $E_{a}\in\D$ such that the orbit of $a$ under $\mathfrak{A}_{0}$ is precisely
    		$[a]_{E_{a}}\cap a\cdot G$. We will see that the assignment $a\mapsto E_a$
    		can be extended to any point $a\in H$, so as to satisfy the the claim of the lemma.
    		
    		Fix an element $c\in H$. The last item in Lemma \ref{lem: refined convex extensions} implies the existence of
    		\[c_{1}\in c\cdot G,\quad b_{1}\in H_{0},\quad \{c_{0}^{i}\}_{1\leq i\leq r-1}\]
    		such that for all $1\leq i,j\leq r-1$ we have:
    		\newcommand{\uvu}[0]{u_{|}^{*}}
    		\newcommand{\uvd}[0]{u_{|}}
    		\begin{itemize}
    			\item $[u_{|}]:=\pa{c^{i}_{0},b_{0}}=\pa{c_{1},b_{1}}$;
    			\item $\pa{b_{0}^{i},b_{1}}=\pa{c_{0}^{i},c_{1}}\ni u_{-}^{i}$;
    			\item $[u_v]\perp^* u_h^i$;
    			\item $\pa{a_{i},c^{i}_{0}}=\pa{a_{j},c^{i}_{0}}:=[u_{|}^{*}]$;
    			\item $[u^{max}_{|}]=\pa{a_{i},b^{i}_{0}}=[u_{|}^{*}u_{|}]$;
    			\item 	$\pa{a_{i},c}=[u_{|}^{*}]*[u_{-}^{i}]$.
    		\end{itemize}
    		Let \[E^{*}=\bigcap_{i=1}^{r-1}RA(D_{i}u^{*}_{|})=RA\left(\left(\bigcap_{i=1}^{r-1}D_{i}\right)u^{*}_{|}\right).\]
    		Consider the domain furnished by the assignment $b_1\mapsto E_{b_1}$.
    		One can decompose $E_{b_{1}}$ into two orthogonal pieces: \[E_{b_{1}}^{-}:=E_{b_{1}}\cap u_{|}^{\perp},\quad
    		E_{b_{1}}^{|}:=E_{b_{1}}\cap RA(u_{|}).\]
    		
    		As $E_{c}$, we take \[E^{*}\cap (E_{b_{1}}^{-}\vee LA(u_{|})),\]
    		and we claim that the resulting domain satisfies the properties required by the claim of the lemma.
    		
    		{\bf ``If" direction:}
    		We first assert that $c':=c\cdot g\in\mathfrak{A}\cdot c$ for any $g\in G_{E_{c}}$.
    		Writing $g=g_{|}g_{-}$, where
    		\[g_{|}\in G_{LA(u_{|})\cap E^{*}},\quad g_{-}\in G_{E_{b_{1}}^{-}\cap E^{*}},\]
    		it suffices to show that $\tp(c/e)=\tp(c'/e)$. Quantifier elimination reduces this to $\pa{c,e_{i}}=\pa{c',e_{i}}$ for $1\leq i\leq m$.
    		
    		Choose an element $\sigma\in\mathfrak{A}_{0}$ such that $\sigma(b_{1})=b_{1}\cdot g_{-}$.
    		Since $b_{1}\cdot g_{-}$ is a basepoint for $c'$ in $H_{0}$ and since $g_{|}$ is absorbed by $u_{|}$,
    		we have that for each $r\leq i\leq m$:
    		\begin{align*}
    			\pa{c',e_{i}}=\pa{c',\sigma(b_{1})}*\pa{\sigma(b_{1}),e_{i}}=
    			\pa{c',\sigma(b_{1})}*\pa{b_{1},e_{i}}=\\g_{-}^{-1}u_{|}g_{-}*\pa{b_{1},e_{i}}=u_{|}\pa{b_{1},e_{i}}=\pa{c,e_{i}}.
    		\end{align*}
    		We briefly clarify the meaning of this last series of displayed equations, since we expressing sequences between a point and
    		a class contained in a weakly convex set has not been specifically spelled out. We choose $d_i\in e_i$ and carry out the sequence
    		of equalities with $d_i$ in place of $e_i$, and the displayed equations are a succinct shorthand for such a choice.
    		Since $e_i$ is $\sigma$-invariant, we have that $\sigma(d_i)$ lies in the same class
    		as $e_i$, and so a sequence from a point to $\Sigma(d_i)$ is still a sequence from that point to the class $e_i$.
    		
    		For $1\leq i\leq r-1$ write $u^{i}_{-}\simeq \ell_{i}v^{i}_{-}$, where $\ell_{i}\in G$ and $v^{i}_{-}\in\D^{*}$.
    		
    		\begin{claim}\label{claim-12}
    			Let $i$ be fixed. There is a decomposition $g_{-}=g_{-}^{1}g_{-}^{2}$, where
    			\[g_{-}^{1}=G[(v_{-}^{i})^{\perp}\cap u_{|}^{\perp}\cap \ell_{i}^{-1}(E_{b_{0}^{i}})],\quad g_{-}^{2}\in G[RA(u^{i}_{-})].\]
    		\end{claim}
    		\begin{subproof}[Proof of Claim~\ref{claim-12}]
    			Notice that $\sigma(b^{i}_{0})=b^{i}_{0}\cdot k$ for a suitable element $k\in G[E_{b_{0}^{i}}]$. It follows that
    			$$
    			[\ell_{i}v^{i}_{-}]=[u^{i}_{-}]=\pa{b_{0}^{i},b_{1}}=[k\pa{\sigma(b_{0}^{i}),b_{1}\cdot g_{-}}g_{-}^{-1}]=[ku_{-}^{i}g_{-}^{-1}]=
    			[\ell_{i}k^{\ell_{i}}v^{i}_{-}g_{-}^{-1}]
    			$$
    			Lemma \ref{l: element absorption} yields a decomposition of $g_{|}$ into a component supported on $RA(u_{-}^{i})$, and a
    			component supported in
    			$(v_{-}^{i})^{\perp}\cap \ell_{i}^{-1}(E_{b_{0}^{i}})$. Note that since $g_{-}^{2}$ is orthogonal to $u_{|}$, so must $g_{-}^{1}$.
    		\end{subproof}
    		
    		For a word $u\in\wo$, write $u^{\perp}$ for the intersection of the orthogonal complements of the letters occuring
    		in $u$. Note that $u_{|}^{\perp}\cap E_{b_{0}^{i}}\subseteq E^{*}$. In particular
    		$(g^{1}_{-})^{\ell_{i}}\subseteq E^{*}$.
    		Using Claim~\ref{claim-12}, we get that for $1\leq i\leq r-1$,
    		$$
    		\pa{e_{i},c'}\simeq D_{i}u_{|}^{*}*u_{-}^{i}g_{|}g_{-}\simeq(D_{i}u_{|}^{*}g_{|}(g^{1}_{-})^{\ell_{i}})*(u_{-}^{i}g^{2}_{-})
    		\simeq(D_{i}u_{|}^{*})*u^{i}_{-}\simeq\pa{e_{i},c}.
    		$$
    		
    		{\bf ``Only if" direction:}
    		It remains to show the opposite inclusion. Fix an automorphism $\phi\in\mathfrak{A}$.
    		We know by Lemma \ref{lem: incompressibility} that $\phi$ restricts to an orbit-preserving automorphism of
    		$H_{0}$. For each $c\in H$ we will let $g_{c}$ stand for the unique group element such that $\phi(c)=c\cdot g_{c}$.
    		
    		Let us begin by showing that $\phi_{\restriction H_{0}}\in\mathfrak{A}_{0}$, so that $\phi$ preserves $e'_{i}$ for $1\leq i\leq r-1$.
    		For $1\leq i\leq r-1$, we have
    		$$
    		u_{|}^{max}\simeq g_{a_{i}}u_{|}^{max}g_{b_{0}^{i}}^{-1}\simeq g_{a_{i}}u_{|}^{max}{b_{0}^{i}}^{-1}.
    		$$
    		It follows from Lemma \ref{l: element absorption} that $g_{b_{0}^{i}}$ decomposes as $h_{1}h_{2}$, where
    		here \[h_{1}\in G[{D_{i}}]\cap (u_{|}^{max})^{\perp},\quad h_{2}\in RA(u_{|}^{max}).\] Equivalently,
    		\[g_{b_{0}^{i}}\in G[{RA(D_{i}u_{|}^{max})}]=G[{D'_{i}}].\] Hence, $\phi_{\restriction H_{0}}\in\mathfrak{A}_{0}$ and thus
    		$g_{d}\in G[{D_{d}}]$, for all $d\in H_{0}$.
    		
    		Similarly,
    		\[g_{c_{0}^{i}}\in G[{RA(D_{i}u_{|}^{*})}], \quad g_{c_{0}^{i}}\in G[{LA(u_{|}E_{b_{0}^{i}})}]\] for $1\leq i\leq r-1$.
    		Finally, a series of straightforward applications of Lemma~\ref{l: element absorption} gives:
    		\begin{itemize}
    			\item $E_{b_{1}}\subseteq RA(u_{-}^{i})\vee ((v_{-}^{i})^{\perp}\cap \ell_{i}^{-1}(E_{b_{0}^{i}}))$ for any $1\leq i\leq r-1$;
    			\item The element $g_{c}$ decomposes into a component $g_{-}$ that is orthogonal to $u_{|}$, and a component
    			$g_{|}$ that is supported in the join of the letters in appearing in $u_{|}$;
    			\item For each $1\leq i\leq r-1$, the element $g_{c}$ decomposes into a component in $RA(u_{-}^{i})$ and a component in
    			$(v_{-}^{i})^{\perp}\cap \ell_{i}^{-1}(E_{c_{0}^{i}})$, which implies that
    			$g_{|}$ is supported in $E^{*}\cap LA(u_{|})$;
    			\item The element $g_{c}$ decomposes into a component supported in $LA(u_{|})$ and a component supported in
    			$u_{|}^{\perp}\cap E_{b_{1}}$, which implies that $g_{-}$ is supported in
    			$(E_{b_{1}}\cap u_{|}^{\perp})=E^{-}_{b_{1}}$.
    		\end{itemize}
    		
    		This proves that the element $g_c$ is supported on $E_c$, which completes the proof of the lemma.
    	\end{proof}
    	
  	\subsection{Using interpretations to build injective homomorphisms between mapping class groups}

    	\renewcommand{\N}[0]{\mathcal{N}}
    	
    	Recall that if $\Gamma$ is an undirected graph, the \emph{clique number} of $\Gamma$ is the
    	maximum size of a complete subgraph of $\Gamma$.
    	For the curve graph of a surface of genus $g$ with $n$ punctures, it is a standard fact that the clique number is $3g-3+n$.
    	
    	\begin{lemma}
    		\label{l: rare cancellation}
    		Let $\Sigma$ be a connected surface, and let $\chi$ be the clique number of $\C=\C(\Sigma)$.
    		Then for all $D\in\D$, for all reduced $w\in\wo$, and all sequences of points
    		\[a,b_{0},b_{1},b_{2},\dots, b_{\chi+1}\in N\] such that $R^{*}_{D}(b_{i},b_{j})$ for $i\neq j$ and $R^{*}_{w}(a,b_{0})$,
    		there exists a
    		$j_{0}\in\{1,2,\cdots,\chi+1\}$ such that $\pa{a,b_{j_{0}}}$ is the reduction without cancellation of $wD$.
    	\end{lemma}
    	\begin{proof}
    		Let $D_{1},\dots, D_{k}$ denote the connected components of $D$. Clearly $k\leq \chi$.
    		Suppose that for an index $j$, we have that $wD$ reduces with cancellation and yields a representative in $\pa{a,b_{j}}$.
    		Such a reduction involves a component of $D$, say $D_i$.
    		It suffices to show that $D_{i}$ can be involved in such a cancellation for at most one index $j$, whence the lemma follows
    		by the pigeonhole principle.
    		
    		If $D_{i}$ is not a component of the end of $w$, then clearly such cancellation can never occur.
    		So, we may write $w\simeq uD_{i}$. If the final $D_{i}$ cancels out in $\pa{a,b_{j}}$,
    		then we have $\pa{a,b_{j}}=[u'v]$, where here
    		$v\in\wo(D_{i})$ and $u'$ is a reduction of $u\hat D$, where here \[\hat D=\bigvee_{\ell\neq i} D_{\ell}.\]
    		Since $w$ is reduced, we have that $D_i$ cannot cancel in the reductions of
    		$u'vD_{i}$, which yield representatives of $\pa{a,b_{k}}$ for all $k\neq j$.
    		It follows that $D_{i}$ is a component of the end of $\pa{a,b_{k}}$, and so $D_{i}$ does not cancel
    		in the reduction of $\pa{a,b_{0}}D_{i}$ into $\pa{a,b_{k}}$ for any $j\neq k$.
    	\end{proof}
    	
    	There is a natural action of $G$ on $M\times M$, given by the diagonal. For fixed $g,h\in G$, write $\lambda=\lambda_{g,h}$ for
    	the action of $\bZ$ on $M\times M$ that is given by which we write \[\lambda(k)(a,b)=(a\cdot g^{k},b\cdot h^{k}).\]
    	The following technical result can be thought of as saying that orbits of points under the action defined by $\lambda$ will
    	generically avoid staying in subvarieties.
    	
    	\begin{lemma}
    		\label{l: orbits avoid subvarieties}Let $v\precneq w$ be reduced words, and let $g,h$ be fixed.
    		There exists a finite fragment $\phi(x,y)$ of $R^{*}_{w}(x,y)$ containing
    		$\neg R_{v}(x,y)$ and an integer $K$ such that if $M\models \phi(a,b)$, then
    		\[|\{k\in\mathbb{Z}\mid M\models R_v(\lambda(k)(a,b))\}|<K.\]
    	\end{lemma}
    	\begin{proof}
    		Let \[D=\supp(g),\quad E=\supp(h).\] Let $\chi$ be the clique number of $\C$ and let $n=2\chi+1$.
    		If the formula $\phi$ and integer $K$ claimed by the lemma do not exist, then the following type is consistent:
    		$$
    		q(x,y)=R^{*}_{w}(x_{0},y_{0})\cup\bigcup_{0\leq i<j\leq n}R^{*}_{D}(x_{i},x_{j})\cup
    		\bigcup_{0\leq i<j\leq n}R^{*}_{E}(y_{i},y_{j})\cup\left\{\bigwedge_{j=1}^{n}R_{v}(x_{i},y_{i})\right\},
    		$$
    		as follows from Corollary~\ref{c: consistency} and Lemma~\ref{l: displacement}.
    		Let $(a_{i},b_{i})_{i=0}^{n}$ be a realization of $q$. Then $\pa{a_{0},b_{0}}=w$.
    		For each $1\leq i\leq n$ the type $\pa{a_{i},b_{i}}$ is a reduction of
    		$EwF$. From Lemma \ref{l: rare cancellation}, we deduce the existence of an index $1\leq i_{0}\leq n$ such that
    		$\pa{a_{0},b_{i_{0}}}$ and $\pa{a_{i_{0}},b_{0}}$ are reducts without cancellation of $wE$ and $Dw$ respectively.
    		This implies that $w\preceq\pa{a_{i_{0}},b_{i_{0}}}$, contradicting the fact that $R_{v}(a_{i_{0}},b_{i_{0}})$.
    	\end{proof}

    	\newcommand{\G}[0]{G_{1}}
    	\newcommand{\Gg}[0]{G_{2}}

    	We let $\Sigma_{1},\Sigma_{2}$ be finite type orientable surfaces,
    	and write $G_1=\Mod^{\pm}(\Sigma_1)$ and $G_2$ for the pure mapping class group of $\Sigma_2$. For $i\in \{1,2\}$, let $\D_i$ be
    	a downward closed, $G_i$--invariant collection of domains, and let $\M_i=\M_{\D_i}^{G_i}$.
    	
    	We suppose now that there is an interpretation with parameters $\bar{\zeta}=(\zeta,X,\mathcal E)$ of $\M_{1}$ into $\M_{2}$,
    	where $X$ is a definable set of $\M_{2}$ in $m$ variables. We work in sufficiently saturated elementary extensions
    	$\N_{1}$ and $\N_{2}$ of $\M_{1}$ and $\M_{2}$, wherein $\bar{\eta}$ extends to an interpretation of $\N_{1}$ into $\N_{2}$, and where
    	the parameters remain in the standard models.

    	By weak elimination of imaginaries (Corollary \ref{c: weak elimination of imaginaries}) and a straightforward compactness
	argument, we may assume that $X$ is a definable set in $\mathrm{Th}(\hat{\M}_{2})$,
    	so that $X$ can be written as a disjoint union \[(X_{1}\sqcup X_{2}\sqcup\dots \sqcup X_{k})/\mathcal E,\]
    	where here each $X_{j}$ is a definable set in a Cartesian
    	product of sorts, and where $\mathcal{E}$ is a $\emptyset$-definable equivalence relation with finite equivalence classes,
    	whose classes are each
    	contained within components of the disjoint union that gives $X$. This disjoint union can be canonically encoded as a set of imaginaries,
	though the encoding may involve the addition of components which are imaginaries defined by equivalence relations with infinite
	classes, though this does not affect our argument below.
    	The fact that the classes of $\mathcal E$ are contained within components follows easily from the observation that given a
    	definable equivalence relation $\mathcal{E}$ on a definable set $Z$ that admits a partition into definable sets
    	$Z=Z_{1}\coprod Z_{2}$, the equivalence relation whose classes are of the form $[c]_{\mathcal{E}}\cap Z_{i}$
    	for $i=1,2$ is again definable.

    	Given $a\in X$, we write $[a]$ for the class of $a$ with respect to the equivalence relation $E$.
    	Given $D\in\D_{1}$ we let \[R_{\hat D}=\zeta[R_{D}]\subseteq X\times X\] be the interpretation of $R_D$.
    	We will occasionally abbreviate $R_{\hat D}(a,a')$ by $a\sim_{\hat{D}}a'$.

    	Let $\mathcal{P}=\{p_{1},p_{2},\dots, p_{r}\}$ denote the complete types of maximal Morley rank containing $X$.
    	By further subdividing the components $X_{i}$, we may assume that each $X_{i}$ contains at most one of the types $p_{k}$
    	and that the $\mathcal{E}$-equivalence class of a tuple in $p_{i}$ is contained in $p_{i}$;
    	this is only possible because the $\mathcal{E}$-classes were finite to begin.
    	We may also assume that for any $\mathcal{E}$-class $\epsilon$, distinct elements $e,e'\in\epsilon$ have the same type over $\epsilon$.
    	
    	From now on we will focus on the type $p=p_{1}$, which we assume contained in \[X_{1}\subseteq S_{D_{1}}\times\dots\times S_{D_{m}}=S_{\bar{D}},\] where $S_{D_i}$ is the sort consisting of $R_{D_i}$ equivalence
    	classes.
    	
    	\newcommand{\Fun}{\mathrm{Fun}}
    	\begin{definition}
    		Let $p,q$ be types in a first order structure. Let $\Fun(p,q)$ be the collection of definable functions
    		$f:X\to Y$ from definable sets containing $p$ (i.e.~where the defining formula is contained in $p$)
    		to definable sets containing $q$.
    		A \emph{germ} of definable functions between $p$ and $q$ is an equivalence class of functions in $\Fun(p,q)$,
    		where the equivalence relation given as follows: $f:X\to Y$ and $f':X'\to Y'$ are equivalent if there exists a
    		definable subset of $X\cap X'$ on which the restrictions of $f$ and $f'$ coincide.
    		It is not difficult to see that a germ of definable functions between $p$ and $q$ determines a map from the collection of
    		realizations of $p$ to the collection of realizations of $q$.
    	\end{definition}
    	
    	We note the following.
    	\begin{observation}
    		\label{o: germs from definability} Let $p$ and $q$ be complete types, together with realizations $c$ and $d$ of
    		$p$ and $q$ respectively, and suppose that $d\in \dcl(c)$. Then, there exists a unique germ of definable functions
    		from $p$ to $q$ that sends $c$ to $d$.
    	\end{observation}
    	
    	\begin{definition}
    		Let $e=(e_{j})_{j=1}^{k}$ be a tuple with $e_{i}=[a_{i}]_{D_{i}}$. We say that $e$ is \emph{fine} if for $1\leq i\leq k$ and any $E_{i}\in\D$ properly containing $D_{i}$ we have \[e_{i}\nin \dcl(e\setminus e_{i}\cup[a_{i}]_{E_{i}}).\]
    	\end{definition}
    	
    	\begin{lemma}
    		\label{lem: definable functions permute} Let $p(x_{1},\dots, x_{k})$ be a complete type of fine tuples in
    		$S_{D_{1}}\times\cdots\times S_{D_{k}}$ over a finite tuple $c$ of parameters, and let $\mathfrak{G}$ be a group of germs
    		of definable functions from $p$ to itself. Then there exists a finite index subgroup
    		$\mathfrak{G}_{0}\leq\mathfrak{G}$ and homomorphisms \[\lambda_{i}:\mathfrak{G}_{0}\to G[D_{i}^{\perp}],\quad 1\leq i\leq k\]
    		such that for all $f\in\mathfrak{G}_{0}$, we have $f(e_{i})=[a_{i}\lambda_{i}(f)]_{D_{i}}$.
    	\end{lemma}
    	\begin{proof}
    		By subsuming the constants in the tuples into the underlying language, we may assume that $c=\emptyset$. Fix an instance $e$ of $p$ and
    		$H\in\ti(e)$, and let $f\in\mathfrak{G}$. Since $\tp(e)=\tp(e')$, we can fix an automorphism $\sigma_{f}$ of
    		$\hat{\mathcal{N}}$ that sends $e$ to $f(e)$. Let $\pi$ be a retraction of $\mathcal{N}$ onto
    		$H$, and let $\rho_{f}:=(\pi\sigma_{f})_{\restriction H}$.
    		
    		Since $e':=f(e)\subseteq \dcl(e)$, each component of $e'$ must intersect $H$; therefore, $\pi$ must fix every component of $e'$.
    		Clearly, we have \[H':=\sigma_{f}(H)\in\ti(e').\] By incompressibility of $H$ over $e$ (see Lemma \ref{lem: incompressibility})
    		and the finiteness of the number of $G$-orbits in $H$, we obtain that $\rho_{f}$ is an automorphism of $H$. Applying
    		incompressibility again, we see that $\pi$ must restrict to an isomorphic embedding of $H'$ into $H$.
    		The sets $H$ and $H'$ must contain the same number of $G$-orbits, and so $\rho_{f}$ is onto and $H\in\ti(e')$.
    		
    		Now, for any other $f'\in\mathfrak{G}$,
    		we have
    		\[\rho_{f'}(f(e))=\sigma_{f'}(f(e))=f'(f(e)).\] By the second part of Lemma \ref{lem: incompressibility}, $\rho_{f'}\circ\rho_{f}$ and
    		$\rho_{f'f}$ must permute the orbits in $H$ in the same way. It follows that there exists a finite index subgroup
    		$\mathfrak{G}_{0}\leq\mathfrak{G}$ such that for any $f\in\mathfrak{G}_{0}$, the map $\rho_{f}$ preserves each $G$-orbit in $H$.
    		In particular, for any given $f\in\mathfrak{G}_{0}$ and $1\leq i\leq k$, there exist elements
    		$h_{i}\in G$ such that $\rho_{f}(a_{i})=a_{i}h_{i}$. We claim that we may take $h_{i}\in G[D_{i}^{\perp}]$.
    		
    		First of all, recall that by Lemma~\ref{lem:acl=dcl imaginary},
    		for all
    		$a\in H$ there exists a domain $D_{a}\in\D$ such that
    		the orbit of $a$ under the group of automorphisms $\Aut_e(H)$ coincides with the orbit $aG[D_{a}]$.
    		Moreover, $[a]_{E}\subseteq \dcl(e)$ for all $E\subseteq D_{a}$.
    		It follows that $D_{a_{i}}\subseteq D_{i}$. The fact that we chose $e$ to be fine implies that $D_{a_{i}}= D_{i}$.
    		
    		Now, it is easy to see from the definitions that for any $h\in G$
    		and $a\in H$, we have $D_{ah}=h(D_{a})$. Since $[a_{i}h_{i}]_{D_{i}}\in \dcl(e)$, necessarily
    		\[D_{i}\subseteq D_{a_{i}h_{i}}=h_{i}(D_{a_{i}})=h_{i}(D_{i}).\] It follows that $h_{i}(D_{i})=D_{i}$, and
    		therefore $h_{i}\in G[D_{i}\vee D_{i}^{\perp}]$. Replacing $a_i$ by another representative in its $R_{D_i}$ class,
    		we may take $h_{i}\in G[D_{i}^{\perp}]$.
    	\end{proof}
    	
    	\begin{remark}
    		In general, we cannot take $\mathfrak G_0=\mathfrak G$.
    		Consider two orthogonal domains $D,E$ without common boundary components and let $p(x,y)$ be the real type stating that $\pa{x,y}=DE$. Then for any instance $(a_{1},a_{2})$ of $p$ there exists a unique instance $(a'_{1},a'_{2})$ of $p$ such that
    		\begin{align*}
    			R^{*}_{D}(a_{1},a'_{1}), && R^{*}_{E}(a'_{1},a_{2}),   && R^{*}_{E}(a_{1},a'_{2}), && R^{*}_{D}(a'_{2},a_{2})
    		\end{align*}
    		This gives a germ of definable functions $f$ whose domain is some definable set $X$ in $p$ and such that $f^{2}$ is the identity.
    	\end{remark}
    	
    	Then following technical result is the key fact which allows us to transform interpretations between curve graphs into
    	homomorphisms between mapping class groups.
    	
    	\begin{lemma}
    		\label{lem: nice action}There exists a finite index subgroup $G'_{1}\leq G_{1}$ such that:
    		\begin{enumerate}
    			\item \label{first item}Each element
    			$g\in G'_{1}$ permutes the collection of $\mathcal{E}$-classes of realizations of $p_{1}$;
    			\item \label{second item}The action of $G_{1}'$ on the collection of $\mathcal{E}$-classes of realizations of $p_{1}$  lifts to an action of $G_{1}$ on the collection of realizations of $p_{1}$;
    			\item \label{third item}For each $1\leq i\leq m$ and $g\in G'_{1}$, there is a unique element
    			$\mu_{i}(g)\in G_2[D_{i}]$ satisfying
    			\[(e_{i})_{i=1}^{m}\circ g=(e_{i}\cdot\mu_{i}(g))_{i=1}^{m}\] for any $(e_{i})_{i=1}^{m}\models p_1$;
    			\item \label{fourth item}The map
    			\[\mu=(\mu_{i})_{i=1}^{m}:\G'\longrightarrow
    			\bigtimes_{i=1}^{m}G_2[D_{i}]\] is homomorphism, at least one of whose components
    			is injective.
    		\end{enumerate}
    	\end{lemma}
    	\begin{proof}
    		As in Lemma~\ref{lem: definable functions permute}, we will assume all parameters have been subsumed by the language.
    		Since two tuples that are interdefinable must have the same Morley rank, it follows that the action of $G_{1}$ on $Y$
    		induces a permutation of $\mathcal{P}$. We set
    		$G_{1}^{0}\leq G_{1}$ to be the stabilizer of $p_1$, which clearly has finite index. The group
    		$G_1^0$ then acts on the set of realizations of $p_{1}$.
    		
    		Now, fix $g\in G^{0}_{1}$ and the $\mathcal{E}$-class $\delta$ of instances of the type $p_{1}$. Let $\epsilon$ be the
    		$\mathcal{E}$-class that is the image of $\delta$ by the action of $g$.
    		By Lemma \ref{lem:acl=dcl imaginary}, we have that $d$ and $e$ are interdefinable for any
    		$d\in\delta$ and $e\in\epsilon$, since \[e\in \acl(\delta\cdot g)\subseteq \acl(\delta),\] and vice versa.
    		
    		Let $f_{d,e}$ be the unique invertible germ of definable functions from  $p_{1}$ to itself, sending $d$ to $e$; cf.~Observation
    		\ref{o: germs from definability}.
    		We claim that $f_{d,e}$ sends $\delta$ to $\epsilon$. Indeed, let $d'\in\delta$. Since $d$ and $d'$ have the same type and $\delta$
    		is defined from $d$,
    		we may conclude that
    		$\tp(d/\delta)=\tp(d'/\delta)$, which
    		implies the existence of an automorphism $\sigma$ of $\mathcal{N}$ such that
    		$\sigma(d)=d'$. By invariance of $f_{d,e}$ we have
    		\[f_{d,e}(d')=f_{d,e}(\sigma(d))=\sigma(f_{d,e}(d))=\sigma(e).\] However,
    		$\sigma$ must fix $\epsilon$ (it fixes $\delta$) and thus $\sigma(e)\in\epsilon$.
    		
    		By invariance, it follows that $f_{d,e}$ is an $\mathcal{E}$-compatible permutation of the class of realizations of $p_{1}$ sending each class to its image by the action of $g$. Let $\mathfrak{G}_{g}$ be the collection of all the germs of definable functions from $p_1$
    		to itself with that property. Clearly for $h,g\in G^{0}_{1}$ the composition of all germs in $\mathfrak{G}_{h}$ and all germs in $\mathfrak{G}_{g}$ is a germ in $\mathfrak{G}_{gh}$ and the inverse of a germ in $\mathfrak{G}_{g}$ is a germ in $\mathfrak{G}_{g^{-1}}$. Hence,
    		\[\mathfrak{G}=\bigcup_{g\in G^{0}_{1}}\mathfrak{G}_{g}\] is a group.
    		
    		By Lemma \ref{lem: definable functions permute}, there exists some finite index subgroup $\mathfrak{G}^{0}$ of $\mathfrak{G}$ such that any $f\in\mathfrak{G}_{0}$ acts by an action of $G_{2}$ on each coordinate.
    		
    		Via the map $\mathfrak G^0_g\mapsto g$, we have that $\mathfrak{G}^{0}$ projects homomorphically to a finite index subgroup
    		$G'_{1}\leq G^{0}_{1}$. We claim that this map is in fact injective. Otherwise, there exists a non-identity
    		$f\in\mathfrak{G}^0_{1_G}$ such that for any
    		$e=(e_{i})_{i=1}^{m}$ satisfying $p_{1}$, we have \[f(e)=(e_{i}h_{i})_{i=1}^{m},\quad h_{i}\in G_2[D_{i}^{\perp}]\setminus
    		G_2[\partial D_i],\]
    		and $h_{i}\neq 1$ for at least one value of $i$. This is in contradiction with the fact that $G_{2}[D_i^{\perp}]/G_2[\partial D_i]$
    		is torsion-free,
    		the fact that $f$ preserves $[e]_{\mathcal{E}}$, and the fact that $[e]_{\mathcal{E}}$ is finite. The fact that
    		$G_{2}[D_i^{\perp}]/G_2[\partial D_i]$ is torsion-free makes critical use of the fact that $G_2$ consists of pure mapping classes.
    		
    		This settles item (\ref{second item}), and furnishes maps \[\mu_{i}:G'_{1}\longrightarrow G_{2},\quad 1\leq i\leq m\]
    		as in (\ref{third item}). It follows immediately from the properties of the interpretation that the product $\mu$ of the maps $\{\mu_i\}_{i=1}^m$ is an injective homomorphism.
    		A standard argument then proves that $\mu_{i}$ must be injective for at least one value of $1\leq i\leq m$;
    		this is well known (see for instance \cite{ivanov-book}), and we provide a proof for the sake of completeness.
    		Let $K_1$ denote the kernel of $\mu_1$ and $K_2$ the kernel of $\mu_2\times
    		\cdots\times\mu_m$. We have that
    		$K_1$ and $K_2$ both contain a pseudo-Anosov mapping classes~\cite{ivanov-book}, which have virtually cyclic
    		centralizer~\cite{FLP}. Moreover,
    		since $\mu$ is injective we must have $K_1\cap K_2=\{1\}$. Thus, there are pseudo-Anosov mapping classes $\psi_i\in K_i$
    		such that $\langle\psi_1,\psi_2\rangle\cong\bZ^2$, a contradiction. It follows that
    		$\mu_i$ is injective for some $i$, and the lemma
    		thus follows.
    	\end{proof}

    	In light of Lemma~\ref{lem: nice action}, we will abuse notation and use the same symbol $\circ$ to denote the lifted action.
    	For any $\alpha\in\C(\Sigma_1)$ choose a generator $\tau_{\alpha}$ of the group
    	generated by the Dehn twist about $\alpha$
    	with $G_1'$. If $G_1'$ is chosen to be normal, we may assume that $\tau_{\alpha}$ is chosen $G_1$--equivariantly.
    	
    	\begin{corollary}\label{cor:induced-homo-body}
    		Suppose $\Sigma_1$ and $\Sigma_2$ are surfaces as above, and suppose that $\C(\Sigma_1)$ is interpretable in $\C(\Sigma_2)$.
    		Then, there is a finite index subgroup $G<\Mod^{\pm}(\Sigma_1)$ and an injective homomorphism
    		$G\longrightarrow \Mod^{\pm}(\Sigma_2)$.
    	\end{corollary}
    	
    	In fact, we Corollary~\ref{cor:induced-homo-body} does not use anything specific to the curve graph, and only requires bi--interpretability
    	between the curve graph and the corresponding augmented Cayley graph. Thus, interpretations between suitable
    	pants graphs, arc graphs, nonseparating arc graphs, and flip graphs
    	also induce homomorphisms between the underlying mapping class groups, excluding low complexity sporadic surfaces.
    	
  	\subsection{Interpretation rigidity for curve graphs}
    	We now show that in the induced homomorphism $G\longrightarrow \Mod^{\pm}(\Sigma_2)$, powers of Dehn twists in $G$ get mapped
    	to powers of Dehn twists in $\Mod^{\pm}(\Sigma_2)$. This will be the last fact needed to establish interpretation rigidity.
    	
    	\newcommand{\perptw}{\perp^{\mathrm{tw}}}
    	\begin{definition}
    		For $w\in\wo$ and $D\in\D$, we will say that $D$ is \emph{skew-orthogonal} to $w$, written as
    		$w\perptw D$ if there exists $gu\in[w]$ in left normal form such that $u\perp D$.
    		It is easy to check that the property is independent of the left normal form chosen.
    		We define $w(D)=g(D)$, which again is independent of the normal form chosen.
    		For $h\in G$, we say that $w\perptw h$ if $w\perptw \supp(h)$, and define $w(h)=h^{w^{-1}}$ in a similar manner.
    		This definition also does not depend on the choice of left normal form.
    	\end{definition}

    	Fix a constant $c\in \hat M_{2}$, let $x_i$ be a variable in the sort $S_{D_i}$. Write
    	$q(x_{1},\dots, x_{m})$  for the type consisting of
    	$\pa{(x_{i},x_{j})}=[v_{i,j}]$ and $\pa{c,x_i}=[v_{i}]$.
    	Let $w_{i}$ and $w_{i,j}$ be the reducts (necessarily without cancellation) of $v_{i}D_i$ and $D_{i}v_{i,j}D_{j}$ respectively.
    	
    	We start with some general observations that will be useful in setting up the next lemma.
    	First, suppose $g\in G_1'$. The fact that the action of $G'_{1}$ leaves $p$ invariant implies that \[w_{i}\mu_{i}(g)\simeq w_{i},\quad
    	\mu_{i}(g)^{-1}w_{i,j}\mu_{j}(g)\simeq w_{i,j}\] for
    	distinct indices $1\leq i,j\leq m$. Let $\mathcal{S}_{j}$ denote the collection of connected components of
    	$\supp(\mu_{j}(g))$.  Lemma \ref{l: element absorption} implies that
    	$\supp(\mu_{i}(g))\subseteq RA(w_{i})$, and for each distinct $i,j$ there are partitions
    	\[\mathcal{S}_{i,j}^{\perp}\sqcup\mathcal{S}^{abs}_{i,j}=\mathcal{S}_{i}\] and
    	\[\mathcal{S}_{j,i}^{\perp}\sqcup\mathcal{S}^{abs}_{j,i}=\mathcal{S}_{j}\] such that
    	\begin{itemize}
    		\item For all $D\in\mathcal{S}^{abs}_{i,j}$, we have $D\subseteq LA(w_{i,j})$;
    		\item For all $D\in\mathcal{S}^{abs}_{j,i}$, we have
    		$D\subseteq RA(w_{i,j})$;
    		\item For all $D\in\mathcal{S}^{\perp}_{j,i}$, we have $w_{i,j}\perptw D$ and $w_{i,j}\nsubseteq RA(D)$;
    		\item $\mathcal{S}_{i,j}^{\perp}=\{w_{i,j}(D)\,:\,D\in\mathcal{S}^{\perp}_{j,i}\}$.
    	\end{itemize}

    	Consider the preimage $S$ of $R_{\widehat{\supp(g)}}$ in $X$, where here $\widehat{\supp(g)}$ means $\hat D$ for the
	domain $D=\supp(g)$. Tautologically, its restriction to the set of realizations of $p$ coincides with the fiber partition of a germ $\bar{\psi}$ of definable maps from $p$ to some type $r$ in some imaginary sort in $\hat{\mathcal{N}}^{eq}$. By Corollary \ref{c: weak elimination of imaginaries} we may assume said sort is of the form
    	\[(S_{D_{1}'}\times\cdots \times S_{D'_{n}})/\mathcal{F},\] where
    	$\mathcal{F}$ has finite equivalence classes. Let $r_{0}$ be the lift of the type $r$ to
    	\[(S_{D_{1}'}\times\cdots\times S_{D'_{n}})/\mathcal{F}.\]
    	Pick some realization $e$ of $p$ and choose some $f\in \bar{\psi}(e)$. By Lemma \ref{lem:acl=dcl imaginary}, we have
    	$f\in \dcl(e)$. If we denote by $\psi$ the corresponding germ of definable functions from $p$ to $\tp(f)$, then equivalence classes by $S_{\restriction p}$ are finite unions (of the same cardinality) of fibers of the map $\psi$.
    	
    	%
    	Given a realization $e$ of $p$, each component \[\psi_{\ell}((e_{i})_{i=1}^{m})\in S_{F_{\ell}}\] is determined by the sequence types
    	$(\pa{e_{i},\psi_{\ell}((e_{i})_{i=1}^{m})})_{1\leq i\leq m}$. Since $\psi(e)=\psi(e\circ g)$, we have that
    	\[\supp(\mu_{i}(g))\subseteq RA(F_{\ell}\pa{\psi_{\ell}((e_{i})_{i=1}^{m}),e_{i}}D_{i})\]
    	for $1\leq i\leq m$.
    	
    	\begin{lemma}\label{lem:dt-support}
    		Let $\alpha\in\C(\Sigma_1)$, and suppose that $\mu_{1}(\tau_{\alpha})\neq 1$. Then $\mu_{1}(\tau_{\alpha})$ is a
		 power of a Dehn twist about a curve $\phi(\alpha)\in \C(\Sigma_2)$.
    		Moreover, for each $2\leq j\leq n$, the set
    		$\mathcal{S}_{j,1}^{abs}$ is empty. In particular, we have \[\alpha\in\mathcal{S}^{\perp}_{1,j},\quad\textrm{or}\quad
    		\mu_{j}(\tau_{\alpha})=1.\]
    	\end{lemma}
    	\begin{proof}
    		\newcommand{\g}[0]{\tau_{\alpha}}
    		
    		Take $\alpha\in\C(\Sigma_1)$, and suppose that either $\mu_{1}(\tau_{\alpha})$ is not the power of a Dehn twist,
    		i.e.~$F=\supp(\mu_{1}(\tau_{\alpha}))$ is not annular, or that there exists an index $i\geq 2$ such that some component
    		$F'$ of $\supp(\mu_{i}(g))$ satisfies $F'\subseteq LA(w_{i,1})$.
    		
    		We will show that this implies the existence of infinitely many different orbits of points of $Y(\M_{2})$ by the action of $\subg{\tau_{\alpha}}$ that are $\hat{\alpha}$-equivalent, contradicting the fact that $\bar{\zeta}$ is an interpretation of $\M_{1}$ in $\M_{2}$;
    		indeed, two group elements in $\M_1$ that are $\alpha$--related differ by a power of a Dehn twist, and so
    		there are only finitely many $\tau_{\alpha}$ orbits
    		that are $\alpha$--related in $\M_1$.
    		
    		Let $O(a)$ denote the orbit of $a\in X_1$ under the action by right multiplication of $\mu(\tau_{\alpha})$ on the Cartesian product $S_{D_{1}}\times S_{D_{2}}\times\cdots\times S_{D_{m}}$.  We will write
    		$$\bar{X}_{1}:=\bigwedge_{1\leq i< j\leq m} R_{v_{i,j}}(x_{i},x_{j})\wedge \bigwedge_{1\leq i< m} R_{v_{i}}(c,x_{i}).$$
    		\begin{claim*}\label{claim:12.6}
    			There are definable subsets $Z_{1}\subseteq Z_{0}\subseteq \bar{X}_{1}$ in $q$ such that:
    			\begin{enumerate}
    				\item \label{group action is generic}$e\circ \tau_{\alpha}=e\cdot\mu(\tau_{\alpha})$ for any $e\in Z_{0}$;
    				
    				\item \label{orbits are don't get swallowed} $|O(e)\setminus Z_{0}|$ is finite for any $e\in Z_{1}$.
    			\end{enumerate}
    			Moreover, we may assume that if $e=(e_i)_{i=1}^m\in Z_{1}$ with $e_i=[a_i]_{D_i}$, then the following conclusions hold:
    			\enum{(i)}{
    				\item \label{first case}\label{flexibility with the first coordinate} If $F$ is not annular, then
    				there exists an infinite sequence $(e^{\ell}_{1})_{1\leq \ell<\infty}$ of classes
    				that lie in different orbits under the action of
    				$\mu_{1}(\tau_{\alpha})$, such that for any $\ell\geq 1$ we can extend $e^{\ell}_{1}$ to a tuple
    				$e^{\ell}=(e^{\ell}_{j})_{j=1}^{m}$ satisfying
    				$\psi(e)=\psi(e^{\ell})$.
    				\item \label{second case} If $i$ is such that $\mu_{i}(\tau_{\alpha})$ is not
    				supported on $\bigvee\mathcal{S}^{\perp}_{1,i}$, then there exist infinitely many
    				choices $\{e_i^{\ell}\}_{\ell\in\mathbb{N}}$ such that $e_1$ and $e_i^{\ell}$ are the first and $i^{th}$
    				coordinate respectively of a tuple
    				$e^{\ell}\in Z_1$ satisfying $\psi(e)=\psi(e^{\ell})$.
    			}
    		\end{claim*}
    		\begin{subproof}[Proof of the Claim]
    			We set $Z_0$ to be a suitable fragment of the type $p$, so that
    			Item \ref{group action is generic} becomes true by the definition of $\mu$. The existence of $Z_0$ is simply by
    			compactness. Now, for any finite fragment of $p$ defining $Z_0$, we have that $\bar{X}_{1}\setminus Z_{0}$
    			is contained in a union of definable sets of the form \[R_{v'_{i,j}}(x_{i},x_{j})\,\, \textrm{for}\,\, 1\leq i<j\leq m,\,\,
    			v'_{i,j}\precneq v_{i,j},\] or of the form \[R_{v'_{i}}(c,x_{i})\,\, \textrm{for}\,\,
    			v'_{i}\precneq v_{i}.\] By Lemma \ref{l: orbits avoid subvarieties}, we obtain a finite fragment $\phi$ of $p$ containing
    			\[\{R_{v_{i,j}}(x_{i},x_{j})\}_{1\leq i< j\leq m},\quad  \{R_{v_{i}}(c,x_{i})\}_{1\leq i\leq m},\]
    			such that for any tuple $e$ satisfying $\phi$, the orbit of $e$ by the action of $\mu(\tau_{\alpha})$
    			on the right has finite intersection with the set $\bar{X}_{1}\setminus Z_{0}$. This establishes
    			Item~\ref{orbits are don't get swallowed}.
    			
    			Now, let $k$ the number of components of $\psi$. For
    			$1\leq i\leq m$ and $1\leq j\leq k$, let $r_{i,j}(x,y)$ be the type expressing that
    			\[\pa{x_{i},y_{j}}=\pa{e_{i},\psi_{j}(e)},\] where $y_{j}$ is of sort $F_{j}$ and $e$ is an instance of $p$.
    			By compactness, there is a finite approximation $p_{0}(x)\subset p(x)$ and a finite approximation \[r_{0}(x,y)
    			\subseteq\bigcup_{\substack{1\leq i\leq m\\1\leq j\leq k}}r_{i,j},\] such that for any tuple $e$ satisfying
    			$p_{0}$, we have $\psi(e)$ is characterized by $r_{0}(e,\psi(e))$.

    			To show part (\ref{first case}) of the second part, fix some $h_{1}\in G[F]$ that acts as a pseudo-Anosov
    			mapping class on all of the components of $F$. For $2\leq j\leq m$, let $h_{j}=v_{1,j}^{-1}(h_{1})$ if $F$
    			is contained in $\mathcal{S}_{j,1}^{\perp}$, and the identity otherwise.

    			After further shrinking $Z_{1}$, we may assume it contains only realizations of $p_{0}$.
    			Since $e\in Z_{1}$ by assumption, by an application of Lemma \ref{l: orbits avoid subvarieties}
    			analogous to that used to show item (\ref{group action is generic}), we conclude that $e^{\ell}\in Z_{1}$
    			for all but finitely many values of $\ell$ and in fact $\psi(e^{\ell})=\psi(e)$ for all but finitely many of those values.
    			This is enough to show (\ref{first case}).
    			
    			For Item (\ref{second case}), let $\alpha$ be a curve in $\supp(\mu_{i}(\tau_{\alpha}))$ that is
    			not contained in a component of $\mathcal{S}_{1,i}^{\perp}$. For $1\leq j\leq m$ define
    			$h_{j}$ by:
    			\begin{itemize}
    				\item $\tau_\alpha$ if $j=i$;
    				\item the identity if $\alpha$ is not contained in a component of $\mathcal{S}^{\perp}_{i,j}$;
    				\item $w_{i,j}^{-1}(h_{i})$ otherwise.
    			\end{itemize}
    			The proof now follows as in Case (\ref{first case}).
    		\end{subproof}

    		Let $e\in Z_{1}(M_{2})$. The claim, together with the fact that $F\neq\{\alpha\}$, implies the existence of
    		an infinite sequence \[\{e^{0}=e,e^{1},e^2\dots\} \subset Z_{1}(M_{2})\] such that \[O(e^{i})\cap O(e^{j})=\emptyset\] for
    		$i\neq j$ and such that $e^i$ and $e^j$ are $\hat\alpha$--related.
    		
    		Since $O(e^{i})\setminus Z_{0}$ is finite for all $i$, there is a representative
    		\[\tilde{e}^{i}\in O(e^{i})\subseteq S_{D_{1}}(M_{2})\times\cdots\times S_{D_{m}}(M_{2})\] such that
    		$\tilde{e}^{i}\mu(\tau_{\alpha}^{t})\in Z_{0}(M_{2})$ for any $t\geq 0$, and therefore
    		\[\{[\tilde{e}^{i}\mu(\tau_{\alpha}^{t})]_{\mathcal E}\,:\,t\geq 0\}=\{[\tilde{e}^{i}]_{\mathcal E}\circ \tau_{\alpha}^{t}\,:\,t\geq 0\}.\]
    		Since $\mathcal E$--fibers are finite and since
    		$\{[\tilde{e}^{i}]_{\mathcal E}\}_{i\in\mathbb{N}}$ for a set of representatives of infinitely many distinct orbits of the action of
    		$\subg{\tau_{\alpha}}$ on $X$, we conclude that there exist infinitely many distinct orbits of the action of
    		$\subg{\tau_{\alpha}}$ on $Y$. Finally, we have that $[\tilde{e}^{i}]\sim_{\hat{\alpha}}[\tilde{e}^{j}]$,
    		in contradiction with the fact that in $M_{1}$, any $\alpha$--class is the union of finitely many $\subg{\tau_{\alpha}}$ orbits.
    	\end{proof}

    	If $\Lambda_1$ and $\Lambda_2$ are graphs, then a map $\phi\colon\Lambda_1\to\Lambda_2$ is called a \emph{full embedding} if
    	$\phi$ is injective on vertices, and if $\phi$ preserves adjacency and non-adjacency in $\Lambda_1$. Maps between graphs that
    	preserve both adjacency and non-adjacency are also sometimes called \emph{superinjective}. Irmak~\cite{irmak-superinjective}
    	shows that a superinjective map of graphs $\C(\Sigma)\to\C(\Sigma)$, under the assumption that $\Sigma$ is not one of finitely
    	many sporadic surfaces, is automatically induced by a homeomorphism of $\Sigma$; cf.~\cite{Shackleton}.

    	\begin{corollary}\label{cor:cc-mutual-interpret}
    		Suppose that for $i\in\{1,2\}$, we have that $\Sigma_i$ is not a sphere with six or fewer punctures, a
    		torus with two or fewer punctures, or a closed surface of genus two.
    		If $\C(\Sigma_{1})$ is interpretable in $\C(\Sigma_{2})$ then there is a full embedding $\C(\Sigma_{1})\to \C(\Sigma_{2})$.
    		If $\C(\Sigma_{2})$ is also interpretable in $\C(\Sigma_{1})$ then $\C(\Sigma_{1})\cong \C(\Sigma_{2})$. In particular,
    		$\Sigma_1$ and $\Sigma_2$ are homeomorphic.
    	\end{corollary}
    	\begin{proof}
    		The interpretation of $\C(\Sigma_1)$ into $\C(\Sigma_2)$ interprets $\M_{\D_1}^{G_1}$ in $\M_{\D_2}^{G_2}$, and we
    		obtain a homomorphism $\mu$ from Lemma~\ref{lem: nice action}. Lemma~\ref{lem:dt-support} shows that $\mu$
    		induces a superinjective map between $\C(\Sigma_1)$ and $\C(\Sigma_2)$.
    		
    		Since $\C(\Sigma_2)$ is also interpretable in $\C(\Sigma_1)$, we see that the complexities $k(\Sigma_1)$ and $k(\Sigma_2)$
    		have to be the same, since these are determined by the maximal number of pairwise non-isotopic, nonperipheral simple closed curves,
    		and since these are in turn equal to the maximal rank of a torsion-free abelian subgroup of the corresponding mapping class group,
    		by~\cite{BLM-duke}. It follows that $\Sigma_1\cong\Sigma_2$; see Theorem A of~\cite{erl-fan-2017}, for instance.
    	\end{proof}

    	\bibliography{literature_saddle_connection_graph}{}
    	\bibliographystyle{alpha}
    	
\end{document}